\tikzstyle{tikzfig}=[baseline=-0.25em,scale=0.5]
\tikzstyle{none}=[inner sep=0mm]
\tikzstyle{every loop}=[]
\tikzstyle{label}=[inner sep=0.15mm, font={\footnotesize}]
\tikzstyle{vert}=[fill=black, draw=white, shape=circle, line width=0.25mm, tikzit shape=circle, inner sep=0.35mm]
\tikzstyle{label-small}=[inner sep=0.1mm, font={\scriptsize}]
\tikzstyle{label-white}=[fill=white, draw=none, shape=circle, inner sep=0.07mm, font={\scriptsize}]
\tikzstyle{vert-white}=[fill=white, draw=black, shape=circle, inner sep=0.35 mm]
\tikzstyle{vert-red}=[fill=red, draw=white, shape=circle, inner sep=0.4 mm, font={\scriptsize}, text=black]
\tikzstyle{blue}=[-, draw={rgb,255: red,49; green,149; blue,255}, line width=0.8pt]
\tikzstyle{red}=[-, draw=red, line width=0.8pt]
\tikzstyle{blue thick}=[-, tikzit draw=blue, draw=blue, line width=1.7pt]
\tikzstyle{thick}=[-, ultra thick]
\tikzstyle{dot}=[-, dotted]
\tikzstyle{fade}=[-, opacity=0.5]
\tikzstyle{yellow}=[-, draw={rgb,255: red,208; green,208; blue,42}, line width=1.5]
\tikzstyle{arrow}=[->, line width=2em]
\tikzstyle{faded}=[-, opacity=0.5]
\tikzstyle{matching}=[-, draw={rgb,255: red,255; green,80; blue,80}, line width=5]
\tikzstyle{orange}=[-, draw={rgb,255: red,255; green,128; blue,0}]
\tikzstyle{red arrow}=[draw=red, ->]
\tikzstyle{double edge}=[-, double, double distance=0.8 pt, draw=blue, line width=1.3, opacity=1]
\tikzstyle{single edge}=[-, opacity=1, draw=blue, line width=1.3]
\tikzstyle{orange}=[-, draw={rgb,255: red,255; green,128; blue,0}]
\tikzstyle{dimer}=[-, draw={rgb,255: red,255; green,80; blue,80}, line width=2.5]
\tikzstyle{arrow}=[->]
\tikzstyle{grayfilled}=[-, fill={rgb,255: red,128; green,128; blue,128}, draw=blue, line width=2.5, opacity=0.5]
\tikzstyle arrowstyle=[scale=1]
\tikzstyle directed=[postaction={decorate,
decoration={markings,mark=at position .65 with {\arrow[arrowstyle]{stealth}}}}]
\newcommand{\Z}{\mathbb{Z}}
\newcommand{\N}{\mathbb{N}}
\newcommand{\R}{\mathbb{R}}
\newtheorem{lemma}{Lemma}[section]
\newtheorem{theorem}[lemma]{Theorem}
\newtheorem{prop}[lemma]{Proposition}
\newtheorem{cor}[lemma]{Corollary}
\newtheorem{conj}[lemma]{Conjecture}
\newtheorem{question}[lemma]{Question}
\newtheorem{lem-def}[lemma]{Lemma/Definition}
\newtheorem{definition}[lemma]{Definition}
\newtheorem{example}[lemma]{Example}
\newtheorem{notation}[lemma]{Notation}
\newtheorem{Question}[lemma]{Question}
\newtheorem{remark}[lemma]{Remark}
\newtheorem{Miracle}[lemma]{Miracle}
\newcommand{\col}{\operatorname{col}}
\newcommand{\im}{\operatorname{im}}
\newcommand{\RNum}[1]{\uppercase\expandafter{\romannumeral #1\relax}}
\DeclareTextFontCommand{\emph}{\color{blue}\em}
\newcommand{\bx}{\mathbf{x}}
\title{The Monoid Representation of Upho Posets and Total Positivity}
\author{Ziyao Fu\thanks{Peking University, Beijing \href{mailto:fzy1003@stu.pku.edu.cn}{fzy1003@stu.pku.edu.cn}}, Yulin Peng\thanks{Tsinghua University, Beijing \href{mailto:pyl24@mails.tsinghua.edu.cn}{pyl24@mails.tsinghua.edu.cn}}, and Yuchong Zhang\thanks{University of Michigan, Ann Arbor, MI 48104 \href{mailto:zongxun@umich.edu}{zongxun@umich.edu}}}     
\date{}
\begin{document}
\maketitle
\begin{abstract}
We show that all totally positive formal power series with integer coefficients and constant term $1$ are precisely the rank-generating functions of Schur-positive upho posets, thereby resolving the main conjecture proposed by Gao, Guo, Seetharaman, and Seidel. To achieve this, we construct a bijection between finitary colored upho posets and atomic, left-cancellative, invertible-free monoids, which restricts to a correspondence between $\mathbb{N}$-graded colored upho posets and left-cancellative homogeneous monoids. Furthermore, we introduce semi-upho posets and develop a convolution operation on colored upho posets with colored semi-upho posets within this monoid-theoretic framework.
\end{abstract}


\section{Introduction}\label{sec:intro}

A poset is called \emph{upper homogeneous}, abbreviated as \emph{upho}, if each principal order filter is isomorphic to the poset itself. Upho posets were introduced by Richard Stanley during his research on the enumeration properties of Stern's triangle and its poset \cites{Stanleyuphoslide, MR4752181}.

The study of Stern's poset, particularly regarding enumeration problems, has been a focal point of numerous research efforts \cites{MR4463221,MR4546049}. Upho posets, as a generalization of Stern's poset, preserve the attribute of self-similarity, and hence exhibit many intriguing structural and enumeration properties. For example, in \cite{MR4318812}, Gao et al. give a concise characterization of $\N$-graded planar upho posets using their rank-generating functions; in \cite{MR4432972}, Hopkins proves that the characteristic generating functions of upho posets are the inverse of their rank-generating functions. Moreover, the applications of upho posets span several domains, including lattice theory \cites{MR4432972, Hopkinslattice, HopkinsLewis24}, commutative algebra \cite[Conjecture~1.1]{MR4318812}, Coxeter groups \cite[Corollaries~4.10~and~4.15]{Hopkinslattice}, and finite geometry \cite[Theorem~1.3]{Hopkinslattice}.

For simplicity, we say a formal power series is an \emph{upho function} if it is the rank-generating function of some upho poset. A fundamental problem is:
\begin{Question}
    Is there a criterion to determine whether a formal power series is an upho function?
\end{Question}

In \cite{MR4318812}, Gao et al. prove that there are uncountably many different upho functions, and a straightforward corollary is that almost all upho functions are not rational functions. Hence, the complete characterization of upho functions is anticipated to be challenging. 

In this paper, we characterize the rank-generating functions of all \emph{Schur-positive upho posets}, which are upho posets with a Schur-positive Ehrenborg quasi-symmetric function, thereby resolving the main conjecture proposed by Gao et al. in \cite{MR4318812}.

\begin{theorem}[{\cite[Conjecture~3.3]{MR4318812}}]\label{conj:Gao1}
    A formal power series $f(x)\in 1+x\Z_{\ge 0}[[x]]$ is the rank-generating function of a Schur-positive upho poset if and only if $f(x)$ is totally positive.
\end{theorem}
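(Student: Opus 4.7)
Assume $P$ is a Schur-positive upho poset with rank-generating function $f(x)$, so its Ehrenborg quasi-symmetric function admits a Schur expansion $F_P = \sum_\lambda c_\lambda s_\lambda$ with $c_\lambda \in \Z_{\ge 0}$. Under a suitable specialization of symmetric functions (tied to the upho structure of $P$), $F_P$ relates directly to $f(x)$, and each Schur function specializes to a single Toeplitz minor of $f(x)$ via the Jacobi--Trudi identity $s_\lambda = \det(h_{\lambda_i-i+j})$. Schur-positivity then yields nonnegativity of all Toeplitz minors of $f(x)$, which is equivalent to total positivity; I plan to formalize this along the lines of the motivation Gao et al.\ gave for Conjecture~3.3.

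\textbf{Reverse direction, construction of a colored upho poset.} Fix totally positive $f(x) \in 1+x\Z_{\ge 0}[[x]]$. By the Aissen--Schoenberg--Whitney--Edrei theorem, $f(x) = \prod_i (1 + \alpha_i x)/\prod_j (1 - \beta_j x)$, and together with the integrality of the coefficients this lets me decompose $f(x)$ into a product of ``simple'' totally positive factors (for instance $1/(1 - x^k)$ and $1 + x^k$ with suitable multiplicities), each realizable as the rank-generating function of an explicit small Schur-positive colored semi-upho poset $Q_i$. Starting from the trivial colored upho poset (a point) and iteratively applying the paper's convolution operation with each $Q_i$---which multiplies rank-generating functions and, by a Pieri-type computation in the monoid framework, preserves Schur-positivity of the Ehrenborg function---I obtain a colored upho poset $P_c$ with rank-generating function $f(x)$ and Schur-positive Ehrenborg function.

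\textbf{Passage to a genuine (uncolored) upho poset.} To realize $f(x)$ as the rank-generating function of an honest (uncolored) Schur-positive upho poset, I invoke the paper's bijection between finitary colored upho posets and atomic, left-cancellative, invertible-free monoids. By choosing each building block $Q_i$ so that its contribution consists only of atoms of the accumulating monoid---and since convolution preserves atomicity---the distinguished generating set of the final monoid $M$ coincides with the entire set of atoms of $M$. Hence $P_c$ is automatically the canonical uncolored upho poset associated with $M$; forgetting the coloring yields a genuine upho poset $P$ with the same rank-generating function and the same Ehrenborg function (Schur-positivity being preserved because the Ehrenborg function depends only on the poset, not on the coloring).

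\textbf{Main obstacle.} The principal difficulty lies in the second step: producing a sufficiently rich family of Schur-positive colored semi-upho building blocks whose convolutions realize every totally positive integer power series, while handling infinite products via a direct limit of finitary colored upho posets. A secondary but crucial point is verifying that the convolution preserves both the upho property and the Schur-positivity of the Ehrenborg function, which I expect reduces to a Pieri-type rule in the monoid/semi-upho framework. Finally, the atomicity condition ensuring the final colored upho poset is ``canonically uncolored'' (so that forgetting colors yields a genuine upho poset rather than a colored upho poset with a degenerate underlying structure) must be checked carefully; this ties the combinatorics of the chosen building blocks directly back to the monoid bijection.
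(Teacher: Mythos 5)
Your forward direction is essentially the paper's: by \Cref{lemma: Ep of upho symmetric} the Ehrenborg function of an upho poset $P$ equals $\prod_{i\ge 1}F_P(x_i)$, and \Cref{prop:Schur positive} (proved via Jacobi--Trudi) says that Schur-positivity of $\prod_{i}f(x_i)$ is equivalent to total positivity of $f$. Note that this equivalence also removes any need to track Schur-positivity through your construction in the converse: once $f$ is realized as the rank-generating function of \emph{any} upho poset, Schur-positivity is automatic, so no ``Pieri-type'' preservation argument is required. Likewise your worry about passing to an uncolored poset is a non-issue, since the underlying poset of a colored upho poset is an upho poset by definition and the forgetful map costs nothing.

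The genuine gap is in the reverse direction, at the step where you decompose $f$ into ``simple'' integer-coefficient totally positive factors each realizable by a small explicit poset. The integer form of Aissen--Schoenberg--Whitney--Edrei (\Cref{thm:working def of TP}) gives $f=g(x)/h(x)$ with $g,h\in 1+x\Z[x]$, $g$ with negative real roots and $h$ with positive real roots, but these roots are in general irrational, so $g$ and $h$ need not factor over $\Z$ into linear or otherwise prescribed pieces: for instance $1/(1-3x+x^2)$ is totally positive with integer coefficients, yet its denominator is irreducible over $\Z$ and the elementary factors $1/(1-\lambda x)$ have irrational coefficients. (Your sample blocks $1+x^k$ and $1/(1-x^k)$ are not even totally positive for $k\ge 2$.) Handling an irreducible denominator of degree at least $2$ is precisely where the substance of the paper lies: it splits into two cases according to whether one or two roots of $h^{-1}$ exceed $1$ (\Cref{prop:type I} and \Cref{prop:type II}), the latter requiring an explicitly constructed head-changing monoid whose existence rests on a nontrivial linear-algebra identity; and the numerator $g$, which is log-concave but generally not factorable over $\Z$, is absorbed via \Cref{thm: log-concave rgf of semi-upho} (log-concave series are regular semi-upho functions, proved by the greedy $0$-monoid construction) combined with the convolution theorem \Cref{thm:upho times semi-upho}. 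Your proposal contains no mechanism for either of these irrational-root situations, and since the integer ASW--Edrei representation is a finite rational function, the direct-limit machinery you anticipate is not needed either.
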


In \Cref{sec:3}, we introduce the notion of \emph{colored upho posets} and establish a fundamental correspondence, detailed later, which serves as the primary tool for proving all results in this paper.

\begin{theorem}\label{thm: intro corresp} There exists a canonical bijection between finitary colored upho posets and atomic, left-cancellative, invertible-free monoids. Moreover, this bijection maps $\N$-graded colored upho posets to left-cancellative homogeneous monoids and maps finite-type $\N$-graded colored upho posets to finitely generated left-cancellative homogeneous monoids. \end{theorem}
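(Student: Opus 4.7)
The plan is to produce two mutually inverse constructions. Let $F$ send a finitary colored upho poset $P$ to a monoid whose underlying set is $P$, and let $G$ go the other way, constructing a colored poset from the right-divisibility order on the monoid. The theorem then follows by checking that $F$ and $G$ compose to identities on isomorphism classes, and by matching the extra structures (gradings, atoms) under the bijection.

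For $F$: given $P$ with minimum $\hat 0$, the whole point of the coloring is to make the upho isomorphism $P \cong V_x$ with the principal filter at $x$ \emph{canonical}. Concretely, the coloring should be a labelling of cover relations at each vertex so that for every $x \in P$ there is a unique color-preserving poset isomorphism $\phi_x \colon P \to V_x$. Define the monoid product by $x \cdot y := \phi_x(y)$ with identity $\hat 0$. Since $\phi_{\hat 0} = \mathrm{id}_P$ and $\phi_x(\hat 0) = x$, the identity axiom holds. Associativity $(xy)z = x(yz)$ reduces to the coherence identity $\phi_{\phi_x(y)} = \phi_x \circ \phi_y$, which is the statement that the colorings on all the filters $V_x$ are identified with the coloring on $P$ in a compatible way. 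Left-cancellativity follows from injectivity of each $\phi_x$; atomicity follows because any element of $P$ is reachable from $\hat 0$ by a finite chain of covers (this is where finitary enters), each cover encoding multiplication by an atom; and invertible-freeness holds because $xy = \hat 0$ forces $x, y \leq \hat 0$, since multiplication sends the poset strictly upward.

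For $G$: given $M$, set $x \leq y$ iff $y = xz$ for some $z \in M$. Reflexivity takes $z = 1$, transitivity is immediate, and antisymmetry uses left-cancellativity together with the absence of non-trivial invertible elements. The identity becomes the minimum, and left translation $L_x \colon m \mapsto xm$ is a poset isomorphism $M \to V_x$, establishing the upho property. Atomicity ensures each element is reached from $1$ by a finite chain, so the poset is finitary; covers $x \lessdot y$ correspond exactly to multiplications by atoms (the uniqueness of the atom factor uses left-cancellation together with the atom definition), and labelling each cover $x \lessdot xa$ by the atom $a$ gives the coloring. A direct verification then identifies $F \circ G$ and $G \circ F$ with the identity, essentially by matching $\phi_x$ with $L_x$ via uniqueness of color-preserving isomorphisms.

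The main obstacle is precisely the coherence $\phi_{xy} = \phi_x \circ \phi_y$: the bare upho condition only produces \emph{some} isomorphism $P \cong V_x$, and arbitrary such choices will not make $\cdot$ associative. The definition of a \emph{colored} upho poset must be set up so that color-preserving isomorphisms exist uniquely and are globally compatible; once this is arranged, the rest of the argument is largely routine bookkeeping. The refinements then fall out quickly. An $\N$-grading on $P$ gives a rank function that is preserved by every $\phi_x$, and hence transports to a grading on $M$ in which atoms lie in degree $1$, which is exactly a homogeneous monoid; conversely, any such grading on $M$ descends to the divisibility poset. Finally, a finite-type $\N$-graded upho poset, having finitely many atoms, is sent to a homogeneous monoid with finitely many degree-one generators, which for a homogeneous monoid is equivalent to finite generation.
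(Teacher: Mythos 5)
Your proposal is essentially the paper's argument: the monoid product $x\cdot y:=\phi_x(y)$ with associativity coming from the uniqueness of the color-preserving isomorphism (which the paper's definition of a colored upho poset already guarantees, so no additional ``global compatibility'' axiom needs to be arranged), left-cancellativity from injectivity of $\phi_x$, invertible-freeness because multiplication moves strictly upward, atomicity from finitariness, and the inverse map given by the divisibility order with each cover colored by its unique irreducible factor, exactly as in the paper's Lemmas on $\mathcal{M}$ and $\tilde{\mathcal{P}}$. The only slip is terminological: the order you write, $x\le y$ iff $y=xz$, is the paper's \emph{left}-divisibility order (you call it right-divisibility), and your handling of the $\N$-graded and finite-type refinements likewise matches the paper's corollary.
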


This correspondence is crucial for the study of both upho posets and left-cancellative monoids. On one hand, it enables concrete calculations on upho posets using monoids from an algebraic perspective. On the other hand, it provides combinatorial insight into problems such as enumerating elements in left-cancellative monoids. We use \emph{regular upho posets} to denote upho posets that admit a compatible coloring (in fact, we conjecture that all finitary upho posets admit such a coloring). Furthermore, we refer to their rank-generating functions as \emph{regular upho functions}.

In \Cref{sec:4}, we explore the properties of \emph{semi-upho posets}, a generalization of upho posets that exhibit partial self-similarity. We build a framework to characterize all \emph{regular semi-upho functions}, and prove that all log-concave formal power series are regular semi-upho functions.

\begin{theorem}\label{thm: log-concave rgf of semi-upho}
    Any log-concave formal power series $g(x)\in 1+x\Z_{\ge 0}[[x]]$ is a regular semi-upho function.
\end{theorem}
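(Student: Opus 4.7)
The plan is to apply the characterization of regular semi-upho functions developed earlier in Section~4 and to verify that any log-concave $g(x) \in 1 + x\Z_{\geq 0}[[x]]$ meets its hypotheses. The key numerical input is that log-concavity $a_n^2 \geq a_{n-1}a_{n+1}$ (writing $g(x) = 1 + \sum_{n \geq 1} a_n x^n$) is equivalent to the ratios $a_{n+1}/a_n$ being non-increasing on their support. Telescoping from $r_1 = a_1/a_0 = a_1$ then yields the crucial inequality $a_{n+1} \leq a_1 \cdot a_n$ for every $n \geq 0$, and in particular rules out internal zeros in the support of $g$.

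Using this bound, I will construct an explicit colored semi-upho poset $P$ with rank-generating function $g(x)$. Rank $0$ consists of a single element $\hat{0}$, and rank $n$ consists of $a_n$ elements. Fixing a palette of $a_1$ colors, I stipulate that each element carries exactly one upward cover of each color, for a total of $a_1 \cdot a_n$ covers emanating from rank $n$. The inequality $a_1 \cdot a_n \geq a_{n+1}$ permits me to distribute these covers surjectively onto the $a_{n+1}$ elements of rank $n+1$, and I choose the assignment so that the principal order filter at each element embeds into $P$ in the manner required by the semi-upho definition.

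The main obstacle is to make this assignment \emph{globally consistent}: the local colored structure around each vertex must agree with that of a bona fide colored semi-upho poset, not merely satisfy the right counting at each level. I plan to address this by induction on the rank, exploiting the fact that log-concavity passes to each appropriately ``shifted'' subsequence --- since non-increasing ratios remain non-increasing upon truncation --- so that at every stage the slack $a_1 \cdot a_n - a_{n+1} \geq 0$ furnishes enough combinatorial freedom to extend the construction to one more level while matching each principal filter to a pre-built sub-poset of $P$. Once existence is established, the framework of Section~4 certifies that $g(x)$ is a regular semi-upho function, completing the proof.
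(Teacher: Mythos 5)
Your construction cannot work as stated. If every element of your poset carries exactly one upward cover of each of the $a_1$ colors, then any color-compatible isoembedding $\psi_s\colon V_s\hookrightarrow S$ is forced to be surjective: its image contains $\hat{0}$ and is closed under passing to the unique cover of each color, hence is all of $S$ in a finitary graded poset. So if your object satisfied the colored semi-upho axioms at all, it would in fact be a colored \emph{upho} poset, and upho posets have weakly increasing rank sizes. But log-concave series in $1+x\Z_{\ge 0}[[x]]$ need not be weakly increasing ($1+2x+x^2$, or even $1+x$, where top-rank elements would have no covers whatsoever), so the stipulation ``one cover of each color at every vertex'' is incompatible with the target rank-generating functions. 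Note that the paper goes in exactly the opposite direction: it prunes rather than merges, reducing via \Cref{prop:reg semiupho= tree gen} to tree-like semi-upho posets, i.e.\ free $0$-monoids in which covers are deleted, not identified.

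More fundamentally, the only quantitative consequence of log-concavity you use is $a_{n+1}\le a_1 a_n$, and that inequality is not sufficient, so no appeal to ``enough combinatorial freedom'' can close the global-consistency gap on that basis. Concretely, $1+2x+4x^2+8x^3+x^4+2x^5$ satisfies $a_{n+1}\le a_1a_n$ for all $n$ but is not the rank-generating function of any semi-upho poset: both rank-$5$ elements must lie above the unique rank-$4$ element $u$, hence above any atom $s_1$ on a saturated chain below $u$, and the isoembedding $V_{s_1}\hookrightarrow S$ would send them to two distinct rank-$4$ elements of $S$, which do not exist. (This series is of course not log-concave, since $a_3a_5=16>1=a_4^2$.) The paper's proof genuinely needs the full chain of ratio inequalities $b_{k+1}/b_k\le b_k/b_{k-1}\le\cdots\le b_1$, and its real work is the control of how many words survive the greedy pruning, via \Cref{prop: split b_k} and \Cref{prop: count k+1 from k} together with the technical lemmas behind them, fed into the criterion of \Cref{prop:semi-upho upper bound}. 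That survival count is precisely the ``global consistency'' you acknowledge as the main obstacle; your proposal leaves it as an unproved assertion, and with only $a_{n+1}\le a_1a_n$ in hand it cannot be repaired.
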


In \Cref{sec:5}, we define the \emph{$\bx$-convolution} of a colored upho poset with a semi-upho spanning tree of a colored semi-upho poset, and shows that the $\bx$-convolution remains a colored upho poset using calculations based on monoids. A direct corollary is that multiplying a regular upho function by a regular semi-upho function yields another regular upho function.

\begin{theorem}\label{thm:upho times semi-upho}
        Let $f(x)\in 1+x\Z_{\ge 0}[[x]]$ be a regular upho function, and $g(x)\in 1+x\Z_{\ge 0}[[x]]$ be a regular semi-upho function, then $f(x)g(x)$ is a regular upho function.
\end{theorem}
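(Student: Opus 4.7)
My plan is to derive this theorem as a direct consequence of the $\mathbf{x}$-convolution construction announced in Section~\ref{sec:5}. Using the hypotheses, I would first choose a colored upho poset $P$ realizing $f(x)$ as its rank-generating function, together with a colored semi-upho poset $Q$ realizing $g(x)$. I would then fix a semi-upho spanning tree $T$ of $Q$; since $T$ spans $Q$ level by level, its rank-generating function is still $g(x)$. Form the $\mathbf{x}$-convolution $R := P \ast_{\mathbf{x}} T$.

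The next and central step is structural: show that $R$ is itself a colored upho poset. I would establish this by passing through the monoid correspondence of Theorem~\ref{thm: intro corresp}. Concretely, from the atomic, left-cancellative, invertible-free homogeneous monoid $M_P$ associated to $P$, together with the combinatorial/coloring data encoded by the spanning tree $T$, I would construct an explicit monoid $M$, verify by hand that $M$ is atomic, left-cancellative, invertible-free, and homogeneous, and check that its Cayley-type poset (under Theorem~\ref{thm: intro corresp}) is exactly $R$. Invoking Theorem~\ref{thm: intro corresp} in the reverse direction then promotes $R$ to a colored upho poset.

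Once $R$ is known to be a colored upho poset, the rank-generating function computation is routine bookkeeping: rank-$n$ elements of $R$ are in bijection with pairs $(t,p) \in T \times P$ satisfying $\mathrm{rk}(t) + \mathrm{rk}(p) = n$, so
\[
\mathrm{rk}(R;\, x) \;=\; g(x)\,f(x).
\]
Hence $f(x)g(x)$ is the rank-generating function of a colored upho poset, i.e.\ a regular upho function.

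The main obstacle I expect is the upho verification itself, not the generating-function identity. Semi-upho structure carries only partial self-similarity, so the construction of $M$ must splice the $T$-data into $M_P$ in exactly the right way: left-cancellation has to be preserved, no accidental invertible element may be created, and, crucially, \emph{global} (not merely local) homogeneity has to be maintained so that every principal filter of $R$ is isomorphic to all of $R$. This is where the monoid framework of Theorem~\ref{thm: intro corresp} does the real work, by providing an algebraic language in which the semi-upho data and the upho data can be multiplied compatibly; the spanning-tree hypothesis is what guarantees that the factorizations of elements of $M$ are unique.
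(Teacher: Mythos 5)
Your plan is essentially the paper's own proof: the paper realizes $f(x)$ by a colored upho poset, replaces the colored semi-upho poset for $g(x)$ by a colored tree-like semi-upho poset whose Hasse diagram is a spanning tree with the same rank-generating function (\Cref{prop:reg semiupho= tree gen}, \Cref{lemma: regular semi-upho by tree}), forms the $\bx$-convolution, and then cites exactly the two facts you defer --- that the convolution monoid is an LCH monoid, hence gives a colored upho poset once $\bx$ sends the tree generators into $\mathcal{I}_{\mathcal{M}(\tilde{P})}$ (\Cref{thm: LCIF monoid convolution}), and that its length-$n$ elements biject with pairs via standard words so that the rank-generating functions multiply (\Cref{prop: enum of conv monoid}). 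So your outline is correct and follows the same route; just note that the steps you label ``fix a semi-upho spanning tree'' and ``routine bookkeeping'' are precisely the nontrivial supporting results the paper establishes beforehand rather than free observations.
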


This result serves as the main tool in the proof of \Cref{conj:Gao1}.

In \Cref{sec:6}, we explore the relationship between upho functions and totally positive power series.

\begin{definition}[{\cite{MR0230102}}]\label{def:TP}
    A formal power series $\sum_{n=0}^{\infty}a_n x^n$ is \emph{totally positive} if all finite minors of the infinite Toeplitz matrix
\[\begin{pmatrix}a_{0} & 0&0&0&\cdots \\a_{1} & a_{0} &0&0&\cdots \\a_{2} & a_{1} &a_{0}&0&\cdots \\\vdots&\vdots&\vdots&\vdots&\ddots\end{pmatrix}
\]
    are nonnegative.
\end{definition} 

Our working definition of totally positive formal power series follows from \Cref{thm:working def of TP}.

\begin{theorem}[{\cites{MR41897,MR11720}}]\label{thm:working def of TP}
    A formal power series $f(x)\in 1+x\Z_{\ge 0}[[x]]$ is totally positive if and only if $f(x)$ can be expressed in the form $\frac{g(x)}{h(x)}$, where $g(x), h(x) \in 1 + x \mathbb{Z}[x]$ are polynomials with all complex roots of $g(x)$ real and negative, and all complex roots of $h(x)$ real and positive.
\end{theorem}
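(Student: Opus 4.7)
The statement is the Aissen--Schoenberg--Whitney--Edrei characterization of totally positive power series, specialized to integer coefficients; the plan is to establish the two implications separately, with the nontrivial direction relying on Edrei's classical infinite-product theorem.

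For the sufficiency, I would first recall that the Toeplitz matrix of a product $f_1(x)f_2(x)$ is the product of the individual Toeplitz matrices, so by the Cauchy--Binet identity every minor of the product is a sum of products of minors of the factors. Hence the class of totally positive series forms a multiplicative semigroup in $1+x\R_{\ge 0}[[x]]$, and it suffices to check that each irreducible building block $(1+\alpha x)$ with $\alpha>0$ and each $\tfrac{1}{1-\beta x}$ with $\beta>0$ is itself totally positive. For $(1+\alpha x)$ the Toeplitz matrix is bidiagonal with nonnegative entries, and nonnegativity of minors is immediate. For $\tfrac{1}{1-\beta x}$ the Toeplitz matrix has $(i,j)$-entry $\beta^{i-j}$ for $i\ge j$ and $0$ otherwise; writing it as $D_\beta L D_\beta^{-1}$ for $L$ the lower-triangular all-ones matrix reduces nonnegativity of minors to the trivial case of $L$, and a short case analysis shows each such minor either vanishes or equals a positive power of $\beta$. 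Applying the semigroup property to the given factorization $g(x)/h(x) = \prod_i (1+\alpha_i x)/\prod_j (1-\beta_j x)$ then yields total positivity.

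For the necessity, the main input is Edrei's theorem: any $f\in 1+x\R_{\ge 0}[[x]]$ that is totally positive admits a factorization
\[f(x)=e^{\gamma x}\prod_i\frac{1+\alpha_i x}{1-\beta_i x}\]
with $\gamma,\alpha_i,\beta_i\ge 0$ and $\sum_i(\alpha_i+\beta_i)<\infty$. Granted this, I would use the integrality hypothesis $f\in 1+x\Z_{\ge 0}[[x]]$ to collapse the factorization into a rational function over $\Z$. A growth/asymptotics argument on the coefficients rules out $\gamma>0$, and then a Kronecker-type rationality criterion---combined with the fact that an integer-coefficient power series meromorphic on $\mathbb{C}$ with finitely many poles must be rational---forces only finitely many of the $\alpha_i,\beta_i$ to be nonzero. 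One reads off $g,h\in 1+x\Z[x]$ whose roots are $\{-1/\alpha_i\}$ (real, negative) and $\{1/\beta_j\}$ (real, positive), as required.

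The main obstacle is Edrei's infinite-product theorem itself, which rests on nontrivial complex analysis characterizing generating functions of totally positive sequences as meromorphic functions of genus zero with a prescribed zero/pole structure. The passage from this real-analytic factorization to the integer-polynomial rational form is more elementary but still requires careful handling of the algebraic and asymptotic implications of integrality to eliminate the exponential factor and the infinite products.
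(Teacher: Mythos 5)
The paper never proves this statement: it is imported wholesale from Aissen--Schoenberg--Whitney and Edrei, and the citation is the entire ``proof.'' So there is nothing in the paper to compare with step-by-step; what you have written is essentially the standard literature argument, with Edrei's infinite-product theorem playing exactly the role of the cited black box. Your sufficiency half is complete and correct as sketched: Toeplitz matrices of power series multiply, Cauchy--Binet expresses minors of a product as nonnegative combinations of minors of the factors, and the building blocks $1+\alpha x$ and $\tfrac{1}{1-\beta x}$ with $\alpha,\beta>0$ are checked directly.

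In the necessity half two points need repair. First, the fact you invoke --- ``an integer-coefficient power series meromorphic on $\mathbb{C}$ with finitely many poles must be rational'' --- is circular as used: finiteness of the pole set is part of what has to be proved, since the Edrei factorization a priori allows infinitely many $\beta_j>0$ (the poles $1/\beta_j$ merely tend to infinity). The correct classical input is the Borel/P\'olya--Carlson-type theorem: a power series with integer coefficients that continues meromorphically to a disk of radius strictly greater than $1$ is rational. This applies because the Edrei form makes $f$ meromorphic on all of $\mathbb{C}$, and its proof is Kronecker's rationality criterion combined with Hadamard-type bounds on Hankel determinants (a nonzero integer has absolute value at least $1$); it is not a triviality to be quoted in passing. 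Second, a bare growth argument does not rule out $\gamma>0$: multiplying $e^{\gamma x}$ against a pole factor produces coefficients of the same exponential order, so integrality is not contradicted by size alone. Both $\gamma=0$ and the finiteness of the products should instead be read off \emph{after} rationality is established, by matching the zeros and poles of the rational function with the Edrei factorization; Fatou's lemma then normalizes the numerator and denominator to lie in $1+x\mathbb{Z}[x]$. With these corrections your outline is the standard proof of the cited theorem.
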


By employing \Cref{thm: intro corresp}, \Cref{thm: log-concave rgf of semi-upho}, \Cref{thm:upho times semi-upho}, and resolving an exceptional case by a miracle in linear algebra, we prove the following theorem.

\begin{theorem}\label{thm:main}
    Any totally positive formal power series $f(x)\in 1+x\mathbb{Z}_{\ge 0}[[x]]$ is a regular upho function. 
\end{theorem}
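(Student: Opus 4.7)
The plan is to reduce the problem via the working definition of total positivity (\Cref{thm:working def of TP}) to two tractable pieces and then combine them through \Cref{thm:upho times semi-upho}. Specifically, I would write any totally positive $f \in 1 + x\Z_{\ge 0}[[x]]$ as $f(x) = g(x)/h(x)$ with
\[
g(x) = \prod_{i=1}^{m}(1+r_i x), \qquad h(x) = \prod_{j=1}^{n}(1-s_j x),
\]
where $r_i, s_j > 0$ are real and $g, h \in 1 + x\Z[x]$. The strategy is then to realize the numerator factor as a regular semi-upho function and the inverse denominator as a regular upho function, and glue.

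First, I would observe that $g$ is a log-concave polynomial with nonnegative integer coefficients and constant term $1$: its coefficients are the elementary symmetric polynomials $e_k(r_1,\ldots,r_m)$ in positive reals, so by Newton's inequalities the sequence $(e_0, e_1, \ldots, e_m)$ is log-concave. \Cref{thm: log-concave rgf of semi-upho} then identifies $g$ as a regular semi-upho function.

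The substantive step is to exhibit $1/h(x)$ as a regular upho function. Nonnegativity and integrality of its coefficients are routine (each $1/(1-s_j x)$ contributes nonnegative coefficients, while $h \in 1 + x\Z[x]$ forces integrality via the defining recurrence). The task is to construct a finitely generated left-cancellative homogeneous monoid with growth series $1/h(x)$ and then invoke \Cref{thm: intro corresp} to extract the corresponding $\N$-graded colored regular upho poset. I anticipate an inductive strategy on $\deg h$, peeling off linear factors while using \Cref{thm:upho times semi-upho} to reassemble intermediate pieces. The main obstacle is that no single factor $1/(1-s_j x)$ need have integer coefficients, so the monoid must encode all factors of $h$ simultaneously rather than as independent chain or tree pieces. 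I expect one exceptional configuration to resist the inductive scheme and to be dispatched by an explicit matrix or determinantal construction---the ``miracle in linear algebra'' advertised in the introduction.

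Once $g$ is realized as a regular semi-upho function and $1/h$ as a regular upho function, a single application of \Cref{thm:upho times semi-upho} to the factorization $f = (1/h)\cdot g$ delivers the desired conclusion.
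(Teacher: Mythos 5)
Your overall framing matches the paper's: reduce via \Cref{thm:working def of TP} to $f = g/h$, note that $g$ (all roots real negative) is log-concave and hence a regular semi-upho function by \Cref{thm: log-concave rgf of semi-upho}, and finish with \Cref{thm:upho times semi-upho} applied to $f = (1/h)\cdot g$. That outer reduction is fine. But the substantive claim of the theorem lives entirely in the step you leave unproved: that $1/h(x)$ is a regular upho function when $h \in 1+x\Z[x]$ has all roots real and positive. Your proposal only gestures at ``an inductive strategy on $\deg h$, peeling off linear factors'' plus ``an explicit matrix or determinantal construction'' for an exceptional case, and this is not an argument. Worse, the specific induction you suggest cannot work as stated: after factoring $h$ into $\Z$-irreducibles (which is the legitimate reduction, via \Cref{cor:reg upho times reg upho}, since each reciprocal of an irreducible factor is again totally positive with integer coefficients), the remaining $h$ is irreducible over $\Z$ of degree $\ge 2$, so there are no linear factors to peel off over $\Z$; and over $\R$ the individual factors $1/(1-\lambda_i x)$ are not integer power series, so neither \Cref{thm:upho times semi-upho} nor \Cref{cor:reg upho times reg upho} can be applied to them --- the very obstacle you name but do not overcome.

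For comparison, the paper resolves this step by a case split on the real roots $0<\lambda_1<\cdots<\lambda_n$ of the irreducible $h$. In the \emph{type I} case ($\lambda_{n-1},\lambda_n>1$) it writes $f = \frac{1}{1-x}\cdot (1-x)f(x)$ and proves (\Cref{lemma:1-x/(1-ax)(1-bx)logconcave}) that $\frac{1-x}{(1-\lambda_{n-1}x)(1-\lambda_n x)}$ is log-concave with positive coefficients, so $(1-x)f(x)$ is a log-concave integer series and \Cref{cor:upho times log-concave} applies with the regular upho function $\frac{1}{1-x}$. In the \emph{type II} case ($\lambda_{n-1}<1<\lambda_n$) it constructs an explicit head-changing LCH monoid (using \Cref{lemma:head-changing upho} and \Cref{cor:corrsp for N-graded upho}) whose layer sizes are governed by the matrix $L_n$ of \Cref{miracle}; proving that the coefficients of $1/h$ admit this form, with integers $l_1-1\ge l_2\ge\cdots\ge l_n\ge 1$, requires the chain of computations in \Cref{lem: c_i eq1}, \Cref{lem: c_i eq2}, \Cref{lemma:l_i}, and \Cref{lem: c_i eq3}. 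None of this content --- the case split, the log-concavity trick with the auxiliary factor $1-x$, or the monoid construction and its counting argument --- is supplied or replaced by your proposal, so the proof is incomplete at its core.
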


And finally, we show that \Cref{conj:Gao1} is a corollary of \Cref{thm:main}.

The paper is structured as follows: \Cref{sec:2} provides necessary background on upho posets and introduces the notion of semi-upho posets. \Cref{sec:3} through \Cref{sec:6} are dedicated to the exposition and proofs of the aforementioned results. In \Cref{sec:7}, we propose further conjectures on upho posets and related structures and build a framework for characterizing all upho functions.

\section{Preliminaries}\label{sec:2}
In this paper, we employ the standard terminology used in order theory. For a more detailed exposition, readers are referred to \cite[Chapter~3]{MR1442260}.
\subsection{Upho Posets}
In this subsection, we introduce several definitions related to upho posets, accompanied by illustrative examples.

\begin{definition}
A poset $P$ is \emph{upper homogeneous}, abbreviated as \emph{upho}, if for any $s\in P$ we have $V_{P,s}\cong P$, where $V_{P,s}\coloneqq\{p\in P\mid s\le_P p\}$ is the \emph{principal order filter} generated by $s$.
\end{definition}
Note that each principal order filter has a unique minimal element, so every upho poset $P$ has a unique minimal element, denoted $\hat{0}_P$. We abbreviate $V_{P,s}$ to $V_s$ and $\hat{0}_P$ to $\hat{0}$ when the poset $P$ referred to is clear.

A poset $P$ is said to be \emph{$\mathbb N$-graded} if $P$ can be written as a disjoint union $P=P_0\sqcup P_1\sqcup P_2\sqcup\dots$ such that every saturated chain has the form $p_0\lessdot_P p_1\lessdot_P p_2\lessdot_P \cdots$, where $p_i\in P_i$ for all $i\in \N$. The \emph{rank function} $\rho:P\rightarrow \N$ of $P$ is defined by $\rho(p)=i$ for all $p\in P_i$. We refer to $P_i$ as the \emph{$i$-th layer} of $P$. 

An $\N$-graded poset $P$ is said to be of \emph{finite type} if $\left|P_i\right|$ is finite for all $i\in \N$. The \emph{rank-generating function} of a finite type $\N$-graded poset $P$ is defined to be $F_P(x)\coloneqq\sum_{k=0}^{\infty}\left|P_k\right| x^k$.

\begin{definition}
A formal power series $f(x)\in 1+x\Z[[x]]$ is an \emph{upho function} if it is the rank-generating function of a finite type $\N$-graded upho poset.    
\end{definition}

\begin{definition}[{\cite{MR1383883} }]
    The \emph{Ehrenborg quasi-symmetric function} of a finite type $\mathbb{N}$-graded poset $P$ is defined to be $E_P\coloneqq\sum_{n \geq 0} E_{P, n}$, where $E_{P,0}=1$, and
\[
E_{P, n}\left(x_1, x_2, \cdots \right)\coloneqq\sum_{\substack{\hat{0} = t_0 \leq_P t_1 \leq_P \cdots \leq_P t_{k-1}<_P t_k \\ \rho\left(t_k\right)=n}} x_1^{\rho(t_1)-\rho(t_0)} x_2^{\rho(t_2)-\rho(t_1)} \cdots x_k^{\rho(t_k)-\rho(t_{k-1})},\ n \ge 1.
\]
\end{definition}

\begin{definition}[\cite{MR4318812}]
    An finite type $\N$-graded upho poset is a \emph{Schur-positive upho poset} if its Ehrenborg quasi-symmetric function is a Schur-positive symmetric function.
\end{definition}

We state some fundamental properties of upho functions.

\begin{lemma}[{\cite[Lemma~2.2]{MR4318812}}]\label{lemma: Ep of upho symmetric}
    For a finite type $\N$-graded upho poset $P$, $E_P(x_1,x_2,\cdots )=F_P(x_1)F_P(x_2)\cdots $, hence $E_p$ is a symmetric function.
\end{lemma}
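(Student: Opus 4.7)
The plan is to match coefficients of monomials on both sides. Expanding the infinite product (which converges in the formal power series topology since $F_P \in 1+x\Z[[x]]$), one obtains
\[
F_P(x_1)F_P(x_2)\cdots = \sum_{(m_1,m_2,\dots)} \left(\prod_{i\ge 1}|P_{m_i}|\right) x_1^{m_1}x_2^{m_2}\cdots,
\]
where the sum runs over eventually-zero sequences of nonnegative integers and $|P_0|=1$. My goal then is to show that for each such sequence the same monomial appears in $E_P$ with exactly the same coefficient.

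The key step is the following count: the number of multichains $\hat{0}=t_0\le t_1\le\cdots\le t_{k-1}<t_k$ in $P$ with $\rho(t_i)-\rho(t_{i-1})=m_i$ (where $m_k>0$) equals $\prod_{i=1}^k|P_{m_i}|$. I would prove this by induction on $k$: having fixed $t_{i-1}$, the upho isomorphism $V_{t_{i-1}}\cong P$ gives a bijection between choices of $t_i\in V_{t_{i-1}}$ with $\rho(t_i)-\rho(t_{i-1})=m_i$ and elements of $P_{m_i}$, contributing a factor of $|P_{m_i}|$ at each step. Matching the two expansions is then routine: a monomial $x_1^{m_1}\cdots x_k^{m_k}$ in $E_P$ with $m_k>0$ corresponds to the eventually-zero sequence $(m_1,\dots,m_k,0,0,\dots)$ in the product, while the constant term $E_{P,0}=1$ matches the all-zero sequence. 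Symmetry of $E_P$ is then immediate since the right-hand side is manifestly symmetric in the $x_i$.

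The main subtlety is that the upho isomorphism $V_{t_{i-1}}\cong P$ must be rank-preserving (up to a shift by $\rho(t_{i-1})$) for the count above to go through; \emph{a priori} the definition only provides an isomorphism of underlying posets. I would handle this by noting that in an $\mathbb{N}$-graded poset with a minimum element the rank of each element is determined by the order structure as the common length of any saturated chain from the minimum, so any poset isomorphism between such posets automatically respects rank differences. Everything else in the argument is bookkeeping.
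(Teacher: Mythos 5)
Your proof is correct and is essentially the standard argument: the paper itself gives no proof of this lemma (it is quoted from Gao--Guo--Seetharaman--Seidel, Lemma~2.2), and the cited proof is the same chain-by-chain count, using the upho isomorphism $V_{t_{i-1}}\cong P$ at each step to show the monomial $x_1^{m_1}\cdots x_k^{m_k}$ (with $m_k>0$) appears in $E_P$ with coefficient $\prod_i |P_{m_i}|$, matching the expansion of $F_P(x_1)F_P(x_2)\cdots$. Your added remark that any poset isomorphism automatically preserves rank differences (since rank is the common length of saturated chains from the minimum in an $\N$-graded poset) correctly disposes of the only real subtlety.
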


Recall that the \emph{product} of two posets $P$ and $Q$, denoted $P\times Q$, is a poset whose elements are given by the Cartesian product of $P$ and $Q$, with the order defined by $(a,b)\leq (c,d)$ iff $a\leq_{P}c$ and $b\leq_{Q}d$.

\begin{lemma}[{\cite[Lemma~2.3]{MR4318812}}]\label{lemma:produpho}
    Let $P$ and $Q$ be finite type $\N$-graded upho posets. Then $P\times Q$ is a finite type $\N$-graded upho poset. Furthermore, $F_{P\times Q}=F_{P} F_{Q}$, and $E_{P\times Q}=E_{P} E_{Q}$.
\end{lemma}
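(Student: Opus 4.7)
The plan is to verify the three assertions in turn, leaning on the standard structural properties of the product poset and then invoking \Cref{lemma: Ep of upho symmetric} for the Ehrenborg identity.

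First I would establish that $P\times Q$ is an $\N$-graded finite type poset with rank function $\rho_{P\times Q}(p,q)=\rho_P(p)+\rho_Q(q)$. The key observation is that in a product of posets, a cover relation $(p,q)\lessdot (p',q')$ occurs exactly when either $p=p'$ and $q\lessdot_Q q'$, or $q=q'$ and $p\lessdot_P p'$; hence along any saturated chain exactly one coordinate increases by a cover at each step, so the sum of the ranks increases by $1$ at each step. This shows that the proposed $\rho_{P\times Q}$ is a well-defined rank function and decomposes $P\times Q$ into layers $(P\times Q)_n=\bigsqcup_{i+j=n}P_i\times Q_j$. Since each $P_i$ and $Q_j$ is finite, so is each layer, giving finite type.

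Next I would read off the rank-generating function identity directly from the layer decomposition:
\[
F_{P\times Q}(x)=\sum_{n\ge 0}|(P\times Q)_n|\,x^n=\sum_{n\ge 0}\sum_{i+j=n}|P_i||Q_j|\,x^n=F_P(x)F_Q(x).
\]
To establish the upho property, I would fix $(p,q)\in P\times Q$ and check that $V_{(p,q)}=V_p\times V_q$ as subposets; this is immediate from the definition of the product order. Since $P$ and $Q$ are upho, there exist order isomorphisms $\varphi\colon V_p\to P$ and $\psi\colon V_q\to Q$, and the product map $\varphi\times\psi\colon V_p\times V_q\to P\times Q$ is an order isomorphism. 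Hence $V_{(p,q)}\cong P\times Q$, as required.

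Finally, for the Ehrenborg identity I would invoke \Cref{lemma: Ep of upho symmetric}. Since $P\times Q$ is itself a finite type $\N$-graded upho poset by the previous steps, the lemma gives
\[
E_{P\times Q}(x_1,x_2,\dots)=\prod_{i\ge 1}F_{P\times Q}(x_i)=\prod_{i\ge 1}F_P(x_i)F_Q(x_i)=E_P\,E_Q.
\]
No step here is a serious obstacle; the only potential subtlety is the cover-relation characterization underlying the grading, which I would verify carefully by showing that a non-trivial cover cannot change both coordinates simultaneously (otherwise one could insert an intermediate element, contradicting the covering). Everything else is a direct unpacking of definitions together with the two previously stated lemmas.
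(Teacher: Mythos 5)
Your proof is correct; note that the paper does not prove this lemma at all but cites it from Gao--Guo--Seetharaman--Seidel, and your argument (cover relations in a product change exactly one coordinate, layers decompose as $(P\times Q)_n=\bigsqcup_{i+j=n}P_i\times Q_j$, and $V_{(p,q)}=V_p\times V_q$ maps isomorphically via the product of the two filter isomorphisms) is the standard one. Deducing $E_{P\times Q}=E_PE_Q$ from \Cref{lemma: Ep of upho symmetric} after establishing that $P\times Q$ is a finite type $\N$-graded upho poset is legitimate and non-circular, and is arguably cleaner than a direct chain-counting verification.
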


We define the following two finiteness conditions. It is worth noting that the terminology is not standard, and ``finitary" here does not align with its use in ``finitary distributive lattice."

\begin{definition}
A poset $P$ is \emph{Noetherian} if its principal order filters satisfy \emph{ascending chain condition}, that is, for any element $s\in P$, there is no infinite strictly ascending chain $V_s \subseteq V_{s_1}\subseteq V_{s_2}\subseteq \cdots$ (or equivalently, $s>_Ps_1>_Ps_2>_P\cdots$).
\end{definition}

\begin{definition}
    The \emph{height} of an element $s$ in a poset $P$ is the minimal length of saturated chains in $P$ with $s$ as its maximum. A poset $P$ is \emph{finitary} if every element in $P$ has finite height.
\end{definition}

All $\N$-graded posets are finitary, and all finitary posets are Noetherian. Moreover, given a Noetherian upho poset $P$, a finitary upho poset $P'\subseteq P$ can be obtained by selecting all elements of finite height in $P$, with $P'$ inheriting the order of $P$.

\begin{notation}
    In a poset $P$, we denote the set of \emph{edges} of $P$ by $\mathcal{E}_P\coloneqq \{(r, s)\mid r,s \in P, r\lessdot s\}$, which corresponds to the edges in the Hasse diagram of $P$ if $P$ is finitary. If a poset $P$ has a unique minimum $\hat{0}$, we denote the set of \emph{atoms} of $P$ by $\mathcal{A}_P\coloneqq \{s\in P \mid \hat{0}\lessdot s\}$.
\end{notation} 
It can be easily verified that in a Noetherian poset, each saturated chain includes exactly one atom. However, both $\mathcal{A}_P$ and $\mathcal{E}_P$ can be empty if $P$ is non-Noetherian. See \Cref{eg:R}.

We list below some examples of upho posets and upho functions.

\begin{example}\label{eg:R}
     Nonnegative real numbers $\R_{\ge 0}$ with usual order is a non-Noetherian upho poset, and there are no atoms in $\R_{\ge 0}$. Let $\R_{\ge 0}\times \N$ be the poset product of $\R_{\ge 0}$ and $\N$, both with usual order. Then $\R_{\ge 0}\times \N$ is also a non-Noetherian upho poset, and $(0,1)$ is its only atom.
\end{example}

\begin{example}\label{eg:N*N lexi}
    The poset $\N \times \N$ with lexicographic order forms a Noetherian upho poset which is not finitary. For any $(m,n) \in \N \times \N$, the isomorphism $\tau: V_{(m,n)}\cong \N \times \N$ is given by $\tau(m,n+s)=(0,s), \tau(m+t, s)=(t,s)$ for all $s\in \N, t\in \N_{>0}$.
\end{example}

\begin{example}\label{eg:not N graded}
    The upho poset $P_M$ is defined as follows. The elements in $P_M$ are those in the monoid $M = \langle x_1, x_2 \mid x_1^3 = x_2 x_1 \rangle$. The partial order of $P_M$ is defined by left-divisibility in $M$, that is, $a \le_{P_M} b$ if and only if there exists $c \in M$ such that $ac = b$ in $M$. 
\end{example}
It can be shown that $P_M$ is a finitary upho poset but not an $\N$-graded one.

\begin{example}\label{eg:not finite type}
The upho posets $P_{M_1}$ and $P_{M_2}$ are defined as follows. The upho poset $P_{M_1}$ consists of elements in the free monoid $M_1$ generated by $[0,1] \subseteq \R$, with a partial order defined by left-divisibility. Similarly, $P_{M_2}$ consists of elements in the free commutative monoid $M_2$ generated by $[0,1] \subseteq \R$, with its partial order also defined by left-divisibility.
\end{example}

Both $P_{M_1}$ and $P_{M_2}$ are $\N$-graded upho posets that are not of finite type. The $d$-th layer of $P_{M_1}$ and $P_{M_2}$ can be viewed as a $d$-dimensional cube and a $d$-dimensional simplex, respectively.

\begin{example}\label{example:easyexamples}
    \Cref{fig:easyeg} shows the Hasse diagrams of several $\N$-graded upho posets of finite type: $\mathbb{N}$ (with usual order), the full binary tree, Stern's poset, and the bowtie poset, from left to right. Their rank-generating functions are $\frac{1}{1-x}$, $\frac{1}{1-2x}$, $\frac{1}{(1-x)(1-2x)}$, and $\frac{1+x}{1-x}$, respectively.
    \end{example}
    \begin{figure}[H]
\centering
    \begin{minipage}{.25\textwidth}
    \centering
    \begin{tikzpicture}[scale=0.7]
	\begin{pgfonlayer}{nodelayer}
		\node [style=vert] (0) at (-1, 0) {};
		\node [style=vert] (1) at (-1, 1) {};
		\node [style=vert] (2) at (-1, 2) {};
        \node [style=vert] (3) at (-1, 3) {};
		\node [style=none] (5) at (-1, 3.25) {.};
		\node [style=none] (7) at (-1, 3.5) {.};
		\node [style=none] (8) at (-1, 3.75) {.};
	\end{pgfonlayer}
	\begin{pgfonlayer}{edgelayer}
		\draw (3) to (2);
		\draw (2) to (1);
		\draw (1) to (0);
	\end{pgfonlayer}
\end{tikzpicture}
    \end{minipage}%
    \begin{minipage}{.25\textwidth}
    \centering
    \begin{tikzpicture}[scale=0.7]
	\begin{pgfonlayer}{nodelayer}
		\node [style=vert] (0) at (0, 0) {};
		\node [style=vert] (1) at (-1, 1) {};
		\node [style=vert] (2) at (1, 1) {};
		\node [style=vert] (3) at (-1.5, 2) {};
		\node [style=vert] (4) at (-0.5, 2) {};
		\node [style=vert] (5) at (0.5, 2) {};
		\node [style=vert] (6) at (1.5, 2) {};
		\node [style=vert] (7) at (-1.75, 3) {};
		\node [style=vert] (8) at (-1.25, 3) {};
		\node [style=vert] (9) at (-0.75, 3) {};
		\node [style=vert] (10) at (-0.25, 3) {};
		\node [style=vert] (11) at (0.25, 3) {};
		\node [style=vert] (12) at (0.75, 3) {};
		\node [style=vert] (13) at (1.25, 3) {};
		\node [style=vert] (14) at (1.75, 3) {};
		\node [style=none] (15) at (0, 3.25) {};
		\node [style=none] (16) at (0, 3.5) {};
		\node [style=none] (17) at (0, 3.75) {};
		\node [style=none] (18) at (0, 3.25) {.};
		\node [style=none] (19) at (0, 3.5) {.};
		\node [style=none] (20) at (0, 3.75) {.};
	\end{pgfonlayer}
	\begin{pgfonlayer}{edgelayer}
		\draw (1) to (0);
		\draw (0) to (2);
		\draw (2) to (6);
		\draw (5) to (2);
		\draw (4) to (1);
		\draw (3) to (1);
		\draw (3) to (7);
		\draw (8) to (3);
		\draw (4) to (10);
		\draw (9) to (4);
		\draw (11) to (5);
		\draw (5) to (12);
		\draw (13) to (6);
		\draw (6) to (14);
	\end{pgfonlayer}
\end{tikzpicture}
    \end{minipage}%
    \begin{minipage}{.25\textwidth}
    \centering
    \begin{tikzpicture}[scale=0.7]
	\begin{pgfonlayer}{nodelayer}
		\node [style=vert] (0) at (0, -1) {};
		\node [style=vert] (1) at (-1, 0) {};
		\node [style=vert] (2) at (0, 0) {};
		\node [style=vert] (3) at (1, 0) {};
		\node [style=vert] (4) at (-1.5, 1) {};
		\node [style=vert] (5) at (-1, 1) {};
		\node [style=vert] (6) at (-0.5, 1) {};
		\node [style=vert] (7) at (0, 1) {};
		\node [style=vert] (8) at (0.5, 1) {};
		\node [style=vert] (9) at (1, 1) {};
		\node [style=vert] (10) at (1.5, 1) {};
		\node [style=none] (14) at (0, 2.25) {.};
		\node [style=none] (15) at (0, 2.5) {.};
		\node [style=none] (16) at (0, 2.75) {.};
		\node [style=vert] (17) at (-1.25, 2) {};
		\node [style=vert] (18) at (-1.75, 2) {};
		\node [style=vert] (19) at (-1.5, 2) {};
		\node [style=vert] (20) at (-1, 2) {};
		\node [style=vert] (21) at (-0.75, 2) {};
		\node [style=vert] (22) at (-0.5, 2) {};
		\node [style=vert] (23) at (-0.25, 2) {};
		\node [style=vert] (24) at (0, 2) {};
		\node [style=vert] (25) at (0.25, 2) {};
		\node [style=vert] (26) at (0.5, 2) {};
		\node [style=vert] (27) at (0.75, 2) {};
		\node [style=vert] (28) at (1, 2) {};
		\node [style=vert] (29) at (1.25, 2) {};
		\node [style=vert] (30) at (1.5, 2) {};
		\node [style=vert] (31) at (1.75, 2) {};
	\end{pgfonlayer}
	\begin{pgfonlayer}{edgelayer}
		\draw (0) to (1);
		\draw (0) to (2);
		\draw (0) to (3);
		\draw (3) to (10);
		\draw (3) to (9);
		\draw (3) to (8);
		\draw (8) to (2);
		\draw (2) to (7);
		\draw (2) to (6);
		\draw (6) to (1);
		\draw (1) to (5);
		\draw (1) to (4);
		\draw (18) to (4);
		\draw (19) to (4);
		\draw (17) to (4);
		\draw (17) to (5);
		\draw (20) to (5);
		\draw (21) to (5);
		\draw (21) to (6);
		\draw (22) to (6);
		\draw (23) to (6);
		\draw (23) to (7);
		\draw (24) to (7);
		\draw (25) to (7);
		\draw (25) to (8);
		\draw (26) to (8);
		\draw (27) to (8);
		\draw (27) to (9);
		\draw (28) to (9);
		\draw (29) to (9);
		\draw (29) to (10);
		\draw (30) to (10);
		\draw (31) to (10);
	\end{pgfonlayer}
\end{tikzpicture}
    \end{minipage}%
    \begin{minipage}{.25\textwidth}
    \centering
    \begin{tikzpicture}[scale=0.7]
	\begin{pgfonlayer}{nodelayer}
		\node [style=vert] (0) at (0, -1) {};
		\node [style=vert] (1) at (-1, 0) {};
		\node [style=vert] (2) at (1, 0) {};
		\node [style=vert] (3) at (-1, 1) {};
		\node [style=vert] (4) at (1, 1) {};
		\node [style=vert] (5) at (-1, 2) {};
		\node [style=vert] (6) at (1, 2) {};
		\node [style=vert] (7) at (1, 0) {};
		\node [style=none] (8) at (0, 2.25) {};
		\node [style=none] (9) at (0, 2.5) {};
		\node [style=none] (10) at (0, 2.75) {};
		\node [style=none] (11) at (0, 2.25) {.};
		\node [style=none] (12) at (0, 2.5) {.};
		\node [style=none] (13) at (0, 2.75) {.};
	\end{pgfonlayer}
	\begin{pgfonlayer}{edgelayer}
		\draw (0) to (7);
		\draw (0) to (1);
		\draw (1) to (4);
		\draw (3) to (7);
		\draw (1) to (3);
		\draw (7) to (4);
		\draw (3) to (5);
		\draw (6) to (4);
		\draw (6) to (3);
		\draw (5) to (4);
	\end{pgfonlayer}
\end{tikzpicture}
    \end{minipage}%
    \caption{The Hasse diagrams of \Cref{example:easyexamples}.}
    \label{fig:easyeg}
\end{figure}

\begin{example}\label{example:feiposet}
Fei's poset $\mathcal{F}$ is the upho poset that satisfies the generating rules depicted in the left two components of \Cref{fig:fei}. This poset is a finite type $\N$-graded upho poset, with its rank-generating function given by $F_{\mathcal{F}}(x)=\frac{1-x}{1-2x-x^2}$. The first few layers of the Hasse diagram of Fei's poset are shown in the rightmost component of \Cref{fig:fei}.
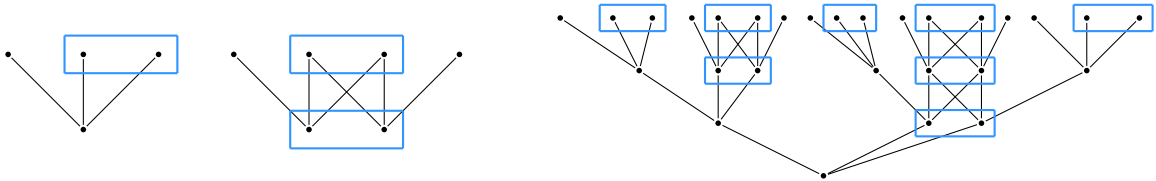
\begin{figure}[H]
    \begin{minipage}{.45\textwidth}
    \centering
    \begin{tikzpicture}
	\begin{pgfonlayer}{nodelayer}
		\node [style=vert] (0) at (-5, 0) {};
		\node [style=vert] (1) at (-6, 1) {};
		\node [style=vert] (2) at (-5, 1) {};
		\node [style=vert] (3) at (-4, 1) {};
		\node [style=none] (4) at (-5.25, 1.25) {};
		\node [style=none] (5) at (-3.75, 1.25) {};
		\node [style=none] (6) at (-5.25, 0.75) {};
		\node [style=none] (7) at (-3.75, 0.75) {};
		\node [style=vert] (8) at (-2, 0) {};
		\node [style=vert] (9) at (-1, 0) {};
		\node [style=vert] (10) at (-3, 1) {};
		\node [style=vert] (11) at (-2, 1) {};
		\node [style=vert] (12) at (-1, 1) {};
		\node [style=vert] (13) at (0, 1) {};
		\node [style=none] (14) at (-2.25, 0.25) {};
		\node [style=none] (15) at (-0.75, 0.25) {};
		\node [style=none] (16) at (-2.25, -0.25) {};
		\node [style=none] (17) at (-0.75, -0.25) {};
		\node [style=none] (18) at (-2.25, 0.75) {};
		\node [style=none] (19) at (-0.75, 0.75) {};
		\node [style=none] (20) at (-2.25, 1.25) {};
		\node [style=none] (21) at (-0.75, 1.25) {};
	\end{pgfonlayer}
	\begin{pgfonlayer}{edgelayer}
		\draw (1) to (0);
		\draw (2) to (0);
		\draw (0) to (3);
		\draw (10) to (8);
		\draw (8) to (11);
		\draw (8) to (12);
		\draw (11) to (9);
		\draw (9) to (12);
		\draw (9) to (13);
		\draw [style=blue] (4.center) to (5.center);
		\draw [style=blue] (5.center) to (7.center);
		\draw [style=blue] (7.center) to (6.center);
		\draw [style=blue] (6.center) to (4.center);
		\draw [style=blue] (20.center) to (21.center);
		\draw [style=blue] (21.center) to (19.center);
		\draw [style=blue] (19.center) to (18.center);
		\draw [style=blue] (18.center) to (20.center);
		\draw [style=blue] (14.center) to (15.center);
		\draw [style=blue] (15.center) to (17.center);
		\draw [style=blue] (17.center) to (16.center);
		\draw [style=blue] (16.center) to (14.center);
	\end{pgfonlayer}
\end{tikzpicture}
    \end{minipage}%
    \begin{minipage}{.55\textwidth}
    \centering
    \begin{tikzpicture}[scale=0.7]
	\begin{pgfonlayer}{nodelayer}
		\node [style=vert] (0) at (0, -2) {};
		\node [style=vert] (1) at (-2, -1) {};
		\node [style=vert] (2) at (2, -1) {};
		\node [style=vert] (3) at (3, -1) {};
		\node [style=vert] (4) at (-3.5, 0) {};
		\node [style=vert] (5) at (-2, 0) {};
		\node [style=vert] (6) at (-1.25, 0) {};
		\node [style=vert] (7) at (1, 0) {};
		\node [style=vert] (8) at (2, 0) {};
		\node [style=vert] (9) at (3, 0) {};
		\node [style=vert] (10) at (5, 0) {};
		\node [style=vert] (11) at (-5, 1) {};
		\node [style=vert] (12) at (-4, 1) {};
		\node [style=vert] (13) at (-3.25, 1) {};
		\node [style=vert] (14) at (-2.5, 1) {};
		\node [style=vert] (15) at (-2, 1) {};
		\node [style=vert] (16) at (-1.25, 1) {};
		\node [style=vert] (17) at (-0.75, 1) {};
		\node [style=vert] (18) at (-0.25, 1) {};
		\node [style=vert] (19) at (1.5, 1) {};
		\node [style=vert] (20) at (2, 1) {};
		\node [style=vert] (21) at (3, 1) {};
		\node [style=vert] (22) at (3.5, 1) {};
		\node [style=vert] (26) at (4, 1) {};
		\node [style=vert] (27) at (5, 1) {};
		\node [style=vert] (28) at (6, 1) {};
		\node [style=vert] (29) at (0.25, 1) {};
		\node [style=vert] (30) at (0.75, 1) {};
		\node [style=none] (31) at (1.75, -0.75) {};
		\node [style=none] (32) at (3.25, -0.75) {};
		\node [style=none] (33) at (3.25, -1.25) {};
		\node [style=none] (34) at (1.75, -1.25) {};
		\node [style=none] (35) at (1.75, 0.25) {};
		\node [style=none] (36) at (3.25, 0.25) {};
		\node [style=none] (37) at (3.25, -0.25) {};
		\node [style=none] (38) at (1.75, -0.25) {};
		\node [style=none] (39) at (1.75, 1.25) {};
		\node [style=none] (40) at (3.25, 1.25) {};
		\node [style=none] (41) at (3.25, 0.75) {};
		\node [style=none] (42) at (1.75, 0.75) {};
		\node [style=none] (43) at (4.75, 1.25) {};
		\node [style=none] (44) at (6.25, 1.25) {};
		\node [style=none] (45) at (6.25, 0.75) {};
		\node [style=none] (46) at (4.75, 0.75) {};
		\node [style=none] (47) at (-2.25, -0.25) {};
		\node [style=none] (48) at (-2.25, 0.25) {};
		\node [style=none] (49) at (-1, 0.25) {};
		\node [style=none] (50) at (-1, -0.25) {};
		\node [style=none] (51) at (-2.25, 0.75) {};
		\node [style=none] (52) at (-2.25, 1.25) {};
		\node [style=none] (53) at (-1, 1.25) {};
		\node [style=none] (54) at (-1, 0.75) {};
		\node [style=none] (55) at (0, 0.75) {};
		\node [style=none] (56) at (0, 1.25) {};
		\node [style=none] (57) at (1, 1.25) {};
		\node [style=none] (58) at (1, 0.75) {};
		\node [style=none] (59) at (-4.25, 0.75) {};
		\node [style=none] (60) at (-4.25, 1.25) {};
		\node [style=none] (61) at (-3, 1.25) {};
		\node [style=none] (62) at (-3, 0.75) {};
		\node [style=none] (63) at (-1, -0.25) {};
	\end{pgfonlayer}
	\begin{pgfonlayer}{edgelayer}
		\draw (1) to (0);
		\draw (2) to (0);
		\draw (0) to (3);
		\draw (4) to (1);
		\draw (1) to (6);
		\draw (5) to (1);
		\draw (7) to (2);
		\draw (8) to (2);
		\draw (9) to (2);
		\draw (8) to (3);
		\draw (9) to (3);
		\draw (3) to (10);
		\draw (11) to (4);
		\draw (4) to (12);
		\draw (4) to (13);
		\draw (14) to (5);
		\draw (15) to (5);
		\draw (15) to (6);
		\draw (16) to (6);
		\draw (16) to (5);
		\draw (6) to (17);
		\draw (18) to (7);
		\draw (10) to (26);
		\draw (10) to (27);
		\draw (10) to (28);
		\draw (19) to (8);
		\draw (8) to (20);
		\draw (8) to (21);
		\draw (9) to (20);
		\draw (9) to (21);
		\draw (9) to (22);
		\draw (29) to (7);
		\draw (30) to (7);
		\draw [style=blue] (60.center) to (61.center);
		\draw [style=blue] (61.center) to (62.center);
		\draw [style=blue] (62.center) to (59.center);
		\draw [style=blue] (59.center) to (60.center);
		\draw [style=blue] (52.center) to (53.center);
		\draw [style=blue] (53.center) to (54.center);
		\draw [style=blue] (54.center) to (51.center);
		\draw [style=blue] (51.center) to (52.center);
		\draw [style=blue] (48.center) to (49.center);
		\draw [style=blue] (49.center) to (63.center);
		\draw [style=blue] (63.center) to (47.center);
		\draw [style=blue] (47.center) to (48.center);
		\draw [style=blue] (56.center) to (57.center);
		\draw [style=blue] (57.center) to (58.center);
		\draw [style=blue] (58.center) to (55.center);
		\draw [style=blue] (55.center) to (56.center);
		\draw [style=blue] (39.center) to (40.center);
		\draw [style=blue] (40.center) to (41.center);
		\draw [style=blue] (41.center) to (42.center);
		\draw [style=blue] (42.center) to (39.center);
		\draw [style=blue] (35.center) to (36.center);
		\draw [style=blue] (36.center) to (37.center);
		\draw [style=blue] (37.center) to (38.center);
		\draw [style=blue] (38.center) to (35.center);
		\draw [style=blue] (31.center) to (32.center);
		\draw [style=blue] (32.center) to (33.center);
		\draw [style=blue] (33.center) to (34.center);
		\draw [style=blue] (34.center) to (31.center);
		\draw [style=blue] (43.center) to (44.center);
		\draw [style=blue] (44.center) to (45.center);
		\draw [style=blue] (45.center) to (46.center);
		\draw [style=blue] (46.center) to (43.center);
	\end{pgfonlayer}
\end{tikzpicture}
    \end{minipage}%
\caption{The generating rules and the first several layers of Fei's poset.}
\label{fig:fei}
\end{figure}
\end{example}
On one hand, \Cref{example:feiposet} shows that upho functions are not necessarily log-concave (see \Cref{def:log-concave}), since $|\mathcal{F}_1|\cdot|\mathcal{F}_3|=51>49=|\mathcal{F}_2|^2$. On the other hand, it illustrates potential connections between upper homogeneity and linear recurrence, inspiring our proof of \Cref{prop:type II}.

\subsection{Semi-Upho Posets}
In this subsection, we introduce \emph{semi-upho posets}, a generalization of upho posets exhibiting partial self-similarity. The primary motivation for defining such posets is to identify suitable formal power series $g(x)$ such that $f(x)g(x)$ is an upho function for any upho function $f(x)$. As shown in \Cref{lemma:produpho}, upho functions serve as appropriate choices for $g(x)$. Furthermore, in \Cref{thm:upho times semi-upho}, we extend this result to the rank-generating functions of semi-upho posets, with a mild restriction termed ``regular."

\begin{definition}
Given posets $P'$ and $P$ with unique minima $\hat{0}_{P'}$ and $\hat{0}_P$ respectively, an injective mapping $\eta : P' \hookrightarrow P$ is an \emph{induced saturated order embedding}, abbreviated as \emph{isoembedding}, if $\eta(\hat{0}_{P'})=\hat{0}_P$, and furthermore, for any chain $C$ in $P'$ with given maximum $a$ and minimum $b$, $C$ is a saturated chain with given maximum $a$ and minimum $b$ if and only if $\eta(C)$ is a saturated chain with maximum $\eta(a)$ and minimum $\eta(b)$.     
\end{definition}

For finitary posets, an isoembedding is an embedding that preserve the covering relation $\lessdot$.

\begin{definition}
A poset $S$ is a \emph{semi-upho poset} if, for any $s\in S$, there exists an isoembedding $V_s \hookrightarrow S$. A formal power series $f(x)\in 1+x\Z[[x]]$ is a \emph{semi-upho function} if it is the rank-generating function of a finite type $\N$-graded semi-upho poset.    
\end{definition}

Upho posets form a subclass of semi-upho posets as isomorphisms are isoembeddings. Moreover, intuitively, a semi-upho poset can be thought of as an upho poset with some parts cut off. 

\begin{definition}
A finitary semi-upho poset $S$ is \emph{tree-like} if for any two elements $s, t \in S$, there does not exist an element $u \in S$ such that $s <_S u$ and $t <_S u$.
\end{definition}

The Hasse diagram of a tree-like semi-upho poset is a tree. 

\begin{example}\label{eg:tree-like sem-upho}
    \Cref{fig:treesemi} is an example of a tree-like semi-upho posets. The red part is a principal order filter that can be isoembedded into the poset itself.

\begin{figure}[H]
    \centering
    \begin{tikzpicture}[scale=0.6]
	\begin{pgfonlayer}{nodelayer}
		\node [style=vert] (0) at (1, -2) {};
		\node [style=vert] (1) at (-4, -1) {};
		\node [style=vert] (3) at (6, -1) {};
		\node [style=vert] (4) at (-6, 0) {};
		\node [style=vert] (5) at (-4, 0) {};
		\node [style=vert] (6) at (-2, 0) {};
		\node [style=vert] (10) at (4, 0) {};
		\node [style=vert] (12) at (-6.75, 1) {};
		\node [style=vert] (13) at (-6, 1) {};
		\node [style=vert] (14) at (-5.25, 1) {};
		\node [style=vert] (15) at (-4.75, 1) {};
		\node [style=vert] (16) at (-4, 1) {};
		\node [style=vert] (17) at (-3.25, 1) {};
		\node [style=vert] (18) at (-2.75, 1) {};
		\node [style=vert] (30) at (-7, 2) {};
		\node [style=vert] (31) at (-6.75, 2) {};
		\node [style=vert] (32) at (-6.5, 2) {};
		\node [style=vert] (33) at (-6.25, 2) {};
		\node [style=vert] (34) at (-6, 2) {};
		\node [style=vert] (35) at (-5.75, 2) {};
		\node [style=vert] (36) at (-5.5, 2) {};
		\node [style=vert] (37) at (-5, 2) {};
		\node [style=vert] (38) at (-4.75, 2) {};
		\node [style=vert] (40) at (-4.5, 2) {};
		\node [style=vert] (41) at (-4.25, 2) {};
		\node [style=vert] (42) at (-4, 2) {};
		\node [style=vert] (43) at (-3.75, 2) {};
		\node [style=vert-red] (51) at (1, -1) {};
		\node [style=vert-red] (52) at (-1, 0) {};
		\node [style=vert-red] (53) at (1, 0) {};
		\node [style=vert-red] (54) at (3, 0) {};
		\node [style=vert-red] (55) at (-1.75, 1) {};
		\node [style=vert-red] (56) at (-1, 1) {};
		\node [style=vert-red] (57) at (-0.25, 1) {};
		\node [style=vert-red] (58) at (0.25, 1) {};
		\node [style=vert-red] (59) at (1, 1) {};
		\node [style=vert-red] (60) at (1.75, 1) {};
		\node [style=vert-red] (61) at (-2, 2) {};
		\node [style=vert-red] (62) at (-1.75, 2) {};
		\node [style=vert-red] (63) at (-1.5, 2) {};
		\node [style=vert-red] (64) at (-1.25, 2) {};
		\node [style=vert-red] (65) at (-1, 2) {};
		\node [style=vert-red] (66) at (-0.75, 2) {};
		\node [style=vert-red] (67) at (-0.5, 2) {};
		\node [style=vert-red] (68) at (0, 2) {};
		\node [style=vert-red] (69) at (0.25, 2) {};
		\node [style=vert-red] (70) at (0.5, 2) {};
		\node [style=vert-red] (71) at (0.75, 2) {};
		\node [style=vert-red] (72) at (1, 2) {};
		\node [style=vert-red] (73) at (1.25, 2) {};
		\node [style=none] (74) at (9, 2) {};
	\end{pgfonlayer}
	\begin{pgfonlayer}{edgelayer}
		\draw (1) to (0);
		\draw (0) to (3);
		\draw (51) to (0);
		\draw [style=red] (52) to (51);
		\draw [style=red] (53) to (51);
		\draw [style=red] (51) to (54);
		\draw (4) to (1);
		\draw (5) to (1);
		\draw (1) to (6);
		\draw (12) to (4);
		\draw (13) to (4);
		\draw (14) to (4);
		\draw (15) to (5);
		\draw (16) to (5);
		\draw (17) to (5);
		\draw (18) to (6);
		\draw (10) to (3);
		\draw [style=red] (55) to (52);
		\draw [style=red] (56) to (52);
		\draw [style=red] (57) to (52);
		\draw [style=red] (58) to (53);
		\draw [style=red] (59) to (53);
		\draw [style=red] (60) to (53);
		\draw (30) to (12);
		\draw (31) to (12);
		\draw (32) to (12);
		\draw (33) to (13);
		\draw (34) to (13);
		\draw (35) to (13);
		\draw (36) to (14);
		\draw (37) to (15);
		\draw (38) to (15);
		\draw (40) to (15);
		\draw (41) to (16);
		\draw (42) to (16);
		\draw (43) to (16);
		\draw [style=red] (61) to (55);
		\draw [style=red] (62) to (55);
		\draw [style=red] (63) to (55);
		\draw [style=red] (64) to (56);
		\draw [style=red] (65) to (56);
		\draw [style=red] (66) to (56);
		\draw [style=red] (67) to (57);
		\draw [style=red] (68) to (58);
		\draw [style=red] (69) to (58);
		\draw [style=red] (70) to (58);
		\draw [style=red] (71) to (59);
		\draw [style=red] (72) to (59);
		\draw [style=red] (73) to (59);
	\end{pgfonlayer}
\end{tikzpicture}
    \caption{A tree-like semi-upho poset.} 
    \label{fig:treesemi}
\end{figure}
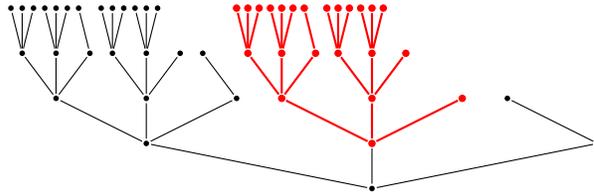
\end{example}

\subsection{Monoids}
In this paper, our main focus is on the upho posets derived from left-cancellative homogeneous monoids. Accordingly, we introduce some definitions and properties related to these monoids. For a more detailed exposition, readers are referred to \cite{MR1455373}. It is worth noting that certain terms in this paper, such as ``homogeneous" and ``invertible-free," are not yet standard terminology. 

A monoid $M$ is a set equipped with an associative binary operation, called \emph{multiplication}, and an \emph{identity element} $e$ such that $ex=x=xe$ for any $x \in M$. 

\begin{definition}
     A \emph{free monoid} $\mathbf{F}(\mathbf{X})$ generated by a set $\mathbf{X}$ is defined as follows:
\begin{itemize}
    \item Elements in $\mathbf{X}$ are called \emph{letters}. A finite sequence of letters $W=a_1a_2\cdots a_n$ is called a \emph{word}, with \emph{length} denoted by $\ell(W)\coloneqq n$. There exists a unique word of length $0$, denoted by $e$. 
    \item $\mathbf{F}(\mathbf{X})$ is the set of all words formed from letters in $\mathbf{X}$, with multiplication of two words $W_1$ and $W_2$ defined by their concatenation.
\end{itemize}
\end{definition}   

Let $\mathbf{F}_k(\mathbf{X})$ denote the set of all words of length $k$ in $\mathbf{F}(\mathbf{X})$. To facilitate working with elements in monoids, we define the \emph{$k$-lexicographic order} $<_k$ on $\mathbf{F}_k(\mathbf{X})$ as follows.

\begin{definition}\label{def: k-lexi order of F_k}
Given an ordered set $(\mathbf{X}, <)$, the \emph{$k$-lexicographic order} $<_k$ on $\mathbf{F}_k(\mathbf{X})$ is the total order defined by: for two distinct words $W_1 = a_1 a_2 \ldots a_k$ and $W_2 = b_1 b_2 \ldots b_k$ in $\mathbf{F}_k(\mathbf{X})$, we have $W_1 <_k W_2$ if and only if $a_i < b_i$ at the first position $i$ (counting from the left) where $W_1$ and $W_2$ differ.
\end{definition}

     A \emph{relation} on a set $S$ is a subset $R \subseteq S \times S$. It is an \emph{equivalence} if it is reflexive $(x, x) \in R$, symmetric $(x, y) \in R \Rightarrow (y, x) \in R$, and transitive $(x, y), (y, z) \in R \Rightarrow (x, z) \in R$. A \emph{congruence} on a monoid $M$ is an equivalence $R$ defined on $M$ such that $(xay, xby) \in R$ for any $(a, b) \in R$ and $x, y \in M$.

Given a relation $R$ on a monoid $M$, we denote by $R^c$ the smallest congruence containing $R$. Given a monoid $M$, if $M \cong \mathbf{F}(\mathbf{X}_M)/{R_M^c}$ for a relation $R_M$ on a set $\mathbf{X}_M$, then $M = \langle \mathbf{X}_M \mid R_M \rangle$ is called a \emph{presentation} of $M$. We call $\mathbf{X}_M$ the \emph{generating set} and elements of $R_M$ the \emph{defining relations} of $M$. If $\mathbf{X}_M = \{s_1, \dots, s_n\}$ and $R_M = \{(a_1, b_1), \dots, (a_m, b_m)\}$ are finite, then $M$ is called \emph{finitely presented} and written as $M = \langle s_1, \dots, s_n \mid a_1 = b_1, \dots, a_m = b_m \rangle$.

For any element $W \in M = \langle \mathbf{X}_M \mid R_M \rangle$, if $W = a_1 a_2 \cdots a_n$ with each $a_i \in \mathbf{X}_M$, then $a_1 a_2 \cdots a_n$ is called a \emph{word representation} of $W$. Moreover, if two word representations $W_1$ and $W_2$ correspond to the same element $W$ in $M$, we write $W_1 = W_2 = W$. If, in addition, these word representations are identical in $\mathbf{F}(\mathbf{X}_M)$, meaning they use the same letters in the same order, we write $W_1 \equiv W_2$.

A \emph{zero element} of $M$, denoted $0 \in M$, satisfies $0x = x0 =0$ for any $x \in M$. 

An element $x \in M$ is \emph{left-invertible} if there exists $y \in M$ such that $yx = e$, and \emph{right-invertible} if there exists $y \in M$ such that $xy = e$. An element $x \in M$ is \emph{invertible} if it is both left- and right-invertible. 

\begin{definition}
    A monoid $M$ is \emph{invertible-free} if it has no left-invertible or right-invertible elements other than $e$; equivalently, $xy = e$ implies $x = y = e$ for any $x, y \in M$. 
\end{definition}

An element of $M$ is \emph{irreducible} if it is not invertible and is not the product of two non-invertible elements. The set of all irreducible elements in $M$ is denoted $\mathcal{I}_M$. A monoid is \emph{atomic} if all its elements can be written as a finite product of irreducible elements.

\begin{definition}
    An atomic monoid $M$ is \emph{homogeneous} if, for any $W \in M$ with $W = a_1 a_2 \cdots a_n = b_1 b_2 \cdots b_m$, where $a_i, b_j \in \mathcal{I}_M$, we have $n = m$. We define the \emph{length} of $W$, denoted $\ell(W)$, to be $n$ and let $\mathbf{W}^M_n$ be the set of all distinct elements of length $n$. It can be verified that homogeneous monoids are invertible-free.
\end{definition}

Given a homogeneous monoid $M$, we define the \emph{$k$-lexicographic order} $<^k$ on $\mathbf{W}^M_k$ as follows.

\begin{definition}\label{def: k-lexi order of W^M_n}
Let $M$ be a homogeneous monoid with a total order defined on $\mathcal{I}_M$. The \emph{$k$-lexicographic order} $<^k$ on $\mathbf{W}^M_k$ is defined by: for two elements $W_1, W_2 \in \mathbf{W}^M_k$, we have $W_1 \le^k W_2$ if and only if the minimal word representations of $W_1$ and $W_2$ under the $k$-lexicographic order $<_k$ on $\mathbf{F}_k(\mathcal{I}_M)$ (as introduced in \Cref{def: k-lexi order of F_k}), denoted $W'_1$ and $W'_2$, satisfy $W'_1 \le_k W'_2$.
\end{definition}

For $a, b \in M$, we say $a$ is \emph{left-divisible} by $b$ if $a = bc$ for some $c \in M$. A monoid $M$ is \emph{left-cancellative} if for any $a, x, y \in M$, $ax = ay$ implies $x = y$.

\begin{notation}
    For simplicity, we abbreviate left-cancellative invertible-free monoids as \emph{LCIF monoids}, and abbreviate left-cancellative homogeneous monoids as \emph{LCH monoids}.
\end{notation}

\begin{remark}\label{remark: LCIF not atomic}
    An LCIF monoid is not necessarily atomic. A counterexample is $M=\langle \mathbf{X}\mid R\rangle$, where $\mathbf{X}=\{a, b_i\mid i\in \N\}$, and $R=\{ b_i=ab_{i+1}\mid i\in \N\}$.
\end{remark}

Our main technique for characterizing equalities in a monoid is the following property.
      \begin{prop}\cite{MR1455373}\label{prop:monoidtrans}
        Let $R$ be an equivalence on a set $S$, and $c,d$ be two elements in S. We say that $c\rightarrow d$ is a \emph{elementary $R$-transition} if there exists $(a,b)\in R$ and $x,y\in S$ such that $c=xay, d=xby$. Then $(x,y)\in R^c$ if and only if there exists a finite sequence \[x\equiv z_1\rightarrow z_2\rightarrow \cdots \rightarrow z_n\equiv y\] of elementary $R$-transition connecting $x$ and $y$.
     \end{prop}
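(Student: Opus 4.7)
The plan is to define an auxiliary relation $\sim$ on $S$ by declaring $c \sim d$ if and only if there exists a finite chain of elementary $R$-transitions $c \equiv z_1 \to z_2 \to \cdots \to z_n \equiv d$, and then to establish $\sim = R^c$ by proving both inclusions.

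First I would verify that $\sim$ is itself a congruence containing $R$. Reflexivity is immediate via the length-one chain $z_1 = c$. Symmetry uses that $R$ is an equivalence, hence symmetric: reversing an elementary transition $c = xay \to xby = d$ produced by $(a,b) \in R$ yields an elementary transition $d \to c$ produced by $(b,a) \in R$, so whole chains reverse termwise. Transitivity follows from concatenating chains. The crucial multiplicative compatibility comes from the observation that if $c = xay \to xby = d$ is elementary, then for any $u, v \in S$ we have $ucv = (ux)a(yv) \to (ux)b(yv) = udv$, which is again an elementary transition; applying this step by step along a chain from $c$ to $d$ produces a chain from $ucv$ to $udv$. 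Finally, the inclusion $R \subseteq \sim$ follows by taking $x = y = e$ in the definition of elementary transition, giving $a \to b$ whenever $(a,b) \in R$.

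For the inclusion $\sim \subseteq R^c$, I would argue directly: since $R^c$ is a congruence containing $R$, every elementary transition $c = xay \to xby = d$ satisfies $(c, d) \in R^c$ by compatibility of $R^c$ with multiplication, and transitivity of $R^c$ then lifts this to entire chains. For the reverse inclusion $R^c \subseteq \sim$, the preceding paragraph shows that $\sim$ is a congruence containing $R$, and $R^c$ is by definition the smallest such congruence, hence $R^c \subseteq \sim$.

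The argument is entirely formal, and no step poses a genuine combinatorial or algebraic obstruction; the proof is essentially a bookkeeping exercise tracking how the congruence closure is built up from its generating relation. The only subtlety worth flagging is the reliance on the symmetry of $R$ to obtain the symmetry of $\sim$: here this is free because $R$ is assumed to be an equivalence, but if one dropped this hypothesis, one would first need to pass to $R \cup R^{-1}$ before the chain description of $R^c$ would remain valid.
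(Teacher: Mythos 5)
Your proof is correct and is exactly the standard argument: the paper gives no proof of this proposition, citing it from the literature, and your two-inclusion argument (the chain relation $\sim$ is a congruence containing $R$, hence contains $R^c$ by minimality; conversely each elementary transition lies in $R^c$ and transitivity lifts this to chains) is the textbook proof in the cited source. The only point worth noting is that although the statement says ``a set $S$,'' your use of products $ucv$ and of the identity $e$ implicitly (and correctly) treats $S$ as a monoid, which is the intended setting here.
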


\subsection{Power Series}
In the second half of this paper, we focus on determining which formal power series are upho functions. Alongside totally positive formal power series (\Cref{def:TP}, \Cref{thm:working def of TP}), we recall some other definitions and fundamental results related to formal power series.
In this paper, a polynomial is regarded as a formal power series with finitely many nonzero terms.

Recall that a symmetric function $f$ is said to be \emph{Schur-positive} if, when expanded in the linear basis of Schur polynomials $f=\sum_\mu a_\mu s_\mu$, all coefficients $a_\mu$ are positive. We have the following property relating Schur-positivity and total positivity, which can be proved using Jacobi-Trudi identity.

\begin{prop}\cite{MR3618143}\label{prop:Schur positive}
     A formal power series $f(x)\in 1+x\Z_{\ge 0}[[x]]$ is totally positive if and only if $\prod_{i\ge 1} f(x_i)$ is Schur positive.
\end{prop}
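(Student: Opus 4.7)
The plan is to interpret the Schur coefficients of $\prod_{i\ge 1} f(x_i)$ as specializations of Schur functions and then identify these specializations with the minors of the Toeplitz matrix of $f$, so that both implications of the equivalence fall out of the same chain. To set this up, I would introduce the ring homomorphism $\phi:\Lambda\to\R$ on the ring $\Lambda$ of symmetric functions defined by $\phi(h_n)=a_n$, where $f(x)=\sum_{n\ge 0} a_n x^n$; this is well-defined because $h_1,h_2,\ldots$ form a free polynomial generating set for $\Lambda$.

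Applying $\phi$ in the $y$-variables of the Cauchy identity $\prod_{i,j}\frac{1}{1-x_iy_j}=\sum_\lambda s_\lambda(x)s_\lambda(y)$, the left-hand side becomes $\prod_i \sum_n a_n x_i^n=\prod_i f(x_i)$, which yields
$$\prod_{i\ge 1} f(x_i)=\sum_\lambda \phi(s_\lambda)\,s_\lambda(x).$$
Hence $\prod_i f(x_i)$ is Schur-positive if and only if $\phi(s_\lambda)\ge 0$ for every partition $\lambda$. Next, applying $\phi$ to the Jacobi-Trudi identity $s_\lambda=\det(h_{\lambda_i-i+j})_{1\le i,j\le k}$ with $k=\ell(\lambda)$ yields $\phi(s_\lambda)=\det(a_{\lambda_i-i+j})$, and it remains to recognize this determinant as a $k\times k$ minor of the Toeplitz matrix $T=(a_{i-j})_{i,j\ge 1}$ up to a harmless reversal of rows and columns.

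For this identification I would use that the entries of $T$ depend only on the difference $i-j$, so simultaneous shifts of the row and column index sets preserve every minor; it therefore suffices to treat minors with column set $\{1,\ldots,k\}$ and row set $1\le r_1<\cdots<r_k$. Setting $\mu_p:=r_p-p\ge 0$ produces a weakly increasing sequence, and $\lambda_p:=\mu_{k+1-p}$ is then a partition. Reversing both rows and columns of the Jacobi-Trudi matrix for this $\lambda$ (each reversal contributing $(-1)^{k(k-1)/2}$, which cancel in pairs) matches it entrywise with the chosen Toeplitz minor after applying $\phi$. Thus the minors of $T$ are exactly the values $\phi(s_\lambda)$ as $\lambda$ ranges over all partitions. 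Chaining the equivalences gives $f$ totally positive $\iff$ all minors of $T$ are nonnegative $\iff$ $\phi(s_\lambda)\ge 0$ for all $\lambda$ $\iff$ $\prod_i f(x_i)$ is Schur-positive.

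The main obstacle is the indexing bookkeeping in the identification step: one must verify that \emph{every} minor of $T$ (for arbitrary strictly increasing row and column indices) arises as some $\phi(s_\lambda)$, not merely those supported on initial columns, and that the row/column reversal correctly converts between the Toeplitz and Jacobi-Trudi conventions without introducing a sign. Once this correspondence is set up cleanly, the proposition is immediate from the displayed Schur expansion above.
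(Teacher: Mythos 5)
The paper itself gives no proof of this proposition---it is quoted from the cited reference, with only the remark that it can be proved via the Jacobi--Trudi identity---so the relevant question is whether your argument is internally sound. Most of it is: the specialization $\phi(h_n)=a_n$, the Cauchy-identity computation giving $\prod_{i\ge 1} f(x_i)=\sum_\lambda \phi(s_\lambda)s_\lambda(x)$, and the identification of $\phi(s_\lambda)=\det(a_{\lambda_i-i+j})$ with the Toeplitz minor on rows $r_p=\lambda_{k+1-p}+p$ and columns $\{1,\dots,k\}$ (the double reversal is indeed sign-free) are all correct, and they already yield the direction ``totally positive $\Rightarrow$ Schur-positive.''

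The gap is precisely in the step you flag as the main obstacle, and your proposed resolution of it does not work. Simultaneous shifts of the row and column index sets only translate both by the same amount, so they reduce minors with \emph{consecutive} columns to initial columns; they cannot convert a non-consecutive column set into $\{1,\dots,k\}$. Moreover the asserted conclusion---that every minor of $T=(a_{i-j})$ equals $\phi(s_\lambda)$ for some partition $\lambda$---is false: the minor on rows $\{2,4\}$ and columns $\{1,3\}$ equals $a_1^2=\phi(h_1^2)=\phi(s_{(2)})+\phi(s_{(1,1)})$, which is not a single $\phi(s_\lambda)$ in general. Hence the direction ``Schur-positive $\Rightarrow$ totally positive'' is not established: you have only proved nonnegativity of the minors supported on initial (or consecutive) columns, and for total \emph{nonnegativity} one cannot reduce to consecutive minors (Fekete-type criteria require strict positivity). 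The standard repair is the skew Jacobi--Trudi identity: the minor with rows $r_1<\cdots<r_k$ and columns $c_1<\cdots<c_k$ equals $\phi(s_{\lambda/\mu})$ with $\lambda_p=r_{k+1-p}-(k+1-p)$ and $\mu_p=c_{k+1-p}-(k+1-p)$, and since $s_{\lambda/\mu}=\sum_\nu c^{\lambda}_{\mu\nu}s_\nu$ with nonnegative Littlewood--Richardson coefficients, $\phi(s_\nu)\ge 0$ for all partitions $\nu$ forces every minor of $T$ to be nonnegative. With that substitution in place of the shift argument, your proof closes.
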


In this paper, a log-concave formal power series is required to be nonnegative, and has no internal zeros.

\begin{definition}\label{def:log-concave}
A \emph{log-concave formal power series} is a formal power series
    \[
    a_0+a_1x+a_2x^2+\cdots+a_nx^n+\cdots
    \]
    satisfying $a_i\ge 0$, $a_ia_{i+2}\le a_{i+1}^2$ for any integer $i$, and moreover, if $a_j=0$, then $a_k=0$ for all $k\ge j$.
\end{definition}

A fact we use in this paper is that multiplication preserves the log-concavity of formal power series.
    
\begin{lemma}\cite{MR1110850}\label{lemma:logconcavetimeslogconcave}
    If $A(x), B(x)$ are log-concave formal power series, then so is $A(x)B(x)$.
\end{lemma}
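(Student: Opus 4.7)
The plan has three steps: verify the no-internal-zeros condition for $c_n \coloneqq \sum_{k=0}^{n} a_k b_{n-k}$; expand $c_n^2 - c_{n-1}c_{n+1}$ as a double sum; and reduce it to a manifestly non-negative expression via an involution. The no-internal-zeros condition is immediate: under \Cref{def:log-concave}, log-concavity with no internal zeros forces the support of each of $(a_n)$ and $(b_n)$ to be an interval of the form $\{0, 1, \ldots, N\}$ (possibly with $N = \infty$), so the support of $(c_n)$ is the Minkowski sum of two such intervals, hence again such an interval.

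For the log-concavity inequality $c_n^2 \ge c_{n-1} c_{n+1}$, I would first write
\[
c_n^2 - c_{n-1} c_{n+1} = \sum_{i, j \in \Z} a_i a_j \left( b_{n-i} b_{n-j} - b_{n-1-i} b_{n+1-j} \right),
\]
using the convention $a_k = b_k = 0$ for $k < 0$, and then apply the involution $\phi(i, j) = (j - 1, i + 1)$ on $\Z^2$. A direct check shows that the $b$-bracket is negated by $\phi$. The fixed locus $\{j = i + 1\}$ contributes zero because the bracket vanishes there, and selecting the unique representative $(i, j)$ with $j \ge i + 2$ from each size-two orbit collapses the sum into
\[
\sum_{(i, j) \,:\, j \ge i + 2} \left( a_i a_j - a_{i+1} a_{j-1} \right) \left( b_{n-i} b_{n-j} - b_{n-1-i} b_{n+1-j} \right).
\]

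The proof is then finished by invoking the following inward-shift inequality: for a nonnegative log-concave sequence $(x_n)$ with no internal zeros, if $p \le p' \le q' \le q$ and $p + q = p' + q'$, then $x_p x_q \le x_{p'} x_{q'}$. This lemma iterates the defining inequality on the interior of the support; boundary cases where some index lies outside the support follow from the support being an interval. Applied to $(a_n)$ with $(p, p', q', q) = (i, i+1, j-1, j)$ and to $(b_n)$ with $(p, p', q', q) = (n-j, n-j+1, n-i-1, n-i)$, both bracketed factors in every summand become non-positive, so each product is non-negative. The main conceptual obstacle is spotting the involution $\phi$ that converts an indefinite alternating sum into a sum of products of two simultaneously signed factors; the only residual subtlety is the bookkeeping when $\phi$ pushes one of $i, i+1, j-1$ outside $\Z_{\ge 0}$, but in every such boundary case at least one factor degenerates to zero, so the term-by-term inequality still holds.
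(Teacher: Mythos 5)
The paper does not prove \Cref{lemma:logconcavetimeslogconcave} at all; it is quoted from the reference [MR1110850] without argument, so your proposal can only be judged on its own merits, and it is correct. The identity $c_n^2-c_{n-1}c_{n+1}=\sum_{i,j}a_ia_j\bigl(b_{n-i}b_{n-j}-b_{n-1-i}b_{n+1-j}\bigr)$ is right, the map $\phi(i,j)=(j-1,i+1)$ is indeed an involution that negates the $b$-bracket and whose fixed locus $j=i+1$ contributes zero, and pairing each orbit with its representative $j\ge i+2$ gives exactly the displayed sum of products $\bigl(a_ia_j-a_{i+1}a_{j-1}\bigr)\bigl(b_{n-i}b_{n-j}-b_{n-1-i}b_{n+1-j}\bigr)$. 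The inward-shift inequality you invoke is valid under \Cref{def:log-concave}: that definition forces the support to be an initial segment $\{0,\dots,N\}$, on which the ratios $x_{k+1}/x_k$ are weakly decreasing, and if an outer index falls outside the support the left-hand side vanishes; applied with $(i,i+1,j-1,j)$ and $(n-j,n-j+1,n-i-1,n-i)$ it makes both brackets non-positive, so every summand is non-negative. Your Minkowski-sum observation also settles the no-internal-zeros requirement for the product (with the zero series being a trivial degenerate case). For comparison, this is essentially the classical elementary proof of the cited result; the other standard route in the literature deduces it from total positivity, viewing a nonnegative log-concave sequence without internal zeros as a $\mathrm{PF}_2$ sequence and applying Cauchy--Binet to the product of the associated Toeplitz matrices, which is less self-contained but fits the total-positivity theme of \Cref{sec:6}.
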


\section{Monoid Representations}\label{sec:3}
In this section, we establish a bijection between finitary \emph{colored upho posets} and atomic LCIF monoids, and associate the bijection to upho posets.

\subsection{Colored Upho Posets}\label{subsec:colored upho def}
The motivation for introducing \emph{colored upho posets} is to designate a unique isomorphism between each given principal order filter and the upho poset itself.

\begin{definition}
    A colored upho poset $\tilde{P}$ consists of the data $(P, \col_P)$: the poset $P$ is an upho poset, and the color mapping $\col_P: \mathcal{E}_P\to \mathcal{A}_P$ satisfies the following conditions:
    \begin{itemize}
        \item For any $t\in \mathcal{A}_P$, $\col_P(\hat{0}, t)=t$;
        \item For any $s\in P$, there exists an isomorphism $\phi_s: V_{s}\xrightarrow{\sim} P$ such that $\col_P(u,v)=\col_P(\phi_s(u), \phi_s(v))$ for all $(u,v)\in \mathcal{E}_{V_s}$.
    \end{itemize}
\end{definition}
It can be shown that such $\phi_s$ is unique for a given $s\in P$.

Finitary colored upho posets can be conceptualized as structures in which each edge extending upward from the same vertex in their Hasse diagrams is assigned a distinct color. Moreover, there exists a unique isomorphism between each principal order filter and the poset itself, which maps edges to identically colored ones.

Similarly, we introduce colored semi-upho posets.

\begin{definition}
        A colored semi-upho poset $\tilde{S}$ consists of the data $(S, \col_S)$: the poset $S$ is a semi-upho poset, and the color mapping $\col_S: \mathcal{E}_S\to \mathcal{A}_S$ satisfies the following conditions:
    \begin{itemize}
        \item For any $t\in \mathcal{A}_S$, $\col_S(\hat{0}, t)=t$;
        \item For any $s\in S$, there exists an isoembedding $\psi_s: V_{s}\hookrightarrow S$ such that $\col_S(u,v)=\col_S(\psi_s(u), \psi_s(v))$ for all $(u,v)\in \mathcal{E}_{V_s}$. 
    \end{itemize}
\end{definition}

It can be shown that such $\psi_s$ is also unique for a given $s\in S$.

\subsection{Correspondence with Monoids}\label{subsec:corresp}
In this subsection, we build the bijection between finitary colored upho posets and atomic LCIF monoids explicitly. 

    We define a mapping $\mathcal{M}$ which maps finitary colored upho posets to LCIF monoids by the following rule.
    For a given finitary colored upho poset $\tilde{P}=(P,\col_P)$, the elements of $\mathcal{M}(\tilde{P})$ are the elements of $P$. For any $s,t \in P$, we define the multiplication $st$ by $st \coloneqq \phi_s^{-1}(t)$. 
    
    \begin{lemma}\label{lemma:poset to LCIF monoid}
        The monoid $\mathcal{M}(\tilde{P})$ is a well-defined atomic LCIF monoid, and  $\mathcal{A_P}=\mathcal{I}_{\mathcal{M}(\tilde{P})}$.
    \end{lemma}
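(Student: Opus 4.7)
The plan is to verify in sequence: that $\mathcal{M}(\tilde{P})$ is a well-defined monoid with identity $\hat{0}$, that it is left-cancellative and invertible-free, and finally to identify $\mathcal{I}_{\mathcal{M}(\tilde{P})} = \mathcal{A}_P$ and conclude atomicity. The essential ingredient throughout is the compatibility relation $\phi_{st} = \phi_t \circ \phi_s|_{V_{st}}$. To prove it, set $r = st = \phi_s^{-1}(t)$, so that $r \in V_s$ and $V_r \subseteq V_s$; the restriction $\phi_s|_{V_r}$ is then a color-preserving poset isomorphism from $V_r$ onto $V_t \subseteq P$, and composing with $\phi_t : V_t \to P$ yields a color-preserving isomorphism $V_r \to P$, which must equal $\phi_r$ by the stated uniqueness of such an isomorphism.

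From compatibility the monoid structure is immediate. Uniqueness of $\phi_{\hat{0}}$ forces $\phi_{\hat{0}} = \mathrm{id}_P$, so $\hat{0} \cdot t = t$; meanwhile $\phi_s$ sends the minimum $s$ of $V_s$ to $\hat{0}$, giving $s \cdot \hat{0} = s$. Associativity is a one-line consequence of compatibility: $(st)u = \phi_{st}^{-1}(u) = \phi_s^{-1}(\phi_t^{-1}(u)) = s(tu)$. Left-cancellativity follows from injectivity of $\phi_s^{-1}$. Finally, if $st = \hat{0}$, then $\phi_s^{-1}(t) = \hat{0}$, but $\phi_s^{-1}$ takes values in $V_s$, so $\hat{0} \in V_s$ forces $s = \hat{0}$; then $t = \phi_{\hat{0}}(\hat{0}) = \hat{0}$, proving invertible-freeness.

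Next I would identify $\mathcal{I}_{\mathcal{M}(\tilde{P})} = \mathcal{A}_P$ and prove atomicity jointly. If $x \in \mathcal{A}_P$ factors as $x = yz$, then $x = \phi_y^{-1}(z) \in V_y$ gives $y \leq_P x$, and the atom condition forces $y \in \{\hat{0}, x\}$; in either case one of $y, z$ equals $\hat{0}$. Conversely, if $x \neq \hat{0}$ is irreducible, choose any atom $a \leq_P x$, which exists by finitariness (take $p_1$ in a shortest saturated chain $\hat{0} = p_0 \lessdot p_1 \lessdot \cdots \lessdot p_n = x$); then $x = a \cdot \phi_a(x)$, so irreducibility forces $\phi_a(x) = \hat{0}$, i.e., $x = a \in \mathcal{A}_P$. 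The same construction gives atomicity by induction on height: applying $\phi_a$ to the saturated chain $a = p_1 \lessdot p_2 \lessdot \cdots \lessdot p_n = x$ in $V_a$ produces a saturated chain in $P$ from $\hat{0}$ to $\phi_a(x)$ of length $n-1$, so $\phi_a(x)$ has strictly smaller height and factors into atoms by the inductive hypothesis, whence so does $x = a \cdot \phi_a(x)$.

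The main technical obstacle is the compatibility relation $\phi_{st} = \phi_t \circ \phi_s|_{V_{st}}$, on which every subsequent step rests; this is precisely where the rigidity of the color data enters, via the uniqueness of the color-preserving filter isomorphism. Once compatibility is in hand, the remaining verifications reduce to routine bookkeeping with the definitions and the Hasse-diagram structure of a finitary upho poset.
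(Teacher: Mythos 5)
Your proposal is correct and takes essentially the same route as the paper: both rest on the uniqueness of the color-preserving isomorphism to get the compatibility $\phi_{s_1s_2}^{-1}=\phi_{s_1}^{-1}\circ\phi_{s_2}^{-1}$, and then deduce associativity, left-cancellativity, invertible-freeness, $\mathcal{A}_P=\mathcal{I}_{\mathcal{M}(\tilde{P})}$, and atomicity from finitariness. You merely spell out a few steps the paper leaves implicit (the atom-implies-irreducible inclusion and the height induction for atomicity), which is fine.
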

    \begin{proof}
        In the colored upho poset $P$, since both $\phi_{s_1s_2}^{-1}$ and $\phi_{s_1}^{-1}\circ \phi_{s_2}^{-1}$ induces an isomorphism $P\xrightarrow{\sim}V_{s_1s_2}$, the uniqueness of such isomorphism in colored upho poset implies that $\phi_{s_1s_2}^{-1}=\phi_{s_1}^{-1}\circ \phi_{s_2}^{-1}$. Therefore, the monoid $\mathcal{M}(\tilde{P})$ is well-defined since the identity element $e\in \mathcal{M}(\tilde{P})$ corresponds to $\hat{0}\in P$ and \[(s_1s_2)t=\phi_{s_1s_2}^{-1}(t)=\phi_{s_1}^{-1}(\phi_{s_2}^{-1}(t))=s_1(s_2t)\]
        for any $s_1, s_2, t\in \mathcal{M}(\tilde{P})$.
        
        Suppose $st_1=st_2$ in $\mathcal{M}(\tilde{P})$, then $\phi_s^{-1}(t_1)=\phi_s^{-1}(t_2)$ in $P$. Since $\phi_s^{-1}$ is an isomorphism, it follows that $t_1=t_2$, and hence $\mathcal{M}(\tilde{P})$ is left-cancellative. 
        
        Suppose $st=e$ in $\mathcal{M}(\tilde{P})$, then $\phi_s^{-1}(t)=e$, which implies $s=e$. Therefore, $st=e$ implies $s=t=e$, and hence $\mathcal{M}(\tilde{P})$ is invertible-free. 
        
        The irreducible elements of $\mathcal{M}(\tilde{P})$ are exactly $\mathcal{A_P}$. Note that any nontrivial element $t\in \mathcal{M}(\tilde{P})$ lie in the order filter generated by some atom $s$. If $t\notin \mathcal{A_P}$, then $t=\phi_{s}^{-1}(t_1)$ for some $t_1\neq e$, so $t=st_1\notin \mathcal{I}_{\mathcal{M}(\tilde{P})}$. This implies $\mathcal{I}_{\mathcal{M}(\tilde{P})}\subseteq \mathcal{A_P}$, and the converse inclusion is clear by definition.
        
        Finally, since $P$ is finitary, any element in $\mathcal{M}(\tilde{P})$ can be written as a finite product of elements in $\mathcal{A_P}=\mathcal{I}_{\mathcal{M}(\tilde{P})}$, and therefore, $\mathcal{M}(\tilde{P})$ is atomic.
    \end{proof}

    Conversely, we define a mapping $\mathcal{\tilde{P}}=(\mathcal{P},\mathcal{C})$ which maps atomic LCIF monoids to finitary colored upho posets by the following rule.
    The elements of $\mathcal{P}(M)$ are the elements of $M$. The partial order $\le_\mathcal{P}(M)$ is defined by the left-divisibility in $M$, as is in \Cref{eg:not N graded}. Then we have for any $a, b \in M$, $a\lessdot_P b$ if and only if there exists $c\in \mathcal{I}_M$ such that $ac=b$. So the unique minimum in $\mathcal{P}(M)$ is $e$, and $\mathcal{A}_{\mathcal{P}(M)}=\mathcal{I}_M$. For any ordered pair $(a, b)\in\mathcal{E}_{\mathcal{P}(M)}$, define $\mathcal{C}(M)(a,b)$ to be the $c\in \mathcal{I}_M$ such that $ac=b$. Such $c$ is unique because $b=ac=ac'$ implies $c=c'$ by the left-cancellative property of $M$.

    \begin{lemma}[cf. {\cite[Lemma~5.1]{MR4318812}}]\label{lemma:LCIF monoid to posets}
        The colored upho poset $\mathcal{\tilde{P}}(M)=(\mathcal{P}(M),\mathcal{C}(M))$ is a well-defined finitary colored upho poset. 
    \end{lemma}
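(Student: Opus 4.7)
The plan is to check four items in turn: that $\le_{\mathcal{P}(M)}$ is a partial order, that the cover relation matches multiplication by a single irreducible (so $\mathcal{A}_{\mathcal{P}(M)} = \mathcal{I}_M$ and $\mathcal{P}(M)$ is finitary), that $\mathcal{P}(M)$ is upho, and that $\mathcal{C}(M)$ satisfies the two axioms of a color map. Each piece reduces to a short calculation in $M$ using left-cancellation and invertible-freeness.

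First I would verify reflexivity (from $a = a\cdot e$) and transitivity (by concatenating witnesses) of $\le_{\mathcal{P}(M)}$. For antisymmetry, if $b=ax$ and $a=by$ then $a=axy$, so left-cancellation forces $xy=e$, and invertible-freeness then gives $x=y=e$. The same combination of hypotheses handles the cover relation: if $b=ac$ with $c\in\mathcal{I}_M$ and $a\le d\le b$, writing $d=ax$ and $b=dy=axy=ac$ and left-cancelling $a$ yields $c=xy$; irreducibility of $c$ together with invertible-freeness forces $x=e$ or $y=e$, so no strict intermediate exists. Conversely, if $a\lessdot b$ and $b=ac$, any decomposition $c=xy$ with $x,y\neq e$ gives an intermediate element $ax$, so $c$ must be irreducible. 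This identifies $\mathcal{A}_{\mathcal{P}(M)}$ with $\mathcal{I}_M$ and shows that a saturated chain from $e$ to an element $W$ corresponds to a factorization of $W$ into irreducibles, so atomicity of $M$ makes every element of $\mathcal{P}(M)$ of finite height and $\mathcal{P}(M)$ finitary.

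Next, for the upho property, I introduce for each $s\in\mathcal{P}(M)$ the map $\phi_s\colon \mathcal{P}(M)\to V_s$ defined by $\phi_s(x)=sx$. Surjectivity is the definition of $V_s$, injectivity is left-cancellation, and order-preservation in both directions is another application of left-cancellation (to see $sx\le sy$ implies $x\le y$, write $sy=sx\cdot z$ and cancel $s$). Hence $\phi_s$ is a poset isomorphism. To check the color compatibility of its inverse, take a cover $(u,v)\in\mathcal{E}_{V_s}$ with $v=uc$, $c\in\mathcal{I}_M$; writing $u=su'$ and $v=sv'$, left-cancellation applied to $sv'=su'c$ gives $v'=u'c$, so the color on $(u',v')$ equals $c$, matching the color on $(u,v)$. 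Uniqueness of $c$ in $ac=b$ (needed for $\mathcal{C}(M)$ to be single-valued) is again left-cancellation, and $\mathcal{C}(M)(e,t)=t$ is immediate from $e\cdot t=t$.

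The main (though modest) subtlety is the joint use of invertible-freeness and left-cancellation both for antisymmetry of the order and for pinning down covers; everything else is a straightforward dictionary between the monoid multiplication in $M$ and the poset structure on $\mathcal{P}(M)$.
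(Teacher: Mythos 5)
Your proof is correct and follows essentially the same route as the paper: antisymmetry from left-cancellation plus invertible-freeness, the isomorphism $V_s\cong\mathcal{P}(M)$ via left multiplication and cancellation, color compatibility by cancelling the common prefix, and finitariness from atomicity. The only difference is that you explicitly verify the characterization of covers (namely $a\lessdot b$ iff $b=ac$ with $c\in\mathcal{I}_M$), which the paper states without proof in the setup preceding the lemma; this is a welcome but minor addition rather than a different argument.
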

    \begin{proof}
    Given elements $a,b\in \mathcal{P}(M)$ such that $a \le_{\mathcal{P}(M)} b$ and $b\le_{\mathcal{P}(M)}a$, there exists $c,d\in \mathcal{P}(M)$ such that $a=bc$ and $b=ad$. Thus, $a=adc$. Since $M$ is left-cancellative and invertible-free, it follows that $e=dc$, $c=d=e$, and $a=b$. Therefore, $a \le_{\mathcal{P}(M)} b$ and $b\le_{\mathcal{P}(M)}a$ implies $a=b$. Furthermore, reflexivity and transitivity are straightforward to verify, so the partial order $\le_{\mathcal{P}(M)}$ is well-defined.
    
By left-divisibility, each element in $V_W \subseteq \mathcal{P}(M)$ can be written as $WX$ for some $X \in M$. The mapping $\phi_W : V_W \rightarrow \mathcal{P}(M)$ defined by $\phi_W(WX) = X$ is a homomorphism, as $X \le_{\mathcal{P}(M)} Y$ if and only if $WX \le_{\mathcal{P}(M)} WY$ by the left-cancellative property of $M$. Moreover, this mapping is bijective, thus an isomorphism, and hence $\mathcal{P}(M)$ is an upho poset.

For any $t \in \mathcal{A}_{\mathcal{P}(M)}$, $\mathcal{C}(M)(\hat{0}, t)$ is given by the unique $c \in \mathcal{I}_M$ such that $ec = t$, implying $c = t$. Thus, $\mathcal{C}(M)(\hat{0}, t) = t$ for all $t \in \mathcal{A}_{\mathcal{P}(M)}$. Moreover, fixing $W \in M$ and the corresponding isomorphism $\phi_W : V_W \rightarrow \mathcal{P}(M)$, we have, for any $(WU, WV) \in \mathcal{E}_{V_W}$, that $\mathcal{C}(M)(WU, WV)$ is defined by the unique $c \in \mathcal{I}_M$ such that $WUc = WV$. By left-cancellative property, this implies $Uc = V$, hence
\[
\mathcal{C}(M)(WU, WV) = c = \mathcal{C}(M)(U, V) = \mathcal{C}(M)(\phi_W(WU), \phi_W(WV)),
\]
as required.

Finally, $\mathcal{\tilde{P}}(M)$ is finitary because, since $M$ is atomic, any element in $M$ can be written as a finite product of elements in $\mathcal{I}_M = \mathcal{A}_{\mathcal{P}(M)}$, which induces a finite saturated chain.
    \end{proof}

Now we are ready to prove the main theorem of this section.

    \begin{proof}[Proof of \Cref{thm: intro corresp}]
        This follows from \Cref{lemma:poset to LCIF monoid} and \Cref{lemma:LCIF monoid to posets}, along with the fact that $\mathcal{\tilde{P}}(\mathcal{M}(\tilde{P}))\cong \tilde{P}$ and $\mathcal{M}(\mathcal{\tilde{P}}(M))\cong M$, both of which are straightforward to verify.
    \end{proof}

    More precisely, we have:
    
    \begin{theorem}\label{thm:corrsp for finitary upho}
        The mutually inverse mappings $\mathcal{M}$ and $\mathcal{\tilde{P}}$ give a bijection between finitary colored upho posets and atomic LCIF monoids.
    \end{theorem}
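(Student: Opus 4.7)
The plan is to deduce \Cref{thm:corrsp for finitary upho} directly from \Cref{lemma:poset to LCIF monoid} and \Cref{lemma:LCIF monoid to posets}, which already establish that $\mathcal{M}$ and $\mathcal{\tilde{P}}$ land in the intended target classes. What remains is to verify that the two round trips $\mathcal{\tilde{P}}\circ\mathcal{M}$ and $\mathcal{M}\circ\mathcal{\tilde{P}}$ are identities up to the appropriate notion of isomorphism, i.e.\ that they preserve the underlying set, the partial order, and the color mapping on one side, and the multiplication on the other.

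For the round trip $\mathcal{\tilde{P}}(\mathcal{M}(\tilde{P}))\cong \tilde{P}$, I would first note that by construction both posets have the same underlying set, namely the set of elements of $P$. I would then unpack the order on $\mathcal{P}(\mathcal{M}(\tilde{P}))$: $a$ is below $b$ iff there exists $c$ with $ac=b$ in $\mathcal{M}(\tilde P)$, i.e.\ $\phi_a^{-1}(c)=b$ for some $c\in P$, which is equivalent to $b\in V_a$, that is, $a\le_P b$. To check that the colors agree, for an edge $(a,b)\in \mathcal{E}_P$, the isomorphism $\phi_a\colon V_a\xrightarrow{\sim}P$ sends the edge $(a,b)$ to the edge $(\hat 0,\phi_a(b))$, where $\phi_a(b)\in\mathcal{A}_P$ because $\phi_a$ preserves the covering relation. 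The color-compatibility of $\phi_a$ together with the normalization $\col_P(\hat 0,t)=t$ for atoms gives $\col_P(a,b)=\phi_a(b)$; on the other hand the reconstructed color is the unique $c\in\mathcal{I}_{\mathcal{M}(\tilde P)}$ with $ac=b$, which is exactly $\phi_a(b)$ by definition of the multiplication in $\mathcal{M}(\tilde P)$. Hence the colorings coincide, proving the first isomorphism.

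For the reverse round trip $\mathcal{M}(\mathcal{\tilde{P}}(M))\cong M$, the underlying sets again coincide with $M$, and we need to show that the multiplication $s\cdot t \coloneqq \phi_s^{-1}(t)$ recovers the original multiplication of $M$. The key observation is that the canonical isomorphism $\phi_s\colon V_s \xrightarrow{\sim}\mathcal{P}(M)$ from the proof of \Cref{lemma:LCIF monoid to posets} is given by $\phi_s(sX)=X$, so $\phi_s^{-1}(t)=st$ in $M$. Here one uses left-cancellativity to ensure that $\phi_s$ is well-defined and bijective, and one uses uniqueness of the color-preserving isomorphism in a colored upho poset to see that this is indeed the $\phi_s$ used by $\mathcal{M}$.

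Neither verification requires new ideas beyond carefully tracing the definitions, so I do not anticipate a genuine obstacle. The subtlest point is justifying that the colored-upho-poset isomorphism $\phi_s$ constructed in \Cref{lemma:LCIF monoid to posets} is the same $\phi_s$ used to define $\mathcal{M}$; this is where the uniqueness of $\phi_s$ (remarked on just after the definition of colored upho poset) does the work, since any color-preserving isomorphism $V_s\to P$ must coincide with the one recorded by the coloring data. Once this is noted, the theorem reduces to the two bookkeeping computations above.
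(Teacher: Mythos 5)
Your proposal is correct and follows essentially the same route as the paper: the paper also deduces this theorem from \Cref{lemma:poset to LCIF monoid} and \Cref{lemma:LCIF monoid to posets} together with the two round-trip identities $\mathcal{\tilde{P}}(\mathcal{M}(\tilde{P}))\cong \tilde{P}$ and $\mathcal{M}(\mathcal{\tilde{P}}(M))\cong M$, which it declares ``straightforward to verify.'' Your write-up simply supplies those verifications explicitly (including the key use of the uniqueness of the color-preserving isomorphism $\phi_s$), and they check out.
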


    Restrict to $\N$-graded colored upho posets, we have:

    \begin{cor}\label{cor:corrsp for N-graded upho}
        The mutually inverse mappings $\mathcal{M}$ and $\mathcal{\tilde{P}}$ give a bijection between $\N$-graded colored upho posets and LCH monoids. Moreover, finite type $\N$-graded colored upho posets correspond to finitely generated LCH monoids.
    \end{cor}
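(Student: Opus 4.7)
The plan is to establish both assertions as refinements of the bijection from \Cref{thm:corrsp for finitary upho}. Since $\mathcal{M}$ and $\mathcal{\tilde{P}}$ are already mutually inverse on the larger classes (finitary colored upho posets $\leftrightarrow$ atomic LCIF monoids), it suffices to show that, under this correspondence, the $\N$-graded condition on $\tilde{P}$ matches the homogeneity condition on $M$, and that finite type additionally matches finite generation.

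First I would record the key translation between chains and factorizations. Given $\tilde{P} = (P, \col_P)$ corresponding to $M = \mathcal{M}(\tilde{P})$, and a fixed element $s \in P \leftrightarrow W \in M$, there is a natural bijection between saturated chains $\hat{0} = s_0 \lessdot s_1 \lessdot \cdots \lessdot s_k = s$ in $P$ and ordered factorizations $W = a_1 a_2 \cdots a_k$ with each $a_i \in \mathcal{I}_M$. In one direction, set $a_i \coloneqq \col_P(s_{i-1}, s_i) \in \mathcal{A}_P = \mathcal{I}_M$; in the other, set $s_i \coloneqq a_1 a_2 \cdots a_i$. That these are mutually inverse follows from left-cancellativity of $M$ and the explicit description of $\mathcal{C}(M)$ in \Cref{lemma:LCIF monoid to posets}.

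For the first assertion, $P$ is $\N$-graded precisely when every pair of saturated chains in $P$ with the same two endpoints has the same length. Via the above bijection, this is equivalent to the requirement that, for every $W \in M$, all factorizations of $W$ into irreducibles have a common length, which is exactly the definition of a homogeneous monoid. For the second assertion, the same bijection identifies the $k$-th layer $P_k$ with $\mathbf{W}^M_k$, so finite type is equivalent to $|\mathbf{W}^M_k| < \infty$ for every $k \in \N$. In particular, taking $k=1$ gives $|\mathcal{I}_M| = |\mathcal{A}_P| = |P_1| < \infty$; since an atomic monoid is generated by its irreducibles, $M$ is finitely generated. Conversely, if $M$ is generated by $n$ irreducibles, then every element of length $k$ admits a word representation of length $k$ in $n$ letters, so $|\mathbf{W}^M_k| \le n^k$ and $\tilde{P}$ is of finite type.

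The argument is essentially a careful translation of definitions once the chain-to-factorization dictionary is in place, so no serious obstacle is expected. The one mildly subtle point worth spelling out is that in an atomic monoid the irreducibles are forced to lie in every generating set (an irreducible cannot be written as a nontrivial product of non-invertible elements, hence cannot be obtained from other generators), so ``finitely generated'' is equivalent to ``finitely many irreducibles''; this makes the equivalence in the finite-type half unambiguous.
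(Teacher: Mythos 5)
Your proposal is correct, and it follows the route the paper intends: the paper states this corollary as a direct restriction of \Cref{thm:corrsp for finitary upho} (via \Cref{lemma:poset to LCIF monoid} and \Cref{lemma:LCIF monoid to posets}) and omits the details, which are exactly the chain-to-factorization dictionary and the finite-type/finitely-generated matching you spell out. Your added observation that in an atomic invertible-free monoid every generating set contains all irreducibles is the right way to make the second assertion unambiguous, and it is consistent with how the paper uses $\mathcal{A}_P=\mathcal{I}_M$ and $P_k\leftrightarrow\mathbf{W}^M_k$ elsewhere.
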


    Furthermore, the bijection can be generalized to semi-upho posets. 

    \begin{definition}
            An atomic \emph{LCIF $0$-monoid} is defined as an atomic LCIF monoid with an additional zero element $0$ and certain relations of the form $W_i = 0$, where each $W_i$ is an element of the original atomic LCIF monoid. We refer to these relations as \emph{$0$-defining relations}. Similarly, an \emph{LCH $0$-monoid} is an LCH monoid with an additional zero element $0$ and some $0$-defining relations, and a \emph{free $0$-monoid} is a free monoid with an additional zero element $0$ and some $0$-defining relations.
    \end{definition}

    \begin{notation}
        Given an LCH $0$-monoid $M$, we use $\mathbf{W}^M_k$ to denote the set of all distinct \textbf{nonzero} elements of length $k$ in $M$.
    \end{notation}
    
    \begin{remark}
        It is worth mentioning that, in general, atmoic LCIF $0$-monoids are not left-cancellative monoids, LCH $0$-monoids are neither left-cancellative monoids nor homogeneous monoids, and free $0$-monoids are not free monoids. 
    \end{remark}
    
    The mutually inverse mappings $\mathcal{M}_0$ and $\mathcal{\tilde{S}}$ between finitary colored semi-upho posets and atomic LCIF $0$-monoids are defined similarly to $\mathcal{M}$ and $\mathcal{\tilde{P}}$, respectively. The only difference is that the elements in the posets now correspond to \textbf{nonzero} elements in the monoids.
    \begin{cor}\label{cor:corrsp for semi-upho}
         The mutually inverse mappings $\mathcal{M}_0$ and $\mathcal{\tilde{S}}$ give a bijection between finitary colored semi-upho posets and atomic LCIF $0$-monoids. Moreover, $\N$-graded colored semi-upho posets correspond to LCH $0$-monoids, finite type $\N$-graded colored semi-upho posets correspond to finitely generated LCH $0$-monoids, and tree-like colored semi-upho posets correspond to free $0$-monoids.
    \end{cor}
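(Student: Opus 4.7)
The plan is to adapt the bijection $\mathcal{M} \leftrightarrow \tilde{\mathcal{P}}$ established for finitary colored upho posets to the semi-upho setting, using the zero element of the monoid precisely to encode those elements of a hypothetical ambient upho poset that are ``cut off'' in the semi-upho poset. The guiding principle is: the isoembedding $\psi_s \colon V_s \hookrightarrow S$ need not be a bijection, and a product $s \cdot t$ in the monoid should be $\psi_s^{-1}(t)$ when $t \in \psi_s(V_s)$ and should be $0$ otherwise.

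To construct $\mathcal{M}_0$, I start from a finitary colored semi-upho poset $\tilde{S} = (S, \col_S)$ and let $\mathcal{M}_0(\tilde{S})$ consist of the elements of $S$ together with an adjoined zero $0$. The multiplication is defined by $s \cdot t \coloneqq \psi_s^{-1}(t)$ if $t$ lies in the image of $\psi_s$, and $s \cdot t \coloneqq 0$ otherwise; all products involving $0$ collapse to $0$. The same uniqueness-of-isoembedding argument used in Lemma \ref{lemma:poset to LCIF monoid} (comparing $\psi_{s_1 s_2}^{-1}$ with $\psi_{s_1}^{-1} \circ \psi_{s_2}^{-1}$ on their common domain) shows that multiplication is associative, even in the presence of $0$, since both sides agree wherever defined and are $0$ elsewhere. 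Left-cancellativity (on nonzero elements), invertible-freeness, atomicity, and $\mathcal{A}_S = \mathcal{I}_{\mathcal{M}_0(\tilde{S})}$ follow by the same arguments as in the upho case, because they only use local properties of $\psi_s$ as an isoembedding, not its surjectivity. The $0$-defining relations are exactly the minimal products $s \cdot t = 0$, which occur precisely when the cover $(s, \psi_s(t))$ would exist in an ambient upho poset but is missing from $S$.

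Conversely, given an atomic LCIF $0$-monoid $M$, define $\tilde{\mathcal{S}}(M)$ on the set of nonzero elements of $M$ with the partial order given by left-divisibility, and color each cover $(a, b) \in \mathcal{E}$ by the unique $c \in \mathcal{I}_M$ with $ac = b$. The partial order is well-defined by the same anti-symmetry argument as in Lemma \ref{lemma:LCIF monoid to posets}, using that $M$ is left-cancellative and invertible-free on nonzero elements. For each $W \in M \setminus \{0\}$, the map $\psi_W(WX) = X$ is defined on $V_W$ (which consists exactly of nonzero elements $WX$), is injective by left-cancellativity, and preserves the cover relation and color in both directions; it is an isoembedding rather than an isomorphism precisely because some products $Xc$ may equal $0$ in $M$ while the corresponding $WXc$ are also $0$, so certain atoms out of $X$ in $\tilde{\mathcal{S}}(M)$ that would cover $\psi_W(X)$ are absent. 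Finitariness follows from atomicity. The verifications $\tilde{\mathcal{S}}(\mathcal{M}_0(\tilde{S})) \cong \tilde{S}$ and $\mathcal{M}_0(\tilde{\mathcal{S}}(M)) \cong M$ are then routine, mimicking the proof of Theorem \ref{thm:corrsp for finitary upho}.

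For the refinements: $\mathbb{N}$-graded colored semi-upho posets correspond to $0$-monoids in which every nonzero element has a well-defined length, i.e.\ LCH $0$-monoids, since a rank function on $S$ translates into a length function on generators and conversely; finite type corresponds to the generating set $\mathcal{I}_M$ being finite, since $\left| S_k \right| = \left| \mathbf{W}^M_k \right|$ is bounded by $\left| \mathcal{I}_M \right|^k$. The tree-like case is the cleanest: $S$ tree-like means no two elements share a common upper bound, which by left-divisibility translates to the statement that no nontrivial equalities $a_1 \cdots a_n = b_1 \cdots b_m$ hold among nonzero products; i.e.\ the only defining relations are the $0$-defining ones, so $M$ is a free $0$-monoid. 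The main obstacle I anticipate is the careful handling of associativity and the isoembedding property in the presence of $0$: one must verify that whenever $\psi_s^{-1}(t)$ is undefined, \emph{every} further multiplication by $s \cdot t$ on the left is forced to be $0$ in a manner consistent with the $0$-defining relations, and conversely that the cut-off structure of $S$ is faithfully reproduced by the $0$-relations of $M$ with no spurious identifications introduced.
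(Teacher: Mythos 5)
Your plan follows the same route the paper sketches (transport the upho correspondence of \Cref{lemma:poset to LCIF monoid} and \Cref{lemma:LCIF monoid to posets}, using $0$ to record the missing parts of each filter), but the one step you defer to the end --- associativity of the product in the presence of $0$ --- is a genuine gap, and it is exactly where the semi-upho case differs from the upho case. With your product $s\cdot t=\psi_s^{-1}(t)$ (and $s\cdot t=0$ when $t$ is outside the image of $\psi_s$), the law $(s_1s_2)t=s_1(s_2t)$ forces the following: whenever $s_2$ does not lie in the image of $\psi_{s_1}$, no element of $V_{s_2}$ may lie in the image of $\psi_{s_1}$; equivalently, the image of every $\psi_s$ must be a down-set of $S$. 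This is not a ``local property of $\psi_s$ as an isoembedding,'' it does not follow from the axioms of a colored semi-upho poset, and your argument never establishes it. Concretely, let $S$ have $\hat{0}$, atoms $a_1,a_2,a_3$, a rank-two element $c$ covering both $a_1$ and $a_2$, a rank-two element $m$ covering $a_3$, and a rank-three element $e$ covering $m$, with colors $\col_S(\hat{0},a_i)=a_i$, $\col_S(a_1,c)=\col_S(a_3,m)=\col_S(m,e)=a_1$, and $\col_S(a_2,c)=a_2$. All conditions for a finitary colored semi-upho poset are satisfied, and the unique color-compatible isoembedding $\psi_{a_3}$ sends $m\mapsto a_1$ and $e\mapsto c$, so its image $\{\hat{0},a_1,c\}$ contains $c$ but not its lower cover $a_2$. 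Then $a_2\cdot a_2=c$ and $a_3\cdot c=e$, while $a_3\cdot a_2=0$, so $(a_3a_2)a_2=0\neq e=a_3(a_2a_2)$: your multiplication is not associative, so ``the same uniqueness argument'' cannot carry the construction through as written.

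Note also that the issue is not cosmetic. For any atomic LCIF $0$-monoid $M$, the canonical isoembedding of $V_W$ in $\tilde{\mathcal{S}}(M)$ is $WX\mapsto X$, whose image $\{X\neq 0 \mid WX\neq 0\}$ is automatically a down-set (if $WYZ\neq 0$ then $WY\neq 0$); so the colored poset above cannot arise as $\tilde{\mathcal{S}}(M)$ at all, and any complete proof of the corollary must either derive this down-closedness from a sharper reading of the definitions or build it into the notion of (colored) semi-upho poset, in line with the ``upho poset with parts cut off'' intuition. Relatedly, the paper defines an atomic LCIF $0$-monoid not by internal axioms such as ``left-cancellative on nonzero elements'' but as an atomic LCIF monoid with an adjoined $0$ and $0$-defining relations, so even after fixing associativity you still owe a presentation of $\mathcal{M}_0(\tilde{S})$ in that form; this is again the same missing ingredient in disguise, and the $\N$-graded, finite-type and tree-like refinements you describe all sit on top of it. Since the paper itself leaves these details to the reader, the burden of supplying them is precisely the content of this corollary, and your proposal currently does not discharge it.
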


The proof is similar to \Cref{thm:corrsp for finitary upho}, and the details are left to the readers.

\subsection{The Forgetful Mapping and Regularity of Upho Posets}\label{subsec:forgetful and regularity}

Through the \emph{forgetful mapping}, we establish an association between monoids and finitary colored upho posets with finitary upho posets.

The \emph{forgetful mapping} $\mathfrak{F}$ maps a finitary colored semi-upho poset $\tilde{S}=(S, \col_S)$ to $S$. This mapping is well-defined on colored upho posets, as colored upho posets are colored semi-upho posets. It can be conceptualized as forgetting the colors in the colored semi-upho posets.

\begin{remark}
    \Cref{fig:colorex} demonstrate that $\mathfrak{F}$ is not injective. Moreover, readers could verify that $\langle x_1,x_2\mid x_1^2=x_2^2 \rangle$ is the LCH monoid corresponding to the left colored upho poset, while $\langle x_1,x_2\mid x_1 x_2=x_2 x_1 \rangle$ corresponds to the right. These monoids are not isomorphic.

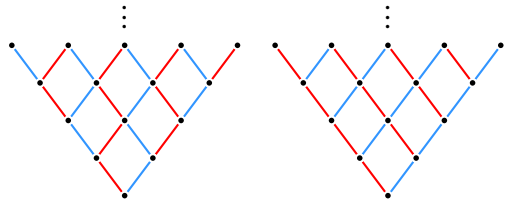
\begin{figure}[H]
\centering
    \begin{tikzpicture}[scale=0.5]
	\begin{pgfonlayer}{nodelayer}
		\node [style=vert] (0) at (0, -1) {};
		\node [style=vert] (1) at (-0.75, 0) {};
		\node [style=vert] (2) at (0.75, 0) {};
		\node [style=vert] (3) at (-1.5, 1) {};
		\node [style=vert] (4) at (1.5, 1) {};
		\node [style=vert] (5) at (-2.25, 2) {};
		\node [style=vert] (6) at (2.25, 2) {};
		\node [style=vert] (7) at (-3, 3) {};
		\node [style=vert] (8) at (3, 3) {};
		\node [style=vert] (9) at (0, 1) {};
		\node [style=vert] (10) at (-0.75, 2) {};
		\node [style=vert] (11) at (0.75, 2) {};
		\node [style=vert] (12) at (-1.5, 3) {};
		\node [style=vert] (13) at (1.5, 3) {};
		\node [style=vert] (14) at (0, 3) {};
		\node [style=vert] (15) at (7, -1) {};
		\node [style=vert] (16) at (6.25, 0) {};
		\node [style=vert] (17) at (7.75, 0) {};
		\node [style=vert] (18) at (5.5, 1) {};
		\node [style=vert] (19) at (8.5, 1) {};
		\node [style=vert] (20) at (4.75, 2) {};
		\node [style=vert] (21) at (9.25, 2) {};
		\node [style=vert] (22) at (4, 3) {};
		\node [style=vert] (23) at (10, 3) {};
		\node [style=vert] (24) at (7, 1) {};
		\node [style=vert] (25) at (6.25, 2) {};
		\node [style=vert] (26) at (7.75, 2) {};
		\node [style=vert] (27) at (5.5, 3) {};
		\node [style=vert] (28) at (8.5, 3) {};
		\node [style=vert] (29) at (7, 3) {};
		\node [style=none] (30) at (0, 3.5) {.};
		\node [style=none] (31) at (0, 3.75) {.};
		\node [style=none] (32) at (0, 4) {.};
		\node [style=none] (33) at (7, 3.5) {.};
		\node [style=none] (34) at (7, 3.75) {.};
		\node [style=none] (35) at (7, 4) {.};
	\end{pgfonlayer}
	\begin{pgfonlayer}{edgelayer}
		\draw [style=red] (0) to (1);
		\draw [style=red] (9) to (1);
		\draw [style=red] (9) to (10);
		\draw [style=red] (10) to (14);
		\draw [style=red] (12) to (5);
		\draw [style=red] (5) to (3);
		\draw [style=red] (13) to (11);
		\draw [style=red] (11) to (4);
		\draw [style=red] (4) to (2);
		\draw [style=red] (8) to (6);
		\draw [style=blue] (2) to (0);
		\draw [style=blue] (9) to (2);
		\draw [style=blue] (11) to (9);
		\draw [style=blue] (14) to (11);
		\draw [style=blue] (13) to (6);
		\draw [style=blue] (6) to (4);
		\draw [style=blue] (12) to (10);
		\draw [style=blue] (10) to (3);
		\draw [style=blue] (3) to (1);
		\draw [style=blue] (7) to (5);
		\draw [style=red] (15) to (16);
		\draw [style=red] (24) to (25);
		\draw [style=red] (20) to (18);
		\draw [style=red] (26) to (19);
		\draw [style=blue] (17) to (15);
		\draw [style=blue] (26) to (24);
		\draw [style=blue] (21) to (19);
		\draw [style=blue] (25) to (18);
		\draw [style=red] (18) to (16);
		\draw [style=red] (22) to (20);
		\draw [style=red] (27) to (25);
		\draw [style=red] (24) to (17);
		\draw [style=red] (26) to (29);
		\draw [style=red] (28) to (21);
		\draw [style=blue] (19) to (17);
		\draw [style=blue] (23) to (21);
		\draw [style=blue] (24) to (16);
		\draw [style=blue] (28) to (26);
		\draw [style=blue] (29) to (25);
		\draw [style=blue] (27) to (20);
	\end{pgfonlayer}
\end{tikzpicture}
    \caption{The forgetful mapping $\mathfrak{F}$ is not injective.}
    \label{fig:colorex}
\end{figure}
\end{remark}

Another point of inquiry is the surjectivity of the forgetful mapping, which remains an open question. In particular, the surjectivity of the restriction of the forgetful mapping from colored upho posets to upho posets also remains open.

Now we define \emph{regular semi-upho posets} to be the semi-upho posets in $\mathbf{im}\mathfrak{F}$, and \emph{regular upho posets} to be the upho posets in $\mathbf{im}\mathfrak{F}$. Moreover, an upho function is called \emph{regular upho} if it is the rank-generating function of a regular upho poset. The surjectivity of the forgetful mapping can be restated as the following conjectures.

\begin{conj}\label{conj:upho regular}
    All finitary upho posets are regular.
\end{conj}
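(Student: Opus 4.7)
The plan is to exploit the correspondence of \Cref{thm: intro corresp} to reformulate the question: a finitary upho poset $P$ is regular if and only if its underlying set admits an atomic LCIF monoid structure whose left-divisibility order recovers $P$. Unpacking, this amounts to producing a coherent family of isomorphisms $\{\phi_s : V_s \xrightarrow{\sim} P\}_{s \in P}$ with $\phi_{\hat{0}} = \mathrm{id}_P$ and satisfying the cocycle condition
\[
\phi_{st} \;=\; \phi_t \circ \phi_s\big|_{V_{st}}, \qquad \text{where } st := \phi_s^{-1}(t).
\]
The coloring is then recovered by setting $\col_P(u, v) := \phi_u(v)$ for each cover $u \lessdot v$, and the normalization $\col_P(\hat{0}, a) = a$ for $a \in \mathcal{A}_P$ is forced by $\phi_{\hat{0}} = \mathrm{id}$.

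Since $P$ is finitary, each element has a well-defined finite height, so a natural approach is to build the family $\{\phi_s\}$ by induction on the height of $s$. For atoms $a \in \mathcal{A}_P$, choose any isomorphism $\phi_a : V_a \to P$ (at least one exists by the upho property). For $s$ of height $n \geq 2$, pick some atom $a \leq s$; the image $s' := \phi_a(s) \in P$ has height $n - 1$, so $\phi_{s'}$ is already defined, and one sets $\phi_s := \phi_{s'} \circ \phi_a\big|_{V_s}$. This definition is forced by the cocycle condition once $\phi_a$ and $\phi_{s'}$ are fixed, making it the only candidate consistent with the inductive data.

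The main obstacle I expect is well-definedness of $\phi_s$ when more than one atom lies below $s$. Given atoms $a, b \leq s$, one must verify
\[
\phi_{\phi_a(s)} \circ \phi_a\big|_{V_s} \;=\; \phi_{\phi_b(s)} \circ \phi_b\big|_{V_s},
\]
a strong non-abelian cocycle constraint that intertwines the freely made choices at height $1$ with the combinatorial geometry of $P$. To attack it, I would try to make the initial selections $\{\phi_a\}_{a \in \mathcal{A}_P}$ compatibly with the $\mathrm{Aut}(P)$-torsor structure on each set of isomorphisms $V_a \to P$, or equivalently, reinterpret coherent families $\{\phi_s\}$ as sections of an $\mathrm{Aut}(P)$-principal bundle over a suitable groupoid built from $P$, reducing regularity to the vanishing of an obstruction class in non-abelian cohomology.

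As a parallel track I would confirm the conjecture first in several restricted settings where the obstruction visibly evaporates: $\mathbb{N}$-graded upho posets of finite type, where each layer is finite and the cocycle conditions can be checked inductively layer by layer; planar upho posets, where the structure is sharply constrained by the classification in \cite{MR4318812}; and upho posets with trivial automorphism group, for which every $\phi_s$, if it exists, is \emph{unique}, so the cocycle condition is automatic and only existence of the individual $\phi_s$ must be established. Success or failure in these testbeds should either suggest a uniform construction extending to the general case or, failing that, illuminate the data needed to engineer a subtle counterexample to the full conjecture.
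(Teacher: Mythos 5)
The statement you are addressing is \Cref{conj:upho regular}, which the paper leaves as an open conjecture: it gives no proof, explicitly notes that the surjectivity of the forgetful mapping is unknown, and states in \Cref{sec:7} that even the finite type $\N$-graded case (\Cref{conj:n upho regular}) remains open. Your text is accordingly a research plan rather than a proof, and it contains a genuine gap at exactly the point where the difficulty lies. Your reformulation is correct: regularity of a finitary upho poset $P$ is equivalent to producing a family $\{\phi_s\colon V_s\xrightarrow{\sim}P\}$ satisfying the cocycle condition, equivalently a coloring $\col_P(u,v)\coloneqq\phi_u(v)$ compatible with every $\phi_s$; this is just unwinding the definition of a colored upho poset together with \Cref{thm: intro corresp}. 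But the central step --- well-definedness of $\phi_s$ when several atoms lie below $s$, i.e.\ the non-abelian compatibility constraint you isolate --- is not established; you only propose to ``reduce regularity to the vanishing of an obstruction class'' without constructing the relevant cohomological framework, showing the obstruction is well defined, or proving it vanishes. Nothing in the upho axioms obviously forces compatible choices of the $\phi_a$ at the atoms, and this is precisely why the paper records the statement as a conjecture.

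Two further points. First, your claim that the obstruction ``visibly evaporates'' for finite type $\N$-graded upho posets, where one could check the cocycle conditions ``layer by layer,'' is unsupported and conflicts with the paper, which singles out exactly this case as open (\Cref{conj:n upho regular}); finiteness of the layers does not by itself let you make the infinitely many choices coherently, since a choice at height $n$ constrains choices at all greater heights. Second, a smaller technical issue: in your induction on height, for an atom $a\le s$ the element $\phi_a(s)$ has height equal to the height of $s$ inside $V_a$, which is at least, but not necessarily equal to, $\mathrm{ht}(s)-1$; so you must choose $a$ on a saturated chain of minimal length (or induct along a different well-founded relation) for $\phi_{\phi_a(s)}$ to be available. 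This is fixable, but the main compatibility gap is not, and as it stands the proposal does not prove the conjecture.
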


\begin{conj}\label{conj:semiupho regular}
    All finitary semi-upho posets are regular.
\end{conj}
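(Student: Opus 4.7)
The plan rests on the monoid correspondence of Corollary~\ref{cor:corrsp for semi-upho}: a finitary semi-upho poset $S$ is regular precisely when one can exhibit a compatible system of isoembeddings $\{\psi_s : V_s \hookrightarrow S\}_{s \in S}$ satisfying (i) $\psi_{\hat 0} = \mathrm{id}_S$, and (ii) the cocycle condition $\psi_u(v) = \psi_{\psi_{s'}(u)}\bigl(\psi_{s'}(v)\bigr)$ whenever $s' \le u \lessdot v$. The color map is then recovered by setting $\col_S(u,v) := \psi_u(v)$, which automatically lands in $\mathcal{A}_S$, and the associated atomic LCIF $0$-monoid is obtained by applying $\mathcal{M}_0$ to the resulting colored semi-upho poset.

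I would attempt an inductive construction on the height of $s$. The base case sets $\psi_{\hat 0} = \mathrm{id}_S$. For the inductive step, suppose $\psi_t$ has been constructed for every $t$ of height less than $n$, and let $s \in S$ have height $n \ge 1$. Choose a parent $s' \lessdot s$, set $a := \psi_{s'}(s) \in \mathcal{A}_S$, and note that $\psi_a$ is already defined since $a$ has height $1$. The cocycle condition \emph{forces} the definition $\psi_s := \psi_a^{-1} \circ \psi_{s'}\bigl|_{V_s}$; one must verify that $\psi_{s'}(V_s) \subseteq \mathrm{im}(\psi_a) = V_a$ (which follows from order-preservation of $\psi_{s'}$ together with $\psi_{s'}(s) = a$) and that the resulting composition is an isoembedding $V_s \hookrightarrow S$. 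As a preliminary reduction, I would first dispatch the tree-like case (\Cref{eg:tree-like sem-upho}), where each element has a unique parent, the coherence issue disappears, and the associated free $0$-monoid can be constructed by directly labeling the Hasse tree with atoms of~$S$.

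The crux, and the main obstacle, is proving that $\psi_s$ is \emph{independent of the chosen parent} $s'$: two distinct covers $s_1', s_2' \lessdot s$ yield two candidate formulas for $\psi_s$, and these must agree globally on $V_s$. A natural strategy is an unfolding construction, lifting $S$ to a tree-like ``universal cover'' $\tilde S$, coloring $\tilde S$ via the tree-like case, and then descending the coloring back to $S$; the descent step requires reconciling the labels assigned by the various preimages of each multi-parent vertex. Rigorously executing this reconciliation appears to demand a genuinely new structural property of finitary semi-upho posets beyond what the bare definition provides --- equivalently, an \emph{a priori} monoid-type structure realizing the order as left-divisibility --- which the authors explicitly flag as open. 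A potential fallback is to produce $\psi_s$ by a compactness/limiting argument along finite truncations of $S$, selecting at each stage a lex-minimal isoembedding with respect to a fixed total order on $\mathcal{A}_S$ and passing to a diagonal subsequence; whether finite-type approximations inherit enough rigidity for the limit to satisfy the cocycle condition globally is, however, far from clear, and this is where any proof attempt seems most likely to founder.
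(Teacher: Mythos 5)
The statement you are asked to prove is not a theorem of the paper at all: it is stated as \Cref{conj:semiupho regular} and explicitly left open (the authors note that even the finite-type $\N$-graded case, \Cref{conj:n semiupho regular}, is unresolved), so there is no proof in the paper to compare against. Your proposal does not close the conjecture either, and to your credit you say so: the entire argument hinges on showing that the inductively defined $\psi_s$ is independent of the chosen parent $s'\lessdot s$ (equivalently, that the coloring produced on a tree-like unfolding descends to $S$), and you supply no mechanism for this reconciliation beyond observing that one seems to be needed. That coherence step is exactly the content of the conjecture --- it amounts to producing an a priori $0$-monoid structure realizing $S$ by left-divisibility --- so the proposal is an (honest) reduction of the problem to itself, not a proof. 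The fallback via lex-minimal isoembeddings on finite truncations plus a diagonal argument has the same defect: nothing guarantees that the limiting system satisfies the compatibility condition on elements with several lower covers, and no rigidity statement about finite truncations of semi-upho posets is proved or available in the paper.

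There are also local errors in the inductive step as written. Since $\psi_a\colon V_a\hookrightarrow S$ is only an isoembedding with $\psi_a(a)=\hat 0$, its image is in general neither $V_a$ nor all of $S$, so the asserted equality $\mathrm{im}(\psi_a)=V_a$ is false and $\psi_a^{-1}\circ\psi_{s'}\big|_{V_s}$ need not be defined; moreover, even where defined, that composite would send $s$ to $a$ rather than to $\hat 0$, so it is not a candidate for $\psi_s$. The formula forced by compatibility is $\psi_s:=\psi_a\circ\psi_{s'}\big|_{V_s}$, using that $\psi_{s'}(V_s)\subseteq V_a$ and that saturated chains in $V_a$ agree with saturated chains in $S$. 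Fixing this makes the tree-like case go through (consistent with \Cref{lemma: trees are regular}, which the paper does prove), but it does not touch the multi-parent coherence problem, which remains the genuine gap.
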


\subsection{Fundamental Applications}
The correspondence in \Cref{thm:corrsp for finitary upho} establishes a powerful connection between the algebraic and the combinatoric perspectives of upho posets. Here, we provide some fundamental applications to give the readers a quick taste.

On one hand, \Cref{thm:corrsp for finitary upho} provides combinatorial insight into problems such as the enumeration of elements in left-cancellative monoids. The following lemma is one such example.

\begin{lemma}
    Given an LCH monoid $M$, recall that we use $\mathbf{W}^M_{k}$ to denote the elements of length $k$ in $M$. Suppose $|\mathbf{W}^M_{k+1}|=|\mathbf{W}^M_{k}|$ for some $k\in \N$, then we have $|\mathbf{W}^M_n|=|\mathbf{W}^M_k|$ for all $n\ge k$.
\end{lemma}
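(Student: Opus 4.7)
The plan is to transfer the problem to the corresponding $\N$-graded colored upho poset $P = \mathcal{P}(M)$ via \Cref{cor:corrsp for N-graded upho}, under which $|\mathbf{W}^M_n| = |P_n|$ and the irreducibles $\mathcal{I}_M$ are identified with the atoms $\mathcal{A}_P$. The hypothesis becomes $|P_{k+1}| = |P_k|$, and we must show $|P_n| = |P_k|$ for every $n \ge k$. The degenerate case $|P_k| = 0$ is immediate from homogeneity (no element of length $n \ge k$ can exist if none has length $k$), so we may assume $|P_k|$ is a positive integer, and in particular $\mathcal{I}_M \ne \varnothing$.

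The main tool is the upho isomorphism $\phi_a : V_a \xrightarrow{\sim} P$ associated to each atom $a \in \mathcal{A}_P$. Because $\phi_a$ shifts ranks by one, its inverse gives a bijection $P_n \xrightarrow{\sim} V_a \cap P_{n+1}$, which on the monoid side is exactly the left-multiplication map $L_a : W \mapsto aW$; this map is injective into $\mathbf{W}^M_{n+1}$ by left-cancellation of $M$. Under the hypothesis, the finite injection $L_a : \mathbf{W}^M_k \hookrightarrow \mathbf{W}^M_{k+1}$ is therefore forced to be a bijection for every atom $a$, which translates into the geometric statement that every $X \in P_{k+1}$ lies above every atom of $P$, i.e. $a \le_P X$ for all $a \in \mathcal{A}_P$.

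The crux of the proof is to promote this divisibility one level higher. Given any $V \in P_{k+2}$, write $V = w_1 w_2 \cdots w_{k+2}$ with each $w_i \in \mathcal{I}_M$ and set $X = w_1 w_2 \cdots w_{k+1} \in P_{k+1}$, so that $X \lessdot_P V$. For any atom $a$, the previous paragraph gives $a \le_P X$, and transitivity of $\le_P$ yields $a \le_P V$. Hence $V_a \cap P_{k+2} = P_{k+2}$ for every atom $a$, and combining this with $|V_a \cap P_{k+2}| = |P_{k+1}|$ (the rank-shift bijection applied at level $k+2$) gives $|P_{k+2}| = |P_{k+1}|$. Iterating the exact same argument with $k$ replaced by $k+1, k+2, \ldots$ yields $|P_n| = |P_k|$ for every $n \ge k$. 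There is no real obstacle once one adopts the poset viewpoint: the only subtlety is recognizing that the numerical hypothesis $|P_{k+1}|=|P_k|$ is equivalent to divisibility of every $X \in P_{k+1}$ by every atom, after which a one-line transitivity argument propagates the property one layer upward and the induction runs freely.
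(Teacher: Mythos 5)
Your proposal is correct and follows essentially the same route as the paper: the upho isomorphism at an atom $a$ gives a rank-shift bijection $P_n \xrightarrow{\sim} V_a \cap P_{n+1}$, the hypothesis $|P_{k+1}|=|P_k|$ forces $P_{k+1}\subseteq V_a$, and then every element of $P_{k+2}$ sits above an element of $P_{k+1}$, hence above $a$, giving $|P_{k+2}|=|P_{k+1}|$ and the induction. The only cosmetic differences are that you phrase the rank-shift map as left-multiplication in the monoid and treat the $|P_k|=0$ case separately, whereas the paper works with a single atom purely on the poset side.
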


\begin{proof}
By \Cref{cor:corrsp for N-graded upho}, it suffices to show that for a regular $\N$-graded upho poset $P$, if $|P_{k+1}| = |P_k|$, then $|P_n| = |P_k|$ for all $n \ge k$. In fact, this can be proved easily for all $\N$-graded upho posets. It suffices to show that if $|P_{k+1}| = |P_k| = r$, then $|P_{k+2}| = r$. Take an atom $s \in \mathcal{A}_P$. The isomorphism $V_s \xrightarrow{\sim} P$ implies that the number of elements in both the $k$-th and $(k+1)$-th layers above $s$ in $V_s$ is $r$. Moreover, since $|P_{k+1}| = r$, the elements in the $k$-th layer above $s$ in $V_s$ are precisely all these $r$ elements in $P_{k+1}$. Consequently, as the $(k+1)$-th layer above $s$ in $V_s$ also has $r$ elements, there are exactly $r$ elements in $P$ covering the $r$ elements in $P_{k+1}$, which implies $|P_{k+2}| = r$, as desired.
\end{proof}

On the other hand, \Cref{thm:corrsp for finitary upho} enables concrete calculations on upho posets using monoids from an algebraic perspective. Specifically, we can construct a variety of well-defined upho posets by showing the corresponding monoid is left-cancellative, as illustrated in \Cref{eg:not N graded} and \Cref{eg:not finite type}. Furthermore, our proof of \Cref{thm:main} in \Cref{sec:6} is entirely based on this method. 

Here, we introduce \emph{head-changing monoids}, the tool we use to prove \Cref{prop:type II}, an exceptional case of \Cref{thm:main}. Notably, we conjecture that all regular upho functions can be derived from such monoids, as discussed in detail in \Cref{subsec: char of upho function}.

\begin{definition}\label{def:head-changing}
    Let $M=\langle \mathcal{I}_{M}\mid R_M\rangle$ be an atomic monoid. A \emph{head-changing relation} is a defining relation $xW = yW$, where $x,y\in \mathcal{I}_{M}$ and $W\in M$. If all defining relations in $R_M$ are \emph{head-changing relations}, then such $M$ is called a head-changing monoid.
\end{definition}

\begin{lemma}\label{lemma:head-changing upho}
    Any head-changing monoid $M=\langle \mathcal{I}_{M}\mid R_M\rangle$ is an LCH monoid, and the corresponding poset $\mathcal{\tilde{P}}(M)$ is an $\N$-graded colored upho poset.
\end{lemma}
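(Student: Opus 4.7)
The plan has three steps: first establish that $M$ is homogeneous, then establish that $M$ is left-cancellative, and finally invoke \Cref{cor:corrsp for N-graded upho} to conclude that $\mathcal{\tilde{P}}(M)$ is an $\N$-graded colored upho poset.

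Homogeneity is immediate from the form of the defining relations: a head-changing relation $xW = yW$ has both sides of equal length $1 + \ell(W)$, so length is preserved under $R_M^c$. Hence $M$ is homogeneous with $\mathcal{I}_M$ as its irreducibles, and as noted in the excerpt, homogeneity automatically entails invertible-freeness.

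The heart of the argument is left-cancellativity. By induction on the length of the cancelled factor, it suffices to show that for $a \in \mathcal{I}_M$, the equation $ax = ay$ in $M$ implies $x = y$. The key observation is that for any defining relation $\alpha V = \beta V$, an elementary $R_M$-transition replaces an occurrence of the word $\alpha V$ inside a larger word by $\beta V$, and thereby modifies exactly one letter, namely the one at the starting position of this occurrence. Fix representative words and invoke \Cref{prop:monoidtrans} to obtain a finite chain $Z_0 \equiv ax \to Z_1 \to \cdots \to Z_n \equiv ay$ of elementary transitions in $\mathbf{F}(\mathcal{I}_M)$. Decompose each $Z_i \equiv \gamma_i \tau_i$ into its first letter $\gamma_i$ and tail $\tau_i$. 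Each step $Z_i \to Z_{i+1}$ is of exactly one of two types: either the replacement is anchored at position $1$, in which case $\tau_i \equiv \tau_{i+1}$ while $\gamma_i$ may differ from $\gamma_{i+1}$; or the replacement is anchored at some position $\ge 2$, in which case $\gamma_i = \gamma_{i+1}$ and the shifted replacement furnishes an honest elementary transition $\tau_i \to \tau_{i+1}$ on the tails. Deleting the type-(i) steps yields a chain of elementary transitions from $\tau_0 \equiv x$ to $\tau_n \equiv y$, and \Cref{prop:monoidtrans} then gives $x = y$ in $M$, as desired.

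Once $M$ is known to be an LCH monoid, $\mathcal{\tilde{P}}(M)$ is automatically an $\N$-graded colored upho poset by \Cref{cor:corrsp for N-graded upho}. The main obstacle I expect is the position bookkeeping in step two: the dichotomy between head-changing and tail-changing transitions is intuitively clear from the shape $\alpha V = \beta V$ of the relations, but a clean argument must carefully verify that once the trivial (tail-fixing) steps are dropped, what remains really is a legitimate sequence of elementary $R_M$-transitions witnessing the equality $x = y$ in the free monoid quotient.
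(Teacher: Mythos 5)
Your proposal is correct and takes essentially the same route as the paper's proof: the paper also reduces to cancelling a single generator and, via \Cref{prop:monoidtrans}, classifies each elementary transition as either a head-switching step (which leaves the tail unchanged, since a head-changing relation alters only its first letter) or a step anchored in the tail (which descends to a legitimate elementary transition on the tails), and then discards the former to obtain a transition chain witnessing $W_1 = W_2$. The only cosmetic difference is that you make explicit the induction reducing cancellation of a general element to cancellation of a single irreducible, which the paper leaves implicit.
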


\begin{proof}
Since any defining relation is homogeneous, $M$ is homogeneous. Therefore, it suffices to show that $M$ is left-cancellative. Given any $W_1, W_2 \in \mathbf{F}(\mathcal{I}_{M})$ and $x \in \mathcal{I}_{M}$, we need to show that if $xW_1 = xW_2$ in $M$, then $W_1 = W_2$ in $M$.

By \Cref{prop:monoidtrans}, there exists a finite sequence  
\[
xW_1 \equiv y_1 V_1 \rightarrow y_2 V_2 \rightarrow \cdots \rightarrow y_n V_n \equiv xW_2
\]
where $y_1, y_2, \dots, y_n \in \mathcal{I}_{M}$ and $V_1, V_2, \dots, V_n \in \mathbf{F}(\mathcal{I}_{M})$, with each transition using a relation in $R_M$. We claim that each $V_i \rightarrow V_{i+1}$ is either an identity transition or uses a relation in $R_M$.

Suppose the transition $y_i V_i \rightarrow y_{i+1} V_{i+1}$ uses a relation that switches $y_i$. Then $V_i \equiv V_{i+1}$, making $V_i \rightarrow V_{i+1}$ an identity transition. Otherwise, if the transition $y_i V_i \rightarrow y_{i+1} V_{i+1}$ keeps $y_i$ fixed, then, as it uses a head-changing relation in $R_M$, $y_i$ is not involved in this relation. Thus, $y_i V_i \rightarrow y_{i+1} V_{i+1}$ induces a transition $V_i \rightarrow V_{i+1}$ using the same relation.

Therefore, the sequence 
\[
xW_1 \equiv y_1 V_1 \rightarrow y_2 V_2 \rightarrow \cdots \rightarrow y_n V_n \equiv xW_2
\]
induces another sequence 
\[
W_1 \equiv V_1 \rightarrow V_2 \rightarrow \cdots \rightarrow V_n \equiv W_2
\]
that only uses relations in $R_M$. By \Cref{prop:monoidtrans} again, we conclude that $W_1 = W_2$ in $M$.
\end{proof}

\section{Semi-Upho Posets and Their Rank-Generating Functions}\label{sec:4}

In this section, we develop a recursive framework using monoids to characterize all regular semi-upho functions and use it to prove \Cref{thm: log-concave rgf of semi-upho}.

\subsection{Regular Semi-Upho Posets and Tree-Like Semi-Upho Posets}\label{subsec: semi-upho spanning tree}

Tree-like semi-upho posets, as a special case of semi-upho posets, have a much simpler structure than arbitrary semi-upho posets. In this subsection, we prove that regular semi-upho functions are precisely the rank-generating functions of tree-like semi-upho posets. This simplification allows for a more accessible study of regular semi-upho functions.

\begin{prop}\label{prop:reg semiupho= tree gen}
    A formal power series $g(x)\in 1+x\Z_{\ge 0}[[x]$ is a regular semi-upho function if and only if it is the rank-generating function of a tree-like semi-upho poset.
\end{prop}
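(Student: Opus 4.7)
The plan is to prove the two directions separately, with the monoid correspondence \Cref{cor:corrsp for semi-upho} doing most of the work.

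For the \emph{only if} direction, starting from a regular finite type $\N$-graded semi-upho poset $S$ with rank-generating function $g(x)$ and a colored lift $\tilde S$, I will pass to the corresponding finitely generated LCH $0$-monoid $M = \mathcal{M}_0(\tilde{S})$. After fixing a total order on $\mathcal I_M$, I define $T \subseteq \mathbf F(\mathcal I_M)$ to be the set of lexicographically minimal word representations, in the sense of \Cref{def: k-lexi order of F_k}, of the nonzero elements of $M$. The key step is a short argument showing $T$ is closed under contiguous substrings: if $U = XVY \in T$ and $V \notin T$, then either $V = 0$ in $M$ (forcing the contradiction $U = 0$), or a lex-smaller $V'$ with $V' = V$ in $M$ produces a lex-smaller $XV'Y = U$ in $M$, contradicting the minimality of $U$. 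Taking the minimal elements $\{F_i\}$ of $\mathbf F(\mathcal I_M) \setminus T$ as $0$-defining relations yields a free $0$-monoid $M'$ whose nonzero elements coincide with $T$, so $M$ and $M'$ have identical rank-generating functions, and \Cref{cor:corrsp for semi-upho} then produces a tree-like colored semi-upho poset $\mathcal{\tilde{S}}(M')$ whose underlying poset has rank-generating function $g(x)$.

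For the \emph{if} direction, starting from a tree-like semi-upho poset $S$ with rank-generating function $g(x)$, I will show that $S$ is itself regular by constructing a compatible coloring on it. I set $\psi_{\hat 0} = \mathrm{id}_S$ and $\col(\hat 0, t) = t$ for each atom $t$, choose an arbitrary isoembedding $\psi_t : V_t \hookrightarrow S$ for each atom $t$ (which exists by the semi-upho hypothesis), and then for each $s$ of rank at least $2$ define
\[
\psi_s \coloneqq \psi_{\psi_{t_s}(s)} \circ \psi_{t_s}|_{V_s} : V_s \hookrightarrow S,
\]
where $t_s$ is the unique atom below $s$ supplied by the tree-like structure; the color of each cover $(s, w)$ is then declared to be $\psi_s(w)$. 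To verify the compatibility axiom $\col(u, v) = \col(\psi_s(u), \psi_s(v))$ for every $(u, v) \in \mathcal E_{V_s}$, I will induct on $\rho(u)$, exploiting the crucial observation that in a tree-like poset one has $t_s = t_u$ whenever $s \le u$, so the required identity descends to an analogous identity at rank $\rho(u) - 1$.

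The main obstacle, in my estimation, will be the compatibility verification in the \emph{if} direction. The recursive definition of $\psi_s$ is easy to write down, but checking that the colors it produces on overlapping subfilters $V_{s'} \subseteq V_s$ agree with those assigned directly through $\psi_{s'}$ requires careful bookkeeping of composed isoembeddings, and visibly relies on the uniqueness of $t_s$ afforded by the tree-like hypothesis. By contrast the \emph{only if} direction is a clean combinatorial argument about lex-minimal representatives and should go through without technical surprises.
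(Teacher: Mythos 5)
Your proposal is correct and takes essentially the same route as the paper, which splits the statement into exactly your two halves: your passage to lex-minimal word representatives of the LCH $0$-monoid and killing the minimal excluded words to get a free $0$-monoid is the paper's \Cref{lemma: regular semi-upho by tree} (with the factor-closedness of minimal representatives, which the paper leaves implicit, usefully spelled out), and your recursive coloring of a tree-like poset by composing the chosen atom isoembeddings is exactly \Cref{lemma: trees are regular}. The compatibility check you flag does go through as you sketch, since tree-likeness gives $t_s=t_u$ for $\hat 0<_S s\le_S u$ and the identity descends to rank $\rho(u)-1$.
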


We split the proof of \Cref{prop:reg semiupho= tree gen} into the following two lemmas.

\begin{lemma}\label{lemma: trees are regular}
    Any tree-like semi-upho poset $S$ is regular.
\end{lemma}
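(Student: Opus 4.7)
The plan is to exhibit a coloring $\col_S$ on $S$ making $(S, \col_S)$ a colored semi-upho poset, which by definition places $S$ in $\mathbf{im}\,\mathfrak{F}$. By \Cref{cor:corrsp for semi-upho} this is equivalent to presenting $S$ as the tree of nonzero elements of a free $0$-monoid on the generating set $\mathcal{A}_S$, but I will work directly on the poset side. The core idea is to choose, for each $s \in S$, an isoembedding $\psi_s: V_s \hookrightarrow S$ in a coherent way, by which I mean the cocycle identity
\[
\psi_u \;=\; \psi_{\psi_s(u)} \circ \psi_s|_{V_u}
\]
should hold for all $s \le u$. Once such a family exists, setting $\col_S(u,v) := \psi_u(v)$ on each cover $(u,v) \in \mathcal{E}_S$ delivers a value in $\mathcal{A}_S$, the condition $\col_S(\hat{0}, t) = t$ follows from taking $\psi_{\hat{0}} = \mathrm{id}_S$, and the cocycle directly yields the required invariance $\col_S(u,v) = \col_S(\psi_s(u), \psi_s(v))$ on $\mathcal{E}_{V_s}$.

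The construction of $\{\psi_s\}_{s \in S}$ proceeds by induction on the height of $s$. First I set $\psi_{\hat{0}} = \mathrm{id}_S$ and, for each atom $t \in \mathcal{A}_S$, pick an arbitrary isoembedding $\psi_t: V_t \hookrightarrow S$ guaranteed by the semi-upho property. For $s$ of height $h \ge 2$, the tree-like hypothesis ensures $s$ has a unique parent $p(s)$; since $\psi_{p(s)}$ is an isoembedding sending $p(s) \mapsto \hat{0}$, the image $\psi_{p(s)}(s)$ is an atom of $S$, and so $\psi_{\psi_{p(s)}(s)}$ has already been defined at the base step. I then put
\[
\psi_s \;:=\; \psi_{\psi_{p(s)}(s)} \circ \psi_{p(s)}|_{V_s},
\]
a composition of isoembeddings that sends $s \mapsto \hat{0}$.

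The main technical step is to verify the cocycle identity for all pairs $s \le u$, which I would do by induction on the Hasse-diagram distance $d(s,u)$. The case $d = 0$ is trivial, and $d = 1$ is immediate from the construction (since in a tree the relation $s \lessdot u$ forces $s = p(u)$). For $d \ge 2$, let $u' := p(u)$ and apply the inductive hypothesis to $(s, u')$ to obtain both $\psi_{u'}(u) = \psi_{\psi_s(u')}(\psi_s(u))$ and $\psi_{u'}|_{V_u} = \psi_{\psi_s(u')} \circ \psi_s|_{V_u}$. Substituting these into the construction $\psi_u = \psi_{\psi_{u'}(u)} \circ \psi_{u'}|_{V_u}$ rewrites $\psi_u$ as $\psi_{\psi_{\psi_s(u')}(\psi_s(u))} \circ \psi_{\psi_s(u')} \circ \psi_s|_{V_u}$. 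On the other hand, the $d = 1$ cocycle applied to the covering pair $\psi_s(u') \lessdot \psi_s(u)$ in $S$ (a cover because $\psi_s$ is an isoembedding) rewrites $\psi_{\psi_s(u)}$ as $\psi_{\psi_{\psi_s(u')}(\psi_s(u))} \circ \psi_{\psi_s(u')}|_{V_{\psi_s(u)}}$; postcomposing with $\psi_s|_{V_u}$, which lands in $V_{\psi_s(u)}$, gives the matching expression for $\psi_{\psi_s(u)} \circ \psi_s|_{V_u}$.

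The main obstacle is the bookkeeping in that last verification: one must keep track of which isoembedding is applied to which principal filter and check at each step that the restrictions make sense, repeatedly invoking that isoembeddings preserve covers and send principal filters into principal filters, and that the tree-like hypothesis guarantees the unique parent $p(s)$ needed for the recursive definition of $\psi_s$ to be unambiguous. Once the cocycle is verified, $(S, \col_S)$ is a colored semi-upho poset with $\mathfrak{F}((S, \col_S)) = S$, so $S$ is regular.
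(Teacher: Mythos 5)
Your proof is correct and takes essentially the same route as the paper: fix isoembeddings at the atoms and propagate them up the tree, with tree-likeness (uniqueness of parents) making the construction unambiguous --- your recursively defined family $\{\psi_s\}$ is precisely the composition of atom-level isoembeddings along the unique saturated chain from $\hat{0}$ to $s$ that the paper uses to define the coloring. The only difference is presentational: you make explicit, via the cocycle identity, the compatibility verification that the paper leaves as ``easily verified.''
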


\begin{proof}
    Fix an isoembedding $\psi_s: V_{s} \hookrightarrow S$ for each atom $s \in \mathcal{A}_{S}$, and denote the set of all such isoembeddings by $\Psi$. For any edge $(u,v) \in \mathcal{E}_{S}$, since $S$ is finitary, there exists, by induction, a sequence of isoembeddings in $\Psi$, denoted $\psi_1, \psi_2, \dots, \psi_n$, such that $\psi_n \circ \psi_{n-1} \circ \dots \circ \psi_1 (u,v) = (\hat{0}, t_{u,v})$ for some atom $t_{u,v} \in \mathcal{A}_{S}$. Since $S$ is tree-like, this sequence is unique. Define $\col_S(u,v) = t_{u,v}$ for each edge $(u,v) \in \mathcal{E}_{S}$, it can be easily verified that $\tilde{S} = (S, \col_S)$ is a colored semi-upho poset. Thus, $S=\mathfrak{F}({\tilde{S}})$ is regular, as desired.
\end{proof}

\begin{lemma}\label{lemma: regular semi-upho by tree}
    Given a finite type finitary colored semi-upho poset $\tilde{S} = (S, \col_S)$, there exists a colored tree-like semi-upho poset $\tilde{T} = (T, \col_T)$ such that the Hasse diagram of $T$ is a spanning tree of the Hasse diagram of $S$. Moreover, if $S$ is $\mathbb{N}$-graded, then the rank-generating functions satisfy $F_{S} = F_{T}$.
\end{lemma}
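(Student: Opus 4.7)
The plan is to choose, for each element $W \in S$, a canonical saturated chain from $\hat 0$ to $W$, and take $T$ to be the union of these chains. To organize the choice systematically, pass to the monoid side via \Cref{cor:corrsp for semi-upho}: let $M = \mathcal{M}_0(\tilde S)$ be the atomic LCIF $0$-monoid associated to $\tilde S$, fix a total order on $\mathcal{I}_M$, and for each nonzero $W \in M$ define its \emph{canonical form} to be the lex-smallest word representation $c_1 c_2 \cdots c_n$ of $W$ among those of minimum length, in the sense of \Cref{def: k-lexi order of W^M_n}. The finite-type finitary hypothesis ensures that this minimum is well-defined.

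The heart of the argument is a prefix-closure claim: if $c_1 \cdots c_n$ is the canonical form of $W$, then for each $1 \le k \le n$ the canonical form of the monoid element $c_1 \cdots c_k$ is the prefix $c_1 \cdots c_k$ itself. This follows from a swap argument, since any strictly shorter or lex-smaller representation $c'_1 \cdots c'_m$ of $c_1 \cdots c_k$ would produce the representation $c'_1 \cdots c'_m c_{k+1} \cdots c_n$ of $W$, contradicting the canonicity of $c_1 \cdots c_n$. Define $T$ to have the same vertex set as $S$, and for each $W \in S$ with canonical form $c_1 \cdots c_n$ include the covering edges $(c_1 \cdots c_{k-1}, c_1 \cdots c_k)$ for $1 \le k \le n$; let $\col_T$ be the restriction of $\col_S$. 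By prefix-closure, each non-minimum vertex has a unique parent in $T$, so $T$ is a spanning subtree of the Hasse diagram of $S$.

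To verify $(T, \col_T)$ is a colored tree-like semi-upho poset, for each $s \in T$ with canonical form $c_1 \cdots c_k$ define the candidate isoembedding $\psi^T_s \colon V_{T, s} \hookrightarrow T$ by sending an element with canonical form $c_1 \cdots c_k d_1 \cdots d_m$ to $d_1 \cdots d_m$. A suffix version of the swap argument shows $d_1 \cdots d_m$ is itself a canonical form (hence a vertex of $T$); injectivity, color preservation (the color of a cover $(u, v)$ is the unique irreducible $d$ with $u d = v$, which is unchanged by left-cancellation of the common prefix $c_1 \cdots c_k$), and the saturated-chain property then follow since both $V_{T, s}$ and its image are subtrees of the tree $T$. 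When $S$ is $\mathbb{N}$-graded, $M$ is an LCH $0$-monoid by \Cref{cor:corrsp for semi-upho}, so every word representation of a nonzero $W \in M$ has length equal to $\rho(W)$; the canonical chain thus has length $\rho(W)$, so $T$ and $S$ share the same rank function and $F_T = F_S$. The main subtlety will be ensuring that the swap arguments interact cleanly with the $0$-defining relations of $M$, i.e., that substituted subwords never produce a zero element — which is automatic because both sides of every equality $c_1 \cdots c_k = c'_1 \cdots c'_m$ represent the same nonzero element of $M$, so concatenating with the nonzero tail $c_{k+1} \cdots c_n$ yields a nonzero word.
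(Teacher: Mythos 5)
Your proposal is correct and follows essentially the same route as the paper: both pass to the $0$-monoid $\mathcal{M}_0(\tilde S)$ via \Cref{cor:corrsp for semi-upho}, order words by length and then lexicographically, and take as the spanning tree exactly the minimal (canonical) word representations, whose prefix-closedness is the key point. The only difference is packaging: the paper encodes this tree as the poset of an auxiliary free $0$-monoid obtained by replacing each defining relation $W_1=W_2$ (with $W_1<_u W_2$) by $W_2=0$ and then cites the correspondence, whereas you verify the tree-like colored semi-upho structure directly through the prefix-stripping isoembeddings.
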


\begin{proof}
   By \Cref{cor:corrsp for semi-upho}, $\mathcal{M}_0(\tilde{S})$ is an atomic LCIF $0$-monoid, denoted $\langle \mathcal{I} \cup \{0\} \mid R \rangle$. Define an arbitrary total order on the finite set $I$, and extend it to a total order $<_u$ on $\mathbf{F}(I)$ such that $W_1 <_u W_2$ if either $\ell(W_1) < \ell(W_2)$ or $\ell(W_1) = \ell(W_2) = k$ and $W_1 <_k W_2$ under the $k$-lexicographic order on $\mathbf{F}_k(I)$. Define $M \coloneqq \langle  \mathcal{I} \cup \{0\} \mid R' \rangle$, where $R'$ includes relations $W_2 = 0$ for each $W_1 = W_2$ in $R_{\mathcal{M}_0(\tilde{S})}$ with $W_1 <_u W_2$.
   Since every nonzero element in $\mathcal{M}_0(\tilde{S})$ has a unique minimal word representation under $<_u$ corresponding to nonzero elements in $M$, the Hasse diagram of $\tilde{\mathcal{S}}(M)$ is a spanning tree of the Hasse diagram of $S$. Moreover, since $M$ only has $0$-defining relations, by \Cref{cor:corrsp for semi-upho}, $\tilde{\mathcal{S}}(M)$ is a colored tree-like semi-upho poset. Let $\tilde{T} \coloneqq \tilde{\mathcal{S}}(M)$, the proof is complete.
\end{proof}

\begin{remark}
    The ``finite type" condition in \Cref{lemma: regular semi-upho by tree} can be generalized to ``$ \mathcal{I}$ is well-ordered."
\end{remark}

\begin{example}
\Cref{fig:semiupho and tree} illustrates an example of \Cref{lemma: regular semi-upho by tree}. The $0$-monoid $\mathcal{M}_0(\tilde{S})$ corresponding to the colored semi-upho poset $\tilde{S}$ on the left is $\langle x_1, x_2, 0 \mid x_1 x_2 = x_2 x_1, x_1 0 = 0 x_1 = 0, x_2 0 = 0 x_2 = 0 \rangle$, and the monoid $M$ is given by $\langle x_1, x_2, 0 \mid x_2 x_1 = 0, x_1 0 = 0 x_1 = 0, x_2 0 = 0 x_2 = 0 \rangle$. The corresponding colored tree-like semi-upho poset on the right is $T \coloneqq \mathcal{S}(M)$, which shares the same rank-generating function as $\tilde{S}$.
    \begin{figure}[H]
\centering
    \begin{tikzpicture}[scale=0.5]
	\begin{pgfonlayer}{nodelayer}
		\node [style=vert] (0) at (0, -1) {};
		\node [style=vert] (1) at (-0.75, 0) {};
		\node [style=vert] (2) at (0.75, 0) {};
		\node [style=vert] (3) at (-1.5, 1) {};
		\node [style=vert] (4) at (1.5, 1) {};
		\node [style=vert] (5) at (-2.25, 2) {};
		\node [style=vert] (6) at (2.25, 2) {};
		\node [style=vert] (7) at (-3, 3) {};
		\node [style=vert] (8) at (3, 3) {};
		\node [style=vert] (9) at (0, 1) {};
		\node [style=vert] (10) at (-0.75, 2) {};
		\node [style=vert] (11) at (0.75, 2) {};
		\node [style=vert] (12) at (-1.5, 3) {};
		\node [style=vert] (13) at (1.5, 3) {};
		\node [style=vert] (14) at (0, 3) {};
		\node [style=vert] (15) at (7, -1) {};
		\node [style=vert] (16) at (6.25, 0) {};
		\node [style=vert] (17) at (7.75, 0) {};
		\node [style=vert] (18) at (5.5, 1) {};
		\node [style=vert] (19) at (8.5, 1) {};
		\node [style=vert] (20) at (4.75, 2) {};
		\node [style=vert] (21) at (9.25, 2) {};
		\node [style=vert] (22) at (4, 3) {};
		\node [style=vert] (23) at (10, 3) {};
		\node [style=vert] (24) at (7, 1) {};
		\node [style=vert] (25) at (6.25, 2) {};
		\node [style=vert] (26) at (7.75, 2) {};
		\node [style=vert] (27) at (5.5, 3) {};
		\node [style=vert] (28) at (8.5, 3) {};
		\node [style=vert] (29) at (7, 3) {};
		\node [style=none] (30) at (0, 3.5) {.};
		\node [style=none] (31) at (0, 3.75) {.};
		\node [style=none] (32) at (0, 4) {.};
		\node [style=none] (33) at (7, 3.5) {.};
		\node [style=none] (34) at (7, 3.75) {.};
		\node [style=none] (35) at (7, 4) {.};
	\end{pgfonlayer}
	\begin{pgfonlayer}{edgelayer}
		\draw [style=red] (0) to (1);
		\draw [style=blue] (9) to (1);
		\draw [style=red] (9) to (10);
		\draw [style=blue] (10) to (14);
		\draw [style=blue] (12) to (5);
		\draw [style=red] (5) to (3);
		\draw [style=blue] (13) to (11);
		\draw [style=red] (11) to (4);
		\draw [style=blue] (4) to (2);
		\draw [style=blue] (8) to (6);
		\draw [style=blue] (2) to (0);
		\draw [style=red] (9) to (2);
		\draw [style=blue] (11) to (9);
		\draw [style=red] (14) to (11);
		\draw [style=red] (13) to (6);
		\draw [style=blue] (6) to (4);
		\draw [style=red] (12) to (10);
		\draw [style=blue] (10) to (3);
		\draw [style=red] (3) to (1);
		\draw [style=red] (7) to (5);
		\draw [style=red] (15) to (16);
		\draw [style=red] (20) to (18);
		\draw [style=blue] (17) to (15);
		\draw [style=blue] (26) to (24);
		\draw [style=blue] (21) to (19);
		\draw [style=blue] (25) to (18);
		\draw [style=red] (18) to (16);
		\draw [style=red] (22) to (20);
		\draw [style=blue] (19) to (17);
		\draw [style=blue] (23) to (21);
		\draw [style=blue] (24) to (16);
		\draw [style=blue] (28) to (26);
		\draw [style=blue] (29) to (25);
		\draw [style=blue] (27) to (20);
	\end{pgfonlayer}
\end{tikzpicture}
    \caption{A regular semi-upho poset and the corresponding tree-like semi-upho poset.}
    \label{fig:semiupho and tree}
\end{figure}
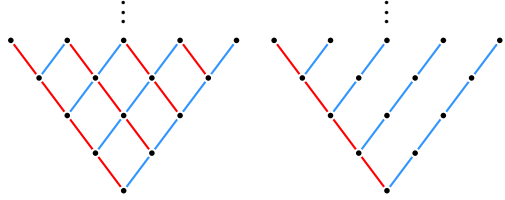
\end{example}

\begin{proof}[Proof of \Cref{prop:reg semiupho= tree gen}]
    This follows directly from \Cref{lemma: trees are regular} and \Cref{lemma: regular semi-upho by tree}.
\end{proof}

\subsection{Greedy $0$-Monoid Series and Tree-Like Semi-Upho Posets}\label{subsec: greedy 0 monoid}
In this subsection, we introduce \emph{greedy $0$-monoid series}, a recursive framework for proving that a given formal power series $g(x)$ is a regular semi-upho function.

\begin{definition}
Given a formal power series $g(x) = 1 + b_1 x + b_2 x^2 + \cdots\in 1 + x \mathbb{Z}_{\ge 0}[[x]]$, the \emph{greedy $0$-monoid series} $\{M_k^{0,g}\}_{k\ge 1}$ of $g(x)$ is a sequence of monoids defined recursively as follows:
\begin{itemize}
    \item Define $M_1^{0,g} \coloneqq \langle \mathbf{X} \cup \{0\} \mid 0 x_i = x_i 0 = 0 \text{ for any } x_i \rangle$, where the ordered set $\mathbf{X} \coloneqq \{x_1< x_2< \dots< x_{b_1}\}$.
    \item For $M_{k}^{0,g} = \langle \mathbf{X} \cup \{0\} \mid R_{k} \rangle$, if the inequality $|\mathbf{W}_{k+1}^{M_{k}^{0,g}}| \ge b_{k+1}$ holds, then define $M_{k+1}^{0,g} \coloneqq \langle \mathbf{X} \cup \{0\} \mid R_{k+1} \rangle$, where $R_{k+1}$ is obtained by extending $R_{k}$ with additional $0$-defining relations $W_i = 0$ for the largest $|\mathbf{W}_{k+1}^{M_{k}^{0,g}}| - b_{k+1}$ elements $W_i$ in $\mathbf{W}_{k+1}^{M_{k}^{0,g}}$ under the $(k+1)$-lexicographic order on $\mathbf{F}_{k+1}(\mathbf{X})$.
\end{itemize}
We call $M_k^{0,g}$ the \emph{$k$-th greedy $0$-monoid} of $g(x)$.
\end{definition}

We omit the superscript $g$ when the power series is clear from context. Notably, not all $g(x) \in 1 + x \mathbb{Z}_{\ge 0}[[x]]$ admit an infinite greedy $0$-monoid series,  and for any $k\ge 1$, $M_{k+1}^{0}$ is defined if and only if the $k$-th greedy $0$-monoid $M_{k}^{0}$ is defined and the inequality $|\mathbf{W}_{k+1}^{M_{k}^{0}}| \ge b_{k+1}$ holds.

It can be easily verified that, given $g(x) = 1 + b_1 x + b_2 x^2 + \cdots \in 1 + x \mathbb{Z}_{\ge 0}[[x]]$, the rank-generating function of the semi-upho poset $\mathcal{\tilde{S}}(M_k^0)$ corresponding to $M_k^0$ (see \Cref{cor:corrsp for semi-upho}) has the initial $k+1$ coefficients $1, b_1, b_2, \dots, b_k$. Thus, if $g(x)$ admits an infinite greedy $0$-monoid series, we define $M^0 \coloneqq \langle \mathbf{X} \cup \{0\} \mid \bigcup_{i \ge 1} R_i \rangle$, where $R_i$ is the set of defining relations of $M_i^0$. Consequently, the rank-generating function of $\mathcal{\tilde{S}}(M^0)$ is exactly $g(x)$, and hence by \Cref{cor:corrsp for semi-upho}, $g(x)$ is the rank-generating function of a tree-like semi-upho poset, which is also a regular semi-upho function by \Cref{prop:reg semiupho= tree gen}. This result can be formulated as the following proposition.

\begin{prop}\label{prop:greedy 0-series implies semi-upho function}
    Given a formal power series $g(x) \in 1 + x \mathbb{Z}_{\ge 0}[[x]]$, if $g(x)$ admits an infinite greedy $0$-monoid series $\{M_k^{0}\}_{k\ge 1}$, then $g(x)$ is a regular semi-upho function.
\end{prop}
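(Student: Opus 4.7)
The plan is to realize $g(x)$ as the rank-generating function of the direct limit of the greedy $0$-monoid series and then invoke the semi-upho correspondence. Writing $R_k$ for the defining relation set of $M_k^0$, I would define
\[
M^0 \coloneqq \left\langle \mathbf{X} \cup \{0\} \,\middle|\, \bigcup_{k \ge 1} R_k \right\rangle.
\]
Since $R_1 \subseteq R_2 \subseteq \cdots$ and every relation in $\bigcup_{k\ge 1} R_k$ is a $0$-defining relation (either an absorbing relation $0 x_i = x_i 0 = 0$ or a killing relation $W = 0$), the monoid $M^0$ is a free $0$-monoid.

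The key technical input is the \emph{stabilization claim}: for every $k \ge 1$, $\mathbf{W}_k^{M^0} = \mathbf{W}_k^{M_k^0}$, and hence $|\mathbf{W}_k^{M^0}| = b_k$ by the greedy construction. I would prove this via \Cref{prop:monoidtrans}. Since every relation in $\bigcup_{k\ge 1} R_k$ involves the symbol $0$, no elementary transition preserves the property of being $0$-free; in particular, any transition chain connecting two distinct $0$-free words must pass through a word containing $0$, which then equals $0$ in $M^0$ via the absorbing relations. Consequently, two distinct $0$-free words are equivalent in $M^0$ only when both equal $0$, and a $0$-free word $U$ equals $0$ iff it contains some killed word as a subword. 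Since the killing relations introduced at stage $k$ involve length-$k$ words, a $0$-free word of length $j$ is affected only by stages $\le j$, yielding $\mathbf{W}_j^{M^0} = \mathbf{W}_j^{M_j^0}$, which has cardinality $b_j$ by the greedy construction.

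With the stabilization claim established, \Cref{cor:corrsp for semi-upho} identifies $\mathcal{\tilde{S}}(M^0)$ as a finite-type $\mathbb{N}$-graded colored tree-like semi-upho poset with rank-generating function
\[
\sum_{k \ge 0} |\mathbf{W}_k^{M^0}|\, x^k \;=\; 1 + b_1 x + b_2 x^2 + \cdots \;=\; g(x).
\]
Applying the forgetful mapping $\mathfrak{F}$ and invoking \Cref{prop:reg semiupho= tree gen} then certifies $g(x)$ as a regular semi-upho function.

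The main obstacle is the stabilization claim — specifically, verifying that distinct $0$-free words remain distinct after each stage of killing, and that a $0$-free word of length $j$ cannot be collapsed to $0$ by relations introduced at a later stage. The analysis reduces to a careful application of \Cref{prop:monoidtrans} exploiting the uniform shape of the relations in $\bigcup R_k$, all of which have the form (word) $= 0$. Once this bookkeeping is in place, the remaining assembly — taking the union of the relation sets, identifying the resulting monoid as a free $0$-monoid, and reading off the rank-generating function — is essentially formal.
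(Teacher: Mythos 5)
Your proposal is correct and follows essentially the same route as the paper: define $M^0$ by the union $\bigcup_{k\ge 1} R_k$, observe it is a free $0$-monoid whose length-$k$ nonzero elements stabilize to those of $M_k^0$ (hence $b_k$ of them), and then invoke \Cref{cor:corrsp for semi-upho} together with \Cref{prop:reg semiupho= tree gen}. The only difference is that the paper dismisses the stabilization step as easily verified, whereas you sketch it explicitly via \Cref{prop:monoidtrans}; your sketch is sound.
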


An equivalent yet more practical version of \Cref{prop:greedy 0-series implies semi-upho function} is as follows.
\begin{prop}\label{prop:semi-upho upper bound}
    Given a formal power series $g(x) = 1 + b_1 x + b_2 x^2 + \cdots \in 1 + x \mathbb{Z}_{\ge 0}[[x]]$, if for any integer $k \ge 1$, the $k$-th greedy $0$-monoid $M_k^0$ is defined implies the inequality  $|\mathbf{W}_{k+1}^{M_{k}^{0}}| \ge b_{k+1}$ holds, then $g(x)$ is a regular semi-upho function.
\end{prop}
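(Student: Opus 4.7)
The plan is to recognize that this proposition is essentially a direct reformulation of \Cref{prop:greedy 0-series implies semi-upho function}. The only gap to bridge is that the stated hypothesis, which is a one-step inductive criterion, guarantees that $g(x)$ admits an infinite greedy $0$-monoid series, which is the global hypothesis of the previous proposition.

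First I would proceed by induction on $k$ to prove that $M_k^{0,g}$ is defined for every $k \ge 1$. The base case is immediate: $M_1^{0,g}$ is defined unconditionally by the first bullet in the definition of the greedy $0$-monoid series. For the inductive step, assume $M_k^{0,g}$ is defined. Then the hypothesis of the proposition applies and yields $|\mathbf{W}_{k+1}^{M_k^{0,g}}| \ge b_{k+1}$, which is precisely the condition required by the second bullet of the definition to construct $M_{k+1}^{0,g}$ by adjoining the $0$-defining relations $W_i = 0$ for the largest $|\mathbf{W}_{k+1}^{M_k^{0,g}}| - b_{k+1}$ elements of $\mathbf{W}_{k+1}^{M_k^{0,g}}$ under the $(k+1)$-lexicographic order on $\mathbf{F}_{k+1}(\mathbf{X})$. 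This completes the induction and establishes that $g(x)$ admits an infinite greedy $0$-monoid series $\{M_k^{0,g}\}_{k \ge 1}$.

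Having produced such a series, I would then directly invoke \Cref{prop:greedy 0-series implies semi-upho function} to conclude that $g(x)$ is a regular semi-upho function. I do not anticipate any substantive obstacle: the proposition is a \emph{practical} reformulation whose content is simply to make explicit that verifying the inequality step-by-step as each $M_k^{0,g}$ is built is equivalent to (and more convenient than) requiring the existence of the infinite series at the outset. The subtlety worth checking, but purely bookkeeping, is that the defining relations of $M_{k+1}^{0,g}$ are a superset of those of $M_k^{0,g}$, so the resulting $M^0 \coloneqq \langle \mathbf{X} \cup \{0\} \mid \bigcup_{i \ge 1} R_i\rangle$ is a well-defined limit whose corresponding semi-upho poset $\tilde{\mathcal{S}}(M^0)$ has rank-generating function $g(x)$, as recorded in the discussion preceding \Cref{prop:greedy 0-series implies semi-upho function}.
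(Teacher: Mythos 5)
Your proposal is correct and matches the paper's (implicit) argument: the paper presents this proposition as an equivalent reformulation of \Cref{prop:greedy 0-series implies semi-upho function}, with the hypothesis serving exactly as the inductive criterion guaranteeing that every $M_k^{0,g}$ is defined. Your induction on $k$ followed by invoking that proposition is precisely this reasoning, just spelled out.
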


Moreover, we conjecture that \Cref{prop:greedy 0-series implies semi-upho function} and \Cref{prop:semi-upho upper bound} characterize all regular semi-upho functions.

\begin{conj}\label{conj:tree like=greedy series}
        A formal power series $g(x)\in 1 + x \mathbb{Z}_{\ge 0}[[x]]$ is a regular semi-upho function if and only if $g(x)$ admits an infinite greedy $0$-monoid series $\{M_k^{0}\}_{k\ge 1}$.
\end{conj}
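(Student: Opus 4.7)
The direction $(\Leftarrow)$ of \Cref{conj:tree like=greedy series} is established by \Cref{prop:greedy 0-series implies semi-upho function}, so what remains is the forward direction: assuming $g(x)=1+b_1x+b_2x^2+\cdots$ is a regular semi-upho function, show that the greedy $0$-monoid series $\{M_k^{0,g}\}_{k\ge 1}$ is defined at every level. By \Cref{prop:reg semiupho= tree gen} combined with \Cref{cor:corrsp for semi-upho}, $g(x)$ is realized by some free $0$-monoid $M=\langle \mathbf{X}\cup\{0\}\mid Z\rangle$ with $|\mathbf{X}|=b_1$, where a word in $\mathbf{F}(\mathbf{X})$ is nonzero in $M$ precisely when no contiguous subword of it belongs to $Z$; in particular, $|\mathbf{W}_n^M|=b_n$ for all $n\ge 0$.

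The plan is to prove the following stronger statement by induction on $k\ge 1$: \emph{there exists a free $0$-monoid $M^{(k)}$ realizing $g(x)$ whose zero-defining relations at all levels $\le k$ coincide with those of $M_k^{0,g}$.} This at once yields $|\mathbf{W}_{k+1}^{M_k^{0,g}}|\ge |\mathbf{W}_{k+1}^{M^{(k)}}|=b_{k+1}$, hence $M_{k+1}^{0,g}$ is well-defined, and $M_{k+1}^{0,g}$ then feeds into the induction at level $k+1$. The base case $k=1$ is trivial, since $M_1^{0,g}$ has no zero-defining relations among words of length $\le 1$, so $M^{(1)}\coloneqq M$ works after identifying the atoms of $M$ with $\mathbf{X}$. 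For the inductive step, assuming $M^{(k)}$ has been constructed, we must modify it at level $k+1$ so that its newly introduced zero-defining relations become exactly the lex-largest $|\mathbf{W}_{k+1}^{M_k^{0,g}}|-b_{k+1}$ elements of $\mathbf{W}_{k+1}^{M_k^{0,g}}$ under $<_{k+1}$, all while preserving every coefficient $b_n$.

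The intended mechanism is a finite sequence of local exchanges. Each exchange takes a pair $W<_{k+1} W'$ of length-$(k+1)$ nonzero words of $M_k^{0,g}$ with $W$ zero and $W'$ nonzero in the current monoid, swaps them (making $W$ nonzero and $W'$ zero), and compensates by adjusting the zero-defining relations at levels $>k+1$ so that each $b_n$ is preserved. The guiding intuition is that lex-larger words have lex-larger extensions at every higher level, so moving the zero relation from $W$ to $W'$ shifts the resulting loss of nonzero extensions into a lex-larger region of $\mathbf{F}(\mathbf{X})$, leaving room below to rezero any length-$n$ extension of $W$ that must remain zero. The principal obstacle is to make this rebalancing rigorous, since an exchange at level $k+1$ alters the pool of potentially nonzero words at every higher level in a globally linked way. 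One plausible approach is a structural lemma producing an explicit length-preserving bijection between the nonzero extensions of $W$ and those of $W'$ in $M^{(k)}$, from which the compensating zeros can be read off directly; another is a compactness or K\"onig-style limiting argument assembling the infinitely many required adjustments into a coherent global modification. Pushing either argument through, and in particular handling the fact that the two sides of the would-be bijection interact with previously imposed zeros in subtle ways, is what we expect to be the main difficulty of the conjecture.
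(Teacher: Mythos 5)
The statement you are addressing is not proved in the paper at all: it is stated as an open conjecture, with only the ``if'' direction established (that is exactly \Cref{prop:greedy 0-series implies semi-upho function}, which the paper derives from \Cref{prop:reg semiupho= tree gen} and \Cref{cor:corrsp for semi-upho}). Your proposal reproduces that easy direction correctly and sets up the forward direction in the natural way: translate a regular semi-upho function into a free $0$-monoid (equivalently, a factorial set of nonzero words in $\mathbf{F}(\mathbf{X})$ with $|\mathbf{W}_n|=b_n$), and try to show by induction that one can always normalize the zero set so that at each level the surviving words are the lex-smallest ones, which is what the greedy $0$-monoid does.

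However, the inductive step is where your argument stops being a proof, and the missing piece is precisely the substance of the conjecture. Your ``local exchange'' moves a zero-defining relation from a lex-smaller word $W$ to a lex-larger word $W'$ at level $k+1$ and asserts that the counts $b_n$ at all higher levels can be restored by adjusting zeros above level $k+1$. But the number of nonzero extensions of a word in a free $0$-monoid is governed by its suffix structure (how it overlaps the forbidden factors), not by its lexicographic position, so the lex-larger word $W'$ may have strictly more nonzero extensions than $W$ in the current monoid; after the swap the total count at some level $n>k+1$ can drop below $b_n$, and since zeros can only be added (the zero set must remain factorial and must contain every extension of $W'$), no compensation at higher levels can raise it back. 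The ``explicit length-preserving bijection between the nonzero extensions of $W$ and those of $W'$'' that you invoke is exactly the Kruskal--Katona/Macaulay-type statement for factorial languages that would prove the conjecture, and you do not construct it; the compactness alternative has the same hole, since it only assembles local adjustments that you have not shown to exist. So the forward direction remains unproved, as you yourself acknowledge, and the proposal should be read as a plausible reduction of \Cref{conj:tree like=greedy series} to this extension-counting lemma rather than as a proof.
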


\begin{conj}\label{conj:tree like recursive}
    A formal power series $g(x) = 1 + b_1 x + b_2 x^2 + \cdots \in 1 + x \mathbb{Z}_{\ge 0}[[x]]$ is a regular semi-upho function if and only if for any integer $k \ge 2$, the $k$-th greedy $0$-monoid $M_{k}^0$ is defined implies the inequality  $|\mathbf{W}_{k+1}^{M_{k}^{0}}| \ge b_{k+1}$ holds.
\end{conj}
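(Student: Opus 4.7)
The ``if'' direction is \Cref{prop:semi-upho upper bound}, so the real content lies in the ``only if'' direction. The plan is as follows. Given a regular semi-upho function $g(x) = 1 + b_1 x + b_2 x^2 + \cdots$, \Cref{prop:reg semiupho= tree gen} supplies a tree-like semi-upho witness, and \Cref{cor:corrsp for semi-upho} identifies it with a free $0$-monoid $M'$. The first step is to fix an identification of the generator set of $M'$ with the ordered alphabet $\mathbf{X}$ used in the greedy construction, so that $|\mathbf{W}_j^{M'}| = b_j$ for every $j \ge 1$.

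Next, the proof would proceed by induction on $k \ge 2$: assuming $M_k^0$ is defined (equivalently, $|\mathbf{W}_j^{M_j^0}| = b_j$ for $j \le k$), the target inequality is
\[
|\mathbf{W}_{k+1}^{M_k^0}| \ge b_{k+1} = |\mathbf{W}_{k+1}^{M'}|.
\]
For any free $0$-monoid $M$ whose $0$-defining relations all lie in lengths $\le k$, the nonzero words of length $k+1$ are exactly those whose length-$k$ prefix and length-$k$ suffix are both in $\mathbf{W}_k^M$, yielding the bilinear count
\[
|\mathbf{W}_{k+1}^{M}| = \sum_{u \in \mathbf{X}^{k-1}} |L_M(u)|\cdot|R_M(u)|,
\]
where $L_M(u)=\{a \in \mathbf{X}: au \in \mathbf{W}_k^M\}$ and $R_M(u)=\{b \in \mathbf{X}: ub \in \mathbf{W}_k^M\}$. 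The heart of the plan is a \emph{compression lemma}: starting from the level-$k$ survivor set $\mathbf{W}_k^{M'}$ and transforming it step by step toward the lex-smallest configuration realized in $M_k^0$, via swaps that exchange a lex-large alive word for a lex-smaller killed word, the sum above should not decrease at any step.

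The hard part will be establishing this compression lemma. Because $|L(u)|\cdot|R(u)|$ is a non-linear (product) quantity, a single lex-favorable swap can simultaneously grow one factor and shrink another, so the lex-smallest configuration is not a priori the pointwise maximizer even at a single level. Overcoming this requires exploiting the full strength of the tree-like hypothesis, namely that $M'$ realizes the entire sequence $(b_j)_{j \ge 1}$, not merely $b_k$ and $b_{k+1}$. Identifying the correct combinatorial invariant preserved along this interpolation---likely a global, multi-level quantity that the greedy construction extremizes rather than a levelwise count---is, in our view, the principal technical obstacle, and it is conceivable that a genuinely new idea (or a weakening of the conjecture) is needed here.
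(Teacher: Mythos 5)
This statement is one of the paper's open conjectures (\Cref{conj:tree like recursive}); the paper gives no proof of it, only the ``if'' direction via \Cref{prop:greedy 0-series implies semi-upho function} and \Cref{prop:semi-upho upper bound}, so there is nothing in the paper for your argument to match. Your proposal correctly identifies that the content is the ``only if'' direction, and the reduction you set up is sound as far as it goes: a regular semi-upho witness exists by \Cref{prop:reg semiupho= tree gen} and \Cref{cor:corrsp for semi-upho}, it is a free $0$-monoid $M'$ with $|\mathbf{W}_j^{M'}|=b_j$, the bilinear count of nonzero $(k+1)$-words is valid for $M_k^0$ (all of whose $0$-defining relations have length $\le k$), and discarding the length-$(k+1)$ relations of $M'$ only overcounts $b_{k+1}$, which is the right direction. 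But everything after that is a plan, not a proof: the ``compression lemma'' you invoke is precisely the open content of the conjecture, and you leave it unproved (and say so yourself). As it stands the proposal restates the difficulty rather than resolving it. (One small additional point: the conjecture as printed quantifies over $k\ge 2$, while the ``if'' direction you cite, \Cref{prop:semi-upho upper bound}, requires the hypothesis for all $k\ge 1$; the $k=1$ constraint $b_2\le b_1^2$ is not vacuous, so the two statements do not literally coincide and this discrepancy should be flagged rather than silently absorbed.)

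Concretely, two obstacles in your interpolation scheme are unaddressed. First, the intermediate configurations between the level-$k$ survivor set of $M'$ and that of $M_k^0$ must remain realizable as the level-$\le k$ data of a free $0$-monoid, i.e.\ factor-closed with the prescribed counts $b_1,\dots,b_k$ at every lower level; since the level-$j$ survivor sets of $M_k^0$ for $j<k$ generally differ from those of $M'$, the interpolation is forced to be multi-level, and your swaps are described only at level $k$. Second, as you note, a single lex-favorable swap can decrease $\sum_u |L(u)|\,|R(u)|$, and no monotone invariant is identified; there is no Kruskal--Katona-type compression theorem for factor-closed sets of words to fall back on, and the stronger claim implicit in a levelwise compression (that the greedy configuration maximizes the level-$(k+1)$ count among all configurations with the same level-$\le k$ counts) is not known and may well be false, which is exactly why the conjecture is stated as such. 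So there is a genuine gap: the proposal does not establish the ``only if'' direction, and without the compression lemma (or the multi-level invariant you gesture toward) it does not go beyond what \Cref{prop:semi-upho upper bound} already gives.
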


\subsection{Counting Elements in Greedy $0$-Monoids}\label{subsec: counting greedy 0 monoid}
As shown in \Cref{prop:semi-upho upper bound}, to prove that a formal power series is a regular semi-upho function, it suffices to count nonzero elements in greedy $0$-monoids and compare them with the coefficients of the formal power series. In this subsection, we develop methods for counting nonzero elements in greedy $0$-monoids. Throughout this subsection, in each lemma and proposition, we consider a formal power series $g(x) = 1 + b_1 x + b_2 x^2 + \cdots \in 1 + x \mathbb{Z}_{\ge 0}[[x]]$ and its $k$-th greedy $0$-monoid $M_k^0=\langle\mathbf{X}\mid R_k\rangle$ , and denote $\mathbf{W}_n^{M_{k}^{0}}$ simply by $\mathbf{W}_n$.

\begin{lemma}\label{lemma: W=0 iff UW=0}
    Let $L \equiv x_{a_1} x_{a_2} \dots x_{a_n}$ be the maximal word under the $n$-lexicographic order of $\mathbf{F}_n(\mathbf{X})$ such that $L$ is nonzero in $M_k^0$. Then for any word $U \equiv x_{a_1} x_{a_2} \dots x_{a_{s-1}} y$ where $s \le n$ and $y <_X x_{a_s}$, we have $W = 0$ in $M_k^0$ if and only if $UW = 0$ in $M_k^0$ for any word $W \in \mathbf{F}_{n-s}(\mathbf{X})$.
\end{lemma}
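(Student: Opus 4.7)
The plan is to prove the nontrivial direction: assuming $UW = 0$ in $M_k^0$, deduce $W = 0$; the converse is immediate because $0$ is absorbing in $M_k^0$ by the initial relations of $M_1^0$. The first ingredient I intend to establish is a combinatorial criterion for vanishing: a word $V \in \mathbf{F}(\mathbf{X})$ (not containing the symbol $0$) satisfies $V = 0$ in $M_k^0$ if and only if $V$ admits some word $W_j$ with $W_j = 0 \in R_k$ as a factor. By \Cref{prop:monoidtrans}, starting from a $0$-free word the first elementary $R_k$-transition that ever introduces a $0$ must replace some factor $W_j$ by $0$ via the relation $W_j = 0$; once a $0$ has appeared, the relations $0 x_i = x_i 0 = 0$ collapse everything to $0$.

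Next, assuming $UW = 0$, the criterion yields a $0$-defining word $W_j$ of length $m \ge 2$ appearing as a factor of $UW$ at some positions $p, \dots, q = p + m - 1$. I split into three cases according to how this window relates to the index $s$ of the letter $y$ in $UW$. If $q < s$, then $W_j$ lies in the shared prefix $x_{a_1} \cdots x_{a_{s-1}}$ of $U$ and $L$, hence $W_j$ is also a factor of $L$, forcing $L = 0$ and contradicting the choice of $L$. If $p > s$, then $W_j$ lies in the part of $UW$ after position $s$, which is precisely $W$; so $W$ contains $W_j$ as a factor and $W = 0$ as required.

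In the remaining ``straddling'' case $p \leq s \leq q$, the $(s - p + 1)$-th letter of $W_j$ equals $y$. I will compare $W_j$ with the length-$m$ factor $V \equiv x_{a_p} x_{a_{p+1}} \cdots x_{a_q}$ of $L$ at exactly the same positions. Both have length $m$; they coincide on the first $s - p$ letters (inherited from the shared prefix of $U$ and $L$), and at position $s - p + 1$ the word $V$ carries $x_{a_s}$ while $W_j$ carries $y < x_{a_s}$, so $W_j <_m V$ in the $m$-lexicographic order on $\mathbf{F}_m(\mathbf{X})$. Now I invoke the greedy construction: the relation $W_j = 0 \in R_k$ was introduced at step $m$, so $W_j \in \mathbf{W}_m^{M_{m-1}^0}$ and $W_j$ lies among the $|\mathbf{W}_m^{M_{m-1}^0}| - b_m$ lexicographically largest length-$m$ words killed at that step. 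On the other hand, $V$ is a factor of the nonzero $L$, hence $V$ is nonzero in $M_k^0$, and a fortiori nonzero in $M_{m-1}^0$ since $R_{m-1} \subseteq R_k$; so $V \in \mathbf{W}_m^{M_{m-1}^0}$. Because $V >_m W_j$, the greedy ``largest first'' rule also kills $V$ at step $m$, giving $V = 0$ in $M_m^0$ and hence in $M_k^0$, which contradicts $V$ being a factor of nonzero $L$. Thus the straddling case cannot occur, and only Case 2 survives, yielding $W = 0$.

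The main obstacle is precisely this straddling case: the argument depends on recognizing that the forbidden factor $W_j$ of $UW$ can be directly paired with the aligned length-$m$ factor $V$ of $L$ at the same positions, and on extracting a contradiction from the greedy ``largest first'' addition rule together with the strict lex inequality $W_j <_m V$. The other two cases are essentially routine bookkeeping once the factor-of-a-relation criterion for vanishing is in hand.
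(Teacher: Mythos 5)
Your proof is correct and follows essentially the same route as the paper: extract a $0$-defining factor of $UW$ via \Cref{prop:monoidtrans}, case on where that factor sits relative to position $s$, and rule out the cases meeting the prefix or the letter $y$ by comparing with the aligned factor of $L$ and using the greedy ``largest-first'' rule (the paper phrases this as: any word $\ge$ a $0$-defining word in the same-length lexicographic order is also $0$). Your inline justification of that monotonicity in the straddling case, via $V\in\mathbf{W}_m^{M_{m-1}^0}$ and $V>_m W_j$, is exactly the content of the paper's appeal to the definition of greedy $0$-monoids.
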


\begin{proof}
    If $W = 0$, then clearly $UW = 0$. Conversely, suppose $UW = 0$. By \Cref{prop:monoidtrans}, there exists a $0$-defining relation $V = 0$ such that $V$ is a consecutive subword of $UW$. By the definition of greedy $0$-monoids, the fact that $V = 0$ is a $0$-defining relation implies that any word $V' \ge_{\ell(V)} V$ under the $\ell(V)$-lexicographic order of $\mathbf{F}_{\ell(V)}(\mathbf{X})$ is also $0$ in $M_k^0$. If the leftmost letter of $V$ is $y$, then $x_{a_s} x_{a_{s+1}} \dots x_{a_{s+\ell(V)-1}} >_{\ell(V)} V$, which implies $0 = x_{a_s} x_{a_{s+1}} \dots x_{a_{s+\ell(V)-1}} = L$, leading to a contradiction. If the leftmost letter of $V$ is $x_{a_i}$, then $x_{a_i} x_{a_{i+1}} \dots x_{a_{i+\ell(V)-1}} \ge_{\ell(V)} V$, resulting in $0 = x_{a_i} x_{a_{i+1}} \dots x_{a_{i+\ell(V)-1}} = L$, which is also a contradiction. Hence, the leftmost letter of $V$ is in $W$, which implies $W=0$, completing the proof.
\end{proof}

\begin{prop}\label{prop: split b_k}
    Let $L \equiv x_{a_1} x_{a_2} \dots x_{a_n}$ be the maximal word under the $n$-lexicographic order of $\mathbf{F}_n(\mathbf{X})$ such that $L$ is nonzero in $M_k^0$. Then for any integer $1\le s\le n$, we have 
    \[
    |\mathbf{W}_n|\le (a_1-1)|\mathbf{W}_{n-1}|+(a_2-1)|\mathbf{W}_{n-2}|+\dots +(a_{s-1}-1)|\mathbf{W}_{n+1-s}|+a_{s}|\mathbf{W}_{n-s}|,
    \] 
    and the equality holds when $s=n$.
    Specifically, when $n=k$, we have 
    \[
    b_k\le (a_1-1)b_{k-1}+(a_2-1)b_{k-2}+\dots +(a_{s-1}-1)b_{k+1-s}+a_{s}b_{k-s}
    \]
    for any $1\le s\le k$, and the equality holds when $s=k$.
\end{prop}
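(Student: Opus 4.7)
The plan is to exploit the fact that $M_k^0$, constructed by the greedy procedure using only $0$-defining relations, is a free $0$-monoid. Consequently every nonzero element has a unique word representation, and two distinct reduced length-$n$ words---those avoiding every declared zero word as a consecutive subword---represent distinct elements of $\mathbf{W}_n$. Under this identification $\mathbf{W}_n$ is canonically the set of reduced length-$n$ words, and the $n$-lexicographic order $<^n$ on $\mathbf{W}_n$ agrees with the restriction of $<_n$ on $\mathbf{F}_n(\mathbf{X})$.

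By maximality of $L = x_{a_1} x_{a_2} \cdots x_{a_n}$, every $W = w_1 w_2 \cdots w_n \in \mathbf{W}_n$ satisfies $W \le_n L$, so I can partition
\[
\mathbf{W}_n = \{L\} \;\sqcup\; \bigsqcup_{j=1}^{n} B_j,\qquad B_j = \{W \in \mathbf{W}_n : w_i = x_{a_i}\ \text{for}\ i<j,\ w_j < x_{a_j}\}.
\]
The core step is to show that $|B_j| = (a_j-1)|\mathbf{W}_{n-j}|$ via the map $W \mapsto (w_j, V)$, where $V \in \mathbf{W}_{n-j}$ is the element represented by $w_{j+1}\cdots w_n$. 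Injectivity is immediate from the uniqueness of reduced word representations, and surjectivity is a direct application of \Cref{lemma: W=0 iff UW=0}: the assumption $w_j < x_{a_j}$ is exactly its hypothesis, so the concatenation $x_{a_1}\cdots x_{a_{j-1}}\,w_j\,v_1\cdots v_{n-j}$ is nonzero if and only if $v_1\cdots v_{n-j}$ is.

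Summing and using $|\mathbf{W}_0|=1$ gives
\[
|\mathbf{W}_n| \;=\; 1 + \sum_{j=1}^{n}(a_j-1)|\mathbf{W}_{n-j}| \;=\; \sum_{j=1}^{n-1}(a_j-1)|\mathbf{W}_{n-j}| + a_n|\mathbf{W}_0|,
\]
which is the asserted equality at $s=n$. For $1 \le s < n$ I would regroup the tail: the set $\{L\} \sqcup B_s \sqcup \cdots \sqcup B_n$ is precisely $\{W \in \mathbf{W}_n : w_i = x_{a_i}\ \text{for}\ i<s\}$, and on this set the map $W \mapsto (w_s, V)$ is injective into $\{x_1,\ldots,x_{a_s}\} \times \mathbf{W}_{n-s}$, yielding $1 + |B_s| + \cdots + |B_n| \le a_s|\mathbf{W}_{n-s}|$; combined with the exact counts for $j<s$, this produces the claimed inequality. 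The specialization to $n=k$ then follows immediately, since the greedy construction forces $|\mathbf{W}_i^{M_k^0}| = b_i$ for every $i \le k$.

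The main subtlety---and the reason equality cannot be claimed for $s<n$---is the boundary case $w_s = x_{a_s}$, where \Cref{lemma: W=0 iff UW=0} does not apply: a nonzero tail $v_1\cdots v_{n-s}$ may still combine with the prefix $x_{a_1}\cdots x_{a_s}$ to create a zero subword straddling the seam, so the regrouped injection is generally not surjective. When $s=n$ the tail is empty, this obstruction disappears, and tightness is recovered.
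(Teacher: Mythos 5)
Your proof is correct and follows essentially the same route as the paper's: both partition the nonzero length-$n$ words by the first position at which they drop below $L$, use \Cref{lemma: W=0 iff UW=0} to count each such class exactly as $(a_j-1)|\mathbf{W}_{n-j}|$, and bound the remaining class of words agreeing with $L$ through position $s$ by the number of nonzero tails, with equality at $s=n$ because the leftover class is then just $\{L\}$. The only cosmetic difference is where you place the boundary class $B_s$ (inside the injected remainder rather than among the exactly counted classes), which yields the identical bound.
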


\begin{proof}
Any nonzero word of length $n$ starts with a unique prefix $x_{a_1} x_{a_2} \dots x_{a_{t-1}} y$ where $t \le s$ and $y <_X x_{a_{t}}$, or starts with $x_{a_1} x_{a_2} \dots x_{a_{s}}$. By \Cref{lemma: W=0 iff UW=0}, there are exactly $|\mathbf{W}_{n-t}|$ elements in $\mathbf{W}_{n}$ that start with each prefix $x_{a_1} x_{a_2} \dots x_{a_{t-1}} y$. Moreover, the number of elements in $\mathbf{W}_n$ starting with $x_{a_1} x_{a_2} \dots x_{a_{s}}$ is no more than the number of all elements in $\mathbf{W}_{n-s}$. As $b_{k-s} = |\mathbf{W}_{k-s}|$, we have
\[
    |\mathbf{W}_n|=(a_1-1)|\mathbf{W}_{n-1}|+(a_2-1)|\mathbf{W}_{n-2}|+\dots +(a_{s-1}-1)|\mathbf{W}_{n+1-s}|+(a_{s}-1)|\mathbf{W}_{n-s}|+|\mathbf{W}_{n-s}|,
\]
as desired. When $s=n$, the number of elements in $\mathbf{W}_n$ starting with $x_{a_1} x_{a_2} \dots x_{a_{s}}$ is exactly the number of all elements in $\mathbf{W}_{n-s}$, so the equality holds.
\end{proof}

\Cref{prop: split b_k} provides a recursive formula for computing $|\mathbf{W}_{n}|$ using the maximal nonzero element of length $n$. In practice, as shown in \Cref{prop:semi-upho upper bound}, the main interest is in calculating $|\mathbf{W}_{k+1}|$. While \Cref{prop: split b_k} gives an expression for $b_k$ based on the maximal nonzero element of length $k$, it only allows us to compute $|\mathbf{W}_{k+1}|$ using the maximal nonzero element of length $k+1$, which can differ significantly from that of length $k$. To establish a link between $|\mathbf{W}_{k+1}|$ and $b_k$, we provide the following technical yet practical proposition, offering an expression for $|\mathbf{W}_{k+1}|$ based on the maximal nonzero element of length $k$, rather than $k+1$.

\begin{prop}\label{prop: count k+1 from k}
Let $L \equiv x_{a_1} x_{a_2} \dots x_{a_k}$ be the maximal word in $\mathbf{F}_k(\mathbf{X})$ under the $k$-lexicographic order such that $L$ is nonzero in $M_k^0$. Then we have
    \[
    |\mathbf{W}_{k+1}| = (a_1 - 1)b_k + (a_2 - 1)b_{k-1} + \dots + (a_{s-1} - 1)b_{k+2-s} + a_{s}b_{k+1-s}
    \]
    for some integer $s$ with $1 \le s \le k$.
\end{prop}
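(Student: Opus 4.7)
The plan is to apply \Cref{prop: split b_k} to the maximum length-$(k+1)$ nonzero word $L^{(k+1)} \equiv x_{c_1} x_{c_2} \dots x_{c_{k+1}}$ in $M_k^0$, choosing the parameter $s$ carefully so that the resulting upper bound is tight and coincides with the desired formula expressed in the letters $a_i$ of $L$. Since the length-$k$ prefix $L^{(k+1)}_{[k]} = x_{c_1} \dots x_{c_k}$ is nonzero as a prefix of $L^{(k+1)}$, the maximality of $L$ gives $L^{(k+1)}_{[k]} \le_k L$. Let $p$ be the smallest index with $c_p < a_p$, setting $p = k+1$ if no such index exists.

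For any $s \in \{1, \dots, k\}$ with $s < p$, \Cref{prop: split b_k} applied with $n = k+1$ yields
\[
|\mathbf{W}_{k+1}| \le (c_1 - 1)b_k + \dots + (c_{s-1} - 1) b_{k+2-s} + c_s b_{k+1-s} = (a_1 - 1)b_k + \dots + (a_{s-1} - 1) b_{k+2-s} + a_s b_{k+1-s},
\]
since $c_i = a_i$ for all $i \le s < p$. Therefore it suffices to locate an $s$ (in $\{1, \dots, p-1\}$ when $p \le k$, or in $\{1, \dots, k\}$ when $p = k + 1$) for which this upper bound is tight. Equality at $(n = k + 1, s)$ amounts to the condition that every length-$(k+1-s)$ nonzero word $V$ extends $L^{(k+1)}_{[s]}$ to a nonzero length-$(k+1)$ word.

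To establish this equality, I would adapt the proof of \Cref{lemma: W=0 iff UW=0}: suppose $L^{(k+1)}_{[s]} V = 0$ for some nonzero $V$; then there is a $0$-defining relation $R$ that is a subword of $L^{(k+1)}_{[s]} V$ overlapping both parts. Comparing $R$ with the length-$\ell(R)$ subword of $L$ starting at the same position, I would obtain a word $R' \ge_{\ell(R)} R$ that is nonzero in $M_{\ell(R) - 1}^0$; the greedy construction then forces $R'$ to be killed at stage $\ell(R)$, contradicting the fact that $R'$ remains nonzero in $M_k^0$ as a subword of $L$.

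The main technical obstacle is the boundary case where $R$ ends at position $k + 1$, since the comparison subword of $L$ would need to extend beyond position $k$. To resolve this, I would take $s = p - 1$ when $p \le k$ and analogously choose $s$ when $p = k + 1$, and invoke the maximality of $L^{(k+1)}$: if the boundary relation forces $V_1 > x_{c_{s+1}}$, then $L^{(k+1)}_{[s]} V$ exceeds $L^{(k+1)}$ in $(k+1)$-lex order, so it must be zero by maximality of $L^{(k+1)}$, which in turn forces $V$ itself to be zero by structural arguments about the greedy construction, contradicting the nonzero assumption on $V$. This closes the boundary case and yields the desired equality.
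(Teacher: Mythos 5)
Your reduction is fine as far as it goes: applying \Cref{prop: split b_k} with $n=k+1$ to $L^{(k+1)}$ and using that its length-$k$ prefix is $\le_k L$, you correctly reduce the proposition to producing one index $s$ for which every nonzero $W\in\mathbf{F}_{k+1-s}(\mathbf{X})$ satisfies $x_{a_1}\cdots x_{a_s}W\neq 0$ in $M_k^0$ (this tightness condition automatically forces $s<p$, so your restriction is harmless). But that statement is exactly \Cref{lemma: exist subword W UW}, which is the real content of the proposition, and your justification of it does not work. You propose to ``adapt'' \Cref{lemma: W=0 iff UW=0} by comparing a straddling $0$-defining relation $R$ with the subword of $L$ starting at the same position and concluding that the latter is $\ge_{\ell(R)}R$. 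In \Cref{lemma: W=0 iff UW=0} that comparison succeeds only because the prefix there ends in a letter $y<_X x_{a_t}$, so the first disagreement with $L$ happens inside the prefix and favors $L$. Here your prefix $x_{a_1}\cdots x_{a_s}$ coincides with $L$'s own prefix, so $R\equiv x_{a_j}\cdots x_{a_s}v_1\cdots v_m$ and the comparison word $x_{a_j}\cdots x_{a_{s+m}}$ agree through position $s$ and first diverge inside $V$, where $v_1$ may well exceed $x_{a_{s+1}}$; then $R$ is lex-larger than the comparison word and no contradiction arises. This is not a technicality: for a general $s<p$ the extension property genuinely fails (one can have $x_{a_1}\cdots x_{a_s}V=0$ with $V\neq 0$), so no one-step comparison of this kind can work for an arbitrary such $s$; what has to be proved is that failure cannot occur for every admissible $s$ simultaneously.

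Your proposed fix, taking $s=p-1$ and invoking maximality of $L^{(k+1)}$, is not established either: the problematic straddling relations are not only those ending at position $k+1$, and the final step (``$V$ itself is zero by structural arguments about the greedy construction'') is precisely the assertion that needs proof; it is also not clear that $s=p-1$ is always a valid choice, since the paper only proves existence of some $s$, non-constructively. In the paper this existence statement is \Cref{lemma: exist subword W UW}, proved by contradiction through a delicate induction (Claims 1--3) on the maximal nonzero words $V_i$ of each length $k+2-i$ and their truncations $V_i^-$; nothing playing that role appears in your sketch, so the argument is incomplete at its crucial step. (Your counting shell does differ mildly from the paper's, which uses \Cref{lemma: k+1 W=0 iff VW=0} to count the extensions of the smaller prefixes of $L$ directly at length $k+1$ rather than passing through $L^{(k+1)}$, but that difference is cosmetic; the missing lemma is the same.)
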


To prove \Cref{prop: count k+1 from k}, we first prove the following two technical lemmas.

\begin{lemma}\label{lemma: exist subword W UW}
    Let $L \equiv x_{a_1} x_{a_2} \dots x_{a_k}$ be the maximal word in $\mathbf{F}_k(\mathbf{X})$ under the $k$-lexicographic order such that $L$ is nonzero in $M_k^0$. Then there exists a positive integer $s$ with $1 \le s \le k$ such that $U \equiv x_{a_1} x_{a_2} \dots x_{a_{s}}$ satisfies that $W = 0$ in $M_k^0$ if and only if $UW = 0$ in $M_k^0$ for any word $W \in \mathbf{F}_{k+1-s}(\mathbf{X})$.
\end{lemma}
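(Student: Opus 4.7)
The plan is to find the correct prefix $U = L_s \coloneqq x_{a_1}x_{a_2}\dots x_{a_s}$ by comparing $L$ with the maximal nonzero word of length $k+1$ in $M_k^0$. Let $L' \equiv x_{c_1}x_{c_2}\dots x_{c_{k+1}}$ denote this maximum. Since the length-$k$ prefix of $L'$ is a nonzero subword, it is $\le_k L$ in the $k$-lexicographic order. Define $s^{\max}$ to be the largest integer in $\{0,1,\dots,k\}$ with $a_i=c_i$ for all $i\le s^{\max}$. My candidate is $s=s^{\max}$, and the argument proceeds in two stages: first showing $s^{\max}\ge 1$, then verifying the stated equivalence for this $s$. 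The trivial direction $W=0 \Rightarrow L_s W=0$ is immediate in both stages, so the content lies in the converse.

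For $s^{\max}\ge 1$ (equivalently $c_1=a_1$), I would argue by contradiction: if $c_1<a_1$, then no nonzero length-$(k+1)$ word begins with $x_{a_1}$, so in particular $L y = 0$ for every $y\in\mathbf{X}$ and more generally $x_{a_1}W = 0$ for every $W\in\mathbf{F}_k(\mathbf{X})$. Analyzing the 0-defining subword that must then span the beginning of $x_{a_1} L$, and combining the greedy construction's structural ordering of 0-defining relations with \Cref{lemma: W=0 iff UW=0} applied to $L$, one forces a contradiction with $L$ being nonzero (or, in the degenerate regime, with $|\mathbf{W}_{k+1}^{M_k^0}|>0$).

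For the nontrivial direction of the equivalence at $s=s^{\max}$, suppose $L_s W = 0$ with $W\in\mathbf{F}_{k+1-s}(\mathbf{X})$ nonzero. Then $L_s W$ contains a 0-defining subword $V$ not entirely inside $W$, so $V = x_{a_p}x_{a_{p+1}}\dots x_{a_s}W_0$ for some $1\le p\le s$ and nontrivial prefix $W_0$ of $W$, with $r=\ell(V)\le k$. The subword $L'[p : p+r-1]$ of $L'$ is well-defined (since $p+r-1\le k+1$) and nonzero. Because $a_i=c_i$ for $i\le s$, the two words agree on their first $s-p+1$ letters; by the greedy up-set property of 0-defining relations (any length-$r$ word larger than $V$ in the $r$-lexicographic order is also zero in $M_k^0$), we get $V>_r L'[p : p+r-1]$, so their first disagreement occurs within the $W_0$-portion, and $W_0$'s letter there strictly exceeds the corresponding letter of $L'$. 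I would then exploit this discrepancy together with the maximality of $s^{\max}$ to derive a contradiction — either by constructing a length-$(k+1)$ nonzero word lexicographically greater than $L'$ (violating $L'$'s maximality) or by extending the matching prefix between $L$ and $L'$ past position $s^{\max}$ (contradicting its definition).

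The main obstacle is this last contradiction step: the lex comparison $V>_r L'[p : p+r-1]$ is by itself consistent with $V$ being 0-defining and $L'$ being nonzero, so the argument must carefully interlock the greedy up-set property with the precise maximality of $s^{\max}$ and the global structure of the greedy construction. Pulling off this final combinatorial step — likely by a careful case analysis on $p$, $r$, and the position at which $W_0$ first deviates from the suffix of $L'$ — is the most delicate part of the proof.
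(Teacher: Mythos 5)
Your choice of $s$ is the core of the proposal, and it is not correct: the prefix determined by the longest common prefix of $L$ with the maximal nonzero word $L'$ of length $k+1$ does not in general satisfy the lemma. Concretely, take $\mathbf{X}=\{x_1<x_2<x_3\}$ and $g(x)=1+3x+7x^2+13x^3+\cdots$. The greedy construction gives $M_2^0$ with $0$-defining relations $x_3x_3=0,\ x_3x_2=0$ (the largest $9-7=2$ words of length $2$), and then $M_3^0$ adds $x_3x_1x_3=x_3x_1x_2=x_3x_1x_1=x_2x_3x_1=0$ (the largest $17-13=4$ nonzero words of length $3$). Here $k=3$, $L=x_2x_2x_3$, and the maximal nonzero word of length $4$ is $L'=x_2x_2x_2x_3$, so your $s^{\max}=2$ and $U=x_2x_2$. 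But $W=x_3x_1$ is nonzero in $M_3^0$ while $UW=x_2x_2x_3x_1=0$ (it contains the $0$-defining factor $x_2x_3x_1$), so the asserted equivalence fails for $s=s^{\max}$; in this example only $s=1$ works. This also shows why your final contradiction step cannot be made to work: the configuration you reach ($V>_r$ the corresponding factor of $L'$, with the first disagreement inside $W_0$) is perfectly consistent and yields neither a nonzero length-$(k+1)$ word above $L'$ nor an extension of the common prefix. A secondary gap is that your argument presupposes the existence of $L'$, i.e.\ $|\mathbf{W}_{k+1}^{M_k^0}|>0$, which is itself a nontrivial consequence of the lemma rather than an available hypothesis.

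For comparison, the paper does not try to name $s$ in terms of $L'$ at all. It argues by contradiction assuming the conclusion fails for every $s\le k$, introduces for each $2\le i\le k$ the maximal nonzero word $V_i$ of length $k+2-i$, and proves by induction (Claims 1 and 2) that $V_i$ with its last letter removed is strictly $>_{k+1-i}$ the suffix $x_{a_i}\cdots x_{a_k}$ of $L$; this yields (Claim 3) that no $0$-defining relation has the form $x_{a_i}\cdots x_{a_k}y=0$, so $Ly\neq 0$ for every letter $y$ and $s=k$ would in fact work, a contradiction. Any repair of your approach would need a genuinely different rule for selecting $s$ (as the example shows, the correct $s$ can be strictly smaller than the common-prefix length), and at that point you are essentially forced into a global analysis of the kind the paper carries out.
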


\begin{proof}
    If $W = 0$, then clearly $UW = 0$. Suppose the converse direction is false. We prove by contradiction. Let $V_i$ denote the maximal word in $\mathbf{F}_{k+2-i}(\mathbf{X})$ under the $(k+2-i)$-lexicographic order such that $V_i \neq 0$ in $M_k^0$ for $2 \le i \le k$. 

    Claim 1: For any $2 \le i \le k$, we have $x_{a_1} x_{a_2} \dots x_{a_{i-1}} V_i = 0$. 

    By our assumption that the converse direction of the lemma is false, for any $2 \le i \le k$, there exists a word $W \in \mathbf{F}_{k+2-i}(\mathbf{X})$ such that $W \neq 0$ but $x_{a_1} x_{a_2} \dots x_{a_{i-1}} W= 0$ in $M_k^0$. Since $W \neq 0$, there must be a $0$-defining relation $x_{a_j} x_{a_{j+1}} \dots x_{a_{i-1}} W' = 0$ for some prefix $W'$ of $W$. By the construction of greedy $0$-monoids, this implies that all words greater than or equal to $x_{a_j} x_{a_{j+1}} \dots x_{a_{i-1}} W'$ under the $(i-j+\ell(W'))$-lexicographic order are also zero, and thus $x_{a_j} x_{a_{j+1}} \dots x_{a_{i-1}} V'_i=0$ for the prefix $V'_i$ of $V_i$ such that $\ell(V'_i)=\ell(W')$. Hence, we have $x_{a_1} x_{a_2} \dots x_{a_{i-1}} V_i = 0$. This completes the proof of Claim 1.

    Claim 2: For any $2 \le i \le k$, we have $V_i^- >_{k+1-i} x_{a_i} x_{a_{i+1}} \dots x_{a_k}$, where $V_i \equiv V_i^- v_i$ with $v_i$ being the last letter of $V_i$.

    We prove Claim 2 by induction on $i$. For $i = 2$, by Claim 1, we know that $x_{a_1} V_2 = 0$. Since $V_2 \neq 0$, there exists a $0$-defining relation $x_{a_1} V'_2=0$ such that $V'_2$ is a prefix of $V_2$. As no $0$-defining relation $W=0$ in $M_k^0$ satisfies $\ell(W)\ge k+1$, we have $\ell(V'_2)\le \ell(V_2)-1=\ell(V_2^-)$. Hence $x_{a_1} x_{a_2} \dots x_{a_{\ell(V'_2)+1}}$ is a prefix of $x_{a_1} x_{a_2} \dots x_{a_{k}}$, resulting in $x_{a_1} x_{a_2} \dots x_{a_{\ell(V'_2)+1}}\neq 0$ in $M_k^0$. Moreover, since $x_{a_1} V'_2=0$ is a $0$-defining relation, by the construction of greedy $0$-monoids, we have $x_{a_1} V'_2>_{\ell(V'_2)+1} x_{a_1} x_{a_2} \dots x_{a_{\ell(V'_2)+1}}$, and hence $V'_2>_{\ell(V'_2)}x_{a_2} \dots x_{a_{\ell(V'_2)+1}}$. Therefore, we have $V_2^{-} >_{k-1} x_{a_2} x_{a_{2}} \dots x_{a_{k}}$ and complete the proof of the base case.

     Suppose we have proved Claim 2 for all $i'$ such that $2\le i'<i$. By Claim 1, $x_{a_1} x_{a_2} \dots x_{a_{i-1}}V_i=0$. 
     
     Case 1: If there exists a $0$-defining relation $x_{a_{i-r}} x_{a_{i-r+1}} \dots x_{a_{i-1}}V'_i=0$ ($1\le r\le i-1$), where $V'_i$ is a prefix of $V'_i$ such that $\ell(V'_i)\le \ell(V_i)-1=\ell(V_i^-)$, then $x_{a_{i-r}} x_{a_{i-r+1}} \dots x_{a_{i-1+\ell(V'_i)}}$ is a consecutive subword of $x_{a_1} x_{a_2} \dots x_{a_{k}}$. Since $x_{a_1} x_{a_2} \dots x_{a_{k}}\neq 0$ in $M_k^0$, we have $x_{a_{i-r}} x_{a_{i-r+1}} \dots x_{a_{i-1+\ell(V'_i)}}\neq 0$ in $M_k^0$, and as $x_{a_{i-r}} x_{a_{i-r+1}} \dots x_{a_{i-1}}V'_i=0$ is a $0$-defining relation, by the construction of greedy $0$-monoids, we have $x_{a_{i-r}} x_{a_{i-r+1}} \dots x_{a_{i-1}}V'_i>_{r+\ell(V'_i)} x_{a_{i-r}} x_{a_{i-r+1}} \dots x_{a_{i-1+\ell(V'_i)}}$, and hence $V_i^->_{k+1-i} x_{a_i} x_{a_{i+1}} \dots x_{a_{k}}$.

     Case 2: Otherwise, since $V_i \neq 0$ and no $0$-defining relation $W=0$ in $M_k^0$ satisfies $\ell(W)\ge k+1$, there exists a $0$-defining relation $x_{a_{i-r}} x_{a_{i-r+1}} \dots x_{a_{i-1}}V_i=0$ ($1\le r\le i-2$). Since $V_{i-r}\neq 0$, by the construction of greedy $0$-monoids, we have $x_{a_{i-r}} x_{a_{i-r+1}} \dots x_{a_{i-1}}V_i>_{k+2-i+r}V_{i-r}$. Moreover, by induction hypothesis, $V_{i-r}\equiv V_{i-r}^- v_{i-r}>_{k+2-i+r} x_{a_{i-r}} x_{a_{i-r+1}} \dots x_{a_{k}}v_{i-r}$. Hence $V_i>_{k+2-i} x_{a_{i}} x_{a_{i+1}} \dots x_{a_{k}}v_{i-r}$, and thus $V_i^- \ge_{k+2-i} x_{a_{i}} x_{a_{i+1}} \dots x_{a_{k}}$. The equality is obtained only if $x_{a_{i-r}} x_{a_{i-r+1}} \dots x_{a_{i-1}}V_i^-\equiv V_{i-r}^- \equiv x_{a_{i-r}} x_{a_{i-r+1}} \dots x_{a_{k}}$, contradicting with the induction hypothesis $V_{i-r}^- >_{k+1-i+r} x_{a_{i-r}} x_{a_{i-r+1}} \dots x_{a_{k}}$. Hence, we have $V_i^- >_{k+2-i} x_{a_{i}} x_{a_{i+1}} \dots x_{a_{k}}$. Therefore, the proof of Claim 2 is complete.

    Claim 3: There does not exist a $0$-defining relation $x_{a_i} x_{a_{i+1}} \dots x_{a_k} y = 0$ for any $1 \le i \le k$ and $y \in X$.

    The case $i = 1$ is trivial, as no $0$-defining relation $W=0$ in $M_k^0$ satisfies $\ell(W)\ge k+1$. For $i \ge 2$, by Claim 2, $x_{a_i} x_{a_{i+1}} \dots x_{a_k} y <_{k+2-i} V_i$. Thus, if $x_{a_i} x_{a_{i+1}} \dots x_{a_k} y = 0$, then $V_i = 0$, contradicting the nonzero nature of $V_i$. This completes Claim 3.

    Hence, by Claim 3 and the fact that $x_{a_1} x_{a_2} \dots x_{a_k}\neq 0$, we have $x_{a_1} x_{a_2} \dots x_{a_k} y=0$ if and only if $y=0$, contradicting our assumption. This completes the proof of the lemma.
\end{proof}

\begin{remark}
     ``For any word $W \in \mathbf{F}_{k+1-s}(\mathbf{X})$" in \Cref{lemma: exist subword W UW} cannot be replaced by ``for any word $W$ of length no more than $k+1-s$."
\end{remark}

\begin{lemma}\label{lemma: k+1 W=0 iff VW=0}
     Let $L \equiv x_{a_1} x_{a_2} \dots x_{a_k}$ be the maximal word in $\mathbf{F}_k(\mathbf{X})$ under the $k$-lexicographic order such that $L$ is nonzero in $M_k^0$. Suppose $U \equiv x_{a_1} x_{a_2} \dots x_{a_{s}}$ is a prefix of $L$ such that $W = 0$ in $M_k^0$ if and only if $UW = 0$ in $M_k^0$ for any word $W \in \mathbf{F}_{k+1-s}(\mathbf{X})$, then for any $V\equiv x_{a_1} x_{a_2} \dots x_{a_{t-1}} y$ where $t\le s$ and $y <_X x_{a_t}$, we have $W = 0$ in $M_k^0$ if and only if $VW = 0$ in $M_k^0$ for any word $W \in \mathbf{F}_{k+1-t}(\mathbf{X})$.
\end{lemma}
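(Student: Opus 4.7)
The forward direction is immediate: if $W = 0$ in $M_k^0$, then $VW = V \cdot 0 = 0$ by zero-absorption. For the reverse, suppose $VW = 0$. Since every relation in $M_k^0$ beyond zero-absorption is a $0$-defining relation, the free-monoid word $VW$ must contain a $0$-defining relation $P$ as a consecutive subword; let $P$ occupy positions $[i,j]$ in $VW$ with length $\ell = j - i + 1 \le k$. The plan is to case-analyze on $[i,j]$ and show that every placement other than $P \subseteq W$ (which yields $W = 0$ immediately) leads to a contradiction.

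If $j \le t$ (so $P \subseteq V$), either $j \le t-1$ and $P$ is a subword of the $L$-prefix $x_{a_1} \cdots x_{a_{t-1}}$, contradicting $L \neq 0$; or $j = t$, in which case bumping the last letter $y$ of $P$ to $x_{a_t}$ produces $P^* > P$ of the same length $\ell$, necessarily zero by greediness yet itself a subword of $L$, the same contradiction. In the straddling case $i \le t < j$, the bump $y \mapsto x_{a_t}$ at position $t$ of $VW$ yields $P^* > P$ (they first disagree at position $t - i + 1$, where $x_{a_t}$ dominates $y$), hence $P^* = 0$. When $j \le k$, further bumping each letter of $P^*$ at positions $t - i + 2, \ldots, \ell$ up to the corresponding $L$-letter produces the subword $x_{a_i} \cdots x_{a_j}$ of $L$, contradicting $L \neq 0$.

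The only remaining sub-case is $j = k + 1$. Here the full bump $y \mapsto x_{a_t}$ together with $z_r \mapsto x_{a_{t+r}}$ for $r = 1, \ldots, k - t$ (leaving $z_{k+1-t}$ fixed) produces $P^{\#} = x_{a_i} x_{a_{i+1}} \cdots x_{a_k} z_{k+1-t} > P$, so $P^{\#} = 0$ by greediness; since $P^{\#}$ is a subword of $L \cdot z_{k+1-t}$ starting at position $i$, this forces $L z_{k+1-t} = 0$ in $M_k^0$. Setting $W'' := x_{a_{s+1}} x_{a_{s+2}} \cdots x_{a_k} z_{k+1-t} \in \mathbf{F}_{k+1-s}(\mathbf{X})$, the identity $UW'' = L z_{k+1-t}$ combined with the hypothesis on $U$ forces $W'' = 0$, so $W''$ contains a $0$-defining subword of the form $x_{a_r} x_{a_{r+1}} \cdots x_{a_k} z_{k+1-t}$ for some $s + 1 \le r \le k$ (since any $0$-defining subword lying inside $x_{a_{s+1}} \cdots x_{a_k}$ alone would be a subword of $L$ and hence nonzero).

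The main obstacle will be to transport this relation inside $W''$ back to a $0$-defining subword of the original $W$, which will force $W = 0$ and close the contradiction. The natural candidate is the length-$(k - r + 2)$ suffix $z_{r - t} z_{r - t + 1} \cdots z_{k+1-t}$ of $W$, whose final letter matches $z_{k+1-t}$ but whose earlier letters need not align with $x_{a_r}, \ldots, x_{a_k}$. The plan to complete this step is a downward induction on $t$ (from $s$ to $1$) with a strengthened inductive hypothesis that simultaneously treats the bare $L$-prefixes $x_{a_1} \cdots x_{a_t}$; in the inductive step the first letter of $W$ is compared to $x_{a_{t+1}}$, with the subtle case (that letter strictly exceeding $x_{a_{t+1}}$) handled by iterating the last-letter bump inward through $W$ until the index catches up with $s$ and the hypothesis on $U$ can be applied directly.
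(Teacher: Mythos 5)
Your forward direction and your treatment of the cases where the $0$-defining subword $P$ lies inside $V$, lies inside $W$, or straddles $y$ with $j\le k$ all reach the right conclusions, but note one point of hygiene in the straddling case: being zero in $M_k^0$ is \emph{not} upward-closed under the $\ell$-lexicographic order (a word can be zero merely because it contains a shorter $0$-defining subword, and a lexicographically larger word need not), so "bumping" is only legitimate when the comparison is made against a $0$-defining relation of the same length. Your second-stage bump from $P^*$, which is zero but not necessarily a $0$-defining relation, is therefore not justified as written; the conclusion survives only because the fully bumped word $x_{a_i}\cdots x_{a_j}$ is already $>_{\ell}P$ at the $y$-position, so one should compare it directly with $P$.

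The genuine gap is the sub-case $j=k+1$, which you do not finish. Because you bump every letter of $W$ inside $P$ except the last one up to the corresponding letter of $L$, the word $W''=x_{a_{s+1}}\cdots x_{a_k}z_{k+1-t}$ that you feed to the hypothesis on $U$ consists almost entirely of letters of $L$; the resulting $W''=0$ says nothing about $W$ itself, and the "transport back" is left as a plan. It is also not clear it can work as sketched: the corresponding suffix $z_{r-t}\cdots z_{k+1-t}$ of $W$ need not dominate the $0$-defining word $x_{a_r}\cdots x_{a_k}z_{k+1-t}$ in the lexicographic order, so greediness gives no handle on it. The paper's proof avoids the problem by bumping less: writing $P\equiv V'yW$ with $V'y$ a suffix of $V$, replace only the letters at positions $t,\dots,s$ by $x_{a_t},\dots,x_{a_s}$ and keep the last $k+1-s$ letters of $W$, call them $W''$, untouched. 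The first discrepancy with $P$ is already at position $t$ (where $y<_X x_{a_t}$), so $V'x_{a_t}\cdots x_{a_s}W''$ is lexicographically larger than the $0$-defining relation $P$ of the same length, hence zero; prepending the missing initial letters of $U$ gives $UW''=0$, the hypothesis on $U$ applies since $W''\in\mathbf{F}_{k+1-s}(\mathbf{X})$ and yields $W''=0$, and since this $W''$ is an actual suffix of $W$ we get $W=0$ at once, closing the case. So your setup is close, but as written the argument is incomplete precisely at the case that matters.
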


\begin{proof}
If $W=0$, then clearly $VW=0$. Conversely, suppose otherwise that $W \neq 0$ and $VW = 0$. Then there exists a $0$-defining relation given by a consecutive subword of $VW$. The $0$-defining relation cannot be of the form $x_{a_i} x_{a_{i+1}} \dots x_{a_{j}}=0$ since $L\neq 0$. Moreover, since $W\neq 0$, this $0$-defining relation must be of the form $V'yW' = 0$, where $V'y$ is the last few letters of $V$, and $W'$ is a prefix of $W$ (both $V'$ and $W'$ can be the empty word). 

If $\ell(W')\neq \ell(W)$, then $V'yW'<_{\ell(V')+\ell(W')+1} V'x_{a_t} x_{a_{t+1}}\dots x_{a_{t+\ell(W')}}$. By the construction of greedy $0$-monoid, $V'x_{a_t} x_{a_{t+1}}\dots x_{a_{t+\ell(W')}}=0$. Moreover, since it is a consecutive subword of $L$, we get $L=0$, leading to a contradiction.

If $\ell(W')= \ell(W)$, then this $0$-defining relation is exactly $V'yW=0$. Note that $V'yW<_{\ell(V')+\ell(W')+1}V' x_{a_t}x_{a_{t+1}}\dots x_{a_{s}}W''$, where $W''$ is the last $\ell(W)-s+t=k+1-s$ letters of $W$. By the construction of greedy $0$-monoid, we have $V' x_{a_t}x_{a_{t+1}}\dots x_{a_{s}}W''=0$. Since $V' x_{a_t}x_{a_{t+1}}\dots x_{a_{s}}$ is the last few letters of $U$, we have $UW''=0$, and hence $W''=0$ as $W'' \in  \mathbf{F}_{k+1-s}(\mathbf{X})$. But this implies $W=0$, which is also a contradiction.
\end{proof}

Now we are ready to prove \Cref{prop: count k+1 from k}.

\begin{proof}[Proof of \Cref{prop: count k+1 from k}]
Any nonzero word of length $k+1$ starts with a unique prefix $x_{a_1} x_{a_2} \dots x_{a_{t-1}} y$ where $t \le s$ and $y <_X x_{a_{t}}$, or starts with $x_{a_1} x_{a_2} \dots x_{a_{s}}$. By \Cref{lemma: k+1 W=0 iff VW=0}, there are exactly $|\mathbf{W}_{k+1-t}| = b_{k+1-t}$ elements in $\mathbf{W}_{k+1}$ that start with each prefix $x_{a_1} x_{a_2} \dots x_{a_{t-1}} y$, and moreover, by \Cref{lemma: exist subword W UW}, there are exactly $|\mathbf{W}_{k+1-s}|=b_{k+1-s}$ elements that start with the prefix $x_{a_1} x_{a_2} \dots x_{a_{s}}$. Hence, we have
\[
    |\mathbf{W}_{k+1}| = (a_1 - 1)b_k + (a_2 - 1)b_{k-1} + \dots + (a_{s-1} - 1)b_{k+2-s} + a_{s}b_{k+1-s}
\]
for some $1 \le s \le k$, as desired.
\end{proof}

\subsection{Log-Concavity Implies Regular Semi-Upho}
With the results in \Cref{subsec: greedy 0 monoid} and \Cref{subsec: counting greedy 0 monoid}, we are now ready to prove the main theorem of \Cref{sec:4}, which states that log-concave power series are all regular semi-upho functions.

\begin{proof}[Proof of \Cref{thm: log-concave rgf of semi-upho}]
According to \Cref{prop:semi-upho upper bound}, it suffices to show $|\mathbf{W}_{k+1}^{M_{k}^{0}}| \ge b_{k+1}$ for any $k\in \Z$.  Moreover, it suffices to prove the case where $b_{k+1}\neq 0$, and since the log-concave formal power series does not have internal zeros, we have $b_1, b_2, \dots b_{k}\neq 0$ and\[\frac{b_1}{1}\ge \frac{b_2}{b_1}\ge \dots \frac{b_{k}}{b_{k-1}}\ge \frac{b_{k+1}}{b_{k}}.\]
By \Cref{prop: count k+1 from k}, we have\[
     |\mathbf{W}_{k+1}^{M_{k}^{0}}| = (a_1 - 1)b_k + (a_2 - 1)b_{k-1} + \dots + (a_{s-1} - 1)b_{k+2-s} + a_{s}b_{k+1-s}
    \]
    for some integer $s$ with $1 \le s \le k$.
    Moreover, by \Cref{prop: split b_k}, we have
   \[
    b_k\le (a_1-1)b_{k-1}+(a_2-1)b_{k-2}+\dots +(a_{s-1}-1)b_{k+1-s}+a_{s}b_{k-s}.
    \]
    Therefore, we have
    \begin{align*}
        |\mathbf{W}_{k+1}^{M_{k}^{0}}|=&(a_1 - 1)b_k + (a_2 - 1)b_{k-1} + \dots + (a_{s-1} - 1)b_{k+2-s} + a_{s}b_{k+1-s}\\
        =&(a_1-1)b_{k-1}\frac{b_k}{b_{k-1}}+(a_2-1)b_{k-2}\frac{b_{k-1}}{b_{k-2}}+\cdots+a_sb_{k-s}\frac{b_{k+1-s}}{b_{k-s}}\\
        \geq &((a_1-1)b_{k-1}+(a_2-1)b_{k-2}+\dots +(a_{s-1}-1)b_{k+1-s}+a_{s}b_{k-s})\frac{b_{k}}{b_{k-1}}\\
        \geq &b_k \cdot \frac{b_k}{b_{k-1}}
        \geq b_{k}\cdot \frac{b_{k+1}}{b_{k}}
        = b_{k+1},
    \end{align*}  
    as desired.
\end{proof}

\begin{remark}
    As shown in \Cref{example:feiposet}, the rank-generating function of Fei's poset is not log-concave, indicating that not all regular semi-upho posets or regular upho functions are log-concave. 
\end{remark}

However, \Cref{thm: log-concave rgf of semi-upho} remains a highly nontrivial result. In \Cref{subsec: char of upho function}, we establish a framework for regular upho functions using \emph{greedy LCH monoid series}, analogous to the framework for regular semi-upho functions in \Cref{subsec: greedy 0 monoid}. Despite the apparent similarities, and although almost all weakly increasing log-concave formal power series we observed in practice are regular upho functions, they do not universally satisfy this property.

\begin{remark}
    Using SageMath \cite{sagemath}, we found that $1,4,11,30$ cannot be the first few coefficients of any upho function, although $f(x) = 1 + 4x + 11x^2 + 30x^3 + 30x^4 + 30x^5 + \dots$ is a weakly increasing log-concave formal power series.
\end{remark}

The complexity of regular upho functions compared to regular semi-upho functions stems from the greater intricacy of LCH monoids (even for head-changing LCH monoids) relative to free $0$-monoids. Even with a full resolution of the conjectures in \Cref{subsec: char of upho function}, directly describing the coefficients of upho functions remains difficult due to the challenges in counting elements in LCH monoids. Consequently, in the following sections, we present an alternative approach to constructing new regular upho functions from known ones.

\section{Convolution of Posets}\label{sec:5}

In this section, we define the \emph{$\bx$-convolution} of a colored upho poset $P$ with a colored tree-like semi-upho poset $S$, denoted $P \rtimes_{\bx} S$. Our main theorem states that $P \rtimes_{\bx} S$ is always a colored upho poset. Moreover, if the rank-generating functions are well-defined, then we have $F_{P \rtimes_{\bx} S} = F_{P} F_{S}$. This result can be reformulated as \Cref{thm:upho times semi-upho}, the product of any regular upho function and any regular semi-upho function remains a regular upho function. This theorem provides a more flexible method, compared to \Cref{lemma:produpho}, for generating new upho functions from existing ones.

\subsection{Convolution of Monoids}\label{subsec: conv of monoids}

In this subsection, we define and study the monoid version of the \emph{$\bx$-convolution}.

\begin{definition}\label{def:convolution of monoids}
   Let $M_1=\langle \mathcal{I}_{M_1}\mid R_{M_1}\rangle$ be an atomic LCIF monoid, $M_2=\langle \mathcal{I}_{M_2}\cup \{0\}\mid R_{M_2}\rangle$ be a free $0$-monoid, and $\bx:\mathcal{I}_{M_2}\rightarrow M_1\setminus\{e\}$ be a mapping. The \emph{$\bx$-convolution} of $M_1$ with $M_2$, denoted $M_1\rtimes_{\bx} M_2$, is defined by $\langle \mathcal{I}_{M_1}\cup \mathcal{I}_{M_2}\mid R_{M_1\rtimes_{\bx} M_2}\rangle$, where $R_{M_1\rtimes_{\bx} M_2}$ consists of the following relations:
    \begin{enumerate}
    \item All defining relations in $R_{M_1}$; \label{gen:case1}
    \item Relations $y_i Y_j =\bx(y_i) Y_j$, where $y_i \in \mathcal{I}_{M_2}, Y_j \in \mathbf{F}(\mathcal{I}_{M_2})$, and the relation $y_i Y_j =0$ is in $R_{M_2}$;  \label{gen:case2}
    \item Relations $y_i x_j =\bx(y_i) x_j$, where $x_j \in \mathcal{I}_{M_1}$ and $y_i\in \mathcal{I}_{M_2}$. \label{gen:case3}
    \end{enumerate}
\end{definition}

It can be easily verified that the $\bx$-convolution of $M_1$ with $M_2$ does not depend on the choice of $R_{M_1}$ and $R_{M_2}$ in their presentations.

\begin{example}\label{eg:eg for fig upho times semiupho}
    Let $M_1=\langle x\mid \emptyset \rangle$, $M_2=\langle 0, y_1, y_2\mid R_{M_2}\rangle,$ where
    \[R_{M_2}=\{y_1^3=y_1^2y_2=y_1y_2y_1=y_1y_2^2=y_2y_1^2=y_2y_1y_2=y_2^2=0, 0y_1=y_10=0, 0y_2=y_20=0\},\] and $\bx(y_1)=x$, $\bx(y_2)=x$.
    Then $M_1\rtimes_{\bx} M_2=\langle x, y_1, y_2\mid  R\rangle$, where \[ R=\{y_1^3=y_2y_1^2=xy_1^2, y_1^2y_2=y_2y_1y_2=xy_1y_2, y_1y_2y_1=xy_2y_1, y_1y_2^2=xy_2^2, y_2^2=xy_2, y_1x=y_2x=x^2\}.\]
\end{example}

Our main theorem in this subsection is the following result.

\begin{theorem}\label{thm: LCIF monoid convolution}
    Let $M_1$ be an atomic LCIF monoid, and $M_2$ be a free $0$-monoid. Then $M_1\rtimes_{\bx} M_2$ is an atomic LCIF monoid for any $\bx:\mathcal{I}_{M_2}\rightarrow M_1\setminus\{e\}$. Moreover, if $M_1$ is an LCH monoid, and $\im \bx \subseteq \mathcal{I}_{M_1}$, then $M_1\rtimes_{\bx} M_2$ is an LCH monoid.
\end{theorem}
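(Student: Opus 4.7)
The plan is to show that every element of $M_1 \rtimes_{\bx} M_2$ admits a unique representation $X \cdot Y$ with $X \in M_1$ and $Y$ a nonzero element of $M_2$, and to deduce atomicity, invertible-freeness, left-cancellativity, and (in the LCH case) homogeneity from this normal form. Existence comes from a reduction procedure: given any word $W \in \mathbf{F}(\mathcal{I}_{M_1} \cup \mathcal{I}_{M_2})$, repeatedly apply type-(3) rewrites $y_i x_j \to \bx(y_i) x_j$ and type-(2) rewrites $y_i Y_j \to \bx(y_i) Y_j$ (whenever $y_i Y_j = 0$ is in $R_{M_2}$). Since $\bx(y_i)$ is a nonempty word in $\mathcal{I}_{M_1}$, every such step strictly decreases the $y$-letter count of $W$, so the procedure terminates. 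The terminal word has the desired form $X \cdot Y$, with $Y$ containing no $0$-defining subword of $M_2$ and hence representing a nonzero element.

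For uniqueness I would build an auxiliary monoid $N$ on the set $M_1 \times (M_2 \setminus \{0\})$, with multiplication mimicking the reduction: $(X_1, Y_1) \cdot (X_2, Y_2) \coloneqq (X_1 \bx(Y_1) X_2, Y_2)$ when $X_2 \neq e$ (where $\bx$ is extended multiplicatively to words in $\mathcal{I}_{M_2}$), and when $X_2 = e$, $(X_1, Y_1) \cdot (e, Y_2)$ is computed by iteratively applying the leftmost available type-(2) reduction to $Y_1 Y_2$ and absorbing the resulting $\bx$-images into the $X$-part, until the remaining suffix is nonzero in $M_2$. The assignment $x_j \mapsto (x_j, e)$, $y_i \mapsto (e, y_i)$ then respects the three families of defining relations of $M_1 \rtimes_{\bx} M_2$ by direct check, inducing a homomorphism $\rho : M_1 \rtimes_{\bx} M_2 \to N$. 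Existence of normal forms makes $\rho$ surjective, while the composition $N \hookrightarrow M_1 \rtimes_{\bx} M_2 \xrightarrow{\rho} N$ given by $(X, Y) \mapsto X \cdot Y$ is the identity, so $\rho$ is a bijection. The auxiliary homomorphism $\phi : M_1 \rtimes_{\bx} M_2 \to M_1$ sending $x_j \mapsto x_j$ and $y_i \mapsto \bx(y_i)$ is well-defined because every relation maps to a trivial identity in $M_1$, and provides useful invariants throughout.

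Given the bijection $M_1 \rtimes_{\bx} M_2 \cong N$, the remaining conclusions follow. Atomicity holds since each element factors as a product of the atoms $(x_j, e)$ and $(e, y_i)$, and each such atom is irreducible by inspection of the multiplication formula on $N$. Invertible-freeness: if $(X_1, Y_1)(X_2, Y_2) = (e, e)$, the formula combined with the invertible-freeness of $M_1$ and the unique factorization of nonzero elements in the free $0$-monoid $M_2$ forces both factors to equal $(e, e)$. Left-cancellativity is a direct computation from the multiplication formula, using the left-cancellativity of $M_1$ and the unique factorization of nonzero elements in $M_2$. For the LCH case with $\im \bx \subseteq \mathcal{I}_{M_1}$, each $\bx(y_i)$ has $M_1$-length $1$, so relations (2) and (3) preserve total word length; combined with the homogeneity of $M_1$, this yields homogeneity of $M_1 \rtimes_{\bx} M_2$ with every atom of length $1$.

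The main obstacle is proving associativity of the multiplication on $N$, equivalently confluence of the reduction procedure. The subtle point is that a type-(2) reduction introduces $\bx$-image $x$-letters in the interior of the word, creating new $y$-$x$ boundaries that trigger further type-(3) reductions and potentially cascading interactions with the $M_1$-relations of type (1). I expect this to be handled by a careful case analysis of all critical pairs via Newman's lemma, leveraging the atomic LCIF structure of $M_1$ and the structure of the $0$-defining relations of $M_2$; alternatively, associativity can be checked more explicitly by verifying the equality $((X_1, Y_1)(X_2, Y_2))(X_3, Y_3) = (X_1, Y_1)((X_2, Y_2)(X_3, Y_3))$ case by case on whether each $X_i$ equals $e$.
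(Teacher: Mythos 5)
Your normal form coincides with the paper's ``standard word'' decomposition, and your van der Waerden--style plan (realize the normal forms as an explicit monoid $N$ on $M_1\times(M_2\setminus\{0\})$ and produce mutually inverse maps) is a genuinely different route from the paper's: the paper never builds an auxiliary monoid, but instead works inside the presented monoid and proves invariance statements about arbitrary elementary transition sequences (\Cref{prop:monoidtrans}) --- that the last $d(W)$ letters of every word representation are fixed (\Cref{lemma:last d(W) letters}), and that the $M_1$-part of a standard word is well defined, via the substitution $y_j\mapsto \bx(y_j)$ which projects any transition sequence onto one using only class I relations (\Cref{lemma:standard word the same tail}); left-cancellativity is then verified only on generators (\Cref{lemma:head x_i}, \Cref{lemma:head y_i}).

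As written, however, your argument has a genuine gap at its core: the uniqueness step, i.e.\ associativity of the multiplication you define on $N$ (equivalently, well-definedness of the terminal form of your reduction), is essentially the whole content of the theorem, and you do not prove it --- you only record the expectation that a case analysis will close it. Moreover, your first proposed tool, critical pairs plus Newman's lemma, does not apply as stated: the class I relations are the arbitrary, unoriented defining relations of $M_1$, so the combined system is not a terminating rewriting system, and $\bx(y_i)$ is an element of $M_1$ rather than a canonical word, so the ``critical pairs'' involving class I relations are not controllable. Your second suggestion (checking $((X_1,Y_1)(X_2,Y_2))(X_3,Y_3)=(X_1,Y_1)((X_2,Y_2)(X_3,Y_3))$ case by case) can be made to work, but only after you pin down a procedure-independent description of the $X_2=e$ branch of your product --- e.g.\ that the surviving $M_2$-part is the longest suffix of $Y_1Y_2$ containing no factor from a $0$-defining relation, and the absorbed prefix is its complement --- and the mixed cases (and the subsequent cancellation and invertible-freeness checks on $N$) also require the invertible-freeness of $M_1$, not only its left-cancellativity, to exclude collapses such as $\bx(G)=e$ for a nonempty word $G$. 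You would also need to verify that $\rho$ respects the class II relations even when the $0$-defining relations of $M_2$ are redundant. Until this associativity/uniqueness verification is actually carried out (this is what Lemmas \ref{lemma:existence of standard word}--\ref{lemma:head y_i} of the paper accomplish by different means), the closing deductions of atomicity, invertible-freeness, left-cancellativity, and homogeneity, which are otherwise fine, have nothing to stand on.
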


In the remainder of this subsection, we prove \Cref{thm: LCIF monoid convolution}. Unless otherwise specified, $M_1$ refers to an atomic LCIF monoid, $M_2$ refers to a free $0$-monoid, $M$ refers to $M_1\rtimes_{\bx} M_2$, and $R$ refers to $R_{M_1\rtimes_{\bx} M_2}$. For simplicity, we use the following notation to denote defining relations in $R$.

\begin{notation}
    Defining relations \ref{gen:case1}, \ref{gen:case2}, and \ref{gen:case3} in \Cref{def:convolution of monoids} are denoted by \emph{class I relations}, \emph{class II relations}, and \emph{class III relations}, respectively.
\end{notation}

We introduce the following definitions to standardize the word representations of elements in $M$.

\begin{definition}
     Given an element $W\in M$, a \emph{separate word} of $W$ is a word of the form $XY$ such that $W=XY$, $X \in \mathbf{F}(\mathcal{I}_{M_1})$, and $Y \in \mathbf{F}(\mathcal{I}_{M_2})$. Moreover, if the length of $Y$ is the shortest among all separate word of $W$, then $XY$ is called a \emph{standard word} of $W$, and the length of $Y$ is called the \emph{depth} of $W$. We use $d(W)$ to denote the depth of $W$.
\end{definition}

Before proving \Cref{thm: LCIF monoid convolution}, we first introduce a series of technical lemmas to help with handling the left-cancellative property.

\begin{lemma}\label{lemma:existence of standard word}
    Given an element $W\in M$, there exists a standard word $XY$ of $W$. 
\end{lemma}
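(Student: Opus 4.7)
The plan is to produce a separate word for any $W \in M$ by iteratively applying class III relations from $R$ to push every $\mathcal{I}_{M_2}$-letter rightward past every $\mathcal{I}_{M_1}$-letter, and then extract a standard word as one of minimum depth among all such separate words. The termination measure will be the number of $\mathcal{I}_{M_2}$-letters appearing in the word representation, which strictly decreases under the relevant move.

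First I would pick any word representation $w_1 w_2 \cdots w_n$ of $W$ in the generators $\mathcal{I}_{M_1} \cup \mathcal{I}_{M_2}$; such a representation exists because $M$ is generated by $\mathcal{I}_{M_1} \cup \mathcal{I}_{M_2}$, and $M_1$ being atomic ensures every element of $M_1$ is itself expressible as a product of atoms from $\mathcal{I}_{M_1}$. Set $N(w) \coloneqq |\{i : w_i \in \mathcal{I}_{M_2}\}|$. If $w$ is not already a separate word, then some pair $(i, j)$ with $i < j$, $w_i \in \mathcal{I}_{M_2}$, and $w_j \in \mathcal{I}_{M_1}$ must exist; choosing such a pair minimizing $j-i$ forces $j = i+1$, producing an adjacent \emph{inversion} $w_i w_{i+1}$.

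Then I would apply the class III relation $w_i w_{i+1} = \bx(w_i) w_{i+1}$ in $R$. Expanding $\bx(w_i) \in M_1 \setminus \{e\}$ as a product of atoms from $\mathcal{I}_{M_1}$ (possible since $M_1$ is atomic), one obtains a new word representation $w'$ of the same element $W$ in which the $\mathcal{I}_{M_2}$-letter at position $i$ has been replaced by a nonempty string of $\mathcal{I}_{M_1}$-letters; in particular $N(w') = N(w) - 1$. Since $N$ takes values in $\N$, iterating this procedure terminates in at most $N(w)$ steps, producing a word in which no $\mathcal{I}_{M_2}$-letter is immediately followed by an $\mathcal{I}_{M_1}$-letter. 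A straightforward check shows that any such word must be of the form $XY$ with $X \in \mathbf{F}(\mathcal{I}_{M_1})$ and $Y \in \mathbf{F}(\mathcal{I}_{M_2})$, i.e., a separate word of $W$.

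Finally, the set $\{\ell(Y) : XY \text{ is a separate word of } W\}$ is a nonempty subset of $\N$ and therefore attains a minimum; any separate word realizing this minimum is by definition a standard word. I do not foresee any serious obstacle here, as the class III relations are tailor-made for exactly this kind of normalization; the only mild subtlety is the adjacency argument guaranteeing that each iteration step is executable, which is handled by the minimality choice of $(i,j)$ above.
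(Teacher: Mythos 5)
Your proposal is correct and follows essentially the same route as the paper: use class III relations to convert any $\mathcal{I}_{M_2}$-letter sitting to the left of an $\mathcal{I}_{M_1}$-letter into (a word for) its image under $\bx$, obtaining a separate word, and then take a separate word whose $\mathcal{I}_{M_2}$-part has minimal length. Your explicit adjacency and termination bookkeeping is just a more detailed write-up of the paper's argument.
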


\begin{proof}
      Note that given any word representation $W'$ of $W$, class III relations allows us to replace any $y_j\in \mathcal{I}_{M_2}$ that appears to the left of the rightmost letter $x_i\in \mathcal{I}_{M_1}$ in $W'$ by $\bx(y_j)\in \mathcal{I}_{M_1}$. Therefore, there exists a separate word of $W$. Let $XY$ be a separate word of $W$ such that $\ell(Y)$ is the shortest among all separate words of $W$, then $XY$ is a standard word of $W$.
\end{proof}

\begin{lemma}\label{lemma:last d(W) letters}
    Given elements $W, V\in M$ and a standard word $XY$ of $W$, the last $d(W)$ letters of any word representation $U$ of $VW$ are exactly $Y$. Specifically, the last $d(W)$ letters of any word representation $W'$ of $W$ are exactly $Y$.
\end{lemma}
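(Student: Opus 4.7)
The plan is to fix any word representation $V'$ of $V$, take the canonical representation $V'XY$ of $VW$ whose last $d(W)=\ell(Y)$ letters are manifestly $Y$, and then appeal to \Cref{prop:monoidtrans}: every other word representation $U$ of $VW$ is connected to $V'XY$ by a sequence of elementary $R$-transitions, so it suffices to show that each such transition preserves the property ``the last $d(W)$ letters equal $Y$.'' The ``Specifically'' sentence then follows by specializing to $V=e$.

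Most classes of transitions are eliminated by inspection. A class I relation replaces a consecutive subword lying entirely in $\mathbf{F}(\mathcal{I}_{M_1})$; since $Y\in\mathbf{F}(\mathcal{I}_{M_2})$, such a subword cannot overlap $Y$ and must sit strictly to the left of $Y$. A class III relation in either direction has a letter $x_j\in\mathcal{I}_{M_1}$ at the right end of the replaced subword, which for the same reason cannot lie within the $\mathcal{I}_{M_2}$-suffix $Y$. Hence only class II relations $y_iY_j\leftrightarrow X_iY_j$ can touch $Y$.

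The key preparatory claim is that $Y$ contains no substring $y_iY_j$ with $y_iY_j=0$ in $M_2$. Indeed, if $Y=Y_1(y_iY_j)Y_2$ were such a decomposition, one step of class II followed by repeated class III transitions (pushing the $\mathbf{F}(\mathcal{I}_{M_1})$-word $X_i$ leftward through $Y_1$ and converting every letter $y\in\mathcal{I}_{M_2}$ encountered into a word representation of $\bx(y)\in M_1$) would produce a separate word of $W$ in $\mathbf{F}(\mathcal{I}_{M_1})\cdot\mathbf{F}(\mathcal{I}_{M_2})$ whose $\mathcal{I}_{M_2}$-part has length $\ell(Y)-1<d(W)$, contradicting the standardness of $XY$. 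This rules out forward class II sitting entirely inside $Y$; the backward direction is excluded even more directly, since its replaced subword contains a nonempty $\mathbf{F}(\mathcal{I}_{M_1})$-word that cannot fit inside $Y$.

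What remains is the straddling sub-case, where the non-$Y_j$ side ($y_i$ in the forward direction, $X_i$ in the backward direction) lies in the prefix and $Y_j$ begins in the prefix and continues into $Y$ as a prefix of $Y$. The central observation is that both sides of the class II relation share the same $Y_j$, so only letters within the prefix are swapped; a direct position count tracking the length shift $\pm(\ell(X_i)-1)$ then shows that the final $\ell(Y)$ letters of the new word still spell out $Y$. The main technical obstacle is this bookkeeping, together with the standardness argument above; once both are settled, induction along the sequence of elementary $R$-transitions furnished by \Cref{prop:monoidtrans} completes the proof.
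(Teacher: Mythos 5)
Your proposal is correct and follows essentially the same route as the paper: connect $U$ to the canonical word $V'XY$ via \Cref{prop:monoidtrans}, observe that only a class II relation can alter letters inside the $\mathcal{I}_{M_2}$-suffix $Y$, and rule that out by transferring the same class II step (followed by class III conversions) into the standard word $XY$ of $W$, contradicting the minimality of $d(W)$. The only difference is presentational — you run a forward induction preserving the suffix invariant, while the paper argues by contradiction at the first transition that disturbs $Y$.
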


\begin{proof}
    Suppose otherwise, by \Cref{prop:monoidtrans}, there exists a finite sequence \[VXY\equiv W_1\rightarrow W_2\rightarrow \cdots \rightarrow W_n\equiv U\] such that each step of transition uses relations in $R$. Let $W_{i+1}$ be the first word from the left of this sequence such that the last $d(W)$ letters are not exactly $Y$ ($1\le i \le n-1$). Then the last $d(W)$ letters of $W_{i}$ are exactly $Y$. Since the transition $W_i\rightarrow W_{i+1}$ only uses relations in $R$, and it replaces some letter in $Y$ by other letters, it must use a class II relation. Suppose this relation replaces $y_i$ by $\bx(y_i)$, then this implies that in the standard word $XY$ of $W$, we can also replace this $y_i$ by $\bx(y_i)$. Then apply class III relations, we can obtain a separate word of $W$ which has fewer $y_j$ in the rightmost part, contradicting with the fact that $XY$ is a standard word of $W$.
\end{proof}

In \Cref{lemma:last d(W) letters}, by taking $V=x_i\in \mathcal{I}_{M_1}$, we directly obtain the following corollary.

 \begin{cor}\label{cor:standard word extension}
     Given an element $W\in M$ and $x_i\in \mathcal{I}_{M_1}$, suppose $XY$ is a standard word of $W$, then $x_iXY$ is a standard word of $x_iW$.
\end{cor}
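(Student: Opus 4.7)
The plan is to deduce the corollary as the paper suggests, by applying \Cref{lemma:last d(W) letters} with the choice $V = x_i$. I would split what needs to be shown into the two defining ingredients of a standard word: first, that $x_i X Y$ is a separate word of $x_i W$; and second, that its $\mathcal{I}_{M_2}$-portion $Y$ has length equal to the depth $d(x_iW)$.

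The first ingredient is immediate. Since $x_i \in \mathcal{I}_{M_1}$ and $X \in \mathbf{F}(\mathcal{I}_{M_1})$, the concatenation $x_iX$ lies in $\mathbf{F}(\mathcal{I}_{M_1})$, while $Y \in \mathbf{F}(\mathcal{I}_{M_2})$ by hypothesis, so $x_iXY$ is a separate word of $x_iW$ with right part of length $\ell(Y) = d(W)$. In particular $d(x_iW) \le d(W)$. For the reverse inequality, I would pick any standard word $X'Y'$ of $x_iW$, which exists by \Cref{lemma:existence of standard word}, and apply \Cref{lemma:last d(W) letters} with $V = x_i$ to the word representation $X'Y'$ of $x_iW$: its last $d(W)$ letters must be exactly $Y$. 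Since $X'$ consists of letters from $\mathcal{I}_{M_1}$ while $Y$ consists of letters from $\mathcal{I}_{M_2}$, and these two alphabets are disjoint in the presentation of $M_1 \rtimes_{\bx} M_2$, the trailing $d(W)$ letters of $X'Y'$ cannot intrude into $X'$ and must therefore sit entirely inside $Y'$. This forces $\ell(Y') \ge d(W)$, hence $d(x_iW) \ge d(W)$. Combining the two inequalities gives $d(x_iW) = d(W) = \ell(Y)$, so $x_iXY$ is indeed a standard word of $x_iW$.

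There is no genuine obstacle here beyond correctly invoking the lemma: \Cref{lemma:last d(W) letters} does the technical work of rigidifying the trailing $\mathcal{I}_{M_2}$-portion of every representative of $x_iW$, and the remaining argument is a short length comparison relying on the disjointness of $\mathcal{I}_{M_1}$ and $\mathcal{I}_{M_2}$. This is exactly why the paper presents the result as a direct corollary rather than a separate lemma.
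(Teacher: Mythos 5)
Your proposal is correct and follows exactly the paper's intended route: the paper derives the corollary by applying \Cref{lemma:last d(W) letters} with $V=x_i$, and your argument simply spells out the two inequalities $d(x_iW)\le d(W)$ (from the obvious separate word $x_iXY$) and $d(x_iW)\ge d(W)$ (from the lemma plus disjointness of $\mathcal{I}_{M_1}$ and $\mathcal{I}_{M_2}$). Nothing is missing; this is the same proof, just written out in detail.
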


\begin{lemma}\label{lemma:standard word the same tail}
        Given an element $W\in M$ and two standard words $X_1Y$ and $X_2Y$ of $W$, we have $X_1=X_2$ in $M_1$.
\end{lemma}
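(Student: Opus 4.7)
The plan is to project from $M$ down to $M_1$ and then exploit \Cref{lemma:last d(W) letters} to control all the intermediate words in any transition sequence joining $X_1Y$ and $X_2Y$. I would first introduce the homomorphism $\pi\colon M\to M_1$ defined on generators by $\pi(x_j)=x_j$ for $x_j\in\mathcal{I}_{M_1}$ and $\pi(y_i)=\bx(y_i)$ for $y_i\in\mathcal{I}_{M_2}$, and verify that $\pi$ respects every relation in \Cref{def:convolution of monoids}: class I relations are themselves relations of $M_1$, while both class II and class III become the trivial identities $\bx(y_i)Y_j=\bx(y_i)Y_j$ and $\bx(y_i)x_j=\bx(y_i)x_j$ in $M_1$. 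Thus $\pi$ is a well-defined monoid homomorphism with $\pi(X_r)=X_r$ in $M_1$ for $r=1,2$.

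By \Cref{prop:monoidtrans} there is a finite sequence $X_1Y\equiv W_0\to W_1\to\cdots\to W_n\equiv X_2Y$ of elementary $R$-transitions, and by \Cref{lemma:last d(W) letters} each $W_i$ ends with the same tail $Y$, so we may write $W_i\equiv Z_iY$ with $Z_0\equiv X_1$ and $Z_n\equiv X_2$. The proof then reduces to establishing $\pi(Z_i)=\pi(Z_{i+1})$ in $M_1$ for every $i$, since a telescoping immediately yields $X_1=\pi(Z_0)=\pi(Z_n)=X_2$ in $M_1$.

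I would establish the step-wise equality by a case analysis on where the rewritten substring $s$ sits relative to the boundary at position $\ell(Z_i)$ in $W_i$. (a) If $s$ lies entirely inside $Z_i$, then the same relation rewrites $Z_i$ into $Z_{i+1}$, so $\pi(s)=\pi(r)$ in $M_1$ immediately gives $\pi(Z_i)=\pi(Z_{i+1})$. (b) If $s$ lies entirely inside $Y$, then class I, class III, and reverse class II are all excluded (each requires an $\mathcal{I}_{M_1}$-letter inside $Y$), leaving only a forward class II rewrite $s=y_kY_j\to\bx(y_k)Y_j$; but then the $m=\ell(\bx(y_k))\ge 1$ letters of $\mathcal{I}_{M_1}$ coming from $\bx(y_k)$ land at positions which, by a direct positional calculation, must belong to the last $d(W)$ letters of $W_{i+1}$, contradicting \Cref{lemma:last d(W) letters} (these should form $Y\in\mathbf{F}(\mathcal{I}_{M_2})$). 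So this case cannot occur. (c) If $s$ straddles the boundary, only a class II relation (forward or reverse) fits, since the $\mathcal{I}_{M_1}$-letters appearing in class I and class III relations cannot sit in $Y$. Aligning positions carefully, $Z_i$ and $Z_{i+1}$ differ only by the local substitution of a single letter $y_k$ with the word $\bx(y_k)$ (or vice versa) inside a common surrounding context, with the straddled prefix of $Y_j$ unchanged; since $\pi(y_k)=\bx(y_k)=\pi(\bx(y_k))$ in $M_1$, we again obtain $\pi(Z_i)=\pi(Z_{i+1})$.

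The main obstacle I expect is case (c). Unlike case (a), here $Z_i$ and $Z_{i+1}$ are generally \emph{not} equal in $M$, so the equality must be argued at the level of $M_1$ rather than via a transition in $M$; in particular one cannot appeal to right-cancellation of the tail $Y$, since neither $M$ nor $M_1$ is right-cancellative in general. The way out is precisely that $\pi$ already identifies $y_k$ with $\bx(y_k)$, and a straddling class II transition is exactly this local swap preceded and followed by identical surrounding letters. Once (a)--(c) are verified, telescoping concludes $X_1=X_2$ in $M_1$.
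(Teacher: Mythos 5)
Your proof is correct and follows essentially the same route as the paper: your retraction $\pi$ (sending each $y_j$ to $\bx(y_j)$) is precisely the substitution the paper applies to the prefixes $W'_i$, and both arguments use \Cref{lemma:last d(W) letters} to pin the tail $Y$ so that class II/III transitions collapse while class I transitions descend to relations of $M_1$, giving $X_1=X_2$ in $M_1$. Your case analysis on where the rewrite site sits simply makes explicit the paper's assertion that no transition alters the letters of $Y$.
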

\begin{proof}
 By \Cref{prop:monoidtrans} and \Cref{lemma:last d(W) letters}, there exists a finite sequence  \[X_1Y\equiv W'_1Y\rightarrow W'_2Y\rightarrow \cdots \rightarrow W'_nY\equiv X_2Y\] such that each step of transition uses a relation in $R$. Now we define a new sequence \[X_1Y\equiv W_1Y\rightarrow W_2Y\rightarrow \cdots \rightarrow W_nY\equiv X_2Y\] where each $W_i$ is obtained by replacing all letters $y_j\in \mathcal{I}_{M_2}$ in $W'_i$ by $\bx(y_j)$. If $W'_iY\rightarrow W'_{i+1}Y$ uses a class I relation, then $W_iY\rightarrow W_{i+1}Y$ also uses a class I relation. If $W'_iY\rightarrow W'_{i+1}Y$ uses a class II or class III relation, then $W_iY\rightarrow W_{i+1}Y$ is an identity transition. Hence, the sequence \[X_1Y\equiv W_1Y\rightarrow W_2Y\rightarrow \cdots \rightarrow W_nY\equiv X_2Y\] is well-defined and only uses class I relations.

Since none of the transitions involve the letters $y_i\in \mathcal{I}_{M_2}$ in $Y$, this induces a finite sequence \[X_1\equiv W_1\rightarrow W_2\rightarrow \cdots \rightarrow W_n\equiv X_2,\] where each element is in $M_1$, and each step of transition only uses a class I relation, which is a relation in $R_{M_1}$. Therefore, we have $X_1=X_2$ in $M_1$.
\end{proof}

    \begin{lemma}\label{lemma:head x_i}
         Given elements $W_1, W_2$ in $M$ and $x_i\in \mathcal{I}_{M_1}$, suppose $x_iW_1=x_iW_2$ in $M$, then $W_1=W_2$.
    \end{lemma}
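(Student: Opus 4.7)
The plan is to reduce the cancellation to left-cancellativity inside $M_1$ by passing to standard word representatives. The key observation is that the three preceding lemmas give a strong uniqueness property for standard words: the tail $Y$ is uniquely determined by the element, and once the tail is fixed, the head is unique up to equality in $M_1$. Given these, the cancellation by a single letter $x_i \in \mathcal{I}_{M_1}$ should follow quite directly.

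First, using \Cref{lemma:existence of standard word}, I choose standard words $X_1 Y_1$ of $W_1$ and $X_2 Y_2$ of $W_2$. By \Cref{cor:standard word extension}, $x_i X_1 Y_1$ and $x_i X_2 Y_2$ are standard words of $x_i W_1$ and $x_i W_2$, respectively, so $d(x_i W_1) = \ell(Y_1)$ and $d(x_i W_2) = \ell(Y_2)$. Since $x_i W_1 = x_i W_2$ in $M$, their depths agree, giving $\ell(Y_1) = \ell(Y_2) =: d$. Applying \Cref{lemma:last d(W) letters} with $V = e$ and standard word $x_i X_1 Y_1$ to the word representation $x_i X_2 Y_2$ of the same element shows that the last $d$ letters of $x_i X_2 Y_2$ must be $Y_1$; but those last $d$ letters are literally $Y_2$, so $Y_1 \equiv Y_2 =: Y$.

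Now $x_i X_1 Y$ and $x_i X_2 Y$ are two standard words of the same element $x_i W_1 = x_i W_2$, so by \Cref{lemma:standard word the same tail} we obtain $x_i X_1 = x_i X_2$ in $M_1$. Since $M_1$ is left-cancellative (it is an atomic LCIF monoid), this yields $X_1 = X_2$ in $M_1$. Every defining relation in $R_{M_1}$ is a class I relation, so any sequence of $R_{M_1}$-transitions connecting $X_1$ to $X_2$ is also a sequence of $R$-transitions, and hence $X_1 = X_2$ in $M$ as well. Appending $Y$ on the right then gives $W_1 = X_1 Y = X_2 Y = W_2$ in $M$, as required.

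I do not anticipate a serious obstacle here, since all the technical difficulty was absorbed into \Cref{lemma:existence of standard word}, \Cref{lemma:last d(W) letters}, \Cref{cor:standard word extension}, and \Cref{lemma:standard word the same tail}. The only subtle point is verifying that the tail of a standard word is uniquely determined by the element (so that comparing $Y_1$ and $Y_2$ makes sense); this is a small but essential byproduct of \Cref{lemma:last d(W) letters} applied with $V = e$. With this observation in hand, the proof is just an assembly of the earlier lemmas plus left-cancellativity of $M_1$.
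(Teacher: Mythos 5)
Your proof is correct and follows essentially the same route as the paper's: pass to standard words, use \Cref{cor:standard word extension} and \Cref{lemma:last d(W) letters} to identify the tails, apply \Cref{lemma:standard word the same tail} to get $x_iX_1=x_iX_2$ in $M_1$, and finish with left-cancellativity of $M_1$. The extra details you supply (depth comparison, and lifting $X_1=X_2$ from $M_1$ to $M$ via class I relations) are fine and only make explicit what the paper leaves implicit.
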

 \begin{proof}
     Let $X_1Y_1$ be a standard word of $W_1$, and $X_2Y_2$ be a standard word of $W_2$. By \Cref{cor:standard word extension}, $x_iX_1Y_1$ is a standard word of $x_iW_1$, and $x_iX_2Y_2$ is a standard word of $x_iW_2$. By \Cref{lemma:last d(W) letters}, we have $Y_1\equiv Y_2$, and we denote both by $Y$. By \Cref{lemma:standard word the same tail}, it follows that $x_iX_1=x_iX_2$ in $M_1$. Now since $M_1$ is left-cancellative, we have $X_1=X_2$ in $M_1$, and thus $X_1Y=X_2Y$ in $M$. Moreover, since $X_1Y$ and $X_2Y$ are standard words of $W_1$ and $W_2$ respectively, we have $W_1=X_1Y=X_2Y=W_2,$ as desired.
 \end{proof}

 \begin{lemma}\label{lemma:equiv Y}
     Given an elements $W$ in $M$, suppose $Y\in \mathbf{F}(\mathcal{I}_{M_2})$ is a standard word of $W$, then $W\equiv Y$.
 \end{lemma}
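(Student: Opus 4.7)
The plan is to show that under the hypothesis, no elementary $R$-transition can be applied at $V_0\equiv Y$; by \Cref{prop:monoidtrans} this forces $Y$ to be the unique word representation of $W$, so in particular $W\equiv Y$.

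First I would pick any word representation $W'$ of $W$ and a sequence of elementary transitions $Y\equiv V_0\to V_1\to\cdots\to V_n\equiv W'$, then argue $n=0$. The first transition would apply some relation of class I, II, or III at a position in $Y\in\mathbf F(\mathcal I_{M_2})$. Class I relations live entirely in $\mathbf F(\mathcal I_{M_1})$ and so have no subword match inside $Y$; class III relations $y_ix_j\leftrightarrow\bx(y_i)x_j$ involve the $\mathcal I_{M_1}$-letter $x_j$ on both sides, which is absent from $Y$; and the reverse direction of class II, $\bx(y_i)Y_j\to y_iY_j$, has the $\mathcal I_{M_1}$-block $\bx(y_i)$ on its left, so also cannot occur.

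The main obstacle is ruling out the forward class II relation $y_iY_j\to\bx(y_i)Y_j$ (with $y_iY_j=0$ in $M_2$), whose left-hand side does lie in $\mathbf F(\mathcal I_{M_2})$. The plan is to suppose such a transition occurs at some position, write $Y\equiv Y_1\,y_iY_j\,Y_2$ and $V_1\equiv Y_1\,\bx(y_i)Y_j\,Y_2$, and then, exactly as in the proof of \Cref{lemma:existence of standard word}, apply class III relations iteratively to push each $\mathcal I_{M_2}$-letter of $Y_1$ (from right to left) across the $\bx(y_i)$-block, replacing it by its $\bx$-image in $\mathbf F(\mathcal I_{M_1})$. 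This produces a separate word $X^*Y^*$ of $W$ with $X^*\in\mathbf F(\mathcal I_{M_1})$ and $Y^*\equiv Y_jY_2\in\mathbf F(\mathcal I_{M_2})$, so
\[
\ell(Y^*)=\ell(Y_j)+\ell(Y_2)=\ell(Y)-1-\ell(Y_1)<\ell(Y)=d(W),
\]
contradicting the definition of $d(W)$ as the minimum $Y$-length over all separate words of $W$. This eliminates the last case, forces $n=0$, and yields $W\equiv Y$.
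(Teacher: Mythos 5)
Your proof is correct and takes essentially the same route as the paper's: you show that no relation of $R$ can be applied to the word $Y$ (classes I and III and the reverse of class II are excluded because $Y$ contains no letter of $\mathcal{I}_{M_1}$), so by \Cref{prop:monoidtrans} the word $Y$ is the only word representation of $W$. The sole difference is that the paper dismisses the remaining forward class II case by citing \Cref{lemma:last d(W) letters}, whereas you re-derive it directly by applying the class II relation and then pushing the letters of $Y_1$ through with class III relations to obtain a separate word with strictly shorter $\mathcal{I}_{M_2}$-tail, contradicting $d(W)=\ell(Y)$ --- which is exactly the mechanism used in the paper's proofs of \Cref{lemma:existence of standard word} and \Cref{lemma:last d(W) letters}, so the mathematical content coincides.
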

 \begin{proof}
     Since $Y$ is also a word representation of itself, by \Cref{lemma:last d(W) letters}, the last $\ell(Y)$ letters of any word representation of $Y$ are exactly $Y$. This implies that no class II relations can be applied to $Y$, and hence no relations in $R$ can be applied to $Y$. Therefore, $Y$ is the only word representation of $Y$, and $W\equiv Y$.
 \end{proof}
\begin{lemma}\label{lemma:head y_i}
        Given elements $W_1, W_2$ in $M$ and $y_i\in \mathcal{I}_{M_2}$, suppose $y_iW_1=y_iW_2$ in $M$, then $W_1=W_2$.
\end{lemma}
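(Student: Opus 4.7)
The plan is to mirror the proof of \Cref{lemma:head x_i}, but with a finer case analysis reflecting the two mechanisms by which a leading $y_i\in\mathcal{I}_{M_2}$ can be ``absorbed'' into $\mathcal{I}_{M_1}$-letters on its right: via a class III relation when an $\mathcal{I}_{M_1}$-letter sits immediately to its right, or via a class II relation when a prefix of $y_i Y$ coincides with the left-hand side of a $0$-defining relation in $R_{M_2}$. Fix standard words $X_1 Y_1$ and $X_2 Y_2$ of $W_1$ and $W_2$. A key preliminary observation is that the $\mathcal{I}_{M_2}$-suffix $Y_j$ of a standard word contains no consecutive subword equal to the left-hand side of a $0$-defining relation in $R_{M_2}$; otherwise a single class II application followed by iterated class III relations would produce a separate word of $W_j$ with a strictly shorter $\mathcal{I}_{M_2}$-suffix, contradicting minimality of depth.

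For each $j\in\{1,2\}$, the element $W_j$ is placed into one of three cases. In case (a), $X_j$ is nonempty; iterated class III gives $y_i W_j = \bx(y_i) X_j Y_j$, and iterating \Cref{cor:standard word extension} with a word representation of $\bx(y_i)\in\mathbf{F}(\mathcal{I}_{M_1})$ shows this is a standard word of $y_i W_j$ whose $\mathcal{I}_{M_1}$-part is nonempty and whose $\mathcal{I}_{M_2}$-suffix is $Y_j$. In case (b), $X_j$ is empty but some prefix of $y_i Y_j$ is the left-hand side of a $0$-defining relation in $R_{M_2}$; the corresponding class II relation yields $y_i W_j = \bx(y_i) Y_j$, again a standard word with nonempty $\mathcal{I}_{M_1}$-part and $\mathcal{I}_{M_2}$-suffix $Y_j$. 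In case (c), $X_j$ is empty and no such prefix exists; together with the preliminary observation, $y_i Y_j$ then contains no consecutive subword that could serve as either side of an elementary $R$-transition (the only candidate---a class II left-hand side---is excluded by the hypothesis, while sides of class I and III relations would require $\mathcal{I}_{M_1}$-letters not present in $y_i Y_j$). Thus by \Cref{prop:monoidtrans}, $y_i W_j \equiv y_i Y_j$ is the unique word representation, hence a standard word with empty $\mathcal{I}_{M_1}$-part and $\mathcal{I}_{M_2}$-suffix $y_i Y_j$.

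Since $y_i W_1 = y_i W_2$, \Cref{lemma:last d(W) letters} forces the $\mathcal{I}_{M_2}$-suffixes of their standard words to agree. In (a) and (b) the standard-word $\mathcal{I}_{M_1}$-part is nonempty, while in (c) it is empty and the suffix begins with $y_i$; hence $W_1$ and $W_2$ must both lie in case (a) or (b), or both lie in case (c). In the (c)+(c) subcase, the suffix equality gives $y_i Y_1 \equiv y_i Y_2$, so $Y_1 \equiv Y_2$ and hence $W_1 = W_2$. In the remaining subcases, matching suffixes give $Y_1 \equiv Y_2$, and \Cref{lemma:standard word the same tail} yields $\bx(y_i) X_1 = \bx(y_i) X_2$ in $M_1$ (with the convention $X_j := e$ in case (b)); left-cancellation of $\bx(y_i)\in M_1$ then gives $X_1 = X_2$ in $M_1$, and combined with $Y_1 \equiv Y_2$ we conclude $W_1 = W_2$ in $M$. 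The main obstacle is case (c): carefully ruling out every potential elementary $R$-transition on $y_i Y_j$ relies on both the preliminary observation about $Y_j$ and the defining hypothesis of (c), while the remainder of the argument is a formal adaptation of \Cref{lemma:head x_i}.
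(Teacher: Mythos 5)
Your proposal is correct and follows essentially the same route as the paper: your trichotomy (a)/(b)/(c) simply refines the paper's dichotomy on whether $y_iW_j=\bx(y_i)W_j$, with the absorbing cases (a)/(b) resolved exactly as in \Cref{lemma:head x_i} (same-tail lemma plus left-cancellation of $\bx(y_i)$ in $M_1$) and case (c) reproducing the content of \Cref{lemma:equiv Y} via \Cref{prop:monoidtrans}. The only cosmetic difference is that you rule out the mixed case explicitly, which in the paper is implicit in the ``without loss of generality'' step.
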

\begin{proof}
    If $y_iW_1=\bx(y_i)W_1$ and $y_iW_2=\bx(y_i)W_2$, by \Cref{lemma:head x_i}, the proof is complete. 
    
    Otherwise, without loss of generality, suppose $y_iW_1\neq \bx(y_i)W_1$. Let $X_1Y_1$ be a standard word of $W_1$. Assuming $X_1$ is not the empty word $e$, then $y_iW_1=y_iX_1Y_1=\bx(y_i)X_1Y_1=\bx(y_i)W_1$, which leads to a contradiction. Therefore, $X_1=e$, and by \Cref{lemma:equiv Y}, $W_1\equiv Y_1$. By \Cref{lemma:last d(W) letters}, the last $\ell(Y_1)$ letters of any word representation of $y_iW_1$ are $Y_1$. Now, assuming that $y_iY_1$ is not a standard word of $y_iY_1$, there exists a class II relation in $R$ that replaces $y_i$ by $\bx(y_i)$, which implies $y_iW_1\equiv y_iY_1=\bx(y_i)Y_1\equiv \bx(y_i)W_1$, contradicting our assumption. Hence, $y_iY_1$ must be a standard word of $y_iY_1$. Since $y_iW_1=y_iW_2$, $y_iY_1$ is also a standard word of $y_iW_2$, and by \Cref{lemma:equiv Y}, $y_iW_2\equiv y_iY_1$. Therefore, $W_2\equiv Y_1\equiv W_1$, as desired.
\end{proof}

Now we are ready to prove \Cref{thm: LCIF monoid convolution}.
   
\begin{proof}[Proof of \Cref{thm: LCIF monoid convolution}]
    First, we show that $M$ is invertible-free. Suppose $W_1W_2=e$, where $W_1,W_2\in M$. We want to show that $W_1=W_2=e$. Let $X_1Y_1$ and $X_2Y_2$ be standard words of $W_1$ and $W_2$, respectively. Then $X_1Y_1X_2Y_2=e$. By \Cref{lemma:last d(W) letters}, the last $\ell(Y_2)$ letters of $e$ is exactly $Y_2$, and hence $Y_2=e$.
    
    Assuming that $X_2\neq e$, then we can apply class III relations on the word $X_1Y_1X_2$ to replace each $y_i$ in $Y_1$ by $\bx(y_i)$. Denote this new word by $V$. By \Cref{prop:monoidtrans}, there exists a finite sequence  \[V\equiv V'_1\rightarrow V'_2\rightarrow \cdots \rightarrow V'_n\equiv e\] such that each step of transition uses a relation in $R$. Now we define a new sequence \[V\equiv V_1\rightarrow V_2\rightarrow \cdots \rightarrow V_n\equiv e\] where each $V_i$ is obtained by replacing all letters $y_j\in \mathcal{I}_{M_2}$ in $V'_i$ by $\bx(y_j)$. If $V'_i\rightarrow V'_{i+1}$ uses a class I relation, then $V_i\rightarrow V_{i+1}$ also uses a class I relation. If $V'_i\rightarrow V'_{i+1}$ uses a class II or class III relation, then $V_i\rightarrow V_{i+1}$ is an identity transition. Hence, the sequence \[V\equiv V_1\rightarrow V_2\rightarrow \cdots \rightarrow V_n\equiv e\] is well-defined and only uses class I relations, which are relations in $R_{M_1}$. Thus, $V=e$ in $M_1$. Since $M_1$ is invertible-free, we have $X_1=X_2=e$, contradicting our assumption that $X_2\neq e$.

    Hence $X_2=e$, and we get $W_2=e$, and $W_1=W_1W_2=e$. Therefore, $M$ is invertible-free.
    
    By \Cref{lemma:head x_i} and \Cref{lemma:head y_i}, $M$ is left-cancellative. Finally, if $M_1$ is an LCH monoid, and $\im \bx \subseteq \mathcal{I}_{M_1}$, then any relation in $M$ are homogeneous, and hence $M$ is an LCH monoid.
\end{proof}

Finally, if $M_1$ is an LCH monoid, and $\im \bx \subseteq \mathcal{I}_{M_1}$, we give the enumeration of distinct elements of a given length in $M$.

\begin{prop}\label{prop: enum of conv monoid}
     Let $M_1$ be an LCH monoid, and $M_2$ be a free $0$-monoid. If $\im \bx \subseteq \mathcal{I}_{M_1}$, then for any nonnegative integer $n$, we have
     \[|\mathbf{W}_n^M|=|\mathbf{W}_0^{M_1}||\mathbf{W}_n^{M_2}|+|\mathbf{W}_1^{M_1}||\mathbf{W}_{n-1}^{M_2}|+\dots+ |\mathbf{W}_{n-i}^{M_1}||\mathbf{W}_{i}^{M_2}| +\dots +|\mathbf{W}_n^{M_1}||\mathbf{W}_{0}^{M_2}|.\]
\end{prop}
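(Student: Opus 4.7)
The plan is to construct a length-preserving bijection
\[\Phi: \mathbf{W}_n^M \longrightarrow \bigsqcup_{i=0}^{n} \mathbf{W}_{n-i}^{M_1} \times \mathbf{W}_{i}^{M_2},\]
and then sum over $i$. First I would invoke \Cref{thm: LCIF monoid convolution}, together with the hypothesis $\im \bx \subseteq \mathcal{I}_{M_1}$, to conclude that $M$ is LCH, so every element of $M$ has a well-defined length. For each $W \in \mathbf{W}_n^M$, \Cref{lemma:existence of standard word} supplies a standard word $XY$ with $X \in \mathbf{F}(\mathcal{I}_{M_1})$, $Y \in \mathbf{F}(\mathcal{I}_{M_2})$, and $\ell(X) + \ell(Y) = n$; let $i := d(W) = \ell(Y)$ and set $\Phi(W) := (X, Y)$, viewing $X$ as an element of $M_1$ and $Y$ as a word in $\mathbf{F}(\mathcal{I}_{M_2})$. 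Well-definedness of $\Phi$ follows from \Cref{lemma:last d(W) letters}, which pins down $Y$ as a word, and \Cref{lemma:standard word the same tail}, which pins down $X$ as an element of $M_1$.

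Next I would verify that $Y$ is nonzero in $M_2$, so that $(X, Y)$ actually lies in $\mathbf{W}_{n-i}^{M_1} \times \mathbf{W}_i^{M_2}$. If $Y$ were zero in $M_2$, some consecutive subword of $Y$ would match the left-hand side of a $0$-defining relation of $M_2$; then a class II relation would rewrite that subword by turning its leading $y$-letter into an element of $\mathcal{I}_{M_1}$, and class III relations would successively convert all preceding $y$-letters of $Y$ into elements of $\mathcal{I}_{M_1}$, producing a separate word of $W$ with a strictly shorter $y$-tail and contradicting the standardness of $XY$. Injectivity of $\Phi$ is immediate from the uniqueness statements already cited: if $\Phi(W) = \Phi(W')$, the two standard words agree literally on the $Y$-part and as elements of $M_1$ on the $X$-part, so the elements they represent in $M$ coincide.

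For surjectivity, given $(X, Y) \in \mathbf{W}_{n-i}^{M_1} \times \mathbf{W}_{i}^{M_2}$, pick any word representation of $X$ in $\mathbf{F}(\mathcal{I}_{M_1})$ and form $W := XY \in M$; then $\ell(W) = n$ by homogeneity, and the task reduces to showing $d(W) = i$. I expect this to be the main obstacle: I would argue that no elementary transition applicable to $XY$ can shorten the $y$-tail. Class I involves only letters of $\mathcal{I}_{M_1}$, hence acts purely on the $X$-part; class III requires a $y$-letter immediately followed by an $x$-letter, which never occurs in $XY$ since all $x$-letters precede all $y$-letters; and class II requires a consecutive subword of $Y$ that is zero in $M_2$, precluded by $Y \in \mathbf{W}_i^{M_2}$, since a zero consecutive subword in a free $0$-monoid would force all of $Y$ to vanish. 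Propagating this observation by induction along any chain of elementary transitions starting from $XY$, via \Cref{prop:monoidtrans}, shows that every word representation of $W$ has the form $X'Y$ with $\ell(X') = n - i$, so $d(W) = i$ exactly and $\Phi(W) = (X, Y)$. Partitioning $\mathbf{W}_n^M$ by depth $i$ and summing $|\mathbf{W}_{n-i}^{M_1}| \cdot |\mathbf{W}_i^{M_2}|$ over $i$ then yields the claimed identity.
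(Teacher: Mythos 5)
Your proof is correct and takes essentially the same route as the paper: both partition $\mathbf{W}_n^M$ by depth and use the standard-word lemmas (\Cref{lemma:existence of standard word}, \Cref{lemma:last d(W) letters}, \Cref{lemma:standard word the same tail}) to match the depth-$i$ elements bijectively with pairs in $\mathbf{W}_{n-i}^{M_1}\times\mathbf{W}_{i}^{M_2}$, and you even spell out the nonvanishing of the $Y$-part and the equality $d(XY)=i$, which the paper leaves implicit. One small caution in your surjectivity induction: after a backward class II or class III transition the prefix may contain letters of $\mathcal{I}_{M_2}$ (so intermediate words need not have all $x$-letters before all $y$-letters), hence the invariant to propagate is just that the last $i$ letters remain literally $Y$ -- which still holds, since any applicable transition, in either direction, only alters a letter lying strictly before those final $i$ positions.
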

\begin{proof}
By \Cref{lemma:last d(W) letters} and \Cref{lemma:standard word the same tail}, given an element $W \in \mathbf{W}_n^M$ of depth $d(W) = i$, any word representation of it is of the form $XY$, where $X \in \mathbf{F}(\mathcal{I}_{M_1})$, $Y \in \mathbf{F}(\mathcal{I}_{M_2})$, and $\ell(Y) = i$. Moreover, $W$ corresponds to a unique pair $(X, Y) \in \mathbf{W}_{n-i}^{M_1} \times \mathbf{W}_{i}^{M_2}$, since $X_1 Y_1 = X_2 Y_2$ in $M$ if and only if $X_1 = X_2$ in $M_1$ and $Y_1 \equiv Y_2$ (which is the same as $Y_1 = Y_2 \neq 0$ in $M_2$). Thus, we have
\[
|\{W \in \mathbf{W}_n^M \mid d(W) = i\}| = |\mathbf{W}_{n-i}^{M_1}| |\mathbf{W}_{i}^{M_2}|.
\]
Now we obtain
\[
|\mathbf{W}_n^M| = \sum_{i=0}^n |\{W \in \mathbf{W}_n^M \mid d(W) = i\}| = \sum_{i=0}^n |\mathbf{W}_{n-i}^{M_1}| |\mathbf{W}_{i}^{M_2}|,
\]
as desired.
\end{proof}

\subsection{Multiplication of Rank-Generating Functions}

In this subsection, we convert the definitions and results in \Cref{subsec: conv of monoids} to the poset version, study the corresponding rank-generating functions, and prove \Cref{thm:upho times semi-upho}.

\begin{definition}\label{def:poset convolution}
    Given a colored upho poset $\tilde{P}$ and a colored tree-like semi-upho poset $\tilde{S}$, the \emph{$\bx$-convolution} $\tilde{P}\rtimes_{\bx}\tilde{S}$ is defined by the $\bx$-convolution of their corresponding monoids. Explicitly, $\tilde{P}\rtimes_{\bx}\tilde{S}\coloneqq \mathcal{\tilde{P}}(\mathcal{M}(\tilde{P})\rtimes_{\bx}\mathcal{M}_0(\tilde{S}))$.
\end{definition}

\begin{example}
\Cref{color} depicts the $\bx$-convolution of the colored posets corresponding to the monoids in \Cref{eg:eg for fig upho times semiupho}. In this figure, the black edges corresponds to $x$, the red edges correspond to $y_1$, and the blue edges correspond to $y_2$.
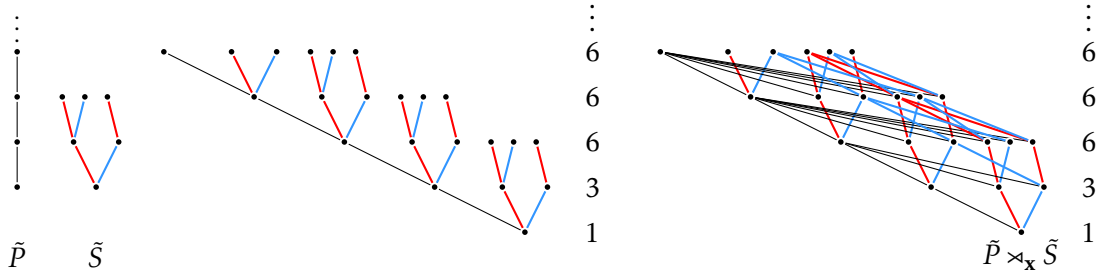
\begin{figure}[H]
\centering
\begin{tikzpicture}[scale=0.6]
	\begin{pgfonlayer}{nodelayer}
		\node [style=vert] (0) at (-1, 0) {};
		\node [style=vert] (1) at (-1, 1) {};
		\node [style=vert] (2) at (-1, 2) {};
		\node [style=vert] (3) at (-1, 3) {};
		\node [style=vert] (4) at (-1, 2) {};
		\node [style=label] (5) at (-1, 3.25) {};
		\node [style=label] (6) at (-1, 3.25) {.};
		\node [style=label] (7) at (-1, 3.5) {};
		\node [style=label] (8) at (-1, 3.75) {};
		\node [style=label] (9) at (-1, 3.5) {.};
		\node [style=label] (10) at (-1, 3.75) {.};
		\node [style=vert] (11) at (0.75, 0) {};
		\node [style=vert] (12) at (1.25, 1) {};
		\node [style=vert] (13) at (0, 2) {};
		\node [style=vert] (14) at (0.5, 2) {};
		\node [style=vert] (15) at (1, 2) {};
		\node [style=vert] (16) at (0.25, 1) {};
		\node [style=vert] (17) at (10.25, -1) {};
		\node [style=vert] (18) at (8.25, 0) {};
		\node [style=vert] (19) at (6.25, 1) {};
		\node [style=vert] (20) at (4.25, 2) {};
		\node [style=vert] (21) at (2.25, 3) {};
		\node [style=vert] (22) at (9.75, 0) {};
		\node [style=vert] (23) at (10.75, 0) {};
		\node [style=vert] (24) at (9.5, 1) {};
		\node [style=vert] (25) at (10, 1) {};
		\node [style=vert] (26) at (10.5, 1) {};
		\node [style=vert] (27) at (7.75, 1) {};
		\node [style=vert] (28) at (8.75, 1) {};
		\node [style=vert] (29) at (7.5, 2) {};
		\node [style=vert] (30) at (8, 2) {};
		\node [style=vert] (31) at (8.5, 2) {};
		\node [style=vert] (32) at (5.75, 2) {};
		\node [style=vert] (33) at (6.75, 2) {};
		\node [style=vert] (34) at (5.5, 3) {};
		\node [style=vert] (35) at (6, 3) {};
		\node [style=vert] (36) at (6.5, 3) {};
		\node [style=vert] (37) at (3.75, 3) {};
		\node [style=vert] (38) at (4.75, 3) {};
		\node [style=none] (44) at (11.75, -1) {1};
		\node [style=none] (45) at (11.75, 0) {3};
		\node [style=none] (46) at (11.75, 1) {6};
		\node [style=none] (47) at (11.75, 2) {6};
		\node [style=none] (48) at (11.75, 3) {6};
		\node [style=none] (52) at (11.75, 3.5) {.};
		\node [style=none] (53) at (11.75, 3.75) {.};
		\node [style=none] (54) at (11.75, 4) {.};
		\node [style=vert] (55) at (21.25, -1) {};
		\node [style=vert] (56) at (19.25, 0) {};
		\node [style=vert] (57) at (17.25, 1) {};
		\node [style=vert] (58) at (15.25, 2) {};
		\node [style=vert] (59) at (13.25, 3) {};
		\node [style=vert] (60) at (20.75, 0) {};
		\node [style=vert] (61) at (21.75, 0) {};
		\node [style=vert] (62) at (20.5, 1) {};
		\node [style=vert] (63) at (21, 1) {};
		\node [style=vert] (64) at (21.5, 1) {};
		\node [style=vert] (65) at (18.75, 1) {};
		\node [style=vert] (66) at (19.75, 1) {};
		\node [style=vert] (67) at (18.5, 2) {};
		\node [style=vert] (68) at (19, 2) {};
		\node [style=vert] (69) at (19.5, 2) {};
		\node [style=vert] (70) at (16.75, 2) {};
		\node [style=vert] (71) at (17.75, 2) {};
		\node [style=vert] (72) at (16.5, 3) {};
		\node [style=vert] (73) at (17, 3) {};
		\node [style=vert] (74) at (17.5, 3) {};
		\node [style=vert] (75) at (14.75, 3) {};
		\node [style=vert] (76) at (15.75, 3) {};
		\node [style=none] (82) at (22.75, -1) {1};
		\node [style=none] (83) at (22.75, 0) {3};
		\node [style=none] (84) at (22.75, 1) {6};
		\node [style=none] (85) at (22.75, 2) {6};
		\node [style=none] (86) at (22.75, 3) {6};
		\node [style=none] (90) at (22.75, 3.5) {.};
		\node [style=none] (91) at (22.75, 3.75) {.};
		\node [style=none] (92) at (22.75, 4) {.};
		\node [style=none] (93) at (-1, -1.5) {$\tilde{P}$};
		\node [style=none] (94) at (0.75, -1.5) {$\tilde{S}$};
		\node [style=none] (96) at (21.25, -1.5) {$\tilde{P}\rtimes_{\bx}\tilde{S}$};
	\end{pgfonlayer}
	\begin{pgfonlayer}{edgelayer}
		\draw (3) to (4);
		\draw (4) to (1);
		\draw (1) to (0);
		\draw [style=red] (13) to (16);
		\draw [style=red] (16) to (11);
		\draw [style=red] (15) to (12);
		\draw [style=blue] (14) to (16);
		\draw [style=blue] (12) to (11);
		\draw (21) to (20);
		\draw (20) to (19);
		\draw (19) to (18);
		\draw (18) to (17);
		\draw [style=red] (22) to (17);
		\draw [style=red] (24) to (22);
		\draw [style=red] (26) to (23);
		\draw [style=red] (29) to (27);
		\draw [style=red] (27) to (18);
		\draw [style=red] (31) to (28);
		\draw [style=red] (32) to (19);
		\draw [style=red] (34) to (32);
		\draw [style=red] (36) to (33);
		\draw [style=red] (37) to (20);
		\draw [style=blue] (38) to (20);
		\draw [style=blue] (33) to (19);
		\draw [style=blue] (35) to (32);
		\draw [style=blue] (28) to (18);
		\draw [style=blue] (30) to (27);
		\draw [style=blue] (23) to (17);
		\draw [style=blue] (25) to (22);
		\draw (59) to (58);
		\draw (58) to (57);
		\draw (57) to (56);
		\draw (56) to (55);
		\draw [style=red] (60) to (55);
		\draw [style=red] (62) to (60);
		\draw [style=red] (64) to (61);
		\draw [style=red] (67) to (65);
		\draw [style=red] (65) to (56);
		\draw [style=red] (69) to (66);
		\draw [style=red] (70) to (57);
		\draw [style=red] (72) to (70);
		\draw [style=red] (74) to (71);
		\draw [style=red] (75) to (58);
		\draw [style=blue] (76) to (58);
		\draw [style=blue] (71) to (57);
		\draw [style=blue] (73) to (70);
		\draw [style=blue] (66) to (56);
		\draw [style=blue] (68) to (65);
		\draw [style=blue] (61) to (55);
		\draw [style=blue] (63) to (60);
		\draw (57) to (60);
		\draw (57) to (61);
		\draw [style=blue] (66) to (61);
		\draw (58) to (65);
		\draw (58) to (66);
		\draw (58) to (62);
		\draw (58) to (63);
		\draw (58) to (64);
		\draw (59) to (70);
		\draw (59) to (71);
		\draw (59) to (67);
		\draw (59) to (68);
		\draw (59) to (69);
		\draw [style=red] (62) to (67);
		\draw [style=red] (64) to (67);
		\draw [style=red] (67) to (72);
		\draw [style=red] (69) to (72);
		\draw [style=blue] (63) to (71);
		\draw [style=blue] (66) to (71);
		\draw [style=blue] (68) to (62);
		\draw [style=blue] (64) to (68);
		\draw [style=blue] (76) to (71);
		\draw [style=blue] (68) to (76);
		\draw [style=blue] (73) to (67);
		\draw [style=blue] (69) to (73);
	\end{pgfonlayer}
\end{tikzpicture}
\caption{Construction of $\tilde{P}\rtimes_{\bx}\tilde{S}$. }
\label{color}
\end{figure}
\end{example}

Now we are ready to prove \Cref{thm:upho times semi-upho}.

\begin{proof}[Proof of \Cref{thm:upho times semi-upho}]
    Since $f(x)$ is a regular upho function, there exists a colored upho poset $\tilde{P}$ such that $f(x) = F_{P}$. Likewise, since $g(x)$ is a regular semi-upho function, by \Cref{prop:reg semiupho= tree gen} and \Cref{lemma: trees are regular}, there exists a colored tree-like semi-upho poset $\tilde{S}$ such that $g(x) = F_{S}$. By \Cref{thm: LCIF monoid convolution}, after fixing an arbitrary mapping $\bx: \mathcal{I}_{\mathcal{M}_0(\tilde{S})} \rightarrow \mathcal{I}_{\mathcal{M}(\tilde{P})}$, the convolution $\tilde{P} \rtimes_{\bx} \tilde{S}$ is a colored upho poset. Moreover, by \Cref{prop: enum of conv monoid}, we have $F_{\mathfrak{F}({\tilde{P} \rtimes_{\bx} \tilde{S}})} = F_{P} F_{S} = f(x) g(x)$, which implies that $f(x) g(x)$ is a regular upho function, as desired.
\end{proof}

Moreover, we have the following two practical corollaries.

\begin{cor}\label{cor:reg upho times reg upho}
        Let  $f(x), g(x)\in 1+x\Z_{\ge 0}[[x]]$ be regular upho functions, then $f(x)g(x)$ is regular upho.
\end{cor}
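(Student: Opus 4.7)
The plan is to deduce this corollary directly from \Cref{thm:upho times semi-upho} by showing that every regular upho function is automatically a regular semi-upho function. Granting this reduction, we take $f(x)$ as in \Cref{thm:upho times semi-upho} and simultaneously view $g(x)$ as a regular semi-upho function, which immediately gives that $f(x)g(x)$ is a regular upho function.

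To justify the reduction, I would invoke the remark in \Cref{sec:2} that upho posets form a subclass of semi-upho posets because every isomorphism is an isoembedding. The same observation extends to the colored setting: the uniqueness of $\phi_s:V_s\xrightarrow{\sim}P$ in the definition of a colored upho poset makes $\phi_s$ in particular an isoembedding satisfying the color-compatibility condition required of $\psi_s$ in the definition of a colored semi-upho poset. Hence every colored upho poset is a colored semi-upho poset, so the image of the forgetful mapping $\mathfrak{F}$ on colored upho posets lies in the image of $\mathfrak{F}$ on colored semi-upho posets. In particular, a regular upho poset is a regular semi-upho poset, and a regular upho function is a regular semi-upho function.

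With this identification, $g(x) \in 1 + x\Z_{\ge 0}[[x]]$ being a regular upho function implies that $g(x)$ is a regular semi-upho function. Since $f(x)\in 1 + x\Z_{\ge 0}[[x]]$ is a regular upho function, \Cref{thm:upho times semi-upho} applies and yields that $f(x)g(x)$ is a regular upho function, completing the proof.

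There is essentially no obstacle here beyond unwinding definitions; the substance of the statement is entirely absorbed into \Cref{thm:upho times semi-upho}, whose proof (via the $\bx$-convolution of monoids and \Cref{prop: enum of conv monoid}) already contains the genuinely nontrivial combinatorics. The only subtlety to flag is the verification that the colored-upho structure genuinely restricts to a colored-semi-upho structure, which, as noted, is immediate from the uniqueness clause for the isomorphisms $\phi_s$.
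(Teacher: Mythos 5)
Your proposal is correct and follows exactly the paper's route: the paper also deduces the corollary from \Cref{thm:upho times semi-upho} together with the fact that any regular upho function is a regular semi-upho function, which you justify by observing that a colored upho poset is in particular a colored semi-upho poset. Your extra unwinding of the color-compatibility condition for $\phi_s$ versus $\psi_s$ is a fine (and harmless) elaboration of the same argument.
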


\begin{proof}
    This follows directly from \Cref{thm:upho times semi-upho} and the fact that any regular upho function is a regular semi-upho function.
\end{proof}

\begin{cor}\label{cor:upho times log-concave}
    Let $f(x)\in 1+x\Z_{\ge 0}[[x]]$ be a regular upho function, and $g(x)\in 1+x\Z_{\ge 0}[[x]]$ be a log-concave formal power series, then $f(x)g(x)$ is regular upho.
\end{cor}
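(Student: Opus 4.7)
The plan is to observe that this corollary is an immediate consequence of two results already established in the paper, so no new technical work is required. The strategy is simply to verify that the hypotheses of \Cref{thm:upho times semi-upho} are satisfied, using \Cref{thm: log-concave rgf of semi-upho} to convert the log-concavity hypothesis on $g(x)$ into the ``regular semi-upho'' hypothesis needed there.

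Concretely, I would proceed in two steps. First, I would invoke \Cref{thm: log-concave rgf of semi-upho} to conclude that the log-concave formal power series $g(x) \in 1 + x\Z_{\ge 0}[[x]]$ is a regular semi-upho function. Second, with $f(x)$ assumed to be a regular upho function and $g(x)$ now known to be a regular semi-upho function, I would apply \Cref{thm:upho times semi-upho} directly to conclude that the product $f(x)g(x)$ is a regular upho function.

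There is essentially no obstacle here, since both inputs to this chain of implications are theorems proved earlier in the paper. If one wanted to make the argument self-contained in spirit, one could briefly recall the underlying mechanism: the tree-like colored semi-upho poset realizing $g(x)$ (guaranteed by the greedy $0$-monoid construction of \Cref{subsec: greedy 0 monoid}) is convolved via a suitable map $\bx$ with a colored upho poset realizing $f(x)$, and \Cref{prop: enum of conv monoid} ensures that the rank-generating functions multiply. But at the level of a corollary, a one-line citation-based proof is the natural presentation.
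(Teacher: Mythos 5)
Your proposal matches the paper's own proof exactly: the paper deduces this corollary directly from \Cref{thm: log-concave rgf of semi-upho} (to see $g(x)$ is a regular semi-upho function) combined with \Cref{thm:upho times semi-upho}. The argument is correct and complete as stated.
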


\begin{proof}
    This follows directly from \Cref{thm: log-concave rgf of semi-upho} and \Cref{thm:upho times semi-upho}.
\end{proof}

\section{Totally Positive Upho Functions}\label{sec:6}
In this section, our goal is to prove \Cref{thm:main} and \Cref{conj:Gao1}. Since \Cref{lemma: Ep of upho symmetric} and \Cref{prop:Schur positive} imply that \Cref{conj:Gao1} follows as a relatively straightforward corollary of \Cref{thm:main}, our main effort is focused on proving \Cref{thm:main}, all totally positive formal power series are regular upho functions.

We outline the proof of \Cref{thm:main} as follows.

By \Cref{thm:working def of TP}, it suffices to show that any formal power series $f(x) \in 1 + x \mathbb{Z}_{\ge 0}[[x]]$ of the form $\frac{g(x)}{h(x)}$ is a regular upho function, where $g(x), h(x) \in 1 + x \mathbb{Z}[x]$ are polynomials with all roots of $g(x)$ real and negative, and all roots of $h(x)$ real and positive.

For the numerator, we have:

\begin{lemma}\label{lemma: numerator g(x)}
    If all roots of $g(x) \in 1 + x \mathbb{Z}[x]$ are real and negative, then $g(x)$ is log-concave.
\end{lemma}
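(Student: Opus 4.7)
The plan is to reduce the lemma to Newton's inequalities on elementary symmetric polynomials of positive reals.

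First, I would factor $g(x)$. Since $g(x) \in 1 + x\mathbb{Z}[x]$ has constant term $1$ and all $n = \deg g$ roots are real and negative, I can write $g(x) = \prod_{i=1}^{n}(1 + \alpha_i x)$ for some positive reals $\alpha_1, \dots, \alpha_n > 0$ (where $-1/\alpha_i$ are the roots of $g$). Expanding this product, the coefficient of $x^k$ in $g(x)$ is exactly the elementary symmetric polynomial $e_k(\alpha_1, \dots, \alpha_n)$, with the convention $e_0 = 1$ and $e_k = 0$ for $k > n$. In particular, $c_k \coloneqq e_k(\alpha_1, \dots, \alpha_n) > 0$ for all $0 \le k \le n$ and $c_k = 0$ for $k > n$.

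Next, I would invoke Newton's inequalities: for positive reals $\alpha_1, \dots, \alpha_n$ and $1 \le k \le n-1$,
\[
\left(\frac{e_k(\alpha)}{\binom{n}{k}}\right)^2 \;\ge\; \frac{e_{k-1}(\alpha)}{\binom{n}{k-1}} \cdot \frac{e_{k+1}(\alpha)}{\binom{n}{k+1}}.
\]
Rearranging and using $\binom{n}{k}^2 \ge \binom{n}{k-1}\binom{n}{k+1}$ (equivalent to $(k+1)(n-k+1) \ge k(n-k)$, which is obvious), this immediately yields $c_k^2 \ge c_{k-1} c_{k+1}$ for $1 \le k \le n-1$.

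It remains to handle the boundary cases and the no-internal-zeros requirement of \Cref{def:log-concave}. For $k \ge n$, one has $c_{k+1} = 0$, so the inequality $c_k^2 \ge c_{k-1} c_{k+1}$ holds trivially. The non-internal-zero condition is automatic because $c_k > 0$ exactly on the contiguous range $0 \le k \le n$, after which all coefficients vanish. This completes the verification of \Cref{def:log-concave} and hence the lemma. I do not anticipate any real obstacle: the entire argument is classical and the only substantive input is Newton's inequalities, which can simply be cited.
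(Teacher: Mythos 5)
Your proof is correct, but it takes a different route from the paper. You factor $g(x)=\prod_{i=1}^n(1+\alpha_i x)$ with $\alpha_i>0$ and then get the inequalities $c_k^2\ge c_{k-1}c_{k+1}$ directly from Newton's inequalities for the elementary symmetric polynomials $e_k(\alpha_1,\dots,\alpha_n)$, together with $\binom{n}{k}^2\ge\binom{n}{k-1}\binom{n}{k+1}$; your treatment of the boundary cases $k\ge n$ and of the no-internal-zeros requirement of \Cref{def:log-concave} is also fine, since the coefficients are strictly positive precisely on $0\le k\le n$ and vanish afterwards. The paper instead uses the same factorization but then simply observes that each linear factor $1+\mu_i x$ is log-concave and invokes \Cref{lemma:logconcavetimeslogconcave} (closure of log-concavity, with no internal zeros, under multiplication), a lemma it has already imported and reuses elsewhere (e.g.\ in the proof of \Cref{prop:type I}). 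So both arguments ultimately rest on one cited classical fact: yours on Newton's inequalities, the paper's on the multiplicativity of log-concavity. Your version is self-contained at the level of symmetric-function inequalities and makes the positivity of all $n+1$ coefficients explicit; the paper's version is shorter in context because the closure lemma is already part of its toolkit and is needed again later, whereas Newton's inequalities would be a one-off import.
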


\begin{proof}
    Write $g(x) = (1 + \mu_1 x)(1 + \mu_2 x) \cdots (1 + \mu_n x)$. Since each factor $1 + \mu_i x$ is log-concave, \Cref{lemma:logconcavetimeslogconcave} implies that $g(x)$ is log-concave, as required.
\end{proof}

Applying \Cref{cor:upho times log-concave}, it suffices to show that any formal power series $f(x) \in 1 + x \mathbb{Z}_{\ge 0}[[x]]$ of the form $\frac{1}{h(x)}$ is a regular upho function, where $h(x) \in 1 + x \mathbb{Z}[x]$ has all roots real and positive.

Using \Cref{cor:reg upho times reg upho}, we can furthermore restrict to the case where $h(x)$ is irreducible over $\mathbb{Z}$, as we can factor $h(x)$ into irreducibles in $\mathbb{Z}[x]$.

Since each $\frac{1}{1 - ax}$ ($a \in \mathbb{Z}_{>0}$) is the rank-generating function of an $a$-ary tree and hence a regular upho function, it suffices to consider $h(x)$ irreducible over $\mathbb{Z}[x]$ with degree at least $2$.

Factor $h(x)$ over $\mathbb{R}$:
\[
f(x) = \frac{1}{h(x)} = \prod_{i=1}^{n} \frac{1}{1 - \lambda_i x}, \quad 0 < \lambda_1 < \lambda_2 < \cdots < \lambda_n,
\]
where $h(x)$ is irreducible over $\mathbb{Z}[x]$ and $\deg h(x) \ge 2$. All $\lambda_i$ are distinct since any algebraic extension of $\mathbb{Q}[x]$ is separable. Since the product of all $-\lambda_i$ is the integer coefficient of $x^n$ in $h(x)$, there exists at least one $\lambda_i > 1$. This reduces the proof to two cases, termed \emph{type I unitary} and \emph{type II unitary}.

\begin{prop}\label{prop:type I}
    If a formal power series $f(x) \in 1 + x \mathbb{Z}_{\ge 0}[[x]]$ is of the form 
    \[
    f(x) = \frac{1}{h(x)} = \prod_{i=1}^{n} \frac{1}{1 - \lambda_i x}, \quad 0 < \lambda_1 < \cdots < \lambda_n,
    \]
    where $h(x)$ is irreducible over $\mathbb{Z}[x]$, $\deg h(x) \ge 2$, and $1 < \lambda_{n-1} < \lambda_{n}$, then $f(x)$ is an upho function.
\end{prop}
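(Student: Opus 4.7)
The plan is to apply the convolution theorem (Theorem \ref{thm:upho times semi-upho}) by factoring $f(x)$ as a rank-generating function of an $a$-ary tree times a log-concave series. Since $\lambda_{n-1} > 1$, I may select any positive integer $a$ with $a \le \lambda_{n-1}$; taking $a = 1$ suffices. Write
\[
f(x) \;=\; \frac{1}{1-ax} \cdot \frac{1-ax}{h(x)} \;=\; u(x)\,v(x),
\]
where $u(x) = 1/(1-ax)$ is the rank-generating function of the $a$-ary tree (a regular upho poset), and $v(x) := (1-ax)/h(x)$. Since $\lambda_{n-1}$ is irrational (as $h$ is irreducible over $\mathbb{Z}$ of degree $\ge 2$), the inequality $a < \lambda_{n-1}$ is automatically strict.

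Next, I verify that $v(x) \in 1 + x\mathbb{Z}_{\ge 0}[[x]]$. Integrality is immediate from $a \in \mathbb{Z}$. For nonnegativity, write $a_k := [x^k]f(x) = h_k(\lambda_1,\dots,\lambda_n)$, the $k$-th complete homogeneous symmetric polynomial. A direct monomial count gives $a_k - \lambda_n a_{k-1} = \sum_{|\alpha|=k,\ \alpha_n=0} \lambda^\alpha > 0$ (since $n \ge 2$), so $a_k/a_{k-1} > \lambda_n > \lambda_{n-1} > a$, and therefore $[x^k] v(x) = a_k - a\,a_{k-1} > 0$ for all $k \ge 1$.

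The main step is proving $v(x)$ is log-concave. Using partial fractions, $v(x) = \sum_{i=1}^n d_i/(1-\lambda_i x)$ with $d_i = (1-a/\lambda_i)\,c_i$, where $c_i = \prod_{j\ne i}\lambda_i/(\lambda_i-\lambda_j)$. Setting $b_k := [x^k] v(x) = \sum_i d_i\lambda_i^k$, a symmetrization of the product expansion yields the identity
\[
b_{k+1}^2 - b_k b_{k+2} \;=\; -\sum_{i<j} d_i d_j (\lambda_i - \lambda_j)^2 (\lambda_i\lambda_j)^k,
\]
so log-concavity reduces to showing $\sum_{i<j} d_i d_j (\lambda_i-\lambda_j)^2 (\lambda_i\lambda_j)^k \le 0$ for all $k \ge 0$. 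The dominant pair $(n-1,n)$ contributes with the correct (nonpositive) sign: the residues $c_i$ have alternating signs coming from the Vandermonde denominator, while the factors $1-a/\lambda_{n-1}$ and $1-a/\lambda_n$ are both strictly positive because $a < \lambda_{n-1} < \lambda_n$; hence $d_{n-1}d_n < 0$. The hypothesis $\lambda_{n-1}>1$ is crucial here: it guarantees that $\lambda_{n-1}\lambda_n$ is strictly the largest among all products $\lambda_i\lambda_j$ with $i<j$, so this negative contribution dominates asymptotically. A more robust approach uses the linear recurrence $b_{k+n} = e_1 b_{k+n-1} - e_2 b_{k+n-2} + \cdots \pm e_n b_k$ (inherited from $h$) to track the ratio $r_k := b_{k+1}/b_k$, and shows that $r_k$ is eventually decreasing to $\lambda_n$ starting from $r_0 = \sum_i\lambda_i - a > \lambda_n$ (the latter inequality following from $\sum_{i<n}\lambda_i \ge \lambda_{n-1} > a$). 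In the base case $n=2$ this is elementary: $r_{k+1} = s - p/r_k$ is a Möbius-type map whose stable fixed point is $\lambda_n$, and $r_0 = \lambda_1+\lambda_2 - a > \lambda_2$ since $\lambda_1 > a$.

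Once $v(x)$ is established to be log-concave, Theorem \ref{thm: log-concave rgf of semi-upho} gives that $v(x)$ is a regular semi-upho function, and Theorem \ref{thm:upho times semi-upho} then concludes that $f(x) = u(x)v(x)$ is a regular upho function, which is what we want. The main obstacle is the uniform log-concavity of $v(x)$ over all $k\ge 0$ for arbitrary $n$: the asymptotic sign analysis via the dominant pair is straightforward, but controlling the error from smaller pairs $(i,j)$ (especially those with $\lambda_i < 1$ and $\lambda_j > 1$, where the sign of $d_i$ flips) for small $k$ requires either a fine quantitative bound on the Vandermonde residues or the ratio-monotonicity argument sketched above, adapted to the multi-term recurrence satisfied by $b_k$ when $n \ge 3$.
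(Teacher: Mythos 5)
Your reduction is the same as the paper's: with $a=1$ you factor $f(x)=\frac{1}{1-x}\cdot(1-x)f(x)$, aim to show $v(x)=(1-x)f(x)$ is log-concave with positive integer coefficients, and then invoke \Cref{thm: log-concave rgf of semi-upho} and \Cref{thm:upho times semi-upho} (i.e.\ \Cref{cor:upho times log-concave}). The positivity of the coefficients of $v$ is fine. But the entire content of the proposition beyond these standard reductions is precisely the log-concavity of $v$, and your proposal does not prove it. The partial-fraction identity
\[
b_{k+1}^2-b_kb_{k+2}=-\sum_{i<j}d_id_j(\lambda_i-\lambda_j)^2(\lambda_i\lambda_j)^k
\]
is correct, and $d_{n-1}d_n<0$ does make the dominant term eventually negative; however this only controls the sign for large $k$, while pairs with $d_id_j>0$ can a priori make the sum positive for small $k$. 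Your fallback, the ratio-monotonicity of $r_k=b_{k+1}/b_k$, is carried out only for $n=2$; for $n\ge 3$ the monotone decrease of $r_k$ to $\lambda_n$ from $r_0$ is asserted but not established (and ``eventually decreasing'' is not enough, since log-concavity requires $r_k$ nonincreasing for all $k$). You acknowledge this yourself, so as written the argument is incomplete exactly at the step that carries the weight of the proof.

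The paper sidesteps this difficulty by never analyzing the coefficients of $v$ directly. It writes
\[
v(x)=\frac{1-x}{(1-\lambda_{n-1}x)(1-\lambda_nx)}\cdot\prod_{i=1}^{n-2}\frac{1}{1-\lambda_ix},
\]
proves by a two-line computation (\Cref{lemma:1-x/(1-ax)(1-bx)logconcave}) that the first factor is log-concave with positive coefficients --- the key identity being $c_{i-1}^2-c_{i-2}c_i=(\lambda_{n-1}\lambda_n)^{i-2}(\lambda_{n-1}-1)(\lambda_n-1)\ge 0$, which is exactly where the type~I hypothesis $1<\lambda_{n-1}<\lambda_n$ enters --- and then uses closure of log-concavity under multiplication (\Cref{lemma:logconcavetimeslogconcave}), since each geometric factor $\frac{1}{1-\lambda_ix}$ is log-concave with positive coefficients. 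Adopting this factorization would repair your proof immediately; pushing your direct dominant-pair or ratio-recurrence estimate through for all $k$ and all $n\ge 3$ would require genuinely new quantitative input that the proposal does not supply.
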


\begin{prop} \label{prop:type II}
    If a formal power series $f(x) \in 1 + x \mathbb{Z}_{\ge 0}[[x]]$ is of the form
 \[
 f(x) = \frac{1}{h(x)} = \prod_{i=1}^{n} \frac{1}{1 - \lambda_i x}, \quad 0 < \lambda_1 < \cdots < \lambda_n,
 \]
 where $h(x)$ is irreducible over $\mathbb{Z}[x]$, $\deg h(x) \ge 2$, and $\lambda_{n-1} < 1 < \lambda_{n}$, then $f(x)$ is an upho function.
\end{prop}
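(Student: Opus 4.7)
The plan is to construct an explicit head-changing LCH monoid $M$ with $|\mathbf{W}_k^M|=a_k$ for every $k$, where $a_k$ denotes the coefficient of $x^k$ in $f(x)$, and then invoke \Cref{lemma:head-changing upho} together with \Cref{cor:corrsp for N-graded upho} to conclude that $\mathcal{\tilde{P}}(M)$ is a finite-type $\mathbb{N}$-graded colored upho poset realizing $f(x)$ as a regular upho function. Writing $h(x)=1-e_1 x+e_2 x^2-\cdots+(-1)^n e_n x^n$, where the $e_i$ are the elementary symmetric polynomials in the positive reals $\lambda_1<\cdots<\lambda_n$, all $e_i$ are positive integers; the coefficients of $f$ satisfy the linear recurrence $a_k=e_1 a_{k-1}-e_2 a_{k-2}+\cdots+(-1)^{n-1}e_n a_{k-n}$ for $k\geq n$, with initial conditions dictated by $h(x)f(x)=1$.

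First, I take $\mathcal{I}_M=\{x_1<\cdots<x_{e_1}\}$, matching $a_1=e_1$ at level one. Then, inductively on $k\geq 2$, I adjoin length-$k$ head-changing relations $x_i W=x_j W$ (with $W$ a length-$(k-1)$ element of $M$) so as to reduce the level-$k$ count to $a_k$. The head-changing nature of the relations gives clean bookkeeping: for each fixed $V\in\mathbf{W}_{k-1}^M$, two extensions $xV$ and $yV$ coincide in $M$ iff $x\sim_V y$ for an equivalence relation $\sim_V$ on $\mathcal{I}_M$ determined by the relations whose tail reduces to $V$, whence
\[
|\mathbf{W}_k^M|=\sum_{V\in\mathbf{W}_{k-1}^M}|\mathcal{I}_M/{\sim_V}|.
\]
Since each summand ranges between $1$ and $e_1$, the only obstruction to hitting $a_k$ at this stage is the pair of inequalities $a_{k-1}\leq a_k\leq e_1 a_{k-1}$.

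Establishing these inequalities is the first key step. The upper bound $a_k\leq e_1 a_{k-1}$ translates via the recurrence into $e_2 a_{k-2}-e_3 a_{k-3}+\cdots\geq 0$; I expect to prove it by combining Newton's inequalities for the $\lambda_i$ with the total positivity of $f(x)$ (or by a partial-fraction argument exploiting the gap $\lambda_{n-1}<1<\lambda_n$). The lower bound $a_k\geq a_{k-1}$ similarly follows from the dominance of $\lambda_n>1$ together with an inductive base-case check.

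The second step, and the main obstacle, is to make the recursive choice of relations globally coherent: the identifications introduced by a length-$k$ relation cascade through all longer words as interior substrings, and these inherited identifications must remain compatible with the target counts $a_m$ for every $m>k$. The ``miracle in linear algebra'' alluded to in the introduction is presumably an identity --- traceable to the Cayley--Hamilton relation satisfied by the companion matrix of $h(x)$ --- that accounts for the inherited identifications via an inclusion--exclusion whose alternating signs mirror those in the recurrence $a_m=e_1 a_{m-1}-e_2 a_{m-2}+\cdots$. Exhibiting a canonical choice for the $\sim_V$ (for instance, always collapsing a prescribed number of top-numbered generators into a single class) and verifying this matching identity is the technical heart of the argument; once it is in place, \Cref{lemma:head-changing upho} and \Cref{cor:corrsp for N-graded upho} deliver the conclusion.
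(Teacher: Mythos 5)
Your high-level plan (build a head-changing LCH monoid whose level counts are the coefficients of $f$, then apply \Cref{lemma:head-changing upho} and \Cref{cor:corrsp for N-graded upho}) does match the paper's strategy, but the proposal stops short of the actual construction, and the two places where it defers are exactly where the difficulty lies. First, your claim that ``the only obstruction to hitting $a_k$ at this stage is $a_{k-1}\le a_k\le e_1 a_{k-1}$'' is not correct as stated: once relations of length $<k$ have been imposed, they propagate into longer words, so the number of length-$k$ elements available before adding new relations is in general strictly smaller than $e_1 a_{k-1}$ and depends on all earlier choices. Controlling this inherited collapse is precisely the coherence problem you flag as ``the technical heart'' and then leave open; in fact the assertion that a greedy level-by-level scheme of this sort always succeeds is essentially \Cref{conj: upho recursive}/\Cref{conj: upho greedy series}, which the paper states as open conjectures rather than proving. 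So as written the proposal contains a genuine gap, not just an omitted computation.

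The paper avoids the cascade problem entirely by a different mechanism. The ``miracle'' (\Cref{miracle}, proved via \Cref{lem: c_i eq2} and \Cref{lem: c_i eq3}) is not an inclusion--exclusion bookkeeping of inherited identifications: it is an explicit conjugation $L_n=B_nH_nA_n$ of the companion matrix $H_n$ of $h(x)$ into a nonnegative integer matrix whose first row $(l_1,\dots,l_n)$ satisfies $l_1-1\ge l_2\ge\cdots\ge l_n\ge 1$ (this is where the hypothesis $\lambda_{n-1}<1<\lambda_n$ enters, replacing your proposed inequalities $a_{k-1}\le a_k\le e_1a_{k-1}$, which are never needed). From $L_n$ one writes down a single \emph{finite} presentation: generators $x^1_1<\cdots<x^1_{l_1}<x^2_1<\cdots<x^n_1$ and head-changing relations involving only words of length two, namely $x^k_1x^1_t=x^1_1x^1_t$ and $x^k_1x^j_1=x^1_1x^j_1$. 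Because every relation has length two and is head-changing, the level counts obey a clean transfer-matrix recursion governed by $L_n$ (tracking the ``tail'' letter of each element), and $\sum_t m_i^t=c_i$ follows directly from the miracle; left-cancellativity is \Cref{lemma:head-changing upho}. To repair your argument you would either have to prove the greedy conjectures or, as the paper does, exhibit such an explicit presentation together with the linear-algebra identity that makes the counts come out right.
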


The previous discussion shows that \Cref{prop:type I} and \Cref{prop:type II} imply \Cref{thm:main}. In the following two subsections, we handle these cases individually.

\subsection{Type I Unitary Totally Positive Functions}

In this subsection, our goal is to prove \Cref{prop:type I}. We state a technical lemma we use first.

\begin{lemma}\label{lemma:1-x/(1-ax)(1-bx)logconcave}
    If a formal power series $p(x)\in \mathbb{R}[[x]]$ is of the form \[p(x)=\sum_{i \ge 0}c_ix^i=\frac{1-x}{(1-\lambda_1 x)(1-\lambda_2 x)},\] and $\lambda_1, \lambda_2 \in \mathbb{R}$ satisfies $1<\lambda_1 <\lambda_2$, then $p(x)$ is log-concave and all coefficients $c_i$ are positive.
\end{lemma}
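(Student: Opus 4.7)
The plan is to prove \Cref{lemma:1-x/(1-ax)(1-bx)logconcave} by a direct partial fraction decomposition combined with a short algebraic identity. Since the denominator has two distinct real roots, we can write
\[
p(x) = \frac{A}{1-\lambda_1 x} + \frac{B}{1-\lambda_2 x}, \qquad A = \frac{\lambda_1 - 1}{\lambda_1 - \lambda_2}, \quad B = \frac{\lambda_2 - 1}{\lambda_2 - \lambda_1},
\]
obtained by evaluating the defining identity at $x = 1/\lambda_1$ and $x = 1/\lambda_2$. Because $1 < \lambda_1 < \lambda_2$, the signs work out to $A < 0$ and $B > 0$. Expanding each summand as a geometric series then yields the closed form $c_n = A\lambda_1^n + B\lambda_2^n$, and in particular
\[
(\lambda_2 - \lambda_1)\, c_n = (\lambda_2 - 1)\lambda_2^n - (\lambda_1 - 1)\lambda_1^n.
\]

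For the positivity of the coefficients, I would observe that the function $t \mapsto (t-1)t^n$ is strictly increasing on $[1,\infty)$ for any integer $n \ge 0$; since $\lambda_2 > \lambda_1 > 1$, the right-hand side above is positive, so $c_n > 0$ for all $n \ge 0$. (The same formula also gives $c_0 = 1$ as a sanity check.)

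For log-concavity, the key is a clean identity: for every $n \ge 1$,
\[
c_n^2 - c_{n-1}c_{n+1} = -AB\,(\lambda_2 - \lambda_1)^2\, (\lambda_1 \lambda_2)^{n-1}.
\]
This follows by expanding both sides using $c_k = A\lambda_1^k + B\lambda_2^k$, in which the terms $A^2\lambda_1^{2n}$ and $B^2\lambda_2^{2n}$ cancel and the cross terms collapse via $\lambda_1^{n-1}\lambda_2^{n+1} + \lambda_1^{n+1}\lambda_2^{n-1} - 2\lambda_1^n\lambda_2^n = \lambda_1^{n-1}\lambda_2^{n-1}(\lambda_2 - \lambda_1)^2$. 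Since $A < 0 < B$, the right-hand side is strictly positive, giving $c_{n-1}c_{n+1} < c_n^2$ for all $n \ge 1$. The remaining log-concavity requirement $c_{-1}c_1 \le c_0^2$ is trivial because $c_{-1} = 0$ by convention.

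There is no real obstacle here; the argument is a short computation once one commits to partial fractions. The only mild care needed is to keep track of the signs of $A$ and $B$ (which come from $\lambda_1 - \lambda_2 < 0$) and to handle the boundary index $n = 0$ separately, since the general identity above is derived under the hypothesis $n \ge 1$.
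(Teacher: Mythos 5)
Your proposal is correct and follows essentially the same route as the paper: your partial-fraction form $c_n = A\lambda_1^n + B\lambda_2^n$ is just a regrouping of the paper's closed formula $c_i=\tfrac{1}{\lambda_1-\lambda_2}(\lambda_1^{i+1}-\lambda_2^{i+1}-\lambda_1^i+\lambda_2^i)$, your identity $c_n^2-c_{n-1}c_{n+1}=-AB(\lambda_2-\lambda_1)^2(\lambda_1\lambda_2)^{n-1}$ equals the paper's $(\lambda_1-1)(\lambda_2-1)(\lambda_1\lambda_2)^{n-1}$, and your positivity argument via monotonicity of $t\mapsto(t-1)t^n=t^{n+1}-t^n$ is the paper's argument with $q(x)=x^{i+1}-x^i$. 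No issues.
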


\begin{proof}[Proof of \Cref{prop:type I} assuming \Cref{lemma:1-x/(1-ax)(1-bx)logconcave}]
Note that
\[
f(x) = \frac{1}{h(x)} = \frac{1}{1 - x} \cdot \frac{1 - x}{(1 - \lambda_{n-1} x)(1 - \lambda_{n} x)} \cdot \prod_{i=1}^{n-2} \frac{1}{1 - \lambda_i x}.
\]
By \Cref{lemma:1-x/(1-ax)(1-bx)logconcave}, $\frac{1 - x}{(1 - \lambda_{n-1} x)(1 - \lambda_{n} x)}$ is log-concave with positive coefficients. Moreover, each term $\frac{1}{1 - \lambda_i x} = 1 + \lambda_i x + \lambda_i^2 x^2 + \cdots$ is also log-concave with positive coefficients. Thus, by \Cref{lemma:logconcavetimeslogconcave}, $(1 - x)f(x) = \frac{1 - x}{(1 - \lambda_{n-1} x)(1 - \lambda_{n} x)} \cdot \prod_{i=1}^{n-2} \frac{1}{1 - \lambda_i x}$ is log-concave. Moreover, $(1 - x)f(x)$ has integer coefficients since $f(x) \in 1 + x \mathbb{Z}[[x]]$. Additionally, as shown in \Cref{example:easyexamples}, $\frac{1}{1 - x}$ is a regular upho function. Therefore, by \Cref{cor:upho times log-concave}, we conclude that $f(x)=\frac{1}{1 - x}\cdot (1 - x)f(x)$ is a regular upho function.
\end{proof}

Finally, we prove \Cref{lemma:1-x/(1-ax)(1-bx)logconcave} by direct calculation.

\begin{proof}[Proof of \Cref{lemma:1-x/(1-ax)(1-bx)logconcave}]
By expanding $\sum_{i \ge 0}c_i x^i = \frac{1 - x}{(1 - \lambda_1 x)(1 - \lambda_2 x)}$, we have for each $i \ge 0$,
\[
c_i = \frac{1}{\lambda_1 - \lambda_2} \left( \lambda_1^{i+1} - \lambda_2^{i+1} - \lambda_1^i + \lambda_2^i \right).
\]

Note that for any $i \ge 2$,
\begin{align*}
    c_{i-1}^2 - c_{i-2}c_i &= \frac{1}{(\lambda_1 - \lambda_2)^2} \left[ (\lambda_1^i - \lambda_2^i - \lambda_1^{i-1} + \lambda_2^{i-1})^2 - (\lambda_1^{i+1} - \lambda_2^{i+1} - \lambda_1^i + \lambda_2^i)(\lambda_1^{i-1} - \lambda_2^{i-1} - \lambda_1^{i-2} + \lambda_2^{i-2}) \right] \\
    &= \frac{1}{(\lambda_1 - \lambda_2)^2} \Big[ -2\lambda_1^i \lambda_2^i + 2\lambda_1^i \lambda_2^{i-1} + 2\lambda_1^{i-1} \lambda_2^i - 2\lambda_1^{i-1} \lambda_2^{i-1} \\
    &\quad + \lambda_1^{i+1} \lambda_2^{i-1} - \lambda_1^{i+1} \lambda_2^{i-2} + \lambda_1^{i-1} \lambda_2^{i+1} - \lambda_1^{i-2} \lambda_2^{i+1} - \lambda_1^i \lambda_2^{i-1} + \lambda_1^i \lambda_2^{i-2} - \lambda_1^{i-1} \lambda_2^i + \lambda_1^{i-2} \lambda_2^i \Big] \\
    &= \lambda_1^{i-2} \lambda_2^{i-2} (\lambda_1 - 1)(\lambda_2 - 1) \ge 0.
\end{align*}

Hence, $\{c_i\}_{i \ge 0}$ is log-concave.

To prove each $c_i$ is positive, it suffices to show $\lambda_2^{i+1} - \lambda_1^{i+1} - \lambda_2^i + \lambda_1^i > 0$. Let $q(x) \coloneqq x^{i+1} - x^i$ with $x > 0$. We have $q'(x) = x^{i-1} ((i+1)x - i)$. Thus, $q'(x) > 0$ if and only if $x > \frac{i}{i+1}$. Since $\frac{i}{i+1} < 1 < \lambda_1 < \lambda_2$, we obtain
\[
\lambda_2^{i+1} - \lambda_1^{i+1} - \lambda_2^i + \lambda_1^i = q(\lambda_2) - q(\lambda_1) > 0,
\]
as desired.
\end{proof}

\subsection{Type II Unitary Totally Positive Functions}\label{subsec: const of type 2 num 1 poset}
In this subsection, our goal is to prove \Cref{prop:type II}. We prove by constructing a head-changing monoid (see \Cref{def:head-changing}) $M$ explicitly such that $F_{\mathcal{\tilde{P}}(M)}=f(x)$. Our construction relies on a surprising linear algebra result that we discovered, which appears to emerge from the void.

\begin{Miracle} \label{miracle}
    If a formal power series $f(x)=\sum_{i \ge 0}c_ix^i \in 1 + x \mathbb{Z}_{\ge 0}[[x]]$ is of the form
 \[
 f(x) = \frac{1}{h(x)} = \prod_{i=1}^{n} \frac{1}{1 - \lambda_i x}, \quad 0 < \lambda_1 < \cdots < \lambda_{n-1} < 1 < \lambda_{n},
 \]
 where $h(x)\in \mathbb{Z}[x]$ and $\deg h(x) \ge 2$, then there exist integers $l_1-1\ge l_2\ge \dots \ge l_n\ge 1$ such that
    \[c_i=
    \begin{pmatrix}
    1&1&1&\cdots&1
    \end{pmatrix}
    (L_n)^i
    \begin{pmatrix}
        1\\
        0\\
        0\\
        \vdots\\
        0
    \end{pmatrix}\text{ for any $i\ge 0$, where }L_n\coloneqq\begin{pmatrix}
    l_1 & l_2 & l_3 &\cdots&l_n\\
    1&1&0&\cdots&0\\
    1&1&1&\cdots&0\\
    \vdots&\vdots&\vdots&\ddots&\vdots\\
    1&1&1&\cdots&1
    \end{pmatrix} \in M_{n\times n}(\Z).
    \]
\end{Miracle}

\begin{notation}
We use $A(i,j)$ to denote the entry in the $i$-th row and $j$-th column of a matrix $A$.
\end{notation}

\begin{proof}[Proof of \Cref{prop:type II} assuming \Cref{miracle}]
   By \Cref{cor:corrsp for N-graded upho}, it suffices to construct an LCH monoid $M$ with $|\mathbf{W}_i^M| = c_i$ for any integer $i \ge 1$.

Let $l_1, l_2, \dots, l_n$ be the integers defined in \Cref{miracle}. Define $M \coloneqq \langle \mathbf{X} \mid R \rangle$, where $\mathbf{X}$ is an ordered set
\[
\mathbf{X} \coloneqq \{x_1^1< x_2^1< \dots< x_{l_1}^1< x_1^2< x_1^3< \dots< x_1^n\},
\]
and all defining relations in $R$ are
\begin{align*}
    &x^k_1 x^1_{t} = x_1^1 x^1_{t}, \quad 2 \le k \le n, \ 1 \le t \le l_1 - l_k; \\
    &x^k_1 x^j_1 = x^1_1 x^j_1, \quad 2 \le j < k \le n.
\end{align*}
Since $l_1 - 1 \ge l_2 \ge \dots \ge l_n$ by \Cref{miracle}, we have $l_{1} - l_k \ge 1$ for any $2 \le k \le n$, ensuring that all these defining relations are well-defined. Furthermore, since $M$ is a head-changing monoid, by \Cref{lemma:head-changing upho}, $M$ is an LCH monoid. In the remainder of the proof, we prove $|\mathbf{W}_i^M| = c_i$  for any $i\ge 0$.

Since $M$ is a head-changing monoid, the rightmost letter of any word representation of a given element in $M$ is fixed, which we refer to as the \emph{tail} of the element. For any integer $i \ge 1$, define $m_i^t$ as the number of distinct elements in $\mathbf{W}_i^M$ with the tail $x_s^t$ for some integer $s$. For $i \le 0$, set all $m_i^t = 0$, except for $m_0^{1} = 1$. Clearly, we have $|\mathbf{W}_i^M| = \sum_{t=1}^n m_i^t$. Thus, by \Cref{miracle}, it suffices to show that for any $i \ge 1$,
\[
\begin{pmatrix}
    m_{i}^1\\
    m_{i}^2\\
    m_{i}^3\\
    \vdots\\
    m_{i}^n
\end{pmatrix} = (L_n)^i
\begin{pmatrix}
    1\\
    0\\
    0\\
    \vdots\\
    0
\end{pmatrix},\text{ or equivalently, }
\begin{pmatrix}
    m_{i}^1\\
    m_{i}^2\\
    m_{i}^3\\
    \vdots\\
    m_{i}^n
\end{pmatrix}
= \begin{pmatrix}
    l_1 & l_2 & l_3 &\cdots&l_n\\
    1&1&0&\cdots&0\\
    1&1&1&\cdots&0\\
    \vdots&\vdots&\vdots&\ddots&\vdots\\
    1&1&1&\cdots&1
    \end{pmatrix}
\begin{pmatrix}
    m_{i-1}^1\\
    m_{i-1}^2\\
    m_{i-1}^3\\
    \vdots\\
    m_{i-1}^n
\end{pmatrix}.
\]
    When $i=1$, the last equality is trivially true by the definition of $\mathbf{X}$. Hence, it suffice to prove $m_i^t=\sum_{k=1}^nL_n(t,k)\cdot m_{i-1}^k$ for any $i\ge 2$ and $1\le t\le n$. 

    Denote $\mathbf{U}_i^t$ as the set of all words of length $i$ in $\mathbf{F}_i(\mathbf{X})$ that serve as the minimal word representation, under the $i$-lexicographic order, of some element in $\mathbf{W}_i^M$ with the tail $x_s^t$ for some integer $s$. Since distinct elements in $M$ have distinct minimal word representations, we have $m_j^t = |\mathbf{U}_j^t|$ for any $j \ge 1$. 
    
    Fix a word $W'x^k_s \in \mathbf{U}_{i-1}^k$; we claim that a word $W$ is in $\mathbf{U}_i^t$ with the prefix $W'x^t_s$ if and only if it is of the form $W'x^t_s x^k_{r}$, where $x^t_s x^k_{r} = x^1_1 x^k_{r}$ is not a defining relation in $R$.

Assuming our claim is true, by the definition of $\mathbf{X}$ and $R$, there are exactly $L_n(t, k)$ letters $x^k_r$ in $\mathbf{X}$ such that $x^t_s x^k_{r} = x^1_1 x^k_{r}$ is not a defining relation in $R$. Therefore, our claim implies that there are exactly $L_n(t, k)$ words in $\mathbf{U}_i^t$ with the prefix $W'x^k_s$. Thus, we have
\[
m_i^t = |\mathbf{U}_i^t| = \sum_{k=1}^n \sum_{W'x^k_s \in \mathbf{U}_{i-1}^{k}} |\{W \in \mathbf{U}_i^t \mid W \text{ has the prefix } W'x^k_s\}| = \sum_{k=1}^n \sum_{W'x^k_s \in \mathbf{U}_{i-1}^{k}} L_n(t, k) = \sum_{k=1}^n m_{i-1}^k \cdot L_n(t, k),
\]
proving the result we need.

Finally, we prove our claim.

For the ``if" part, suppose otherwise that $W'x^t_s x^k_{r}$ is not a minimal word representation. Since $x^t_s x^k_{r} = x^1_1 x^k_{r}$ is not a defining relation in $R$, by the definition of $R$, any defining relation does not involve the last letter $x^k_{r}$. Hence, by \Cref{prop:monoidtrans}, if $W'x^t_s x^k_{r}$ has a smaller word representation, then $W'x^t_s$ would also have a smaller word representation through the same transition, contradicting the minimality of $W'x^t_s$.

For the ``only if" part, suppose it fails; then $W'x^1_1 x^k_{r} = W'x^t_s x^k_{r}$ gives a smaller word representation, which is also a contradiction.

This completes the proof of the claim, and hence the proof of the proposition.
\end{proof}

In the rest of this subsection, we prove \Cref{miracle} by a series of technical lemmas.

\begin{lemma}\label{lem: c_i eq1}
    Let $h(x)=1+\sum_{i=1}^n (-1)^i h_i x^i$. Expand $f(x)=\frac{1}{h(x)}$ as $1+\sum_{i=1}^\infty c_i x^i$, then we have \[c_i=
    \begin{pmatrix}
    1&0&0&\cdots&0
    \end{pmatrix}
    (H_n)^i
    \begin{pmatrix}
        1\\
        0\\
        0\\
        \vdots\\
        0
    \end{pmatrix},\text{ where }H_n=\begin{pmatrix}
    h_1 & -h_2 & h_3 &\cdots&(-1)^{n}h_{n-1}&(-1)^{n+1}h_n\\
    1&0&0&\cdots&0&0\\
    0&1&0&\cdots&0&0\\
    \vdots&\vdots&\vdots&\ddots&\vdots&\vdots\\
    0&0&0&\cdots&0&0\\
    0&0&0&\cdots&1&0
    \end{pmatrix} \in M_{n\times n}(\Z)\]is the companion matrix of $h(x)$.
\end{lemma}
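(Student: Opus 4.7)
The statement is the standard translation of a linear recurrence into a matrix-power formula, with $H_n$ being the companion matrix encoding the recursion satisfied by the coefficients of $1/h(x)$. The plan is a direct induction after extracting the recurrence.

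First, I would derive the recurrence on $\{c_i\}$ by comparing coefficients in $h(x)f(x) = 1$. Writing $h(x) = 1 + \sum_{k=1}^{n}(-1)^{k} h_k x^{k}$ and $f(x) = \sum_{i \ge 0} c_i x^i$ with $c_0 = 1$, the vanishing of the coefficient of $x^i$ for $i \ge 1$ yields
$$c_i \;=\; h_1 c_{i-1} - h_2 c_{i-2} + h_3 c_{i-3} - \cdots + (-1)^{n+1} h_n c_{i-n},$$
with the convention $c_j := 0$ for $j < 0$. Note that the sign of $h_k$ in this recurrence is $(-1)^{k+1}$, which matches the sign of $h_k$ in the $k$-th entry of the top row of $H_n$.

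Next, I would set $v_i := H_n^{\,i} e_1 \in \mathbb{Z}^{n}$, where $e_1 = (1,0,\dots,0)^{T}$, and prove by induction on $i$ that
$$v_i \;=\; (c_i,\, c_{i-1},\, \dots,\, c_{i-n+1})^{T}.$$
The base case $i=0$ is immediate, as $v_0 = e_1 = (c_0, c_{-1}, \dots, c_{-n+1})^{T}$ by our sign convention. For the inductive step, rows $2,\dots,n$ of $H_n$ form a shift operator, so they send $v_i$ to its first $n-1$ coordinates shifted down by one; this gives $v_{i+1}^{(j)} = c_{(i+1) - j + 1}$ for $2 \le j \le n$. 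Applying the top row of $H_n$ to $v_i$ and using the inductive hypothesis produces precisely the right-hand side of the recurrence above, so $v_{i+1}^{(1)} = c_{i+1}$, completing the induction.

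Finally, extracting the first coordinate gives
$$c_i \;=\; v_i^{(1)} \;=\; \begin{pmatrix} 1 & 0 & \cdots & 0 \end{pmatrix} H_n^{\,i}\begin{pmatrix} 1 \\ 0 \\ \vdots \\ 0 \end{pmatrix},$$
which is the desired identity. There is no real obstacle here; the argument is a routine decoding of the companion matrix, and the only point requiring mild care is the sign bookkeeping, which is resolved by the observation after the recurrence above.
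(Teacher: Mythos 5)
Your proposal is correct and follows essentially the same route as the paper: extract the recurrence $c_i=\sum_{j=1}^n(-1)^{j-1}h_jc_{i-j}$ from $h(x)f(x)=1$ (with $c_0=1$, $c_j=0$ for $j<0$) and observe that $H_n$ maps $(c_{i-1},\dots,c_{i-n})^{T}$ to $(c_i,\dots,c_{i-n+1})^{T}$, starting from $e_1$. Your induction on $i$ is just an explicit phrasing of the paper's iteration of this one-step vector recursion, so there is nothing to add.
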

\begin{proof}
Let $c_0 = 1$ and $c_i = 0$ for $i < 0$. Since $h(x) f(x) = 1$, we have $
c_i = \sum_{j=1}^{n} (-1)^{j-1} c_{i-j} h_j$, yielding
\[
\begin{pmatrix}
    c_i \\
    c_{i-1} \\
    c_{i-2} \\
    \vdots \\
    c_{i-n+1}
\end{pmatrix}
= H_n
\begin{pmatrix}
    c_{i-1} \\
    c_{i-2} \\
    c_{i-3} \\
    \vdots \\
    c_{i-n}
\end{pmatrix}
\]
for any $i \ge 1$. Therefore, we obtain
\[
c_i =
\begin{pmatrix}
    1 & 0 & 0 & \cdots & 0
\end{pmatrix}
\begin{pmatrix}
    c_i \\
    c_{i-1} \\
    c_{i-2} \\
    \vdots \\
    c_{i-n+1}
\end{pmatrix}
= \begin{pmatrix}
        1 & 0 & 0 & \cdots & 0
    \end{pmatrix}
    (H_n)^i
    \begin{pmatrix}
        c_0 \\
        c_{-1} \\
        c_{-2} \\
        \vdots \\
        c_{-n+1}
    \end{pmatrix} = \begin{pmatrix}
        1 & 0 & 0 & \cdots & 0
    \end{pmatrix}
    (H_n)^i
    \begin{pmatrix}
        1 \\
        0 \\
        0 \\
        \vdots \\
        0
    \end{pmatrix}.
\]
\end{proof}

\begin{lemma} \label{lem: c_i eq2}
    In \Cref{lem: c_i eq1}, there exist integers $l_1, l_2,\dots, l_n\in \Z$ such that
    \[c_i=
    \begin{pmatrix}
    1&1&1&\cdots&1
    \end{pmatrix}
    (L_n)^i
    \begin{pmatrix}
        1\\
        0\\
        0\\
        \vdots\\
        0
    \end{pmatrix},\quad \text{where }L_n\coloneqq\begin{pmatrix}
    l_1 & l_2 & l_3 &\cdots&l_n\\
    1&1&0&\cdots&0\\
    1&1&1&\cdots&0\\
    \vdots&\vdots&\vdots&\ddots&\vdots\\
    1&1&1&\cdots&1
    \end{pmatrix} \in M_{n\times n}(\Z).
    \]
\end{lemma}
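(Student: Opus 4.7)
The plan is to realize $L_n$ as an integer similarity transform of the companion matrix $H_n$ from \Cref{lem: c_i eq1}. I would construct an invertible integer matrix $P$ whose first column is $e_1 = (1, 0, \dots, 0)^T$ and whose first row is $\mathbf{1}^T = (1, 1, \dots, 1)$, and set $L_n := P^{-1} H_n P$. The two constraints on $P$ give $P e_1 = e_1$ and $e_1^T P = \mathbf{1}^T$, hence $P^{-1} e_1 = e_1$ and $\mathbf{1}^T P^{-1} = e_1^T$. Combined with $\det P = \pm 1$ (which forces $L_n \in M_{n \times n}(\mathbb{Z})$), this yields
\[
\mathbf{1}^T L_n^i e_1 \;=\; \mathbf{1}^T P^{-1} H_n^i P e_1 \;=\; e_1^T H_n^i e_1 \;=\; c_i
\]
for every $i \ge 0$ by \Cref{lem: c_i eq1}, so the entire argument reduces to exhibiting a $P$ for which rows $2, \dots, n$ of $P^{-1} H_n P$ take the prescribed form (row $j$ being $j$ ones followed by $n-j$ zeros, for $2 \le j \le n$).

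The candidate I would use is defined by $P_{1, j} = 1$, $P_{i, 1} = \delta_{i, 1}$, and $P_{i, j} = (-1)^{n-j}\binom{i-2}{n-j}$ for $2 \le i, j \le n$; equivalently, for $i \ge 2$, the $i$-th row of $P$ encodes the coefficients of $(1-y)^{i-2}$ in the basis $1, y, \dots, y^{n-2}$ placed into columns $n, n-1, \dots, 2$. After reversing its column order, the bottom-right $(n-1) \times (n-1)$ block of $P$ becomes lower triangular with $\pm 1$ on the diagonal, so $\det P = \pm 1$. The motivation for this choice is that, with $\Delta f_k := f_k - f_{k-1}$ the backward difference operator, the entries of $P^{-1} H_n^k e_1$ become the finite-difference state $(\Delta^{n-1} c_k, \Delta^{n-2} c_{k-1}, \Delta^{n-3} c_{k-1}, \dots, \Delta^0 c_{k-1})^T$; this is exactly the vector generated by iterating the prescribed $L_n$ against $e_1$, as can be read off row by row from the structure of $L_n$.

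The main obstacle will be verifying the prescribed lower-row structure of $L_n$. I would do this by directly checking the equivalent matrix identity $H_n P = P L_n$, row by row. Because row $i$ of $H_n$ equals $e_{i-1}^T$ for $i \ge 2$, row $i$ of $H_n P$ is simply row $i-1$ of $P$; expanding the corresponding rows of $P L_n$ using the hypothesized column structure of $L_n$ reduces the claim to showing $\sum_{m=0}^{n-j}(-1)^m\binom{i-2}{m} = (-1)^{n-j}\binom{i-3}{n-j}$ for $2 \le j \le n$ and $i \ge 3$, together with a boundary check at $i = 2$. These follow at once from the elementary identity
\[
\sum_{m=0}^{N}(-1)^m\binom{M}{m} \;=\; (-1)^{N}\binom{M-1}{N}
\]
(provable by telescoping via Pascal's rule). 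Once this verification is complete, the first row of $L_n = P^{-1} H_n P$ automatically consists of integers $l_1, \dots, l_n$, yielding the claim.
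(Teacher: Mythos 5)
Your proposal is correct and takes essentially the same approach as the paper: your matrix $P$ is exactly the paper's change-of-basis matrix $A_n$ (first row all ones, first column $e_1$, and $P(i,j)=(-1)^{n-j}\binom{i-2}{n-j}$ in the remaining block), and you extract $c_i=\begin{pmatrix}1&1&\cdots&1\end{pmatrix}L_n^i e_1$ by the same unimodular conjugation of the companion matrix $H_n$. The only difference is bookkeeping: the paper exhibits $B_n=A_n^{-1}$ explicitly and computes $L_n=B_nH_nA_n$ entrywise, whereas you verify $H_nP=PL_n$ row by row via the alternating binomial identity, which amounts to the same calculation.
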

\begin{proof}
    Let $A_n, B_n$ be $n\times n$ matrices with
    \begin{align*}
        &A_n(i,j)=\left\{ 
    \begin{array}{ll}
        1,& \text{ if }i=1; \\
        0,& \text{ if }i>1 \text{ and } i+j<n+2;\\
        (-1)^{n-j}
        \begin{pmatrix}
            i-2\\
            n-j
        \end{pmatrix}, & \text{ if }i>1 \text{ and } i+j\geq n+2.
    \end{array}
    \right. \\
    &B_n(i,j)=\left\{ 
        \begin{array}{ll}
            (-1)^{j+1}
            \begin{pmatrix}
                n-1\\
                j-1
            \end{pmatrix}, & \text{ if }i=1; \\
            0,& \text{ if }i>1 \text{ and } (j=1\text{ or } i+j>n+2);\\
            (-1)^{j}
            \begin{pmatrix}
                n-i\\
                j-2
            \end{pmatrix}, & \text{ if }i>1 \text{ and } (j>1 \text{ and } i+j\leq n+2).
        \end{array}
    \right.
    \end{align*}
    Then $A_nB_n(1,1)=1$. For $i=1, j>1$, we have
    \[
        A_nB_n(1,j) =\sum_{k=1}^n A_n(1,k)B_n(k,j)= (-1)^{j+1}
            \begin{pmatrix}
                n-1\\
                j-1
            \end{pmatrix}
        +\sum_{k=2}^{n+2-j} (-1)^j
        \begin{pmatrix}
                n-k\\
                j-2
        \end{pmatrix}=0.
    \]
    
    For $i>1, 2\le j\le i$, we have
    \begin{align*}
        A_nB_n(i,j)=&\sum_{k=1}^n A_n(i,k)B_n(k,j)=\sum_{k=n+2-i}^{n+2-j} (-1)^{n-k}
        \begin{pmatrix}
            i-2\\
            n-k
        \end{pmatrix}\cdot
    (-1)^{j}
        \begin{pmatrix}
            n-k\\
            j-2
        \end{pmatrix}\\
     =&\sum_{k'=0}^{i-j} (-1)^{j-i+k'}
     \begin{pmatrix}
         i-j\\
         k'
     \end{pmatrix}
     \begin{pmatrix}
         i-2\\
         j-2
     \end{pmatrix}=\left\{\begin{array}{ll}
         0, & j<i; \\
         1, & j=i.
     \end{array}\right. \quad (\text{let } k'= k-(n+2-i))\\
    \end{align*}
    For $1<i<j$, since for any $1\le k\le n$, at least one of the conditions  $i+k<n+2$ or $k+j>n+2$ holds, we have 
    \[A_nB_n(i,j)=\sum_{k=1}^n A_n(i,k)B_n(k,j)=0.\]
    
    In conclusion, we have $A_nB_n=I_n$, the $n\times n$ identity matrix. Now let $L_n\coloneqq B_n H_n A_n$, then we have 
    \begin{align*}
      c_i=&
    \begin{pmatrix}
    1&0&0&\cdots&0
    \end{pmatrix}
    (H_n)^i
    \begin{pmatrix}
        1\\
        0\\
        0\\
        \vdots\\
        0
    \end{pmatrix}
    = \begin{pmatrix}
    1&0&0&\cdots&0
    \end{pmatrix}
    ((B_n)^{-1} L_n (A_n)^{-1})^i
    \begin{pmatrix}
        1\\
        0\\
        0\\
        \vdots\\
        0
    \end{pmatrix}\\
    =& \begin{pmatrix}
    1&0&0&\cdots&0
    \end{pmatrix}
    (A_n L_n B_n)^i
    \begin{pmatrix}
        1\\
        0\\
        0\\
        \vdots\\
        0
    \end{pmatrix}
    = \begin{pmatrix}
    1&0&0&\cdots&0
    \end{pmatrix}
    A_n (L_n)^i B_n
    \begin{pmatrix}
        1\\
        0\\
        0\\
        \vdots\\
        0
    \end{pmatrix}\\
    =& \begin{pmatrix}
    1&1&1&\cdots&1
    \end{pmatrix}
    (L_n)^i
    \begin{pmatrix}
        1\\
        0\\
        0\\
        \vdots\\
        0
    \end{pmatrix},
    \end{align*} 
    
       Finally, we show that $L_n$ is of the desired form. Clearly $l_1, l_2,\dots, l_n\in \Z$ by definition. Moreover, for $i\ge 2$, it is straightforward to check that \[L_n(i,j)=\sum_{r=1}^n\sum_{s=1}^n B_n(i,r)H_n(r,s)A_n(s,j)=\sum_{s=1}^{n-1} B_n(i,s+1)A_n(s,j)=\left\{\begin{array}{ll}
        1,  &  i\ge 2,\  i\ge j;\\
        0,  &  j> i\ge 2.
     \end{array}\right.\]
\end{proof}

\begin{lemma}\label{lemma:l_i}
    The integers $l_1,l_2,\dots,l_n$ in \Cref{lem: c_i eq2} satisfy \[l_i= h_1 + \sum_{t=0}^{i-2} (-1)^{t+1}{n-i+t \choose n-i}\cdot h_{n-i+2+t} -n +i -(1-\delta_1(i)),\text{ where }\delta_1(i)=\left\{
        \begin{array}{ll}
            1,&\text{ if }i=1;  \\
            0,&\text{ otherwise.}
        \end{array}\right.\]
\end{lemma}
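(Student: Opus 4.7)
The plan is a direct matrix computation. From the previous lemma, $L_n = B_n H_n A_n$, so
$$l_j = L_n(1,j) = \sum_{r=1}^n \sum_{s=1}^n B_n(1,r)\, H_n(r,s)\, A_n(s,j).$$
The idea is to exploit the sparsity of $H_n$: its first row lists the $(-1)^{s+1}h_s$, while for each $r \ge 2$ the only nonzero entry is $H_n(r,r-1)=1$. This naturally splits $l_j$ into two contributions,
$$l_j = \underbrace{B_n(1,1)\sum_{s=1}^n (-1)^{s+1} h_s\, A_n(s,j)}_{\text{Sum I}} \;+\; \underbrace{\sum_{r=2}^n B_n(1,r)\, A_n(r-1,j)}_{\text{Sum II}},$$
and since $B_n(1,1) = \binom{n-1}{0} = 1$, Sum I simplifies immediately.

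For Sum I, I would isolate the $s = 1$ contribution, which is $h_1$. For $s \ge 2$ one has $A_n(s,j) = (-1)^{n-j}\binom{s-2}{n-j}$ only when $s+j \ge n+2$, so I would substitute $s = n+2-j+t$ with $t$ ranging from $0$ to $j-2$; the signs $(-1)^{s+1}(-1)^{n-j}$ collapse to $(-1)^{t+1}$ and Sum I becomes exactly
$$h_1 + \sum_{t=0}^{j-2}(-1)^{t+1}\binom{n-j+t}{n-j}\, h_{n-j+2+t},$$
which accounts for the $h$-linear part of the claimed formula.

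For Sum II, the $r=2$ term uses $A_n(1,j) = 1$ and contributes $B_n(1,2) = -(n-1)$. For $r \ge 3$, $A_n(r-1,j)$ is nonzero only when $r \ge n+3-j$. When $j = 1$ this range is empty, so Sum II equals $1-n$, matching the claim; for $j \ge 2$, after substituting $r = n+3-j+t$ and simplifying signs, the remaining sum reduces to
$$\sum_{t=0}^{j-3}(-1)^t \binom{n-1}{j-3-t}\binom{n-j+t}{t}.$$
The main obstacle is evaluating this sum, but I expect it to yield cleanly to a generating-function argument: combining $\sum_k \binom{n-1}{k}x^k = (1+x)^{n-1}$ with $\sum_t (-1)^t\binom{m+t}{t}x^t = (1+x)^{-(m+1)}$ (applied with $m = n-j$) shows the sum equals the coefficient of $x^{j-3}$ in $(1+x)^{n-1}(1+x)^{-(n-j+1)} = (1+x)^{j-2}$, namely $\binom{j-2}{j-3} = j-2$. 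Hence Sum II is $-(n-1)+(j-2) = j-n-1$ for $j \ge 2$.

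Adding Sum I and Sum II then gives $l_j = h_1 + \sum_{t=0}^{j-2}(-1)^{t+1}\binom{n-j+t}{n-j}h_{n-j+2+t} + (j-n-1)$ when $j \ge 2$, and $l_1 = h_1 + (1-n)$ when $j=1$. The two cases are unified by the $-(1-\delta_1(j))$ correction, which is precisely the formula in the lemma. I anticipate no further subtleties beyond the generating-function identity in the Sum II step; the rest is bookkeeping with binomial coefficients and signs.
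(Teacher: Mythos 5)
Your computation is correct and takes essentially the same route as the paper: expand $l_j = L_n(1,j) = \sum_{r,s} B_n(1,r)H_n(r,s)A_n(s,j)$, exploit the sparsity of the companion matrix, and separate the $h$-linear part from a purely binomial sum. Your Sum I is exactly the paper's $h$-part, and your Sum II collects the same terms $B_n(1,2)A_n(1,j) + \sum_{s\ge 2} B_n(1,s+1)A_n(s,j)$ that the paper writes as $-\binom{n-1}{1} + k(j)$; your index shifts, sign bookkeeping, and the empty-range treatment of $j=1,2$ (which produces the $\delta_1$ correction) all check out. The one genuine difference is how the binomial sum is evaluated: the paper proves $k(i) = i-2+\delta_1(i)$ by computing the second differences $k(i+2)-2k(i+1)+k(i)$ together with an alternating-sum identity, whereas you extract the coefficient of $x^{j-3}$ from $(1+x)^{n-1}(1+x)^{-(n-j+1)} = (1+x)^{j-2}$ to get $j-2$ in one step. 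Both arguments are valid (your use of $\sum_t(-1)^t\binom{m+t}{t}x^t=(1+x)^{-(m+1)}$ with $m=n-j\ge 0$ is legitimate since $j\le n$); the generating-function evaluation is arguably the cleaner of the two, while the paper's recurrence argument is more elementary and self-contained.
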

\begin{proof}
Note that
        \begin{align*}
    l_i =L_n(1, i) =& \sum_{r=1}^n\sum_{s=1}^n B_n(1,r) H_n(r,s) A_n(s,i)\\
        =& \sum_{r=1}^n B_n(1,r)H_n(r,1) + \sum_{r=1}^n \sum_{s=2}^n B_n(1,r) H_n(r,s) A_n(s,i)\\
        =& \sum_{r=1}^n B_n(1,r)H_n(r,1) + \sum_{s=2}^n H_n(1,s)A_n(s,i) +\sum_{s=2}^n B_n(1,s+1) H_n(s+1,s) A_n(s,i)\\
        =& h_1 + (-1){n-1 \choose 1} + \sum_{s=2}^n (-1)^{s-1} h_s (-1)^{n-i}{s-2 \choose n-i} +\sum_{s=2}^n (-1)^s{n-1 \choose s}(-1)^{n-i}{s-2 \choose n-i}\\
        =& h_1 + (-1){n-1 \choose 1} + \sum_{s=n-i+2}^n(-1)^{n-i+s-1}{s-2 \choose n-i} \cdot h_s +\sum_{s=2}^n (-1)^s{n-1 \choose s}(-1)^{n-i}{s-2 \choose n-i}\\
        =& h_1 + (-1){n-1 \choose 1} + \sum_{t=0}^{i-2} (-1)^{t+1}{n-i+t \choose n-i}\cdot h_{n-i+2+t}  +\sum_{s=2}^n (-1)^s{n-1 \choose s}(-1)^{n-i}{s-2 \choose n-i}.
    \end{align*} 
    
    Hence, it suffices to prove that
    \[\sum_{s=2}^n (-1)^s{n-1 \choose s}(-1)^{n-i}{s-2 \choose n-i}
        =i-2+\delta_1(i).\]
    Let $k(i)\coloneqq \sum_{s=2}^n (-1)^s{n-1 \choose s}(-1)^{n-i}{s-2 \choose n-i}$, where $1\le i\le n $. Clearly $k(i)=0$ for $i=1,2$. For $1\le i\le n-2$, we have:
        \begin{align*}
            k(i+1)-k(i)&=\sum_{s=2}^n (-1)^s{n-1 \choose s}\left((-1)^{n-i-1}{s-2 \choose n-i-1}-(-1)^{n-i}{s-2 \choose n-i}\right)\\
            &=\sum_{s=2}^n (-1)^s{n-1 \choose s}(-1)^{n-i-1}{s-1 \choose n-i},\\
            k(i+2)-2k(i+1)+k(i)&=\sum_{s=2}^n (-1)^s{n-1 \choose s}\left((-1)^{n-i-2}{s-1 \choose n-i-1}-(-1)^{n-i-1}{s-1 \choose n-i}\right)\\
            &=\sum_{s=2}^n (-1)^s{n-1 \choose s}(-1)^{n-i-2}{s \choose n-i}\\
            &=\sum_{s=2}^n (-1)^{n-i}{n-1 \choose n-i}(-1)^s{i-1 \choose n-1-s}\\
            &=(-1)^{n-i}{n-1 \choose n-i}\sum_{s'=-1}^{n-3} (-1)^{n-1-s'}{i-1 \choose s'} \quad (\text{let }s'=n-1-s) \\
            &=\left\{\begin{array}{cc}
                0, &  i>1;\\
                1, &  i=1.
            \end{array}\right.
        \end{align*}
        Hence, $k(i) = i - 2$ for $3 \le i \le n$. Therefore, $k(i) = i - 2 + \delta_1(i)$ for all $1 \le i \le n$.
\end{proof}

\begin{lemma}\label{lem: c_i eq3}
    In \Cref{lem: c_i eq2}, if $h(x)=\prod_{i=1}^n(1-\lambda_i x)$ such that $0 < \lambda_1 < \cdots < \lambda_{n-1} < 1 < \lambda_{n}$, then we have $l_1-1\ge l_2\ge \dots \ge l_n\ge 1$.
\end{lemma}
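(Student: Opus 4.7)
The plan is to handle the two boundary inequalities $l_1 - 1 \ge l_2$ and $l_n \ge 1$ by direct evaluation from the formula in \Cref{lem: c_i eq2}, and to reduce each interior inequality $l_i \ge l_{i+1}$ for $2 \le i \le n-1$ to the positivity of a certain alternating sum, which I will then close using a probabilistic interpretation combined with the integrality of the coefficients of $h$.

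For the boundary, substituting $i = 2$ into the formula gives $l_1 - l_2 = h_n$, and since $h_n = \lambda_1 \lambda_2 \cdots \lambda_n$ is a positive integer we have $h_n \ge 1$, yielding $l_1 - 1 \ge l_2$. Substituting $i = n$ and comparing the resulting alternating expression with $h(1) = 1 - h_1 + h_2 - \cdots + (-1)^n h_n$ yields $l_n = -h(1)$. By the root hypothesis the product $h(1) = \prod_{j=1}^n(1 - \lambda_j)$ has exactly one negative factor $(1 - \lambda_n)$, so $h(1) < 0$; combined with $h(1) \in \Z$ this forces $h(1) \le -1$, so $l_n \ge 1$.

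For $2 \le i \le n-1$, a direct telescoping of the formula (combining consecutive binomial coefficients via Pascal's rule and reindexing) reduces $l_i - l_{i+1}$ to the form
\[
l_i - l_{i+1} = S_m - 1, \qquad m := n - i + 1, \qquad S_m := \sum_{k=m}^n (-1)^{k-m}\binom{k-1}{m-1} h_k,
\]
so it suffices to prove $S_m \ge 1$ for $2 \le m \le n-1$. The key identification is that $S_m$ is the tail sum of the expansion of $h(1-y)$: writing
\[
h(1-y) = \prod_{j=1}^n \bigl((1-\lambda_j) + \lambda_j y\bigr) = \sum_{j=0}^n a_j y^j, \qquad a_j = \sum_{|T|=j}\prod_{k\in T}\lambda_k \prod_{k\notin T}(1-\lambda_k),
\]
and applying $(1-y)^k = \sum_j \binom{k}{j}(-y)^j$ together with the partial-binomial identity $\sum_{l=0}^{k-m}(-1)^l\binom{k}{l} = (-1)^{k-m}\binom{k-1}{m-1}$ yields
\[
S_m = \sum_{j \ge m} a_j = \sum_{T \subseteq [n],\, |T|\ge m}\prod_{k \in T}\lambda_k \prod_{k \notin T}(1-\lambda_k).
\]

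With this combinatorial form in hand, I split the sum according to whether $n \in T$. For $k < n$ the factors $\lambda_k$ and $1-\lambda_k$ lie in $(0,1)$, so letting $X = B_1 + \cdots + B_{n-1}$ where the $B_k$ are independent Bernoulli variables with parameters $\lambda_1, \ldots, \lambda_{n-1}$ and writing $q_j := \Pr(X \ge j)$, the decomposition reads
\[
S_m = \lambda_n q_{m-1} + (1 - \lambda_n) q_m = q_m + \lambda_n \Pr(X = m-1).
\]
Since $\lambda_n > 0$, $q_m \ge 0$, and $\Pr(X = m-1) > 0$ for $1 \le m \le n$ (each $\lambda_k$ with $k < n$ lies strictly in $(0,1)$), we obtain $S_m > 0$; since $S_m$ is an integer linear combination of the integers $h_k$, we have $S_m \in \Z$, and a positive integer is at least $1$. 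The main obstacle is spotting the identity $S_m = \sum_{|T|\ge m}\prod_T \lambda \prod_{T^c}(1-\lambda)$; once this is established, the sign of $1 - \lambda_n$ delivers manifest positivity and integrality upgrades it to $S_m \ge 1$.
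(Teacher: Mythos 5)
Your proof is correct, and its overall skeleton matches the paper's: both handle $l_n\ge 1$ identically via $l_n=-h(1)=-\prod_j(1-\lambda_j)>0$ plus integrality, and both reduce the remaining inequalities to showing that the alternating sum $\sum_{t=0}^{i-1}(-1)^t\binom{n-i+t}{n-i}h_{n-i+1+t}$ (your $S_m$ with $m=n-i+1$) is strictly positive, then upgrade via integrality. Where you genuinely diverge is in how that positivity is exhibited. The paper expands each $h_k$ as an elementary symmetric polynomial, conditions on the largest index $n-s$ appearing in each monomial, and resums the inner alternating binomial sum to arrive at $\sum_{s=0}^{i-1}\lambda_{n-s}\sum_{R\subseteq[n-s-1],\,|R|=n-i}\prod_{r\in R}\lambda_r\prod_{k\in[n-s-1]\setminus R}(1-\lambda_k)$, which is termwise nonnegative because all the $(1-\lambda_k)$ factors involve $k\le n-1$. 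You instead identify $S_m$ as the tail sum $\sum_{j\ge m}[y^j]\,h(1-y)$, giving the cleaner closed form $\sum_{|T|\ge m}\prod_{k\in T}\lambda_k\prod_{k\notin T}(1-\lambda_k)$; the single negative factor $1-\lambda_n$ is then absorbed by the rearrangement $\lambda_n q_{m-1}+(1-\lambda_n)q_m=q_m+\lambda_n\Pr(X=m-1)>0$. Your identity and the partial binomial sum $\sum_{l=0}^{k-m}(-1)^l\binom{k}{l}=(-1)^{k-m}\binom{k-1}{m-1}$ both check out, as does the evaluation $l_1-l_2=h_n\ge 1$, which dispatches the $i=1$ case more directly than the paper's unified $\delta_1(i)$ bookkeeping. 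The generating-function identification $S_m=\sum_{j\ge m}a_j$ is arguably more conceptual and would be the more reusable statement; the paper's resummation is more elementary but more opaque. Both are complete proofs.
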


\begin{proof}
    First we show that $l_n\ge 1$. By \Cref{lemma:l_i}, we have
    \[
        l_n=h_1+(-1){n-1 \choose 1}+\sum_{s=2}^n(-1)^{s-1}h_s+\sum_{s=2}^n (-1)^s{n-1 \choose s}=\sum_{s=1}^n(-1)^{s-1}h_s -1=-\prod_{j=1}^{n}(1-\lambda_j).
    \]
    Moreover, since $0<\lambda_1<\cdots< \lambda_{n-1}<1<\lambda_n$, we have $l_n>0$, and hence the integer $l_n \ge 1$.

    Now it suffices to show that $l_i - l_{i+1} \ge \delta_1(i)$ for all $1\le i \le n-1$. Again by \Cref{lemma:l_i}, we have
    \begin{align*}
        l_i=&h_1 + \sum_{t=0}^{i-2} (-1)^{t+1}{n-i+t \choose n-i}\cdot h_{n-i+2+t} -n +i -(1-\delta_1(i))\\
        =&h_1 + \sum_{t'=0}^{i-1} (-1)^{t'}{n-i-1+t' \choose n-i}\cdot h_{n-i+1+t'} -n +i -(1-\delta_1(i))\quad (\text{let }t'=t+1)
    \end{align*}
    and
\[l_{i+1}=h_1 + \sum_{t=0}^{i-1} (-1)^{t+1}{n-i-1+t \choose n-i-1}\cdot h_{n-i+1+t} -n +i. \quad(\text{as }\delta_1(i+1)=0)\]
    So
    \begin{align*}
        l_i-l_{i+1}=&\sum_{t=0}^{i-1}\left((-1)^t\left({n-i-1+t \choose n-i}+{n-i-1+t \choose n-i-1}\right)\cdot h_{n-i+1+t}\right) -(1-\delta_1(i))\\
        =&\sum_{t=0}^{i-1}\left((-1)^t{n-i+t \choose n-i}\cdot h_{n-i+1+t}\right) -(1-\delta_1(i))\\
        =&\sum_{t=0}^{i-1}\left((-1)^t{n-i+t \choose n-i}\sum_{\substack{J \subseteq [n], \\ |J|=n-i+1+t}}\left(\prod_{j\in J} \lambda_j\right)\right) -(1-\delta_1(i))\\
        =&\sum_{s=0}^{i-1}\left( \lambda_{n-s}\left(\sum_{t=0}^{i-s-1}\left((-1)^t{n-i+t \choose n-i}\sum_{\substack{J \subseteq [n-s-1], \\ |J|=n-i+t}}\left(\prod_{j\in J} \lambda_j\right)\right) \right)\right) -(1-\delta_1(i))\\
        =&\sum_{s=0}^{i-1}\left( \lambda_{n-s}\left(\sum_{t=0}^{i-s-1}\left(\sum_{\substack{J \subseteq [n-s-1], \\ |J|=n-i+t}}\left( \sum_{\substack{R \subseteq J,\\ |R|=n-i}} \left( \prod_{r \in R} \lambda_r \cdot \prod_{k \in J\backslash R} (-\lambda_k)\right)\right)\right) \right)\right) -(1-\delta_1(i))\\
        =&\sum_{s=0}^{i-1}\left( \lambda_{n-s}\left( \sum_{\substack{R \subseteq [n-s-1],\\ |R|=n-i}} \left( \prod_{r \in R} \lambda_r \cdot \prod_{k \in [n-s-1]\backslash R} (1-\lambda_k)\right)\right)\right) -(1-\delta_1(i))\\
        >&-1+\delta_1(i).
    \end{align*}
    The last step follows from $0<\lambda_1<\cdots< \lambda_{n-1}<1<\lambda_n$. Since $l_i-l_{i+1} \in \Z$, we have $l_i-l_{i+1} \ge \delta_1(i)$.
\end{proof}

We can see from the following example how \Cref{lem: c_i eq2} and \Cref{lem: c_i eq3} work.
\begin{example}
For $n=5$, we have
\begin{align*}
    H_5&=\begin{pmatrix}
    h_1 & -h_2 & h_3 & -h_4 & h_5\\
    1&0&0&0&0\\
    0&1&0&0&0\\
    0&0&1&0&0\\
    0&0&0&1&0
    \end{pmatrix}\quad
    A_5=\begin{pmatrix}
    1&1&1&1&1\\
    0&0&0&0&1\\
    0&0&0&-1&1\\
    0&0&1&-2&1\\
    0&-1&3&-3&1
    \end{pmatrix}\quad
    B_5=(A_5)^{-1}= \begin{pmatrix}
    1&-4&6&-4&1\\
    0&1&-3&3&-1\\
    0&1&-2&1&0\\
    0&1&-1&0&0\\
    0&1&0&0&0
    \end{pmatrix}\\
    L_5&=B_5H_5A_5=
    \begin{pmatrix}
    1&-4&6&-4&1\\
    0&1&-3&3&-1\\
    0&1&-2&1&0\\
    0&1&-1&0&0\\
    0&1&0&0&0
    \end{pmatrix}
    \cdot
    \begin{pmatrix}
    h_1 & -h_2 & h_3 & -h_4 & h_5\\
    1&0&0&0&0\\
    0&1&0&0&0\\
    0&0&1&0&0\\
    0&0&0&1&0
    \end{pmatrix}
    \cdot
    \begin{pmatrix}
    1&1&1&1&1\\
    0&0&0&0&1\\
    0&0&0&-1&1\\
    0&0&1&-2&1\\
    0&-1&3&-3&1
    \end{pmatrix}=
    \begin{pmatrix}
    l_1 & l_2 & l_3 & l_4 & l_5\\
    1&1&0&0&0\\
    1&1&1&0&0\\
    1&1&1&1&0\\
    1&1&1&1&1
    \end{pmatrix}\\
    c_i&=
    \begin{pmatrix}
    1&0&0&0&0
    \end{pmatrix}
    \cdot
    \begin{pmatrix}
    h_1 & -h_2 & h_3 & -h_4 & h_5\\
    1&0&0&0&0\\
    0&1&0&0&0\\
    0&0&1&0&0\\
    0&0&0&1&0
    \end{pmatrix}^i
    \cdot
    \begin{pmatrix}
        1\\
        0\\
        0\\
        0\\
        0
    \end{pmatrix}\\
        &=
    \begin{pmatrix}
    1&0&0&0&0
    \end{pmatrix}
    \cdot
    \left(\begin{pmatrix}
    1&1&1&1&1\\
    0&0&0&0&1\\
    0&0&0&-1&1\\
    0&0&1&-2&1\\
    0&-1&3&-3&1
    \end{pmatrix}
    \cdot
    \begin{pmatrix}
    l_1 & l_2 & l_3 & l_4 & l_5\\
    1&1&0&0&0\\
    1&1&1&0&0\\
    1&1&1&1&0\\
    1&1&1&1&1
    \end{pmatrix}
    \cdot
     \begin{pmatrix}
    1&-4&6&-4&1\\
    0&1&-3&3&-1\\
    0&1&-2&1&0\\
    0&1&-1&0&0\\
    0&1&0&0&0
    \end{pmatrix}\right)^i
    \cdot
    \begin{pmatrix}
        1\\
        0\\
        0\\
        0\\
        0
    \end{pmatrix}\\
    &=
    \begin{pmatrix}
    1&0&0&0&0
    \end{pmatrix}
    \cdot
    \begin{pmatrix}
    1&1&1&1&1\\
    0&0&0&0&1\\
    0&0&0&-1&1\\
    0&0&1&-2&1\\
    0&-1&3&-3&1
    \end{pmatrix}
    \cdot
    \begin{pmatrix}
    l_1 & l_2 & l_3 & l_4 & l_5\\
    1&1&0&0&0\\
    1&1&1&0&0\\
    1&1&1&1&0\\
    1&1&1&1&1
    \end{pmatrix}^i
    \cdot
     \begin{pmatrix}
    1&-4&6&-4&1\\
    0&1&-3&3&-1\\
    0&1&-2&1&0\\
    0&1&-1&0&0\\
    0&1&0&0&0
    \end{pmatrix}
    \cdot
    \begin{pmatrix}
        1\\
        0\\
        0\\
        0\\
        0
    \end{pmatrix}\\
    &=
    \begin{pmatrix}
    1&1&1&1&1
    \end{pmatrix}
    \cdot
    \begin{pmatrix}
    l_1 & l_2 & l_3 & l_4 & l_5\\
    1&1&0&0&0\\
    1&1&1&0&0\\
    1&1&1&1&0\\
    1&1&1&1&1
    \end{pmatrix}^i
    \cdot
    \begin{pmatrix}
        1\\
        0\\
        0\\
        0\\
        0
    \end{pmatrix},\text{ where }\left.\begin{array}{ll}
        l_1=& h_1 -4\\
        l_2=& h_1 -h_5 -4\\
        l_3=& h_1 -h_4 +3h_5 -3\\
        l_4=& h_1 -h_3 +2h_4 -3h_5 -2\\
        l_5=& h_1 -h_2 +h_3 -h_4 +h_5 -1.
     \end{array}\right.
\end{align*}
Expand each $h_i$ in terms of $\lambda_i$, and note that each $l_j - l_{j+1}$ is an integer, we have
\begin{align*}
    l_1-l_2=& h_5= \lambda_5\lambda_1\lambda_2\lambda_3\lambda_4\ge 1\\
    l_2-l_3=& h_4 -4h_5 -1\\
        =& \lambda_5(\lambda_1\lambda_2\lambda_3(1-\lambda_4)+\lambda_1\lambda_2\lambda_4(1-\lambda_3)+\lambda_1\lambda_3\lambda_4(1-\lambda_2)+\lambda_2\lambda_3\lambda_4(1-\lambda_1))\\
        +&\lambda_4\lambda_1\lambda_2\lambda_3-1\ge 1-1=0\\
    l_3-l_4=& h_3 -3h_4 +6h_5 -1\\
        =& \lambda_5(\lambda_1\lambda_2(1-\lambda_3)(1-\lambda_4)+\lambda_1\lambda_3(1-\lambda_2)(1-\lambda_4)+\lambda_1\lambda_4(1-\lambda_2)(1-\lambda_3)\\
        &+\lambda_2\lambda_3(1-\lambda_1)(1-\lambda_4)+\lambda_2\lambda_4(1-\lambda_1)(1-\lambda_3)+\lambda_3\lambda_4(1-\lambda_1)(1-\lambda_2))\\
        +&\lambda_4(\lambda_1\lambda_2(1-\lambda_3)+\lambda_1\lambda_3(1-\lambda_2)+\lambda_2\lambda_3(1-\lambda_1))\\
        +&\lambda_3\lambda_1\lambda_2-1\ge 1-1=0\\
    l_4-l_5=& h_2 -2h_3 +3h_4 -4h_5 -1\\
        =& \lambda_5(\lambda_1(1-\lambda_2)(1-\lambda_3)(1-\lambda_4)+\lambda_2(1-\lambda_1)(1-\lambda_3)(1-\lambda_4)\\
        &+\lambda_3(1-\lambda_1)(1-\lambda_2)(1-\lambda_4)+\lambda_4(1-\lambda_1)(1-\lambda_2)(1-\lambda_3))\\
        +&\lambda_4(\lambda_1(1-\lambda_2)(1-\lambda_3)+\lambda_2(1-\lambda_1)(1-\lambda_3)+\lambda_3(1-\lambda_1)(1-\lambda_2))\\
        +&\lambda_3(\lambda_1(1-\lambda_2)+\lambda_2(1-\lambda_1))\\
        +&\lambda_2\lambda_1-1 \ge 1-1=0.
\end{align*}
\end{example}

\begin{proof}[Proof of \Cref{miracle}]
    This follows directly from  \Cref{lem: c_i eq2} and \Cref{lem: c_i eq3}.
\end{proof}

\subsection{Total Positivity Implies Regular Upho}

Now we are ready to proved our last main results, \Cref{thm:main} and \Cref{conj:Gao1}.

\begin{proof}[Proof of \Cref{thm:main}]
    As discussed at the beginning of \Cref{sec:6}, \Cref{prop:type I} and \Cref{prop:type II} together imply \Cref{thm:main}.
\end{proof}

\begin{proof}[Proof of \Cref{conj:Gao1}]
\Cref{lemma: Ep of upho symmetric} implies that the Ehrenborg quasi-symmetric function of an upho poset is always a symmetric function, and \Cref{prop:Schur positive} shows that they are exactly totally positive formal power series which are upho functions. Moreover, \Cref{thm:main} implies that all totally positive formal power series are upho functions, thereby proving the theorem.
\end{proof}

\section{Further Directions}\label{sec:7}

\subsection{Regularity of Finitary Upho Posets}
The main results in this paper are based on the monoid representation of upho posets. A fundamental question is: does any finitary upho poset have a monoid representation? We believe that \Cref{conj:upho regular} and \Cref{conj:semiupho regular} are central conjectures in the study of upho posets and semi-upho posets. As far as we know, even the finite type $\mathbb{N}$-graded cases of these conjectures remain open, and proving them would represent significant progress in the study of upho posets.

\begin{conj}\label{conj:n upho regular}
    All finite type $\N$-graded upho posets are regular.
\end{conj}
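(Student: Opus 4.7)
The plan is to reformulate the conjecture through the monoid correspondence in \Cref{cor:corrsp for N-graded upho}, which reduces the problem to constructing a finitely generated LCH monoid $M$ whose associated uncolored poset $\mathfrak{F}(\mathcal{\tilde{P}}(M))$ is isomorphic to the given finite type $\N$-graded upho poset $P$. Equivalently, one must exhibit a coherent family of isomorphisms $\{\phi_s : V_s \xrightarrow{\sim} P\}_{s \in P}$ with $\phi_{\hat 0} = \mathrm{id}_P$ and $\phi_{s_1 s_2}^{-1} = \phi_{s_1}^{-1} \circ \phi_{s_2}^{-1}$, where $s_1 s_2 := \phi_{s_1}^{-1}(s_2)$; this compatibility is precisely the associativity of the resulting candidate multiplication, so the monoid obtained is automatically left-cancellative and invertible-free.

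First I would attempt a direct rank-by-rank construction. Set $\phi_{\hat 0} = \mathrm{id}_P$, and for each atom $a \in \mathcal{A}_P$ choose an arbitrary isomorphism $\phi_a : V_a \to P$, which exists by the upho property. For each $s$ with $\rho(s) = k \ge 2$, pick some atom $a \le s$ and tentatively define $\phi_s := \phi_{\phi_a(s)} \circ \phi_a|_{V_s}$. This descends to a well-defined coherent family if and only if different choices of atoms below $s$ induce the same isomorphism $V_s \to P$. When the naive construction fails, a complementary approach is a compactness argument: let $\Phi_k$ denote the finite set of tuples $(\phi_s)_{\rho(s) \le k}$ satisfying the compatibility condition on all products $s_1 s_2$ of total rank at most $k$; the truncation maps form an inverse system $\Phi_{k+1} \to \Phi_k$, and the conjecture is equivalent to the nonemptiness of $\varprojlim \Phi_k$. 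By K\"onig's lemma, it then suffices to prove $\Phi_k \ne \emptyset$ for every $k$.

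A third, more algebraic angle is to try to present $P$ directly as the divisibility poset of a head-changing monoid in the sense of \Cref{def:head-changing}. One would take the generating set to be $\mathcal{A}_P$ and, rank by rank, postulate head-changing relations $a_i W_i = a_j W_j$ reading off the identifications of rank-$k$ elements in $P$; the hope is that iterating this process up the rank filtration produces a monoid whose length-$k$ elements biject with $P_k$, while left-cancellativity is preserved as in \Cref{lemma:head-changing upho}. This route has the advantage of producing an explicit monoid presentation when it succeeds, and it is the natural companion to the characterization framework sketched in \Cref{subsec: char of upho function}.

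The main obstacle common to all three approaches is the \emph{coherence problem} at an element $s$ with several atoms below it: distinct factorizations $s = a \cdot t = a' \cdot t'$ must induce identical isomorphisms $V_s \xrightarrow{\sim} P$, and the constraint at a single $s$ propagates upward and couples together the choices at every element above $s$. I do not expect any purely local or greedy argument to suffice. A plausible source of leverage is the action of $\mathrm{Aut}(P)$ (automorphisms fixing $\hat 0$) on the set of isomorphisms $V_s \to P$: if this action is sufficiently transitive, an averaging or canonical-choice construction might produce a consistent family. Short of a general proof, I would first try the conjecture under additional hypotheses---planarity, modularity of $P$, or transitivity of $\mathrm{Aut}(P)$ on each $P_k$---and perform a computer search for a small potential counterexample, which would either refute the conjecture or reveal the structural feature that forces regularity.
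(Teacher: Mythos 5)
There is no proof to compare against: the statement you are addressing is \Cref{conj:n upho regular}, which the paper states explicitly as an open conjecture (the authors say that even the finite type $\N$-graded case of \Cref{conj:upho regular} remains open). Your text is likewise not a proof but a research plan, and its own closing paragraph concedes this. The concrete gap is that none of your three approaches resolves the coherence problem you correctly identify. The rank-by-rank construction stalls exactly where the difficulty lies: you note that $\phi_s := \phi_{\phi_a(s)}\circ\phi_a|_{V_s}$ must be independent of the chosen atom $a\le s$, but give no mechanism to arrange this, and a single element with two atoms below it couples the choices at all elements above it, so no greedy or local argument is offered that closes the loop. The compactness reformulation does not remove the difficulty either: K\"onig's lemma only reduces the conjecture to showing $\Phi_k\ne\emptyset$ for every $k$, which is the entire content of the problem and is not established. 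Moreover, as stated, each $\phi_s$ is an isomorphism between infinite posets, so a tuple $(\phi_s)_{\rho(s)\le k}$ is not a finite object; to make the inverse-limit argument legitimate you would have to truncate the isomorphisms themselves to bounded rank, verify that the truncated compatibility conditions define finite nonempty sets, and then show that an element of the inverse limit really assembles into a genuine coloring (which needs a further compactness argument for each $s$). None of this is carried out.

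The third, head-changing approach has an additional structural problem: even if one could iteratively impose head-changing relations reproducing the layer sizes of $P$, there is no argument that the resulting divisibility poset is isomorphic to $P$ itself (as opposed to merely having the same rank-generating function), and the paper only \emph{conjectures} (in \Cref{subsec: char of upho function}) that greedy LCH monoid constructions capture all regular upho functions, so this route rests on statements at least as open as the one you are trying to prove. In short, your proposal usefully isolates the obstruction --- the existence of a globally coherent system of filter isomorphisms, equivalently a compatible coloring --- and suggests reasonable lines of attack (automorphism-group transitivity, special classes, computer search), but it does not supply the missing idea, and the statement remains exactly as open as the paper leaves it.
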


\begin{conj}\label{conj:n semiupho regular}
    All finite type $\N$-graded semi-upho posets are regular.
\end{conj}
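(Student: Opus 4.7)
The plan is to work within the monoid framework of \Cref{cor:corrsp for semi-upho}, which identifies finite type $\N$-graded colored semi-upho posets with finitely generated LCH $0$-monoids. Under this correspondence, the conjecture becomes: given a finite type $\N$-graded semi-upho poset $S$, exhibit a finitely generated LCH $0$-monoid whose underlying uncolored poset is isomorphic to $S$. Equivalently, construct a color map $\col_S : \mathcal{E}_S \to \mathcal{A}_S$ such that, for each $s\in S$, the unique map $\psi_s : V_s \to S$ determined by $\psi_s(s)=\hat{0}$ together with the requirement $\col_S(u,v) = \col_S(\psi_s(u),\psi_s(v))$ is a well-defined isoembedding into $S$.

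I would first attempt a rank-by-rank construction. Set $\col_S(\hat{0},t)=t$ for all $t\in S_1$; having colored every edge up through the bottom of layer $S_n$, extend to edges $(u,v)$ with $u\in S_n$ and $v\in S_{n+1}$ by invoking the semi-upho property to pick an isoembedding $\psi_u: V_u \hookrightarrow S$ and setting $\col_S(u,v)=\psi_u(v)\in \mathcal{A}_S$. Finite-typeness ensures that at each rank only finitely many candidate isoembeddings occur. One would need to maintain the inductive hypothesis that for every $s$ of rank less than $n$, the partial coloring on $V_s$ constructed so far coincides with the pullback of the global coloring under some isoembedding extendable to all of $V_s$. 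Granted this, a compactness argument over the finite layers would produce a bona fide colored semi-upho poset whose image under $\mathfrak{F}$ equals $S$.

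The main obstacle, and in my view the essential difficulty of the conjecture, is that the semi-upho axiom supplies only the \emph{existence} of an isoembedding $V_s\hookrightarrow S$, not a canonical or locally compatible one. Independently chosen $\psi_s$ at different $s$ typically produce contradictory constraints on $\col_S$: a saturated chain $\hat{0}\lessdot s_1\lessdot\cdots\lessdot v$ routed through one predecessor $u$ of $v$ may force one color on $(u,v)$, while a chain routed through another predecessor $u'$ forces a different one. Overcoming this requires a coherent simultaneous selection, and the key technical input would be an extension lemma: any coloring consistent with the semi-upho property up through layer $n$ can be extended to layer $n+1$. Proving such a lemma seems to require a combinatorial invariant on $S$ measuring the obstruction to coherence; one candidate is to study, for each pair $(u,u')$ of predecessors of a common $v\in S_{n+1}$, the induced maps between the subposets of $V_u$ and $V_{u'}$ in the images of $\psi_u$ and $\psi_{u'}$, and to show that finite-typeness forces enough local agreement to glue into a global coloring, perhaps via a greedy construction in the spirit of \Cref{subsec: greedy 0 monoid} applied at the level of semi-upho posets rather than power series. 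Success here would, as a bonus, likely also yield \Cref{conj:n upho regular}, since the upho setting is the more rigid special case where $\psi_s$ must be an isomorphism.
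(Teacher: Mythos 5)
This statement is one of the paper's open conjectures (stated again as \Cref{conj:semiupho regular}); the paper offers no proof of it, and your text is likewise a plan rather than a proof. The reduction via \Cref{cor:corrsp for semi-upho} to constructing a compatible coloring is the right translation, but the step that would carry all the weight --- your ``extension lemma'' asserting that any coloring consistent with the semi-upho property up through layer $n$ extends to layer $n+1$, via a coherent simultaneous choice of the isoembeddings $\psi_s$ --- is exactly the content of the conjecture and is left unproven. You correctly identify the obstruction (distinct predecessors $u,u'$ of a common $v$ can force conflicting colors on edges into $v$), but the proposal contains no mechanism for resolving it; studying ``induced maps between the subposets of $V_u$ and $V_{u'}$'' is a suggestion of where to look, not an argument.

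The compactness step also does not go through as stated. Being a colored semi-upho poset is not a condition on finite truncations: it requires, for every $s$, a color-preserving isoembedding of the entire (generally infinite) filter $V_s$ into $S$. Finite type gives finitely many colorings of each finite truncation, so a K\H{o}nig-type argument would need a notion of ``admissible'' partial coloring that is (i) checkable on finite data, (ii) guaranteed nonempty at each rank, and (iii) such that every admissible coloring of rank $n$ restricts from one of rank $n+1$; none of these is established, and (ii)--(iii) are again essentially the conjecture. It is worth noting that the only regularity result of this kind the paper actually proves is \Cref{lemma: trees are regular}, the tree-like case, precisely because there the sequence of isoembeddings carrying an edge down to an atom is unique, so the coherence problem you describe never arises; your proposal would need a genuinely new idea to handle the non-tree-like case, and as written it does not supply one.
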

Readers are referred to \cite[Section~3]{HopkinsLewis24} for further discussions on this problem.

\subsection{Upho Posets beyond Finitary}
As shown in the proofs of \Cref{lemma:poset to LCIF monoid} and \Cref{lemma:LCIF monoid to posets}, we have the following result.

\begin{lemma}
    A poset defined by the left-divisibility of a monoid is an upho poset if and only if the monoid is an LCIF monoid.
\end{lemma}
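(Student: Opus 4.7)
The plan is to adapt the arguments in the proofs of \Cref{lemma:poset to LCIF monoid} and \Cref{lemma:LCIF monoid to posets}, noting where the ``colored,'' ``atomic,'' and ``finitary'' hypotheses were inessential. Throughout, let $P_M$ denote the preordered set given by left-divisibility on $M$.

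For the $(\Leftarrow)$ direction, assume $M$ is LCIF. I would run the argument from the proof of \Cref{lemma:LCIF monoid to posets}: antisymmetry of $\le$ follows because $a = bc$ and $b = ad$ give $b = b(cd)$, whence left-cancellativity yields $cd = e$, and invertible-freeness forces $c = d = e$, so $a = b$. Reflexivity and transitivity are immediate, so $P_M$ is a well-defined poset. For each $s \in M$, the shift map $\phi_s : V_s \to P_M$ defined by $\phi_s(su) := u$ is well-defined and injective by left-cancellativity, surjective since $V_s = \{su : u \in M\}$, and both preserves and reflects the order. This makes $\phi_s$ a poset isomorphism, so $V_s \cong P_M$ and $P_M$ is upho. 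Atomicity was only invoked in the proof of \Cref{lemma:LCIF monoid to posets} to derive the finitary property, not the upho one.

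For the $(\Rightarrow)$ direction, assume $P_M$ is upho. Invertible-freeness follows immediately from antisymmetry: if $ab = e$, then $a \le e$ with witness $b$ and $e \le a$ trivially, so $a = e$, and then $b = eb = ab = e$. For left-cancellativity, the plan is to mirror the argument in the proof of \Cref{lemma:poset to LCIF monoid}. Consider the surjective order-preserving shift map $\psi_s : M \to V_s$, $t \mapsto st$; the goal is to show $\psi_s$ is injective. The upho hypothesis provides some isomorphism $V_s \cong P_M$ sending $s$ to $\hat 0 = e$, and I would argue that this isomorphism must in fact coincide with a two-sided inverse of $\psi_s$, forcing $\psi_s$ to be bijective and hence yielding left-cancellativity.

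The main obstacle lies in the $(\Rightarrow)$ direction: the upho isomorphism $V_s \cong P_M$ is not canonically given by the monoid structure, so one must argue structurally that $\psi_s$ itself must be an order isomorphism $M \to V_s$. My approach is to compare, for any hypothetical $a \ne b$ with $sa = sb$, the principal filter $V_{sa} \subseteq V_s$ (which equals $V_{sb}$) with the filters $V_a$ and $V_b$ in $P_M$: upho self-similarity identifies $V_{sa}$ with $P_M$, and any such identification must be compatible with left-multiplication by $s$ in a way that rules out $sa = sb$ for distinct $a, b$. The cleanest way to execute this is by inducting on the height of $s$ (reducing to the case of $s$ irreducible, where comparing atoms of $V_s$ with atoms of $P_M$ via the upho isomorphism pins down the fibers of $\psi_s$ as singletons).
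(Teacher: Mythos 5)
The $(\Leftarrow)$ direction of your proposal is correct and is exactly the paper's intended route: it is the proof of \Cref{lemma:LCIF monoid to posets} with the atomicity/finitary part stripped away, and your observation that invertible-freeness already follows from antisymmetry alone is also fine.

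The $(\Rightarrow)$ direction, however, is not a proof: the left-cancellativity half is precisely the hard content of the statement, and the two devices you offer to supply it both break down in the generality this lemma is stated in. The lemma lives in the ``beyond finitary'' setting (cf.\ \Cref{remark: LCIF not atomic}), so elements may have infinite height and an induction on the height of $s$ is unavailable; worse, the key step of your sketch---``comparing atoms of $V_s$ with atoms of $P_M$ via the upho isomorphism pins down the fibers of $\psi_s$ as singletons''---is unfounded. An isomorphism $V_s \cong P_M$ only yields a bijection between the two atom sets, and when these sets are infinite this is perfectly compatible with $\psi_s\colon t \mapsto st$ merging atoms: in $M=\langle x_1,x_2,x_3,\dots \mid x_1x_1=x_1x_2\rangle$ (infinitely many generators, one homogeneous relation) left-divisibility is a partial order, $\psi_{x_1}$ identifies the distinct atoms $x_1$ and $x_2$, and yet $V_{x_1}$ and $P_M$ both have countably many atoms, so no comparison of atoms detects the failure of left-cancellativity; indeed in this example every principal order filter appears to be isomorphic to $P_M$ or to $V_{x_1}$, and these two to each other, so it is doubtful that the unrestricted forward implication can be rescued by any argument of the kind you outline. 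Note also that you cannot fall back on the proof of \Cref{lemma:poset to LCIF monoid} for this direction: there the multiplication is \emph{defined} from the canonical isomorphisms $\phi_s$ of a colored upho poset, whereas here the monoid is given in advance and the upho isomorphisms are arbitrary, which is exactly the obstacle you name but do not overcome. As written, your proposal establishes the ``if'' direction and invertible-freeness, but leaves left-cancellativity unproved.
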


As noted in \Cref{remark: LCIF not atomic}, LCIF monoids are not necessarily atomic, so the corresponding upho poset may not be finitary. Moreover, the upho poset corresponding to the LCIF monoid in \Cref{remark: LCIF not atomic} is not even Noetherian. This raises the following question:

\begin{question}
    Characterize all upho posets defined by the left-divisibility of LCIF monoids.
\end{question}

\subsection{Recursive Characterization of Semi-Upho Functions}
In \Cref{subsec: semi-upho spanning tree} and \Cref{subsec: greedy 0 monoid}, we establish a framework for characterizing all regular semi-upho functions, reducing the problem to proving \Cref{conj:tree like=greedy series} and \Cref{conj:tree like recursive}. Furthermore, inspired by \Cref{lemma: regular semi-upho by tree}, we suggest that it may be possible to characterize all semi-upho functions without a complete proof of \Cref{conj:semiupho regular}.

\begin{conj}\label{conj: semi-upho by tree}
    Given a finite type finitary semi-upho poset $S$, there exists a tree-like semi-upho poset $T$ such that the Hasse diagram of $T$ is a spanning tree of the Hasse diagram of $S$. Moreover, if $S$ is $\mathbb{N}$-graded, then the rank-generating functions satisfy $F_{S} = F_{T}$.
\end{conj}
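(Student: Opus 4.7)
The plan is to construct the tree-like semi-upho poset $T$ directly, by selecting a parent in $S$ for each non-minimum element, in a manner parallel to the greedy construction behind \Cref{lemma: regular semi-upho by tree} but carried out at the level of the poset rather than of its monoid.

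Fix, using the semi-upho hypothesis, an isoembedding $\psi_s \colon V_{S,s} \hookrightarrow S$ for every $s \in S$. These induce an edge labelling $c \colon \mathcal{E}_S \to \mathcal{A}_S$ defined by $c(r,s) := \psi_r(s)$, which serves as a surrogate for a coloring in the absence of regularity. Since $S$ is of finite type, $\mathcal{A}_S$ is at most countable, so fix a well-order $\prec$ on it. For each non-minimum $s \in S$, set $\mathrm{parent}_T(s)$ to be the unique cover $r \lessdot_S s$ for which $c(r,s)$ is $\prec$-minimal among $\{c(r',s) : r' \lessdot_S s\}$. Let $T$ be the resulting spanning sub-poset: by construction, its Hasse diagram is a spanning tree of that of $S$. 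If $S$ is $\mathbb{N}$-graded, then every saturated chain in $T$ is saturated in $S$, so $T$ inherits the same rank function, giving $F_T = F_S$ immediately.

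The crux is verifying that $T$ is a semi-upho poset. For each $s \in T$ the natural candidate for the required isoembedding $V_{T,s} \hookrightarrow T$ is the restriction of $\psi_s$. This restriction is an isoembedding exactly when $\psi_s$ sends each tree-edge of $V_{T,s}$ to a tree-edge of $T$; that is, when $\psi_s(\mathrm{parent}_T(v)) = \mathrm{parent}_T(\psi_s(v))$ for every $v \in V_{T,s}\setminus\{s\}$. Tracing through the definitions, this reduces to the compatibility identity $c(\psi_s(r), \psi_s(v)) = c(r,v)$ on the relevant edges, equivalently $\psi_{\psi_s(r)} \circ \psi_s = \psi_r$ on the appropriate domain \textemdash{} the semi-upho counterpart of the uniqueness of $\phi_s$ in the colored upho case.

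The main obstacle is that this compatibility need not hold for independently chosen $\psi_s$. I would attack it in two complementary ways. The first is to choose the $\psi_s$ inductively: well-order $S$ compatibly with height, and at each stage select $\psi_s$ so that it agrees, wherever possible, with compositions of previously chosen isoembeddings; the finite type hypothesis keeps the combinatorial bookkeeping tractable. The second and structurally cleaner route is to first establish \Cref{conj:semiupho regular} \textemdash{} that every finitary semi-upho poset is regular \textemdash{} at which point \Cref{lemma: regular semi-upho by tree} applies verbatim. I expect the inductive approach to essentially produce a compatible coloring of $S$ along the way, so that the true obstruction coincides with the regularity conjecture itself; accordingly, any proof of the present statement in full generality is likely to be at least as deep as \Cref{conj:semiupho regular}.
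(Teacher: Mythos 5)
There is a genuine gap, and you in fact concede it yourself: the statement you are addressing is left as an open conjecture in the paper (it appears in the ``Further Directions'' section precisely because the authors could not prove it), and the paper only establishes the \emph{regular} case, namely \Cref{lemma: regular semi-upho by tree}, where a compatible coloring is assumed and the spanning tree is produced inside the associated $0$-monoid by keeping, for each element, its $<_u$-minimal word representation. Your construction of the spanning tree (pick for each non-minimal $s$ a single parent edge) and the rank-generating-function statement in the $\mathbb{N}$-graded case are unproblematic; the entire content of the conjecture is the verification that the resulting $T$ is semi-upho, and your proposal does not establish it. The ``compatibility identity'' $c(\psi_s(r),\psi_s(v))=c(r,v)$, equivalently $\psi_{\psi_s(r)}\circ\psi_s=\psi_r$ on the relevant filters, is exactly the coherence that a coloring provides and that an arbitrary family of isoembeddings need not satisfy; your first route (choose the $\psi_s$ inductively so that they cohere) is only sketched and, as you note, would essentially amount to producing a compatible coloring, i.e.\ to proving \Cref{conj:semiupho regular}, which is itself open. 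So the proposal reduces an open conjecture to another open conjecture rather than proving it.

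Two further technical points would need repair even under a regularity assumption. First, your labelling $c(r,s)\coloneqq\psi_r(s)$ need not separate the down-edges of a fixed $s$, so ``the unique cover $r$ with $c(r,s)$ $\prec$-minimal'' is not well defined without an extra tie-breaking device, and with arbitrary tie-breaking the coherence argument degrades further. Second, your parent rule is \emph{local} (minimize the label of the last edge), whereas the proof of \Cref{lemma: regular semi-upho by tree} uses a \emph{global} lexicographic selection of minimal words, which is prefix-consistent by construction; even in the colored case it is not clear that the locally-minimal-last-edge tree is stable under the maps $\psi_s$, so the restriction of $\psi_s$ to $V_{T,s}$ may fail to land on tree-edges. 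If you want an honest partial result, prove your statement for regular semi-upho posets by citing \Cref{lemma: regular semi-upho by tree}; the general finitary case remains open.
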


If \Cref{conj: semi-upho by tree} holds, then proofs of \Cref{conj:tree like=greedy series} and \Cref{conj:tree like recursive} would yield a complete characterization of all semi-upho functions.

\subsection{Recursive Characterization of Regular Upho Functions}\label{subsec: char of upho function}
Similar to \Cref{subsec: greedy 0 monoid}, we introduce the \emph{greedy LCH monoid series} to establish a recursive framework for characterizing all regular upho functions.

\begin{definition}
Given a formal power series $f(x) = 1 + c_1 x + c_2 x^2 + \cdots \in 1 + x \mathbb{Z}_{\ge 0}[[x]]$, the \emph{greedy LCH monoid series} $\{M_k^{f}\}_{k \ge 1}$ of $f(x)$ is a sequence of monoids defined recursively as follows:
\begin{itemize}
    \item Define $M_1^{f}$ as the free monoid generated by the ordered set $\mathbf{X} \coloneqq \{x_1< x_2< \dots < x_{c_1}\}$. 
    \item For $M_{k}^{f} = \langle \mathbf{X} \mid R_{k} \rangle$, if $c_k \le c_{k+1} \le |\mathbf{W}_{k+1}^{M_{k}^f}|$, then define $M_{k+1}^{f} \coloneqq \langle \mathbf{X} \mid R_{k+1} \rangle$, where $R_{k+1}$ extends $R_{k}$ with additional head-changing relations $x_i W = x_{i-1} W$ for the largest $|\mathbf{W}_{k+1}^{M_{k}^{f}}| - c_{k+1}$ elements $x_i W$ in $\mathbf{W}_{k+1}^{M_k^f}$ under the $(k+1)$-lexicographic order on $\mathbf{W}_{k+1}^{M_k^f}$ (see \Cref{def: k-lexi order of W^M_n}). The condition $c_{k} \le c_{k+1}$ ensures that each selected element $x_i W$ satisfies $i \ge 2$, making the relation $x_i W = x_{i-1} W$ well-defined.
\end{itemize}
We refer to $M_k^{f}$ as the \emph{$k$-th greedy LCH monoid} of $f(x)$.
\end{definition}

We omit the superscript $f$ when the power series is clear from context. Analogous to \Cref{prop:greedy 0-series implies semi-upho function} and \Cref{prop:semi-upho upper bound}, the following properties hold for regular upho functions.

\begin{prop}\label{prop: greedy LCH series implies upho function}
    Given a formal power series $f(x) \in 1 + x \mathbb{Z}_{\ge 0}[[x]]$, if $f(x)$ admits an infinite greedy LCH monoid series $\{M_k\}_{k\ge 1}$, then $f(x)$ is a regular upho function.
\end{prop}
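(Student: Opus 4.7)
The plan is to form the monoid $M \coloneqq \langle \mathbf{X} \mid \bigcup_{k \ge 1} R_k \rangle$, where each $R_k$ is the set of defining relations of $M_k^f$, and to exhibit $f(x)$ as the rank-generating function of $\mathfrak{F}(\mathcal{\tilde{P}}(M))$. Since every defining relation added at any stage of the greedy construction is a head-changing relation, $M$ itself is a head-changing monoid in the sense of \Cref{def:head-changing}, so \Cref{lemma:head-changing upho} immediately yields that $M$ is an LCH monoid and $\mathcal{\tilde{P}}(M)$ is an $\N$-graded colored upho poset. Applying $\mathfrak{F}$ produces a regular upho poset by the very definition of regularity (\Cref{subsec:forgetful and regularity}), and by \Cref{cor:corrsp for N-graded upho} its rank-generating function is $\sum_{n \ge 0} |\mathbf{W}_n^M|\, x^n$. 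Thus the whole proposition reduces to the layer-count identity $|\mathbf{W}_n^M| = c_n$ for every $n \ge 1$.

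The central step, which I expect to be the main obstacle, is the intermediate identity $|\mathbf{W}_n^M| = |\mathbf{W}_n^{M_n^f}|$. I would argue it as follows. First observe that the greedy construction adds relations layer by layer: the step that produces $M_{k+1}^f$ from $M_k^f$ contributes only relations of the form $x_i W = x_{i-1} W$ with $\ell(x_i W) = k+1$. Hence the set of defining relations in $\bigcup_k R_k$ whose two sides have length at most $n$ is precisely $R_n$. Now suppose $W_1, W_2$ are length-$n$ words that represent the same element of $M$. By \Cref{prop:monoidtrans} there is a chain of elementary transitions $W_1 \equiv z_0 \to z_1 \to \cdots \to z_m \equiv W_2$. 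Because every relation in $R$ is head-changing and therefore length-preserving, each $z_i$ still has length $n$, and so each transition uses a relation whose both sides have length at most $n$, i.e.\ a relation already in $R_n$. Consequently the $M$-equivalence classes and the $M_n^f$-equivalence classes on length-$n$ words coincide, giving $|\mathbf{W}_n^M| = |\mathbf{W}_n^{M_n^f}|$.

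To finish, I would combine this identity with the defining property of the greedy series: $|\mathbf{W}_1^{M_1^f}| = c_1$ by the base case (the free monoid on $\mathbf{X}$), and for $n \ge 2$ the assumed existence of $M_n^f$ forces $|\mathbf{W}_n^{M_n^f}| = c_n$ by the construction rule (the largest $|\mathbf{W}_n^{M_{n-1}^f}| - c_n$ length-$n$ elements are collapsed to the next lexicographically smaller ones). Therefore $|\mathbf{W}_n^M| = c_n$ for all $n$, and the rank-generating function of $\mathfrak{F}(\mathcal{\tilde{P}}(M))$ is exactly $f(x)$, which completes the proof. The only subtlety to guard against is the possibility that adding higher-layer relations secretly collapses lower-layer elements — but this is ruled out precisely by the head-changing, length-preserving nature of the construction, mirroring the role that $0$-defining relations played in the semi-upho analogue \Cref{prop:greedy 0-series implies semi-upho function}.
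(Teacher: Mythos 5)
Your proposal is correct and follows exactly the route the paper intends: the paper states this proposition without proof, by analogy with \Cref{prop:greedy 0-series implies semi-upho function}, and the intended argument is precisely yours — form $M=\langle\mathbf{X}\mid\bigcup_k R_k\rangle$, invoke \Cref{lemma:head-changing upho} and \Cref{cor:corrsp for N-graded upho}, and use the length-preserving nature of head-changing relations to see that equality of length-$n$ words in $M$ only involves relations of $R_n$, so $|\mathbf{W}_n^M|=|\mathbf{W}_n^{M_n}|$. The only step you leave implicit is that the stage-$n$ relations reduce the count to exactly $c_n$, i.e.\ each relation $x_iW=x_{i-1}W$ merges two classes that were distinct in $M_{n-1}$ and no cycles of merges occur — which holds because the minimal representative of the target class is strictly smaller in the $n$-lexicographic order, so the merge graph is a forest; this is at the same level of detail the paper itself elides (``it can be easily verified'') in the semi-upho analogue.
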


\begin{prop}\label{prop:upho upper bound}
    Given a formal power series $f(x) = 1 + c_1 x + c_2 x^2 + \cdots \in 1 + x \mathbb{Z}_{\ge 0}[[x]]$, if for any integer $k \ge 1$, the $k$-th greedy LCH monoid $M_k$ is defined implies the inequality $c_k\le c_{k+1}\le |\mathbf{W}_k^{M_{k+1}}|$ holds, then $f(x)$ is a regular upho function.
\end{prop}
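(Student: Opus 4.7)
The plan is to deduce this proposition from the preceding \Cref{prop: greedy LCH series implies upho function} by showing that the inequality hypothesis forces the greedy LCH monoid series $\{M_k^f\}_{k \ge 1}$ to be defined for every $k \ge 1$. This parallels exactly the way \Cref{prop:semi-upho upper bound} is derived from \Cref{prop:greedy 0-series implies semi-upho function} in the semi-upho setting.

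I would proceed by induction on $k$. The base case is immediate: $M_1^f$ is the free monoid on $c_1$ generators, defined without any precondition. For the inductive step, assume $M_k^f$ has been defined. The hypothesis then supplies $c_k \le c_{k+1} \le |\mathbf{W}_{k+1}^{M_k^f}|$. The second inequality guarantees that there are enough elements of $\mathbf{W}_{k+1}^{M_k^f}$ to furnish the $|\mathbf{W}_{k+1}^{M_k^f}| - c_{k+1}$ new head-changing relations. The first inequality is what ensures each selected $x_i W$ has $i \ge 2$, so that the relation $x_i W = x_{i-1} W$ is well-defined. To see this, note that because $M_k^f$ is a head-changing monoid, the left-cancellation argument in the proof of \Cref{lemma:head-changing upho} gives $x_1 W'_1 = x_1 W'_2$ in $M_k^f$ if and only if $W'_1 = W'_2$ in $M_k^f$; hence the elements of $\mathbf{W}_{k+1}^{M_k^f}$ whose minimal word representation starts with $x_1$ are in bijection with $\mathbf{W}_k^{M_k^f}$, yielding exactly $c_k$ such elements. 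Since $x_1$ is the smallest letter, these are precisely the smallest $c_k$ elements under the $(k+1)$-lexicographic order, and the assumption $c_k \le c_{k+1}$ places all of them safely inside the untouched bottom $c_{k+1}$ positions. Thus $M_{k+1}^f$ is well-defined, closing the induction.

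With the greedy LCH monoid series now known to be infinite, \Cref{prop: greedy LCH series implies upho function} applies directly to conclude that $f(x)$ is a regular upho function.

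The main obstacle for the whole framework is not this reduction, which is a clean bookkeeping induction, but rather \Cref{prop: greedy LCH series implies upho function} itself. Its proof must form the limit monoid $M \coloneqq \langle \mathbf{X} \mid \bigcup_{k \ge 1} R_k \rangle$, invoke \Cref{lemma:head-changing upho} to conclude that $M$ — a head-changing monoid since every added defining relation is head-changing — is LCH, and then verify the counting identity $|\mathbf{W}_n^M| = c_n$ for every $n$. The counting step reduces to showing that defining relations of length greater than $n$ cannot produce any further identifications among length-$n$ elements, which in turn follows from \Cref{prop:monoidtrans}: an elementary transition acts by replacing a consecutive subword according to a defining relation, and a length-$n$ word cannot contain a consecutive subword of length greater than $n$.
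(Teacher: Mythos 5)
Your proposal is correct and follows essentially the same route the paper intends: the paper treats \Cref{prop:upho upper bound} as the exact analogue of \Cref{prop:semi-upho upper bound}, namely an induction showing the hypothesis (read, as you do, with the inequality $c_k\le c_{k+1}\le|\mathbf{W}_{k+1}^{M_k}|$, matching the defining condition of the greedy LCH monoid series) forces the series to be infinite, followed by an appeal to \Cref{prop: greedy LCH series implies upho function}. Your extra justification that the $c_k\le c_{k+1}$ condition keeps the $x_1$-headed elements untouched simply re-derives the well-definedness remark already built into the definition, so nothing is missing.
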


Moreover, we conjecture that \Cref{prop: greedy LCH series implies upho function} and \Cref{prop:upho upper bound} characterize all regular upho functions.

\begin{conj}\label{conj: upho greedy series}
        A formal power series $f(x)\in 1 + x \mathbb{Z}_{\ge 0}[[x]]$ is a regular upho function if and only if $f(x)$ admits an infinite greedy LCH monoid series $\{M_k\}_{k\ge 1}$.
\end{conj}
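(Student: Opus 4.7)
The forward direction (an infinite greedy LCH monoid series implies a regular upho function) is exactly \Cref{prop: greedy LCH series implies upho function}, so the substantive content is the converse: given a regular upho function $f(x) = 1 + c_1 x + c_2 x^2 + \cdots$, one must produce the infinite greedy LCH monoid series $\{M_k\}_{k \ge 1}$.

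The plan is to proceed by induction on $k$, maintaining the invariant that $|\mathbf{W}_i^{M_k}| = c_i$ for all $i \le k$. The base case is immediate: $M_1$ is the free monoid on $c_1$ letters. For the inductive step, the weak monotonicity $c_k \le c_{k+1}$ is automatic for any upho function (each atom $s \in \mathcal{A}_P$ provides, via the isomorphism $V_s \cong P$, at least $c_k$ distinct elements at level $k+1$), so only the second condition $c_{k+1} \le |\mathbf{W}_{k+1}^{M_k}|$ must be verified. To this end, the central intermediate result I would aim to establish is the following maximality property.

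\textbf{Key Lemma.} Among all LCH monoids $N$ on the alphabet $\{x_1, \ldots, x_{c_1}\}$ whose defining relations are head-changing of length at most $k$ and which satisfy $|\mathbf{W}_i^N| = c_i$ for all $i \le k$, the greedy monoid $M_k$ attains the maximum possible value of $|\mathbf{W}_{k+1}^N|$. I would prove this by a greedy-exchange argument under the $k$-lexicographic order of \Cref{def: k-lexi order of W^M_n}: whenever $N$ imposes a head-changing identification of a word that is not the lexicographically largest available, one can replace that relation with an identification of a strictly larger word while preserving the LCH structure and the counts $|\mathbf{W}_i|$ for $i \le k$, and while weakly increasing $|\mathbf{W}_{k+1}|$. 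Each such replacement is tracked via \Cref{prop:monoidtrans} by following the induced elementary transitions through the congruence closure. Combined with the independent step that every regular upho function is realized by \emph{some} head-changing LCH monoid---a natural strengthening of the framework developed in \Cref{subsec:forgetful and regularity}---the existence of such an $N$ with $|\mathbf{W}_{k+1}^N| \ge c_{k+1}$ then yields $|\mathbf{W}_{k+1}^{M_k}| \ge c_{k+1}$, completing the induction.

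The main obstacle is two-fold. The more delicate part is the exchange step itself: head-changing relations interact nontrivially through congruence closure, so replacing a single relation can propagate chains of derived identifications, and controlling these chains so that no $|\mathbf{W}_i|$ for $i \le k$ shrinks while $|\mathbf{W}_{k+1}|$ weakly grows requires a careful inductive bookkeeping on the lengths of elementary $R$-transitions. The second obstacle is the head-changing realization problem---whether every regular upho function arises from a head-changing LCH monoid---which is itself open and appears to be of comparable depth to \Cref{conj: upho greedy series}; progress on it would be the natural first milestone toward the full conjecture.
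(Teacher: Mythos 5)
Be aware that the statement you are addressing is \Cref{conj: upho greedy series}, which the paper leaves as an open conjecture in \Cref{subsec: char of upho function}; there is no proof in the paper to compare against, so the only question is whether your sketch closes the gap. It does not: as you yourself note, it reduces the conjecture to two claims that are each unproven and each of essentially the same depth as the conjecture itself. The first is the head-changing realization claim, that every regular upho function is the rank-generating function of $\tilde{\mathcal{P}}(N)$ for some head-changing LCH monoid $N$; the paper only \emph{conjectures} this (it is stated as a belief just before \Cref{sec:5} and folded into \Cref{subsec: char of upho function}), and nothing in \Cref{subsec:forgetful and regularity} gives a mechanism for converting an arbitrary LCH monoid presentation into a head-changing one while preserving the element counts $|\mathbf{W}_i|$. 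Your induction cannot even start from ``some LCH monoid realizing $f$'' without this, and it is not a ``natural strengthening'' that can be waved through: it is an independent open problem.

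The second and more serious gap is the Key Lemma. You assert that among head-changing LCH monoids on $c_1$ generators with relations of length at most $k$ realizing $c_1,\dots,c_k$, the greedy monoid $M_k$ maximizes $|\mathbf{W}_{k+1}|$, and you propose a greedy-exchange argument: replace a non-lexicographically-maximal identification by a larger one ``while preserving the counts and weakly increasing $|\mathbf{W}_{k+1}|$.'' No mechanism is given for why a single-relation swap has controlled effect. Head-changing relations interact through the congruence closure of \Cref{prop:monoidtrans}: imposing $x_iW = x_{i-1}W$ at length $m \le k$ collapses elements at every length above $m$ in a way that depends globally on the other relations, so a swap can change $|\mathbf{W}_j|$ for several $j \le k$ simultaneously, and there is no evident matroid-type exchange structure that would make the lexicographically greedy choice optimal for the count at level $k+1$. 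Indeed, the exact analogue of your Key Lemma for free $0$-monoids is precisely \Cref{conj:tree like=greedy series}/\Cref{conj:tree like recursive}, which the paper also leaves open even though $0$-defining relations are far simpler than head-changing ones; your lemma is at least as strong as \Cref{conj: upho recursive} restricted to head-changing presentations. One small positive point: your observation that $c_k \le c_{k+1}$ holds automatically for any finite type $\mathbb{N}$-graded upho poset (via $V_s \cong P$ for an atom $s$) is correct, and your truncation remark is salvageable because homogeneous relations of length $> k$ cannot affect words of length $\le k$ and head-changing presentations remain LCH by \Cref{lemma:head-changing upho}. But these are the easy parts; the proposal as written is a reduction of one open conjecture to two others, not a proof.
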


\begin{conj}\label{conj: upho recursive}
    A formal power series $f(x) = 1 + c_1 x + c_2 x^2 + \cdots \in 1 + x \mathbb{Z}_{\ge 0}[[x]]$ is a regular upho function if and only if for any integer $k \ge 2$, the $k$-th greedy LCH monoid $M_{k}^0$ is defined implies the inequality $c_k\le c_{k+1}\le |\mathbf{W}_k^{M_{k}}|$ holds.
\end{conj}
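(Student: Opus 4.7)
The plan is to split into the two directions. The ``if'' direction is precisely \Cref{prop:upho upper bound}, so all of the content sits in the ``only if'' direction. Fix a regular upho function $f(x) = 1 + c_1 x + c_2 x^2 + \cdots$ and a colored upho poset $\tilde P$ with $F_{P} = f(x)$; let $M_f$ be the corresponding LCH monoid from \Cref{cor:corrsp for N-graded upho}. I would proceed by induction on $k$, showing that at every step the greedy monoid $M_k$ is defined and that $c_k \le c_{k+1} \le |\mathbf{W}_{k+1}^{M_k}|$. The lower bound is automatic for any $\N$-graded upho poset: fixing any atom $s \in \mathcal{A}_P$, the isomorphism $V_s \xrightarrow{\sim} P$ sends the $k$-th layer of $V_s$ (of size $c_k$) injectively into the $(k+1)$-th layer of $P$, giving $c_{k+1} \ge c_k$. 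The real content is the upper bound.

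The heart of the argument would be an extremality statement: among all head-changing LCH monoids $M'$ satisfying $|\mathbf{W}_i^{M'}| = c_i$ for $1 \le i \le k$, the greedy monoid $M_k$ maximizes $|\mathbf{W}_{k+1}^{M'}|$. I would attack this with an exchange argument in the spirit of Kruskal--Katona: given such an $M'$, show that any head-changing defining relation $x_i W = x_{i-1} W$ whose left-hand side is \emph{not} the lex-largest survivor in its layer can be swapped for one killing the lex-largest survivor, without decreasing the number of nonzero length-$(k{+}1)$ elements. Iterating the exchange transports $M'$ to $M_k$ while weakly increasing $|\mathbf{W}_{k+1}|$. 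Combined with the inductive hypothesis on lower layers, this extremality, together with the existence of \emph{some} head-changing LCH monoid $M'$ realizing $f$ through degree $k+1$ (so $|\mathbf{W}_{k+1}^{M'}| = c_{k+1}$), yields $|\mathbf{W}_{k+1}^{M_k}| \ge c_{k+1}$, which both finishes the induction and shows $M_{k+1}$ can be defined.

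The main obstacle is the existence of a head-changing realization. Nothing proved in the paper implies that an arbitrary regular upho function arises from a \emph{head-changing} LCH monoid; the construction in \Cref{prop:type II} only produces head-changing realizations for the very restricted totally positive family. This gap is morally a strengthening of \Cref{conj:upho regular} and \Cref{conj:n upho regular}. Two plausible routes: (i) start from $M_f$ and apply a controlled sequence of Tietze-like transformations that rewrite each defining relation as a composite of head-changing ones while preserving every layer size, which requires careful accounting of how the left-cancellative and homogeneity conditions interact with such rewrites; or (ii) bypass head-changing realizability altogether by directly constructing an injection from $P_{k+1}$ into $\mathbf{W}_{k+1}^{M_k}$, using a canonical descending path in $\tilde P$ determined by a total order on $\mathcal{A}_P$ and showing this path avoids every head-changing identification introduced by the greedy algorithm. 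A secondary difficulty is that the exchange argument must track cascading effects, since swapping a relation at an intermediate layer can alter which words at higher layers coincide; proving the exchange is monotone at \emph{all} subsequent layers, not just at layer $k+1$, is where the combinatorics will be most delicate.
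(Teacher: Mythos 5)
The statement you are trying to prove is left as an open conjecture in the paper (\Cref{conj: upho recursive}); there is no proof of it there to compare against, and your proposal does not close it either. The ``if'' direction is indeed \Cref{prop:upho upper bound}, but for the ``only if'' direction your plan rests on two steps that are themselves open, and you correctly identify the first one yourself: the existence of a \emph{head-changing} LCH monoid realizing an arbitrary regular upho function through degree $k+1$. Nothing in the paper supplies this; the only head-changing constructions available are the very special ones of \Cref{prop:type II} via \Cref{lemma:head-changing upho}, and the paper explicitly states only as a conjecture (in the discussion preceding \Cref{def:head-changing} and in \Cref{subsec: char of upho function}) that all regular upho functions arise from head-changing monoids. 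Since an arbitrary colored upho poset realizing $f$ corresponds to an arbitrary LCH monoid, your extremality statement would in any case have to be proved against \emph{all} LCH monoids with the prescribed layer sizes, not just head-changing ones, unless the realizability gap is filled first; your routes (i) and (ii) are sketches of how one might try, not arguments.

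The second gap is the exchange lemma itself. Asserting that replacing a head-changing relation by one killing the lex-largest surviving element weakly increases $|\mathbf{W}_{k+1}|$ --- and, as you note, remains monotone at all higher layers despite cascading identifications --- is precisely the kind of statement the authors could not establish even in the strictly simpler setting of free $0$-monoids, where the analogous ``greedy is extremal'' claims (\Cref{conj:tree like=greedy series}, \Cref{conj:tree like recursive}) are also left open despite the counting machinery of \Cref{subsec: counting greedy 0 monoid}. In the LCH setting the interaction between newly imposed relations and left-cancellativity is harder, not easier (left-cancellativity of the quotient is not automatic for an arbitrary set of head-changing relations chosen during an exchange, so you must also verify the intermediate monoids in your exchange chain stay LCH). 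The only part of your outline that is actually complete is the easy lower bound $c_k\le c_{k+1}$ from the atom embedding. So the proposal is a reasonable research plan, but both of its essential ingredients are missing, and as written it does not constitute a proof of the conjecture.
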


If \Cref{conj:upho regular} holds, then proving \Cref{conj: upho greedy series} and \Cref{conj: upho recursive} would furthermore yield a complete characterization of all upho functions.

\subsection{Characterizing Rational Upho Functions}
Proving \Cref{conj: upho greedy series} and \Cref{conj: upho recursive} would yield a complete characterization of (regular) upho functions. However, due to the intrinsic difficulty of counting elements in LCH monoids, a more accessible characterization may be essential for practical applications. While an elegant and comprehensive characterization of all upho functions is likely unattainable \cite[Theorem~1.4]{MR4318812}, many interesting upho posets in current research have rational rank-generating functions. For example, \cite[Corollary~6]{MR4432972} shows that the rank-generating function of any upho meet semilattice, and thus any upho lattice, is a \emph{reciprocal rational function}, which is the multiplicative inverse of some polynomial. Additionally, \cite[Theorem~1.3]{MR4318812} provides a complete characterization of the rank-generating functions of planar upho posets, which are also reciprocal rational functions.

Thus, the classification of rational upho functions is a significant problem in the study of upho functions.

As our \Cref{thm:main} shows that totally positive formal power series are upho functions, this is not an exhaustive list of rational upho functions. \Cref{example:feiposet} illustrates that for a rational upho function $\frac{g(x)}{h(x)}$, $g(x)$ may have real positive roots, and $h(x)$ may have real negative roots. Furthermore, by \Cref{cor:upho times log-concave},
\[
\frac{g(x)}{h(x)} = \frac{1 - x + x^2}{(1 - x)(1 - 4x + 4x^2)} = \frac{1}{1 - x} \cdot \frac{1 - x + x^2}{1 - 4x + 4x^2}
\]
is a regular upho function, as $\frac{1}{1 - x}$ is regular upho and $\frac{1 - x + x^2}{1 - 4x + 4x^2}$ is log-concave; but $g(x)$ have imaginary roots. Likewise, $h(x)$ can also have imaginary roots, as $\frac{1}{1 - 2x + x^3}$ is an upho function by \cite[Theorem~1.3]{MR4318812}. Thus, classifying rational upho functions remains a challenging problem.

\begin{Question}
    Classify rational (regular) upho functions.
\end{Question}

A slightly simpler problem is to characterize all reciprocal rational upho functions.

\begin{Question}
    Classify reciprocal rational (regular) upho functions.
\end{Question}

This question remains subtle, as \cite[Theorem~1.3]{MR4318812} implies that both $\frac{1}{1 - 2x + x^3}$ and $ \frac{1}{1 - 3x + x^2 + x^3}$ are upho functions, where the denominator may have imaginary or real negative roots.

\section*{Acknowledgement}
This research was conducted as part of the PACE program during the summer of 2023 at Peking University. We thank Prof. Yibo Gao for proposing the project. We also thank Yumou Fei, Samuel F. Hopkins, and Joel B. Lewis for valuable discussions. 

\bibliographystyle{amsplain}
\bibliography{ref}
\addcontentsline{toc}{section}{Bibliography}

\end{document}